\declaretheoremstyle[headfont=\normalsize\normalfont\bfseries,notefont=\mdseries, notebraces={(}{)},bodyfont=\normalfont,postheadspace=0.5em]{basicstyle}
\declaretheoremstyle[headfont=\normalsize\normalfont\bfseries,notefont=\mdseries,
notebraces={(}{)},bodyfont=\normalfont\itshape,postheadspace=0.5em]{italstyle}
\declaretheorem[style=italstyle,name=Theorem,numberwithin=section]{theorem}
\declaretheorem[style=italstyle,name=Claim,sibling=theorem]{claim}
\declaretheorem[style=italstyle,name=Proposition,sibling=theorem]{prop}
\declaretheorem[style=italstyle,name=Lemma,sibling=theorem]{lemma}
\renewenvironment{proof}{\preproof}{\endpreproof}
\newcommand{\coker}{\mathop{\mathrm{coker}}}
\newcommand{\colim}{\mathop{\mathrm{colim}}}
\newcommand{\im}{\mathop{\mathrm{im}}}
\newcommand{\C}{\mathbb{C}}
\renewcommand{\d}{\mathrm{d}}
\newcommand{\id}{\mathrm{id}}
\newcommand{\ip}[1]{\left\langle#1\right\rangle}
\newcommand{\norm}[1]{\left\lVert#1\right\rVert}
\newcommand{\pd}[2]{\frac{\partial #1}{\partial #2}}
\def\@secnumfont{\bfseries}
\renewcommand\section{\@startsection{section}{1}{0pt}{-3.5ex \@plus -1ex \@minus -.2ex}{2.3ex \@plus.2ex}{\centering\itshape}}
\renewcommand\subsection{\@startsection{subsection}{2}{0pt}{-3.5ex \@plus -1ex \@minus -.2ex}{2.3ex \@plus.2ex}{\centering\itshape}}
\newcommand{\Z}{\mathbb{Z}}
\newcommand{\abs}[1]{\left|#1\right|}
\newcommand{\bd}{\partial}
\newcommand{\pr}{\mathrm{pr}}
\newcommand{\R}{\mathbb{R}}
\newcommand{\set}[1]{\left\{#1\right\}}
\newcommand{\F}{\mathbb{F}}
\newcommand{\al}{\alpha}
\newcommand{\Om}{\Omega}
\newcommand{\La}{\Lambda}
\newcommand{\cl}[1]{\mathscr #1}
\newcommand{\til}[1]{\widetilde{#1}}
\title{The chord conjecture for conormal bundles}
\address{Department of Mathematics and Statistics, University of Montreal, C.P.\, 6128 Succ. Centre-Ville, Montreal, QC, H3C 3J7, Canada}
\author{Filip Bro\'ci\'c}
\email{filipvbrocic@gmail.com}
\author{Dylan Cant}
\email{dylan@dylancant.ca}
\author{Egor Shelukhin}
\email{egor.shelukhin@umontreal.ca}
\date{\today}
\begin{document}

\maketitle

\begin{abstract}
We prove Arnol'd's chord conjecture for all Legendrian submanifolds of cosphere bundles of closed manifolds isotopic to conormal bundles of closed submanifolds. Our method of proof involves an isomorphism between wrapped Floer cohomology and the homology of a path space with coefficients in a local system and a twisted version of the Hurewicz theorem.
\end{abstract}

\section{Introduction}
\label{sec:introduction}

Arnol'd's chord conjecture states that for every closed Legendrian submanifold $\Lambda$ of a closed co-oriented contact manifold $(Y,\xi)$ there should exist a non-contstant Reeb chord with respect to every contact structure. It appears for example in \cite{Arnold-steps}. More specifically, given a nowhere-vanishing one-form $\alpha$ on $Y$ with $\xi = \ker(\alpha),$ there exists a unique Reeb vector field $R$ determined uniquely by the properties $\alpha(R) = 1,$ $\mathcal{L}_{R} \alpha = 0.$ Arnol'd's chord conjecture is the statement that there should exist $T>0$ and a chord $c:[0,T] \to Y$ with $c(0), c(T) \in \Lambda$ and $\dot{c}(t) = R(c(t))$ for all $t \in [0,T]$.

This conjecture was proved in its original case of the standard contact three-sphere $S^3$ by Mohnke in \cite{mohnke_chord}. The proof also applies to the standard $S^{2n+1}$ and more generally to contact boundaries of subcritical Weinstein domains. Related work by Abbas \cite{abbas99,abbas99-duke, abbas04} and Cieliebak \cite{cieliebak_handle_chord} proves similar results under additional conditions on contact forms and Legendrians. Early work by Ginzburg and Givental \cite{Givental-nonlinear} proves the conjecture for standard Legendrian $\Lambda = \R P^n$ in the standard contact real projective space $Y = \R P^{2n+1}.$ Moreover, work \cite{ht11,ht13} of Hutchings and Taubes establishes this conjecture for all Legendrian knots in all closed contact three-manifolds $Y.$ Other work related to this conjecture includes \cite{EES-duality, Ziltener-chord, CieliebakMohnke-punctured, reinke20, chantraine-slices, zhou-minimal, kang-strong, allais-arlove} and the references in the discussion below.

The goal of this paper is to establish Arnol'd's chord conjecture for the conormal Legendrian $\Lambda_{N}\subset Y=S^{*}M$ associated to a submanifold $N\subset M$. Here $S^{*}M$ is the spherical cotangent bundle and $\Lambda_N$ is the spherization of the conormal Lagrangian: $$\nu^* N = \{ (p,q)\;|\; q\in N,\; p|_{T_q N} = 0 \}\subset T^{*}M.$$ In other words, $S^*M = (T^*M\setminus 0_M)/\R_{+}$ and $\Lambda_{N}= (\nu^* N \setminus 0_M)/\R_{+}$ are quotients of the non-zero parts of $T^*M, \nu^*N$ by the natural $\R_{+}$-action on $T^*M.$

\begin{theorem}\label{thm: chord}
Let $N \subset M$ be a closed submanifold of a closed manifold with positive codimension. Arnol'd's chord conjecture holds for $\Lambda_N$ in $S^*M.$
\end{theorem}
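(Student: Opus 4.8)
The plan is to argue by contradiction. Suppose that for some contact form $\alpha$ on $S^*M$, with $\ker\alpha$ the canonical contact structure, the Legendrian $\Lambda_N$ admits no Reeb chord. I would work with the conormal Lagrangian $\nu^*N\subset T^*M$ and its wrapped Floer cohomology $HW^*(\nu^*N)$, which is an invariant of the filling $T^*M$ and in particular does not depend on the contact form chosen on the ideal boundary $S^*M$. Computing it with a wrapping Hamiltonian adapted to $\alpha$, the generators of the wrapped cochain complex split into the interior critical points of the action functional — which assemble into the Morse cohomology of $N$ — and the Reeb chords of $\Lambda_N$ with respect to $\alpha$. By hypothesis there are none of the latter, and the continuation maps between slopes are then quasi-isomorphisms, so $HW^*(\nu^*N)\cong H^*(N)$; in particular $HW^*(\nu^*N)$ is finite dimensional.

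Next I would invoke the isomorphism announced in the introduction, identifying $HW^*(\nu^*N)$, up to a regrading, with $H_*(\mathcal{P}_NM;\eta)$ — the homology, with coefficients in a local system $\eta$, of the space $\mathcal{P}_NM$ of paths in $M$ with both endpoints on $N$. Together with the previous paragraph, the contradiction hypothesis would force $H_*(\mathcal{P}_NM;\eta)$ to be finite dimensional, so it remains to see that this is impossible as soon as $N$ has positive codimension.

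This last step is where I expect the main difficulty to lie, and where the twisted Hurewicz theorem enters. Evaluation at the initial endpoint presents $\mathcal{P}_NM$ as a fibration over $N$ with fiber the homotopy fiber $F$ of $\iota\colon N\hookrightarrow M$, and the constant paths give a section, so $N$ is a retract of $\mathcal{P}_NM$; evaluating $F$ at its terminal endpoint presents $F$ in turn as a fibration over $N$ with fiber the based loop space $\Omega M$. Since $M$ is a closed manifold of positive dimension, $H_*(\Omega M)$ is infinite dimensional over any field — infinitely many components when $\pi_1M$ is infinite, and Serre's theorem applied to the closed simply connected universal cover otherwise. I would use the twisted Hurewicz theorem to carry this infinitude, together with the local system, through the two fibrations up to $\mathcal{P}_NM$: the twisting cannot kill the loop-space homology, and the spectral sequences involved cannot degenerate to a finite answer. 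The positive-codimension hypothesis is exactly what prevents this contribution from being absorbed into $N$: the only way the loop-space homology could disappear is if $F$ were weakly contractible, but then the Serre spectral sequence of the fibration associated to $\iota$ would make $\iota$ an isomorphism on homology with every local coefficient system, impossible for closed manifolds with $\dim N<\dim M$ — compare top-degree homology with the orientation local system, for which $M$ carries a fundamental class and $N$ cannot. Hence $H_*(\mathcal{P}_NM;\eta)$ is infinite dimensional, contradicting the previous paragraph and completing the proof.

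I anticipate two genuinely delicate points. The first is the Floer-theoretic input: establishing the path-space isomorphism, and making precise the reduction ``no Reeb chord $\Rightarrow$ wrapped complex $=$ Morse complex of $N$'', which requires controlling the admissible wrapping Hamiltonians near the ideal boundary, the continuation maps, and the behaviour of $\eta$ along the way. The second is proving the twisted Hurewicz theorem in exactly the form needed — a non-simply-connected path space carrying a nontrivial local system — and using it to upgrade the classical, untwisted infinitude of $H_*(\Omega M)$ to the twisted infinitude of $H_*(\mathcal{P}_NM;\eta)$. Finally, the positive-codimension hypothesis is essential: when $N=M$ the path space retracts onto $M$ and $HW^*(\nu^*M)\cong H^*(M)$, with no contradiction, consistently with $\Lambda_M$ being empty.
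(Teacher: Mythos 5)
Your overall architecture is the same as the paper's (wrapped Floer cohomology of $\nu^{*}N$, an isomorphism with the homology of the path space $\mathscr{P}$ with local coefficients, plus a purely topological non-triviality input), and the Floer-theoretic half, while stated loosely (the paper works with $\mathrm{HW}_{+}$ and the \emph{relative} groups $H_{*}(\mathscr{P},N;\mathscr{L})$ rather than full $\mathrm{HW}$ and absolute homology), is not where the problem lies. Note, however, that your ``finite-dimensionality'' framing is already fragile: the local systems that make the scheme work are free ones with fiber $\Z[\pi_{1}(\mathscr{P})]$, which have infinite rank, so even the no-chord computation only gives $\mathrm{HW}(\nu^{*}N;\eta)\cong H_{*}(N;\eta|_{N})$, which need not be finite dimensional; the correct dichotomy is vanishing versus non-vanishing of $H_{*}(\mathscr{P},N;\mathscr{L})$, not finite versus infinite dimension.

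The genuine gap is the last topological step. You claim that the infinite-dimensionality of $H_{*}(\Omega M;k)$ propagates through the fibrations $\Omega M\to F\to N$ and $F\to\mathscr{P}\to N$ because ``the twisting cannot kill the loop-space homology, and the spectral sequences involved cannot degenerate to a finite answer,'' and you offer the dichotomy ``either $F$ is weakly contractible or the contribution survives.'' This is precisely the step that fails in general: the Serre spectral sequences here have coefficients in the local systems given by the $\pi_{1}(N)$-action on the homology of the fiber, and this monodromy can kill arbitrarily much. For instance, when $M$ and $N$ are aspherical, $\mathscr{P}$ is homotopy equivalent to a disjoint union of spaces $K\bigl(\pi_{1}(N)\cap g\,\pi_{1}(N)\,g^{-1},1\bigr)$ indexed by double cosets, and no untwisted Serre-type argument applies; indeed the paper points out that it is not even known whether $H_{*}(N;\Z)\to H_{*}(\mathscr{P};\Z)$ can fail to be an isomorphism for \emph{every} pair $(M,N)$, which is exactly the kind of statement your spectral-sequence claim would imply. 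The paper's actual mechanism is different and is what you would need to supply: assuming $H_{*}(\mathscr{P},N;\mathscr{L})=0$ for all free local systems, one runs an induction over the relative homotopy groups $\pi_{k}(\mathscr{P},N)$ using the twisted relative Hurewicz theorem with the specific free system $\mathscr{L}_{\gamma}\cong\Z[\pi_{1}(\mathscr{P})]$ built from the universal cover (Propositions \ref{prop:hurewicz_hard} and \ref{prop:hurewicz_not_a_group}), concluding that $N\hookrightarrow\mathscr{P}$, and hence $N\hookrightarrow M$, is a weak homotopy equivalence, which contradicts $H_{n}(M)\neq 0=H_{n}(N)$ for $n=\dim M>\dim N$ (after passing to the orientation double cover if $M$ is non-orientable). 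As written, your dichotomy and the degeneration claim are asserted rather than proved, and they are false as a general mechanism, so the proof is incomplete at its crucial point.
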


Importantly, the statement holds for arbitrary contact forms. In particular, it applies equally well to every $\Lambda \subset Y$ Legendrian isotopic to $\Lambda_N.$

Theorem \ref{thm: chord} generalizes a number of results in Riemannian and Finsler geometry, which correspond to specific choices of contact forms. For example, for a contact form coming from the embedding $S^*_g M \subset T^*M$ of the bundle of unit vectors of a Riemannian metric $g$ on $M,$ Arnol'd's chord conjecture for the conormal lift of a submanifold $N$ of $M$ specializes to a 1970s result of Grove \cite{grove_general_boundary_conditions, grove_condition_C}. Related results were obtained in \cite{asselle-euler-lagrange, alberto_figalli} for Hamiltonian systems on $T^*M$ which are convex in the fiber coordinates, such as Tonelli Hamiltonian, and for Finsler metrics in particular. Finally, special cases of our main result, which hold for all contact forms, are known to hold for $N$ equal to a point, so $\Lambda_N$ equal to a fiber, by \cite{viterbo_functors_and_computations_1, abbondandolo_schwarz, abbondandolo_schwarz_2, abbondandolo_schwarz_3, abouzaid_based_loop, abouzaid_monograph, salamon_weber_floer_homology_heat_flow} and for other pairs $N, M$ satisfying various additional topological conditions by \cite{pushkar, merry-relative-rabinowitz, ritter_tqft, abbondandolo_portaluri_schwarz, zhou-minimal}.
%Further aspects of symplectic and contact topology of conormal bundles were studied in \cite{eliashberg-gromov-fin-dim-approach}.

We expect that an extension of our methods would prove that if $\Lambda = \Lambda_N$ and $\Lambda'$ is Legendrian isotopic to $\Lambda$, then there exists a Reeb chord joining $\Lambda$ and $\Lambda'$. This happens if $\Lambda' = \Lambda_{N'}$ for $N'$ smoothly isotopic to $N$ in $M.$ The case when $N, N'$ are two distinct points in $M$ was studied amply in the literature (see the recent work \cite{allais_growth_rate_chords} in the Riemannian case for a review). A similar result does not hold for general conormals $\Lambda$ and $\Lambda'.$ For example, set $M=T^2$ and let $\Lambda = \Lambda_N,$ $\Lambda'=\Lambda_{N'}$ for $N=S^1 \times \{0\} \subset T^2$ and $N' = \{0\} \times S^1.$ Then there are no Reeb chords between $\La$ and $\La'$ for the contact structure on $S^*T^2$ induced by the standard Riemannian metric on $T^2$. Another natural extension of our result would be to require that the Reeb chords satisfy more general non-local boundary conditions as in \cite{grove_general_boundary_conditions, abbondandolo_portaluri_schwarz}. In particular, if for two cleanly intersecting submanifolds $N, N'$ of $M$ the inclusion $N \cap N' \to \cl P_{N,N'},$ the space of paths from $N$ to $N',$ is not a homotopy equivalence, then there exists a Reeb chord from $\La_{N}$ to $\La_{N'},$ and vice versa, for every contact form. Finally, as explained to us by M. Mazzucchelli, another generalization to characteristic chords with respect to exact magnetic symplectic forms on sufficiently large star-shaped hypersurfaces is readily available. We expect to make progress in these directions in a future publication.

We now present a brief outline of our argument. Its main novelty consists in retaining sufficient homotopical information about the relevant path space in terms of homology with coefficients in local systems, which is then readily accessible via homological invariants in symplectic topology which detect Reeb chords.

First, we may always assume that $M$ is orientable by passing to its orientation double cover if it is not (see \cite[pp.\,554]{abbondandolo-macarini-mazzucchelli-paternain} for a similar application of this trick): this induces a double cover of $S^*M$ and we focus on a connected component $K$ of the preimage of $\Lambda = \Lambda_N;$ Reeb chords on $K$ with respect to the pull-back contact form project to the required Reeb chords on $\Lambda$.

Second, inspired by \cite{abbondandolo_portaluri_schwarz}, we prove a version of the Viterbo isomorphism in our setting, taking coefficients in a local system into account. Namely, consider the conormal Lagrangian $L=\nu^* N \subset T^*M.$ Let $\cl P = W^{1,2}(([0,1], \{0,1\}), (M,N))$ be the space of paths in $M $ with endpoints on $N,$ the Sobolev completion taken for technical convenience only. Let $\cl L$ be a local system over $\cl P.$ It corresponds to a local system on the space of paths from $L$ to $L$ in $T^*M,$ which we denote by $\cl L$ again. The positive wrapped Floer cohomology, an invariant introduced by Abouzaid-Seidel \cite{abouzaid_seidel_open_string_analogue}, of the cylindrical Lagrangian $L$ in the Liouville manifold $T^*M,$ has an enhancement $\mathrm{HW}_{+}(L;\cl L)$ with local coefficients in $\cl L.$ It has the property that if $\Lambda_N$ has no Reeb chords, then $\mathrm{HW}_{+}(L;\cl L) = 0.$ We prove that if $N$ is considered as the subspace of constant paths in $\cl P,$ and $H(\cl P, N; \cl L)$ denotes the relative homology with coefficients in $\cl L,$ then the following holds.

\begin{theorem}
  There is an isomorphism \[\mathrm{HW}_{+} (L;\cl L) \cong H(\cl P, N; \cl L).\]
\end{theorem}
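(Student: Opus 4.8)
The plan is to follow the classical chain of comparison maps from Hamiltonian Floer theory on $T^*M$ to Morse theory on the path space $\cl P$, upgraded to track a local system $\cl L$, and then to pass to the positive version by taking the appropriate quotient/subcomplex. Concretely, I would first choose a convex-at-infinity Hamiltonian $H$ on $T^*M$ which is fiberwise asymptotically quadratic (or, after a cofinal argument, genuinely quadratic) and whose Hamiltonian chords from $L=\nu^*N$ to $L$ are nondegenerate; the action functional is the standard $\mathcal A_H(x)=\int x^*\lambda - \int H(x)\,dt$ with the conormal boundary conditions. The wrapped Floer complex $CW(L;\cl L)$ is generated over the group ring by these chords, with the local system $\cl L$ entering via parallel transport along the chords, exactly as in Abbondandolo--Portaluri--Schwarz and in Abouzaid's work on based loop spaces. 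The Legendre transform identifies $H$-chords from $L$ to $L$ with critical points of the free-time (or fixed-time, after reparametrization) Lagrangian action functional $\mathcal E_{\mathcal L}$ on $\cl P = W^{1,2}(([0,1],\{0,1\}),(M,N))$; the key point is that the conormal boundary condition $p|_{T_qN}=0$ is precisely the natural boundary condition making the first variation of $\mathcal E_{\mathcal L}$ vanish on paths with free endpoints on $N$.

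Second, I would establish the chain-level isomorphism $CW(L;\cl L)\cong CM(\mathcal E_{\mathcal L};\cl L)$, the Morse complex of the Lagrangian action on $\cl P$ with local coefficients in $\cl L$. The standard route is the Abbondandolo--Schwarz moduli-space comparison: one counts hybrid half-gradient-flow/half-Floer-cylinder objects and shows the resulting map is a chain isomorphism respecting the action filtration and intertwining the parallel-transport maps that define the local coefficients. Here one must check the index identification — the Conley--Zehnder index of a chord equals the Morse index of the corresponding critical point of $\mathcal E_{\mathcal L}$ with respect to the $W^{1,2}$-metric, which for conormal boundary conditions follows from the Morse index theorem with the appropriate (Lagrangian) boundary conditions — and that $\mathcal E_{\mathcal L}$ satisfies Palais--Smale and is bounded below after the usual modifications, so that its Morse homology with local coefficients computes $H(\cl P;\cl L)$. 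Then I would invoke the standard identification of $HM(\mathcal E_{\mathcal L};\cl L)$ with the singular homology $H(\cl P;\cl L)$.

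Third, to get the \emph{positive} version I would restrict attention to the action-positive part: $CW_+(L;\cl L)$ is the quotient of $CW(L;\cl L)$ by the subcomplex generated by chords of non-positive action (equivalently the constant chords, which sit at critical value $\min(-H)$ and correspond under the isomorphism to the constant paths, i.e.\ to $N\subset\cl P$). Since the comparison isomorphism respects the action filtration, it descends to an isomorphism between $CW_+(L;\cl L)$ and the quotient complex $CM(\mathcal E_{\mathcal L};\cl L)/CM(N;\cl L)$, whose homology is the relative homology $H(\cl P,N;\cl L)$. A small amount of care is needed to see that the sublevel set $\{\mathcal E_{\mathcal L}\le \epsilon\}$ deformation retracts onto $N$ (for a metric Hamiltonian this is the statement that short geodesic chords from $N$ to $N$ retract to constant ones; for a general convex $H$ one arranges this by a $C^2$-small perturbation and a standard continuation argument), and that the constant-chord generators form an honest subcomplex — which holds because action decreases strictly along Floer trajectories, so nothing flows out of the constant chords except within $N$ itself.

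The main obstacle I expect is the careful bookkeeping of the local system through all three stages: one must fix once and for all the correspondence between a local system on $\cl P$ and a local system on the space of paths from $L$ to $L$ in $T^*M$ (using that the inclusion of $\cl P$ into the latter path space, via the zero section, is a homotopy equivalence), and then verify that \emph{every} structure map — the Floer differential, the Abbondandolo--Schwarz comparison map, the Morse differential, and the quotient projection — is compatible with the parallel-transport data defining $\cl L$-coefficients. Equivalently, one can phrase the whole argument on a suitable cover or, more cleanly, work in the category of complexes of modules over the group ring of $\pi_1(\cl P)$ (or a quotient through which $\cl L$ factors); the transversality and compactness inputs are then identical to the untwisted case since twisting by a local system does not affect the moduli spaces, only the coefficients with which one counts. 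A secondary technical point is the fiberwise-quadratic growth versus linear growth of $H$: to have a well-defined $CW$ one typically uses linear Hamiltonians, while the Abbondandolo--Schwarz isomorphism is cleanest for quadratic ones, so one interpolates via a cofinal family and a colimit argument, checking that the local coefficients are carried along the continuation maps.
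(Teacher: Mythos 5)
Your outline is correct and follows essentially the same strategy as the paper (an Abbondandolo--Schwarz type comparison with local coefficients, triangularity with respect to the action filtration, passage to the positive part matching $H(\mathscr{P},N;\mathscr{L})$, and the observation that twisting by $\mathscr{L}$ changes only the counts, not the moduli spaces), but the implementation differs in two ways worth noting. On the Morse side, the paper does not use the Legendre transform and the Lagrangian action functional with $W^{1,2}$ Morse theory; it works with the Riemannian energy functional and Milnor-style finite-dimensional approximations by broken geodesics, which sidesteps Palais--Smale and pseudogradient issues and makes the comparison with relative singular homology with local coefficients elementary. On the Floer side, the paper is careful about what ``quotient by the non-positive action chords'' means over $\Z$: rather than a single quotient, it constructs a cofibrant-cofinal sequence of linear-at-infinity Hamiltonians whose slopes interleave the lengths of normal geodesic chords, proves via a maximum principle and an action estimate that the continuation maps are genuine inclusions (cofibrations), establishes invariance of the resulting colimit, and then shows $\mathrm{CF}(Q_t)/\mathrm{CF}(H_1)$ for a perturbed quadratic system is a chain-level model of $\mathrm{HW}_+$ --- the point your cofinal-family remark only partially anticipates, and which is needed to avoid extension problems over the integers. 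The comparison itself is then a direct chain map $\Theta$ into this quotient, defined by half-infinite strips with moving fiber boundary conditions over unstable-manifold paths, together with a self-contained coherent-orientation analysis; your route of importing the full Abbondandolo--Portaluri--Schwarz isomorphism and restricting is exactly what the paper says ``should'' work but deliberately replaces by this self-contained argument.
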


Finally, we prove that $H(\cl P, N; \cl L) \neq 0$ for a certain local system $\cl L$ which finishes the proof. This non-vanishing is proved via an application of a twisted version of the relative Hurewicz theorem as follows.

Arguing by contradiction, suppose that for all local systems $\cl L,$ $H(\cl P, N; \cl L) = 0$. Using the twisted relative Hurewicz theorem we deduce below that the inclusion $i: N \to \cl P$ is a weak homotopy equivalence. This in turn implies that the inclusion $N\to M$ is a weak homotopy equivalence. This can be seen by considering the Serre fibrations (i) $\mathrm{ev}_{0}:\cl P \to N$, whose fiber over $x \in N$ is denoted $\mathscr{P}_{x}$, and (ii) $\mathrm{ev}_{1}:\cl P_{x} \to N$ whose fiber over $x$ equals the based loop space $\Om M$ of $M.$ However the inclusion of $N$ into $M$ cannot be a weak homotopy equivalence because $H_{n}(N)=0$ and $H_{n}(M)\ne 0$ for $n=\mathrm{dim}(M)$.

To prove that $i$ is a weak homotopy equivalence, we first observe that $i$ is injective on all $\pi_k,$ $k \geq 0,$ as it has a continuous left inverse given by the evaluation map ${\mathrm{ev}_0: \cl P \to N},$ $\mathrm{ev}_0(\gamma) = \gamma(0).$ Next observe that $i$ is an isomorphism on the set $\pi_0$ of connected components, as otherwise $H_0(\cl P, N; \F_2) \neq 0,$ which contradicts the contrapositive assumption. To prove that $i$ induces an isomorphism on $\pi_1$ we consider the natural local system $\cl L$ on $\cl P$ with fiber: $$\cl L_{\gamma} = H_0(p^{-1}(\gamma);\Z) \cong \Z[\pi_1(\cl P, \gamma)],$$ where $p: \til{\cl P} \to \cl P$ is the universal covering. Then
\begin{equation*}
  \begin{aligned}
    H_0(\cl P; \cl L) &= H_0(\til{\cl P};\Z) = \Z,\\ H_0(N; i^*\cl L) &= \Z[\pi_1(\cl P)/ i_*(\pi_1 (N))],
  \end{aligned}
\end{equation*}
while the long exact sequence of a pair yields \[ H_1(\cl P, N; \cl L) \to H_0(N; i^*\cl L) \to H_0(\cl P;\cl L).\] If $i$ is not surjective on $\pi_1,$ then the map $H_0(N; i^*\cl L) \to H_0(P; \cl L) = \Z$ has a non-trivial kernel, whence $H_1(\cl P, N; \cl L) \neq 0.$

The rest of the argument is completed by induction: assume that $i$ is surjective on $\pi_{\ell}$ for $\ell<k$, equivalently, $\pi_{\ell}(\mathscr{P},N)=0$ for $\ell<k$. We will show that $\pi_k(\cl P, N) = 0$ and hence $i$ is surjective on $\pi_k.$  Otherwise, suppose that $k \geq 2$ is the first integer such that $\pi_k(\cl P, N) \neq 0.$ Then, as $i$ is an isomorphism on $\pi_1,$  by the twisted Hurewicz theorem \cite[Proposition 2.1]{rong} (see \S\ref{sec:choice-local-system} for further discussion), \[\pi_k(\cl P, N) \cong \pi_k(\til{\cl P}, \til{N}) \cong H_k(\cl P, N; \cl L),\] where $\cl L$ is the same local system as in the previous step. Therefore $H_k(\cl P, N; \cl L) \neq 0,$ which finishes the proof.

We end this introduction with a brief comparison with previous results. First, the approach of \cite {grove_general_boundary_conditions, grove_condition_C, asselle-euler-lagrange} involved classical calculus of variations, which is not accessible for general Reeb flows. Second, it is easy to see that the pair of manifolds $M=\C P^{n} \times S^{k}$ and $N = \C P^{n} \times \{\mathrm{pt}\}$, where $1\leq k \leq 2n-1$, does not satisfy the topological conditions from \cite{pushkar, merry-relative-rabinowitz, ritter_tqft, zhou-minimal, EES-duality, chantraine-slices}. We expect that for most conormal bundles, the condition of \cite{allais-arlove} does not hold, and in cases where it does, verifying it would require arguments similar to those in this paper. Finally, the results of \cite{alberto_figalli, abbondandolo_portaluri_schwarz} should imply the chord conjecture if $H_{*}(N; \Z) \to H_{*}(\mathscr{P}; \Z)$ is not an isomorphism. However, it is not known if this condition holds for every pair $M, N$. From this point of view, our paper shows that such a homological condition always holds when local systems are taken into account, for homotopical reasons. We expect this method to be applicable to other questions in symplectic and contact topology. For example, in \cite{BCSS}, it is used to prove the following generalization of the classical result of Lyusternik and Fet \cite{lyusternik-fet}:
\begin{theorem}
  Let $M$ be a closed manifold that is not a $K(\pi,1).$ Then $S^*M$ admits a contractible closed Reeb orbit for every contact form $\al.$
\end{theorem}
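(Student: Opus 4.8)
The plan is to carry over the proof of Theorem~\ref{thm: chord} from the open-string setting to the closed-string one: replace the conormal Lagrangian $L=\nu^*N\subset T^*M$ by the zero section $0_M\subset T^*M$, the path space $\cl P$ by the free loop space $\Lambda M=W^{1,2}(S^1,M)$, the subspace $N$ of constant paths by the subspace $M\subset\Lambda M$ of constant loops, and restrict everywhere to the component $\Lambda_0 M$ of contractible loops, which contains $M$. The role of the positive wrapped cohomology is now played by the summand of positive symplectic homology $\mathrm{SH}_+(T^*M;\cl L)$ (with coefficients in a local system $\cl L$ on $\Lambda_0 M$) lying in the class of contractible orbits, and the role of the isomorphism $\mathrm{HW}_+(L;\cl L)\cong H(\cl P,N;\cl L)$ by a closed-string Viterbo / Abbondandolo--Schwarz isomorphism \[ \mathrm{SH}_+(T^*M;\cl L)\;\cong\;H(\Lambda_0 M,\,M;\,\cl L) \] in the contractible class, compatible with action filtrations and with the splitting of both sides over free homotopy classes (the decomposition in which the constant loops only contribute to the trivial class). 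As in the conormal case one first reduces to $M$ orientable by passing to the orientation double cover $\til M$: this leaves $\pi_k$ unchanged for $k\ge 2$, so $\til M$ is again not a $K(\pi,1)$, and a contractible closed Reeb orbit on $S^*\til M$ for the pulled-back form projects to one on $S^*M$. (For $\dim M\ge 3$ the map $\pi_1(S^*M)\to\pi_1(M)$ is an isomorphism, so contractibility in $S^*M$ coincides with contractibility in $M$; in the remaining case $\dim M=2$ one has $M=S^2$, where a closed Reeb orbit contractible in $M$ becomes contractible in $S^*M$ after passing to its $2$-fold iterate, since the kernel of $\pi_1(S^*S^2)\to\pi_1(S^2)$ has order two.)

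Arguing by contradiction, suppose $(S^*M,\alpha)$ has no closed Reeb orbit contractible in $M$. Exactly as the absence of Reeb chords forces $\mathrm{HW}_+(L;\cl L)=0$, the absence of contractible Reeb orbits forces the contractible-class summand of $\mathrm{SH}_+(T^*M;\cl L)$ to vanish: its Floer complex, built from a Hamiltonian that is $C^2$-small Morse near $0_M$ and linear of slope $\tau\to\infty$ at infinity, has no generators of contractible type above the action window of the constants. By the isomorphism above (and since $\cl L$ ranges over all local systems, any fixed twist inherent in the isomorphism is harmless), we conclude $H(\Lambda_0 M,M;\cl L)=0$ for every local system $\cl L$ on $\Lambda_0 M$.

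The homotopical half now runs verbatim as in the introduction, with $(\cl P,N)$ replaced throughout by $(\Lambda_0 M,M)$. The inclusion $i\colon M\hookrightarrow\Lambda_0 M$ has the continuous left inverse $\mathrm{ev}_0$, hence is injective on all $\pi_k$; it is bijective on $\pi_0$, since both spaces are connected; using the local system $\cl L_\gamma=H_0(p^{-1}(\gamma);\Z)\cong\Z[\pi_1(\Lambda_0 M,\gamma)]$ of the universal cover $p$ and the long exact sequence of the pair, $i$ is surjective, hence bijective, on $\pi_1$; and the twisted relative Hurewicz theorem \cite{rong} then yields, by induction on $k$, that $\pi_k(\Lambda_0 M,M)\cong H_k(\Lambda_0 M,M;\cl L)=0$. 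Hence $i$ is a weak homotopy equivalence. Since $i$ is a section of the Serre fibration $\mathrm{ev}_0\colon\Lambda_0 M\to M$ whose fiber is the component $\Om_0 M$ of the constant loop in $\Om M$, the map $\mathrm{ev}_0$ is then also a weak homotopy equivalence, so $\Om_0 M$ is weakly contractible. But $\pi_k(\Om_0 M)\cong\pi_{k+1}(M)$ for $k\ge 1$, so $\pi_j(M)=0$ for all $j\ge 2$, i.e.\ $M$ is a $K(\pi,1)$ --- a contradiction.

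As in the present paper, the formal homotopy-theoretic steps are cheap; the step I expect to be the main obstacle is the construction of the local-coefficient closed-string Viterbo isomorphism $\mathrm{SH}_+(T^*M;\cl L)\cong H(\Lambda_0 M,M;\cl L)$ in the contractible class, together with the verification that it respects the action filtration and the decomposition over free homotopy classes. This is carried out in \cite{BCSS} by adapting the construction underlying the open-string isomorphism of the present paper (itself modeled on \cite{abbondandolo_portaluri_schwarz}).
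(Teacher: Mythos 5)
Your proposal matches the approach the paper itself indicates: this theorem is not proved in the present paper but is attributed to \cite{BCSS}, where exactly this closed-string adaptation (positive symplectic homology with local coefficients restricted to the contractible class, a twisted Viterbo-type isomorphism with $H(\Lambda_0 M, M;\mathscr{L})$, and the twisted relative Hurewicz argument applied to $M\hookrightarrow \Lambda_0 M$) is carried out. Your reduction steps (orientation double cover, contractibility in $S^*M$ versus in $M$, the $\dim M=2$ case via the double iterate) are sound, and you correctly identify that the substantive remaining work is the local-coefficient closed-string isomorphism, which is precisely what the paper defers to \cite{BCSS}.
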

For related work see for instance \cite[Corollary 1]{hofer-viterbo}.

\section*{Acknowledgments}
We thank Alberto Abbondandolo, Paul Biran, Octav Cornea, Marco Mazzucchelli, Leonid Polterovich, and Frol Zapolsky for useful communications.
F.B. was supported by an ISM graduate scholarship.
D.C. was supported by a CIRGET postdoctoral fellowship.
E.S. was supported by an NSERC Discovery Grant, the Fondation Courtois, and a Sloan Research Fellowship.

\subsection{Local systems of coefficients}
\label{sec:local-coefficients}

Since this notion is central to the paper, we recall the definition of a local system on a topological space. Let $\mathscr{P}$ be a locally path-connected and locally simply-connected space. A \emph{local system of coefficients} is a functor $\mathscr{L}$ from the fundamental groupoid of $\mathscr{P}$ to the category of free $\Z$-modules. More pedantically, for each point $x\in \mathscr{P}$, one has a free $\Z$-module $\mathscr{L}_{x}$, and for every homotopy class of paths joining $x$ to $y$ there is an associated \emph{monodromy isomorphism} $\mathscr{L}_{x}\to \mathscr{L}_{y}$; these monodromy isomorphisms are required to be compatible with concatenation of homotopy classes of paths.

The algebraic invariants we consider, e.g., Floer cohomology of the conormal, Morse homology of the energy functional, will be defined with respect to such a local coefficient system on the space of paths $\mathscr{P}$ joining $N$ to $N$.

See \cite[\S2.7]{groman-merry-v4} for related discussion.

\subsection{Structure of the paper}
\label{sec:outline-main-argum}

Our approach is based on a comparison between the Morse homology of the Riemannian energy functional and the wrapped Floer cohomology of a conormal Lagrangian, similar to the work of \cite{abbondandolo_schwarz,abouzaid_based_loop,abbondandolo_portaluri_schwarz,abouzaid_monograph}.

For any Riemannian metric, one can consider the Riemannian energy functional $\mathscr{E}$ on the space of paths $\mathscr{P}$; for generic metrics, this will be a Morse functional except for the critical submanifold of constant paths $N\subset \mathscr{P}$ which represents the minimum. In \S\ref{sec:morse-homol-energy} we define the Morse homology $\mathrm{HM}(\mathscr{P},N;\mathscr{E};\mathscr{L})$ of the Riemannian energy functional with local coefficients generated by critical points of $\mathscr{E}$ with positive energy. This homology is a Morse theoretic version of relative singular homology $H(\mathscr{P},N;\mathscr{L})$, similar to the Morse theoretic version of the homology of the path space in \cite{milnor_morse_theory}. Various facts about Morse theory for the Riemannian energy are relegated to Appendix \ref{sec:appendix-B}.

In \S\ref{sec:wrapped-floer-cohom} we define the positive-wrapped Floer cohomology $\mathrm{HW}_{+}(\nu^{*}N;\mathscr{L})$. Well-known arguments show that $\mathrm{HW}_{+}(\nu^{*}N;\mathscr{L})\ne 0$ implies there are Reeb chords of the ideal Legendrian boundary of $\nu^{*}N$, for any choice of contact form.

\begin{theorem}\label{theorem:main-iso}
  There is an isomorphism $\mathrm{HM}(\mathscr{P},N;\mathscr{E};\mathscr{L})\to \mathrm{HW}_{+}(\nu^{*}N;\mathscr{L})$.
\end{theorem}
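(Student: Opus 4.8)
The plan is to adapt the Viterbo-type isomorphism of Abbondandolo--Schwarz \cite{abbondandolo_schwarz} and Abouzaid \cite{abouzaid_based_loop,abouzaid_monograph}, in the conormal-boundary-condition form of Abbondandolo--Portaluri--Schwarz \cite{abbondandolo_portaluri_schwarz}, carrying the local system $\mathscr{L}$ through every moduli space and structure map. Fix a generic metric on $M$, so that the positive-energy critical points of $\mathscr{E}$ on $\mathscr{P}$ are nondegenerate, and a Hamiltonian $H$ on $T^{*}M$ that is quadratic and fiberwise convex outside a compact set (equivalently, use a cofinal family of linear Hamiltonians as in the colimit defining $\mathrm{HW}$). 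The Legendre transform gives a canonical bijection between the positive-energy geodesic chords meeting $N$ orthogonally at both ends --- the generators of $\mathrm{CM}(\mathscr{P},N;\mathscr{E};\mathscr{L})$ --- and the positive-action Hamiltonian chords $x\colon[0,1]\to T^{*}M$ with $x(0),x(1)\in\nu^{*}N$ --- the generators of $\mathrm{CW}_{+}(\nu^{*}N;\mathscr{L})$. A geodesic chord is literally a point of $\mathscr{P}$ and a Hamiltonian chord $x$ projects to a point $\pi\circ x\in\mathscr{P}$, so each generator $g$ on either side is decorated with the free $\Z$-module $\mathscr{L}_{g}$; the Morse differential counts negative gradient trajectories of $\mathscr{E}$ weighted by the monodromy of $\mathscr{L}$ along them (paths in $\mathscr{P}$), and the Floer differential counts strips $u$ with boundary on $\nu^{*}N$ weighted by the monodromy along $s\mapsto\pi\circ u(s,\cdot)$. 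In both cases $\partial^{2}=0$ follows from the standard breaking analysis together with the homotopy-invariance of monodromy: the two broken ends of a one-dimensional moduli component trace homotopic paths in $\mathscr{P}$ and hence compose to the same monodromy isomorphism.

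\textbf{The comparison map.} Next I would define $\Phi\colon\mathrm{CM}(\mathscr{P},N;\mathscr{E};\mathscr{L})\to\mathrm{CW}_{+}(\nu^{*}N;\mathscr{L})$ using the Abbondandolo--Schwarz hybrid moduli spaces: half-strips $u\colon(-\infty,0]\times[0,1]\to T^{*}M$ solving Floer's equation for $H$ with $u(\cdot,0),u(\cdot,1)\in\nu^{*}N$, asymptotic as $s\to-\infty$ to a Hamiltonian chord $x$, and with the boundary path $\pi\circ u(0,\cdot)\in\mathscr{P}$ constrained to the unstable manifold $W^{u}(\gamma)$ of a positive-energy critical point $\gamma$ of $\mathscr{E}$. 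Counting rigid such $u$, each weighted by the monodromy along the concatenation of the negative gradient half-trajectory in $W^{u}(\gamma)$ with $s\mapsto\pi\circ u(s,\cdot)$, defines $\Phi$. The analytic content --- fiberwise $C^{0}$ and energy estimates forcing compactness of these hybrid moduli spaces with conormal boundary and quadratic $H$, the Fredholm theory and index identities matching the Morse index of $\gamma$ with the Maslov (Conley--Zehnder-type) grading of $x$, transversality for generic auxiliary data, and the gluing and compactness giving $\partial_{W}\Phi-\Phi\partial_{M}=0$ --- is imported from \cite{abbondandolo_schwarz,abbondandolo_portaluri_schwarz,abouzaid_based_loop,abouzaid_monograph}; the monodromy weights ride along formally, since every identity among these moduli spaces is an identity between homotopy classes of paths in $\mathscr{P}$ and monodromy is a functor on the fundamental groupoid.

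\textbf{The isomorphism.} To see that $\Phi$ is an isomorphism I would argue by the action filtration. Both complexes are filtered by the value of the action ($\tfrac{1}{2}\,\mathrm{length}^{2}$ on the Morse side, the Hamiltonian action on the Floer side), with action values discrete and bounded below; $\Phi$ is filtered, and the Legendre correspondence $\gamma\leftrightarrow x_{\gamma}$ matches action values up to the auxiliary perturbations. The key local input is that the action-preserving component of $\Phi$ --- the coefficient of $x_{\gamma}$ in $\Phi(\gamma)$ --- is computed by a single transverse half-strip, carrying the identity of $\mathscr{L}_{x_{\gamma}}$ once $\mathscr{L}_{\gamma}$ and $\mathscr{L}_{x_{\gamma}}$ are identified through the (homotopically canonical) fiberwise Legendre deformation; every other contribution strictly decreases the action. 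Hence, with respect to a total order on generators refining the action, $\Phi$ is upper-triangular with invertible diagonal entries, so it is an isomorphism of chain complexes, and passing to homology proves the theorem. Alternatively one could build the reverse half-strip map and show both compositions are chain-homotopic to the identity by the usual gluing argument, again carrying monodromy along formally.

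\textbf{The main obstacle.} The genuine difficulty is the analytic one inherited from the non-compactness of $T^{*}M$: establishing the $C^{0}$-bounds and compactness for the hybrid half-strips with conormal boundary conditions and a Hamiltonian of quadratic growth, so that $\Phi$ is well-defined at the chain level. I expect this to follow from the arguments of \cite{abbondandolo_portaluri_schwarz} with only notational changes --- the local system, being a locally constant sheaf, contributes nothing to the PDE analysis --- but it is the step that must be carried out with care, and part of it is deferred to Appendix \ref{sec:appendix-B}. A secondary point requiring attention is pinning down, once and for all, the identification of the local systems on the two sides and its compatibility with the truncations defining the ``positive'' parts (discarding the critical submanifold $N$ of constant paths on the Morse side, and quotienting by the subcomplex of near-$N$ generators on the Floer side), which is again handled by the fiber-preserving Legendre homotopy.
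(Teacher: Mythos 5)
Your proposal follows essentially the same route as the paper: the map is the Abbondandolo–Schwarz-type count of half-infinite Floer strips with boundary on $\nu^{*}N$ and $s=0$ boundary lying over a path in the unstable manifold of a geodesic chord, weighted by monodromy of $\mathscr{L}$, for a quadratic-at-infinity Hamiltonian, and the isomorphism is obtained by the action-filtration/upper-triangularity ("diagonal") argument with stationary half-strips giving the identity diagonal entries, exactly as in \S\ref{sec:isom-from-morse} and Lemma \ref{lemma:technical-diagonal}. The paper additionally works out the points you defer — the cofibrant-cofinal model of $\mathrm{HW}_{+}$ over $\Z$, compatibility with continuation maps to pass to the colimit, and the compactness/orientation analysis — but these are refinements of the same argument, not a different one.
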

The isomorphism is constructed in \S\ref{sec:isom-from-morse}, following \cite{abbondandolo_schwarz}, on the chain level by counting certain rigid half-infinite Floer strips.

In \S\ref{sec:choice-local-system}, we explain why there is always \emph{some} choice of local system $\mathscr{L}$ so that $\mathrm{HM}(\mathscr{P},N;\mathscr{E};\mathscr{L})\ne 0$. Consequently we conclude that the ideal Legendrian boundary of $\nu^{*}N$ has a Reeb chord for any choice of contact form.

In Appendix \ref{sec:coher-orient} we give a self-contained account of the relevant theory of orientations to prove our results over the integers.

\section{Proofs}
\label{sec:proofs}

\subsection{Morse homology for the energy functional}
\label{sec:morse-homol-energy}
Fix a Riemannian metric $g$ on $M$ and define the \emph{Riemannian energy functional} on the space of paths $\mathscr{P}$ joining $N$ to $N$ by the formula:
\begin{equation*}
  \mathscr{E}(q(t)):=\int_{0}^{1}g(q'(t),q'(t))\d t;
\end{equation*}
see \cite[\S12]{milnor_morse_theory}. For a generic Riemannian metric, $\mathscr{E}$ is a Morse function except for the Morse-Bott submanifold $N$ of the constant paths, where the minimum of $\mathscr{E}$ is attained; see \S\ref{sec:gener-metr-morse}.

Given a local coefficient system $\mathscr{L}$ on the path space $\mathscr{P}$, define a relative Morse complex:
\begin{equation*}
  \mathrm{CM}(\mathscr{P},N;\mathscr{E};\mathscr{L})
\end{equation*}
as the direct sum of $\mathfrak{o}_{x} \otimes \mathscr{L}_{x}$ where $x$ ranges over the critical points of $\mathscr{E}$ with positive Riemannian energy and $\mathfrak{o}_{x}$ is the orientation line corresponding to the unstable (negative) eigenspace of the Hessian at $x$. Considering only chords with positive Riemannian energy excludes the Morse-Bott submanifold $N$, and the homology should be considered as a homology of $\mathscr{P}$ relative $N$.

The differential on $\mathrm{CM}(\mathscr{P},N;\mathscr{E};\mathscr{L})$ is the homological Morse differential twisted by the action of the monodromy of $\mathscr{L}$. There are various ways to endow $\mathrm{CM}$ with a differential using Morse theory. In this text we follow \cite[\S16]{milnor_morse_theory} and \cite{abouzaid_monograph} and use a finite dimensional approximation to $\mathscr{P}$. Alternatives are the $W^{1,2}$-gradient flow approach of \cite{abbondandolo_majer_morse_theory,abbondandolo_schwarz,abbondandolo_schwarz_smooth_pseudogradient,abbondandolo_portaluri_schwarz} and the heat-flow approach of \cite{salamon_weber_floer_homology_heat_flow,weber_morse_homology_heat_flow_1,weber_morse_homology_heat_flow_2,weber_morse_homology_heat_flow_3}.

In the finite dimensional approach, it is more convenient to work with the subcomplex $\mathrm{CM}_{\ell}$ given as the direct sum of $\mathfrak{o}_{x} \otimes \mathscr{L}_{x}$, where the length of $x$ is at most $\ell$. The recipe in \cite[\S16]{milnor_morse_theory} provides a simple way to endow $\mathrm{CM}_{\ell}$ with a pseudogradient in such a way that there are isomorphisms $H_{*}(\mathscr{E}\le \ell^{2},N;\mathscr{L})\to \mathrm{HM}_{\ell}$ which are natural with respect to $\ell$ and the choice of the pseudogradient. One defines $\mathrm{HM}(\mathscr{P},N;\mathscr{E},\mathscr{L})$ as a colimit of $\mathrm{HM}_{\ell}$ as $\ell$ increases. Since $H_{*}(\mathscr{P},N;\mathscr{L})$ is the colimit of the homologies $H_{*}(\mathscr{E}\le \ell^{2},N;\mathscr{L})$, one concludes an isomorphism:
\begin{equation}
  H_{*}(\mathscr{P},N;\mathscr{L})\simeq \mathrm{HM}(\mathscr{P},N;\mathscr{E},\mathscr{L}).
\end{equation}

The rest of this section explains the construction of the Morse homology with more details.

\subsubsection{Finite dimensional approximation}
\label{sec:finite-dimens-appr}
Our setup follows \cite[\S16]{milnor_morse_theory} closely.

Given $K\in \mathbb{N}$, let $\mathscr{P}_{K}\subset \mathscr{P}$ be the set of piecewise smooth paths $q$ so that, for $j=1,\dots,K$ and $t_{j}=j/K$,
\begin{enumerate}
\item $\mathrm{dist}_{g}(q(t_{j-1}),q(t_{j}))<\mathrm{injrad}(M,g)$, and,
\item the restriction to each interval $[t_{j-1},t_{j}]$ is the minimal geodesic parameterized with constant speed.
\end{enumerate}
The first condition implies that $\mathscr{P}_{K}$ is identified with an open subset of the product $N\times M\times \dots\times M\times N$ via the evaluation at $t_{0},\dots,t_{K}$, and hence naturally has a manifold structure. There is an obvious risk of failure of compactness; however, the following lemma ensures some sort of compactness holds if the Riemannian energy $\mathscr{E}$ is bounded and $K$ is sufficiently large.
\begin{lemma}
  Suppose that $q_{n}\in \mathscr{P}_{K}$ is a sequence satisfying $\mathscr{E}(q_{n})<K\rho^{2}$ for some $\rho<\mathrm{injrad}(M,g)$. Then $q_{n}$ has a convergent subsequence in $\mathscr{P}_{K}$.
\end{lemma}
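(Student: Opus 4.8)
The plan is to deduce the lemma from compactness of a finite product of closed manifolds, the only nontrivial point being to check that the limiting configuration of break points does not escape the open region cut out by the injectivity radius.

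First I would rewrite the energy of an element $q\in\mathscr{P}_K$ in terms of its break points. On the interval $[t_{j-1},t_j]$ the path $q$ is the constant-speed minimal geodesic between $q(t_{j-1})$ and $q(t_j)$; writing $\ell_j(q)=\mathrm{dist}_g(q(t_{j-1}),q(t_j))$, this segment has length $\ell_j(q)$ over a parameter interval of length $1/K$, hence speed $K\ell_j(q)$ and energy contribution $K\ell_j(q)^2$. Summing over $j$ gives
\begin{equation*}
  \mathscr{E}(q)=K\sum_{j=1}^{K}\ell_j(q)^2 .
\end{equation*}
In particular, the hypothesis $\mathscr{E}(q_n)<K\rho^2$ is equivalent to $\sum_{j=1}^{K}\ell_j(q_n)^2<\rho^2$, and hence $\ell_j(q_n)<\rho$ for every $j$ and every $n$.

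Next I would use that, by construction, the evaluation map $\mathrm{ev}_K\colon q\mapsto(q(t_0),\dots,q(t_K))$ identifies $\mathscr{P}_K$ homeomorphically with an open subset $U$ of the compact manifold $Q=N\times M\times\dots\times M\times N$, so that the topology of $\mathscr{P}_K$ is the one transported from $U$. The sequence $\mathrm{ev}_K(q_n)$ lies in the compact space $Q$, so after passing to a subsequence we may assume $\mathrm{ev}_K(q_n)\to x=(x_0,\dots,x_K)\in Q$. Continuity of the distance function $\mathrm{dist}_g\colon M\times M\to\R$ then yields $\mathrm{dist}_g(x_{j-1},x_j)=\lim_n\ell_j(q_n)\le\rho<\mathrm{injrad}(M,g)$ for each $j$, so $x$ in fact lies in $U$. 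It is therefore equal to $\mathrm{ev}_K(q_\infty)$ for a unique $q_\infty\in\mathscr{P}_K$ (the break points determine the piecewise minimal geodesic), and $\mathrm{ev}_K(q_n)\to\mathrm{ev}_K(q_\infty)$ says precisely that $q_n\to q_\infty$ in $\mathscr{P}_K$.

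The only delicate point — and the sole place where the strict inequality $\rho<\mathrm{injrad}(M,g)$, rather than $\rho\le\mathrm{injrad}(M,g)$, is used — is this last passage to the limit inside $U$: a priori the break-point tuples of the $q_n$ could accumulate on $\partial U$, where two consecutive break points lie at distance exactly $\mathrm{injrad}(M,g)$ and the minimal geodesic joining them is no longer unique, so that the limit would fail to be a legitimate element of $\mathscr{P}_K$. The uniform energy bound with $\rho$ strictly below the injectivity radius is exactly what rules this out. (That such a $\rho$ exists uses only the standard fact that the injectivity radius of a closed Riemannian manifold is positive.)
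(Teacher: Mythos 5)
Your proof is correct and follows essentially the same route as the paper: the key point in both is that the energy bound forces every segment length $\ell_j(q_n)$ to stay at most $\rho<\mathrm{injrad}(M,g)$, so the break-point tuples stay in a compact region of $N\times M\times\dots\times M\times N$ well inside the open set defining $\mathscr{P}_K$. The paper merely leaves the extraction of the convergent subsequence implicit after the distance estimate, whereas you spell it out.
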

\begin{proof}
  It is sufficient to prove that $\mathrm{dist}(q_{n}(t_{j-1}),q_{n}(t_{j}))\le \rho$ for all $j$. If this fails for some $j$, then the contribution to $\mathscr{E}(q_{n})$ due to $[t_{j-1},t_{j}]$ is $K\rho^{2}$, a contradiction. This completes the proof.
\end{proof}

Consequently, if $\ell^{2}$ is a regular value of the restriction $\mathscr{E}|_{\mathscr{P}_{K}}$, and $\ell^{2}<K\mathrm{injrad}(M,g)^{2}$, then the sublevel set:
\begin{equation*}
  \mathscr{P}^{\ell}_{K}:=\textstyle\set{\mathscr{E}\le \ell^{2}}\cap \mathscr{P}_{K}\hspace{1cm}\bd\mathscr{P}^{\ell}_{K}=\textstyle\set{\mathscr{E}=\ell^{2}}\cap \mathscr{P}_{K},
\end{equation*}
is a compact manifold with boundary. In this case we say $\mathscr{P}^{\ell}_{K}$ is \emph{admissible}.

If $\mathscr{P}^{\ell}_{K}$ is admissible, then all geodesic chords whose length is at most $\ell$ lie in $\mathscr{P}^{\ell}_{K}$, since each subdivided arc has length $\ell/K<\mathrm{injrad}(M,g)$, and hence satisfies (i), while (ii) is tautologically true.

Moreover, admissibility implies the dimension of the negative eigenspace of the Hessian of $\mathscr{E}|_{\mathscr{P}^{\ell}_{K}}$ at a geodesic chord $c(t)$ equals the full Morse index of $c(t)$. Otherwise one could find a variation which fixes $c(t_{j})$ for $j=0,\dots,K$ whose length decreases at second order. However, since $c([t_{j-1},t_{j}])$ is a minimal geodesic, such a variation cannot exist.

Finally note that there are natural \emph{subdivision embeddings} $\mathscr{P}^{\ell}_{K}\to \mathscr{P}^{\ell'}_{dK}$, for $d\in \mathbb{N}$, if $\ell\le \ell'$ and both $\ell,K$ and $\ell',dK$ are admissible. In the case $\ell=\ell'$, subdivision sends $\bd\mathscr{P}^{\ell}_{K}$ into $\bd\mathscr{P}^{\ell}_{dK}$. In the case $d=1$, the morphism is simply the codimension $0$ inclusion of the sublevel set $\mathscr{P}^{\ell}_{K}\to \mathscr{P}^{\ell'}_{K}$.

\subsubsection{Admissible pseudogradients}
\label{sec:admiss-pseud}

Let $\mathscr{P}^{\ell}_{K}$ be an admissible finite dimensional approximation. An \emph{admissible pseudogradient} is a smooth vector field $V$ which is \emph{gradient-like} for $-\mathscr{E}|_{\mathscr{P}^{\ell}_{K}}$, in the sense of \cite[\S3]{milnor_hcob}, and so that the moduli space of trajectories between any two critical points is cut out transversally. On the submanifold of constant paths $N\subset \mathscr{P}^{\ell}_{K}$ no condition is enforced on $V$ except that it is an attracting invariant set, which follows from the negative gradient-like condition.

\subsubsection{Morse differential twisted by local coefficients}
\label{sec:morse-differential-with-loc-coeffs}

To each admissible pair $(\mathscr{P}^{\ell}_{K},V)$ one puts a differential on the $\Z$-module $\mathrm{CM}_{\ell}$ by counting flow lines following, e.g., \cite{schwarz_book}, incorporating the monodromy of the local system $\mathscr{L}$ as in \cite[(7.1)]{oancea_twisted}. The resulting complex is denoted $\mathrm{CM}_{\ell}(V)$, and its homology is denoted $\mathrm{HM}_{\ell}(V)$.

To fix the conventions, we describe the differential with slightly more details.

Let $S(x)$ be a small unstable sphere around $x$, oriented as the positive boundary of an unstable disk around $x$. The pseudogradient is chosen so that, if $z\in S(x)$ converges to a critical point $y$, then $T_{z}S(x)$ is transverse to the tangent space of the stable manifold of $y$; see Figure \ref{fig:heart-shaped-sphere}.

In particular, if $\mathrm{ind}(x)=\mathrm{ind}(y)+1$, then $T_{z}S(x)$ and the unstable eigenspace of the Hessian at $y$ are both linear complements to tangent spaces of the stable manifold of $y$. This induces a morphism:
\begin{equation*}
  \bd_{z}:\mathfrak{o}_{x}\to \mathfrak{o}(T_{z}S(x))\to \mathfrak{o}_{y},
\end{equation*}
as a composition of two maps. The first map is defined by requiring that $T_{z}S(x)$ is oriented as the positive boundary of the unstable disk at $x$. The second map is defined by picking a frame $F_{1}$ of $T_{z}S(x)$ and a frame $F_{2}$ of the unstable space at $y$, adding to these frames any frame of the tangent space of stable manifold of $y$, and comparing resulting orientations of the full tangent space at $y$. If the orientations agree, say that $[F_{1}]\mapsto [F_{2}]$; otherwise say that $[F_{1}]\mapsto -[F_{2}]$; here we recall that a choice of frame $F$ for a finite dimensional vector space $E$ induces an element $[F]\in \mathfrak{o}(E)$.

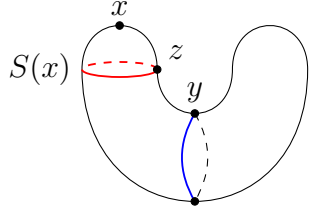
\begin{figure}[H]
  \centering
  \begin{tikzpicture}
    \path (0,0)coordinate(X)--(1,0)coordinate(Y) (2,0)coordinate(Z)--(3,0)coordinate(W);
    \draw (X)to[out=90,in=90,looseness=2]coordinate(E)(Y) (Z)to[out=90,in=90,looseness=2](W) (Y)to[out=-90,in=-90,looseness=2]coordinate(B)(Z) (X)to[out=-90,in=-90,looseness=2]coordinate(C)(W);
    \draw[blue,line width=.7pt](B)to[out=240,in=120](C);
    \draw[dashed](B)to[out=300,in=60](C);
    \draw[dashed,red,line width=0.7pt] (Y) arc (0:180:0.5 and 0.1);
    \draw[red,line width=0.7pt] (Y) arc (0:-180:0.5 and 0.1);
    \path[every node/.style={fill,inner sep=1pt,black,circle,draw}] (C)node{} (B)node{} (E)node{} (Y)node{};
    \path (Y)node[above right]{$z$} (E)node[above]{$x$} (B) node[above]{$y$};
    \node at (X) [left]{$S(x)$};
  \end{tikzpicture}
  \caption{The unstable sphere $S(x)$ contains the point $z$ which lies on the flow line joining $x$ to $y$.}
  \label{fig:heart-shaped-sphere}
\end{figure}

In general, every flow line starting at $x$ and ending at an index difference $1$ critical point $y$ intersects $S(x)$ in a unique point $z$; let us denote by $\mathscr{M}_{1}(x)$ the space of such $z$, and let $y(z)$ be the asymptotic critical point.

The flow line from $x$ to $y$ passing through $z$ induces a monodromy from $\mathscr{L}_{x}$ to $\mathscr{L}_{y}$; let us call this monodromy $\mathscr{L}(z)$.

Define the Morse differential by the formula:
\begin{equation*}
  \bd=\sum_{x}\sum_{z\in \mathscr{M}_{1}(x)} \bd_{z}\otimes \mathscr{L}(z):\sum_{x}\mathfrak{o}_{x}\otimes \mathscr{L}_{x}\to \sum_{y}\mathfrak{o}_{y}\otimes \mathscr{L}_{y};
\end{equation*}
Well-known arguments imply that $\bd^{2}=0$. The local coefficients do not complicate the proof that $\bd^{2}=0$, and the technical aspect is analyzing the maps on orientation lines.

\subsubsection{Continuation lines}
\label{sec:continuation-lines}

If $\ell\le \ell'$ and $(\mathscr{P}^{\ell}_{K},V)$ and $(\mathscr{P}^{\ell'}_{dK},V')$ are admissible, then there is a canonical morphism $\mathrm{HM}_{\ell}(V)\to \mathrm{HM}_{\ell'}(V')$ defined by counting continuation lines with respect to the subdivision embeddings from \S\ref{sec:finite-dimens-appr}; see \cite[\S11.3.2]{abouzaid_monograph} for similar discussion.

To define the morphism, choose a time-dependent vector field $\delta_{s}$ on $(\mathscr{P}^{\ell'}_{dK},V')$, so that (i) $\delta_{s}=0$ for $s\le 0$ and $s\ge 1$, (ii) $\delta$ vanishes on a neighborhood of the critical points, and, (iii) $V'+\delta_{s}$ is negative gradient-like for each $s$. One thinks of $\delta_{s}$ as a small perturbation term.

One then considers the moduli space $\mathscr{M}(\delta_{s})$ of flow lines $\R\to \mathscr{P}^{\ell'}_{dK}$ for the piecewise smooth $s$-dependent vector field illustrated in Figure \ref{fig:continuation-lines}, requiring that the flow lines lie in $\mathscr{P}^{\ell}_{K}$ for $s\le 0$.

Given an input critical point $x$, one counts flow lines whose output critical point $y$ has the same Morse index as $x$. Notice that, for any continuation line $\xi(s)$ whose input is $x$, the evaluation $\xi(0)$ lies on the unstable manifold of $x$. Let $\Sigma(x;\delta)$ be the set of points $\xi(1)$, where $\xi:(-\infty,1]$ solves the ODE shown in Figure \ref{fig:continuation-lines}. One thinks of $\Sigma(x;\delta)$ as a perturbation of the unstable manifold of $x$.

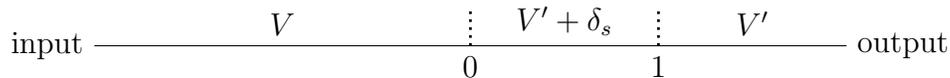
\begin{figure}[H]
  \centering
  \begin{tikzpicture}[xscale=2.5]
    \draw (-2,0)node [left] {input}--(2,0)node [right] {output};
    \draw[line width=1pt,dotted] (0,0)--+(0,0.5) (1,0)--+(0,0.5);
    \path (0,0)node[below]{$0$}--(1,0)node[below]{$1$};
    \path (-1,0)node[above]{$V$}--(0.5,0)node[above]{$V'+\delta_{s}$}--(1.5,0)node[above]{$V'$};
  \end{tikzpicture}
  \caption{Continuation line $\xi(s)$ from $(\mathscr{P}^{\ell}_{K},V)$ to $(\mathscr{P}^{\ell'}_{dK},V')$.}
  \label{fig:continuation-lines}
\end{figure}

The map can therefore be defined by considering half-infinite flow lines: $$\xi:[1,\infty)\to \mathscr{P}^{\ell'}_{dK}$$ starting at $\Sigma(1;\delta)$, for the pseudogradient $V'$. The perturbation term should be chosen generically so that $\Sigma(1;\delta)$ is transverse to the stable manifolds of $V'$.

By counting dimensions, we conclude that the asymptotics of a continuation line satisfy $\mathrm{ind}(x)\geq\mathrm{ind}(y)$. Consequently, we obtain a compactness result for the flow lines with $\mathrm{ind}(x)=\mathrm{ind}(y)$.

Moreover, when $\mathrm{ind}(x)=\mathrm{ind}(y)$, the dimension of $\Sigma(1;\delta)$ equals the codimension of the stable manifold of $y$ determined by $V'$. The same scheme as in \S\ref{sec:morse-differential-with-loc-coeffs} defines a map on orientation lines $\mathfrak{c}_{\xi}:\mathfrak{o}_{x}\to \mathfrak{o}_{y}$. The monodromy along the continuation line $\xi$ induces a map $\mathscr{L}(\xi)$ on local coefficients, and the continuation map is the sum of $\mathfrak{c}_{\xi}\otimes \mathscr{L}(\xi)$ as $\xi$ ranges over all continuation lines with $\mathrm{ind}(x)=\mathrm{ind}(y)$.

Well-known arguments in Morse theory show that $\mathfrak{c}$ is a chain map with respect to the Morse differentials defined by $V$ and $V'$; essentially the idea is to consider the moduli space of continuation lines with $\mathrm{ind}(x)=\mathrm{ind}(y)+1$; see, e.g., \cite{schwarz_book,audin-damian,abouzaid_monograph} for similar arguments.

\subsubsection{Definition of the full Morse homology}
\label{sec:defin-full-morse}

Consider the small category whose objects are pairs $(\mathscr{P}^{\ell}_{K},V)$ of admissible finite-dimensional approximations and pseudogradients, so that there is a morphism between objects $(\mathscr{P}^{\ell}_{K},V)$ and $(\mathscr{P}^{\ell'}_{K'},V')$ if and only if $\ell\le \ell'$ and $K'=dK$ for some $d\in \mathbb{N}$.

Then $(P^{\ell}_{K},V)\mapsto \mathrm{HM}_{\ell}(V)$ is a functor to the category of $\Z$-modules, where the morphisms are given by counting continuation lines for any admissible perturbation $\delta_{s}$.

Define the full Morse homology $\mathrm{HM}(\mathscr{P},N;\mathscr{E};\mathscr{L})$ to be the colimit of this functor. It is also interesting to consider the colimit of the restriction to the subcategory spanned by all objects with fixed length parameter $\ell$; the result is called $\mathrm{HM}_{\ell}(\mathscr{P},N;\mathscr{E};\mathscr{L})$.

A straightforward ``diagonal argument'' proves that the natural map: $$\mathrm{HM}_{\ell}(V)\to \mathrm{HM}_{\ell}(\mathscr{P},N;\mathscr{E};\mathscr{L})$$ is an isomorphism for every $V$; indeed, any continuation map between complexes with the same length parameter $\ell$ is an isomorphism since it equals $1+T$ where $T$ strictly decreases length (the inverse is $1-T+T^{2}-T^{3}+\dots$, which terminates in finitely many steps). One should think of the continuation map as being upper triangular with $1$s on the diagonal with respect to a basis ordered by length. A similar diagonal argument is employed in \S\ref{sec:diagonal-argument-theta} in the proof of Theorem \ref{theorem:main-iso}.

\subsubsection{Comparison with relative singular homology}
\label{sec:comp-with-sing}

We explain how to relate singular homology to Morse homology following a similar approach to \cite[\S4.2]{schwarz_pseudocycles}.

A simplex $u:\Delta\to \mathscr{P}^{\ell}_{K}$ is said to be in \emph{general position} with respect to a pseudogradient $V$ provided it is smooth and all faces are transverse to the stable manifolds determined by $V$.

Let $C(\mathscr{P}^{\ell}_{K},N;V;\mathscr{L})\subset C(\mathscr{P}^{\ell}_{K},N;\mathscr{L})$ be the subcomplex of the relative singular homology complex spanned by simplices which are in general position to $V$; here we recall that:
\begin{equation*}  C(\mathscr{P}^{k}_{\ell},N;\mathscr{L}):=C'\otimes_{\Z[\pi_{1}(\mathscr{P}^{\ell}_{K},\mathrm{pt})]}\mathscr{L}_{\mathrm{pt}},
\end{equation*}
where $C'$ is spanned by relative singular simplices in $(\mathscr{P}^{\ell}_{K},N)$ with a choice of lift to the universal cover of $\mathscr{P}^{\ell}_{K}$. The right module structure on $C'$ is given by deck transformations: if $(x_{1},[x_{t}])$ represents an element of the universal cover, so that $x_{t}$ is a path with $x_{0}=\mathrm{pt}$, and $g\in \pi_{1}(\mathscr{P},\mathrm{pt})$, then $(x_{1},[x_{t}])g=(x_{1},[(x\#g)_{t}])$ defines a right action on $\cl P$ (which preserves $N'$), and hence a right $\Z[\pi_{1}(\mathscr{P},\mathrm{pt})]$-module structure on the singular chain complex $C'$. See \S\ref{sec:singular-chains-cover} for further discussion.

In general, for any countable union $S$ of maps of manifolds to $\mathscr{P}^{\ell}_{K}$, one can consider the subcomplex spanned by simplices in general position to $S$. It is well-known that the inclusion of this subcomplex into the full singular complex is a quasi-isomorphism; one uses \cite[Theorem 18.7]{lee-smooth} to reduce to the case of smooth simplices and then applies standard transversality arguments.

There is a natural map $C(\mathscr{P}^{\ell}_{K},N;V;\mathscr{L})\to \mathrm{CM}_{\ell}(V)$ given by counting rigid trajectories twisted by the monodromy of $\mathscr{L}$, as follows: for each $u:\Delta\to \mathscr{P}^{\ell}_{K}$ in general position, and with a choice of lift to the universal cover, one obtains a trivialization:
\begin{equation*}
  \mathscr{L}_{\mathrm{pt}}\to u^{*}\mathscr{L},
\end{equation*}
given by the monodromy of $\mathscr{L}$ to the chosen basepoint, using the lifts to the universal cover to select a canonical homotopy class of paths.

Let $\mathscr{M}(u)$ be the moduli space of pairs of $(x,q)$ where the a trajectory of $V$ starting at $x$ is asymptotic to the critical point $q$ of $\mathscr{E}$ of positive length, and let $\mathscr{M}_{k}(u)$ be the piece where $\dim(u)=\mathrm{index}(q)+k$. Standard arguments, e.g., those in \cite{milnor_hcob,milnor_morse_theory}, imply that $\mathscr{M}_{0}(u)$ is a finite set whose projection to $\Delta$ lies strictly in the interior, and $\mathscr{M}_{1}(u)$ is a compact 1-manifold whose boundary is $\mathscr{M}_{0}(\bd u)$. Here it is crucial that $u$ is in general position to $V$.

The orientation of the simplex $u$ determines, at each point $(x,q)\in \mathscr{M}_{0}(u)$, an orientation $\mathfrak{e}_{(x,q)}$ for the unstable manifold of $q$, and a homotopy class of paths joining $\mathscr{L}_{u(x)}$ to $\mathscr{L}_{q}$. By preconcatenating with the homotopy class of paths from $\mathrm{pt}$ to $u(x)$, determined by the lifts to the universal cover, we obtain a map $\mu_{(x,q)}:\mathscr{L}_{\mathrm{pt}}\to \mathscr{L}_{q}$.

For $y\in \mathscr{L}_{\mathrm{pt}}$, define:
\begin{equation*}
  \textstyle  C(\mathscr{P}^{\ell}_{K},N;V;\mathscr{L})\ni u\otimes y \mapsto \mathrm{F}(u\otimes y)=\sum_{\mathscr{M}_{0}}\mathfrak{e}_{(x,q)}\otimes \mu_{(x,q)}(y)\in \mathrm{CM}_{\ell}.
\end{equation*}
Note that, if $g\in \pi_{1}(\mathscr{P}^{\ell}_{K},\mathrm{pt})$, then:
\begin{equation*}
  \mathrm{F}(ug\otimes y)=\mathrm{F}(u\otimes \mathscr{L}_{g}y),
\end{equation*}
where $ug$ is the same exact simplex, except we replace the lifts to the universal cover by their pre-concatenations with $g$. Moreover, it is clear that $F(u\otimes y)=0$ whenever $u$ lies in $N$. Therefore $\mathrm{F}$ is well-defined. Moreover:

\begin{lemma}
  The map $\mathrm{F}$ is a chain map and a quasi-isomorphism.
\end{lemma}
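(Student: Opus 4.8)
The plan is to prove that $\mathrm{F}$ is a chain map first, and then use the filtration by length together with the continuation maps to upgrade this to a quasi-isomorphism. The chain-map property is the more computational step, and the quasi-isomorphism is the one requiring more structural care; but both follow the template of \cite{schwarz_pseudocycles}, adapted to carry along local coefficients.

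\emph{Chain map.} The identity $\mathrm{F}\circ \bd_{\mathrm{sing}} = \bd_{\mathrm{Morse}}\circ \mathrm{F}$ is proved by the usual compactness/gluing analysis of the once-broken moduli space $\mathscr{M}_1(u)$. Concretely, for a simplex $u:\Delta\to\mathscr{P}^\ell_K$ in general position to $V$, the 1-manifold $\mathscr{M}_1(u)$ is compact with boundary $\mathscr{M}_0(\bd u)\sqcup (\text{broken trajectories})$, where a broken configuration consists of a rigid trajectory of $u$ to an intermediate critical point $q'$ followed by a rigid Morse trajectory $q'\to q$ with $\mathrm{ind}(q)=\mathrm{ind}(q')-1$. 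Counting boundary points with signs gives the chain-map identity on $\mathfrak{o}$-lines exactly as in the untwisted case; the only addition is that each broken configuration carries a monodromy that is visibly the composition of the monodromy from $\mathrm{pt}$ along $u$ to $q'$ with the monodromy of the connecting Morse trajectory $\mathscr{L}(z)$, which is precisely the matrix coefficient of $\bd_{\mathrm{Morse}}\circ\mathrm{F}$. Because $\mathscr{L}$ is a functor on the fundamental groupoid, concatenation of homotopy classes of paths corresponds to composition of monodromies, so the twisted signs bookkeep automatically. One must also check that $\mathrm{F}$ descends to the tensor product over $\Z[\pi_1]$, but this is the computation $\mathrm{F}(ug\otimes y)=\mathrm{F}(u\otimes\mathscr{L}_g y)$ already recorded above, together with the vanishing on simplices in $N$.

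\emph{Quasi-isomorphism.} Here I would exploit the length filtration. Both $C(\mathscr{P}^\ell_K,N;V;\mathscr{L})$ and $\mathrm{CM}_\ell(V)$ compute, respectively, $H_*(\mathscr{P}^\ell_K,N;\mathscr{L})$ (the source by the transversality-genericity quasi-isomorphism cited just above, which holds with local coefficients since it does after pulling back to the universal cover) and $\mathrm{HM}_\ell(V)$. So it suffices to show $\mathrm{F}$ induces the canonical comparison isomorphism $H_*(\mathscr{P}^\ell_K,N;\mathscr{L})\xrightarrow{\sim}\mathrm{HM}_\ell(V)$ built from the CW/handle structure of $\mathscr{P}^\ell_K$ adapted to $-\mathscr{E}|_{\mathscr{P}^\ell_K}$; equivalently, to show $\mathrm{F}$ is compatible with the sublevel-set filtration of $\mathscr{P}^\ell_K$ by $\mathscr{E}$ and induces an isomorphism on each successive quotient, where the pair $(\{\mathscr{E}\le c+\varepsilon\},\{\mathscr{E}\le c-\varepsilon\})$ is homotopy equivalent to a wedge of spheres indexed by the critical points of index computed by the Hessian (using admissibility, so the finite-dimensional index equals the full Morse index). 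On each such elementary cobordism the statement is the classical local computation: the fundamental class of a small unstable disk of $q$, viewed as a relative cycle, maps under $\mathrm{F}$ to the generator $\mathfrak{o}_q\otimes(\text{monodromy image of }y)$, because a generic such disk meets exactly one trajectory landing at $q$ transversally. Passing to the colimit over $\ell$ and over refinements $K\mapsto dK$ gives the full statement, using that the subdivision embeddings are compatible with $\mathrm{F}$ up to chain homotopy, which again is a continuation-line argument.

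\emph{Main obstacle.} I expect the genuine difficulty to be entirely in the orientation bookkeeping: verifying that the sign conventions in the definition of $\bd_z$ (positive boundary of the unstable disk, then comparison of frames at $y$) are consistent with the sign with which a boundary point of $\mathscr{M}_1(u)$ is counted, so that the chain-map identity holds on the nose over $\Z$ rather than merely over $\F_2$. This is the reason the paper isolates an appendix on coherent orientations; in the write-up I would reduce every sign to a statement about orientations of the moduli spaces $\mathscr{M}_0,\mathscr{M}_1$ and invoke that appendix, rather than re-deriving the conventions. The local coefficients, by contrast, genuinely do not complicate anything, since monodromy enters only as a functorial decoration that splits off from the orientation analysis.
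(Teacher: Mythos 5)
Your chain-map argument is the same as the paper's (the paper simply records that it ``follows by considering $\mathscr{M}_{1}(u)$ in relation to $\mathscr{M}_{0}(\bd u)$'' and leaves the details to the reader), but your proof of the quasi-isomorphism takes a genuinely different route. The paper argues directly on singular cycles: given a cycle $\Sigma$ with $\mathrm{F}(\Sigma)$ exact, it first adds the unstable spheres of the index $m+1$ critical points to kill $\mathrm{F}(\Sigma)$ on the chain level, then cancels the remaining pairs of rigid trajectories by attaching $1$-handles that ``cross'' the index $m$ critical points, and finally flows the modified cycle into a neighborhood of the unstable manifolds of index $\le m-1$, whose relative CW structure forces the class to vanish; surjectivity is proved by starting from the unstable disks of the given Morse class, correcting their boundaries by the same surgery, and invoking the long exact sequence of a triple. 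You instead filter both complexes by the value of $\mathscr{E}$, observe that $\mathrm{F}$ is filtration-preserving because the pseudogradient decreases $\mathscr{E}$, and reduce to the classical local computation that a small unstable disk of $q$, as a relative cycle for the pair of sublevel sets across the critical level, hits exactly one (constant) trajectory to $q$; a finite-filtration comparison then gives the quasi-isomorphism. Your route buys standard algebraic machinery in place of the paper's surgery-and-subdivision manipulations, at the cost of two extra checks you should make explicit: that the general-position subcomplex computes relative homology sublevel-by-sublevel (the same transversality argument as in \S\ref{sec:comp-with-sing}, applied to each pair), and that small unstable disks of a critical point at level $c$ miss the stable manifolds of the other critical points at the same level (true, since a broken trajectory between two critical points at the same value of $\mathscr{E}$ would have to be constant), so that the map on the associated graded really is diagonal. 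The paper's route avoids setting up the filtration comparison but pays for it with the handle-attachment and subdivision bookkeeping. One small correction to your closing remark: the appendix on coherent orientations concerns Floer-theoretic determinant lines and is not what fixes the signs here; the finite-dimensional Morse sign conventions are those of \S\ref{sec:morse-differential-with-loc-coeffs}, and the sign check in your local computation is elementary orientation bookkeeping against that convention rather than something to be outsourced to Appendix \ref{sec:coher-orient}.
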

\begin{proof}
  That $\mathrm{F}$ is a chain map follows by considering $\mathscr{M}_{1}(u)$ in relation to $\mathscr{M}_{0}(\bd u)$; the details are left to the reader.

  Next we prove $\mathrm{F}$ is injective on homology. Let $\Sigma^{m}$ be a cycle and suppose $\mathrm{F}(\Sigma)$ vanishes in homology. We will modify $\Sigma$ by standard $1$-handle attaching cobordisms, where the attaching $0$-spheres are contained in the interiors of the $m$-dimensional faces. The resulting object $\Sigma'$ will not exactly be a singular cycle anymore, however, one subdivides the result to make it a singular cycle.

  The relevance of this discussion is the following: by attaching $1$-handles, we claim that one can replace $\Sigma$ by a cycle $\Sigma'$ so that there are no flow lines starting on $\Sigma'$ and ending at an index $m$ critical point. Indeed, since $\mathrm{F}(\Sigma)$ vanishes in homology, we have an equation of the form:
  \begin{equation*}
    \mathrm{F}(\Sigma)+d\sum b_{i}=0\implies \mathrm{F}(\Sigma\cup S_{1}\cup \dots \cup S_{k})=0,
  \end{equation*}
  where $S_{i}$ is the unstable sphere around the index $m+1$ critical point $b_{i}$ (decorated with some local coefficient). We replace $\Sigma$ by the homologous sum $\Sigma\cup S_{1}\cup \dots \cup S_{k}$.

  Since $\mathrm{F}(\Sigma)$ vanishes on chain level, the elements in the moduli space of trajectories cancel in pairs. If two trajectories $(x_{0},q)$ and $(x_{1},q)$ cancel then can first flow neighborhoods of $x_{0}$ and $x_{1}$ forward in time so they become close to $q$, then do a standard $1$-handle attachment. The resulting cycle is now ``below'' $q$; see Figure \ref{fig:cross-1-handle}.

  \begin{figure}[H]
    \centering
    \begin{tikzpicture}
      \begin{scope}
        \clip (-2,-1.35)rectangle(2,1.35);
        \draw[decorate,decoration={
          markings,
          mark=between positions 0.045 and 1 step 0.0625 with {\arrow{>};},
        }] (0,0)--(2,0) (0,0)--(-2,0) (0,2)--(0,0) (0,-2)--(0,0);
        \draw[name path={P1}] (0,-2)--(0,2) (2,0)--(-2,0);
        \draw[name path={P2},line width=1pt] (2,1)to[out=200,in=-20](-2,1) (2,-1)to[out=150,in=30](-2,-1);
        \path[name intersections={of=P1 and P2}];
        \path (intersection-1) node[circle,fill,inner sep=1pt]{}node[above right]{$x_{1}$} --(intersection-2)node[circle,fill,inner sep=1pt]{}node[below right]{$x_{0}$};
      \end{scope}

      \draw[dashed,line width=1pt] (2,1)to[out=200,in=150,looseness=2](2,-1) (-2,1)to[out=-20,in=30,looseness=2](-2,-1);
      \node[circle,fill,inner sep=1pt] (q)at (0,0){};
      \node at (q) [above right] {$q$};
    \end{tikzpicture}
    \caption{Attaching a 1-handle to cross the critical point $q$. The solid line is pre-surgery, the dashed line is post-surgery.}
    \label{fig:cross-1-handle}
  \end{figure}
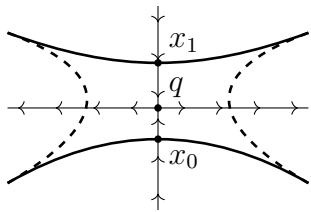

  After this surgery operation (and subdivision), the result $\Sigma'$ represents a homologous element in the chain complex. Importantly, $\Sigma'$ can be flowed into a neighborhood of the unstable manifolds of critical points of index $\le m-1$. Standard techniques in Morse theory, e.g., \cite{milnor_morse_theory}, imply that such a neighborhood has the homotopy type of a CW complex relative $N$ with only cells of index $\le m-1$. In particular, $\Sigma'$, and hence $\Sigma$, vanishes in the relative singular homology group. Thus $\mathrm{F}$ is injective.

  To see that $\mathrm{F}$ is surjective, one can argue similarly. Clearly for any Morse homology class $\mathfrak{q}=\sum q_{i}\otimes \ell_{i}$, the sum of unstable disks $\Sigma=D_{1}\cup \dots \cup D_{k}$, with appropriate cappings and local coefficients, satisfies $\mathrm{F}(\Sigma)=\mathfrak{q}$. This class $\Sigma$ is not a cycle, however, we can correct it to a cycle, as follows. Since $\mathrm{F}$ is a chain map and $\mathfrak{q}$ is closed, $\mathrm{F}(\bd D_{1}\cup \dots \cup \bd D_{k})$ vanishes on chain level. Thus, by the argument used for injectivity, we can attach 1-handles to the unstable spheres $S_{i}=\bd D_{i}$ so that the resulting cycle can be flowed into a neighborhood of the index $\le m-2$ unstable manifolds. Attaching $1$-handles to the unstable spheres is equivalent to doing a boundary connect sum to the disks; once again we appeal to some subdivision algorithm for the result of a boundary connect sum.

  Thus we can replace $\Sigma$ by a chain $\Sigma'$ whose boundary lies in a neighborhood $U$ of the index $\le m-2$ unstable manifolds (union $N$), without changing $\mathrm{F}(\Sigma')=\mathfrak{q}$. Consider $\Sigma'$ as an element of $H_{m}(\mathscr{P}^{\ell}_{K},U;\mathscr{L})$.

  The natural map $H_{m}(\mathscr{P}^{\ell}_{K},N;\mathscr{L})\to H_{m}(\mathscr{P}^{\ell}_{K},U;\mathscr{L})$ is surjective by long exact sequence for a triple since $H_{m-1}(U,N;\mathscr{L})=0$. Thus we can further replace $\Sigma'$ with $\Sigma''$ so that $\mathrm{F}(\Sigma'')=\mathfrak{q}$ and $\Sigma''$ has its boundary entirely in $N$. Thus $\mathrm{F}$ is surjective, as desired.
\end{proof}

\subsubsection{Compatibility with continuation lines}
\label{sec:comp-with-cont-1}

Let $(\mathscr{P}^{\ell}_{K},V)$ and $(\mathscr{P}^{\ell^{\prime}}_{K'},V')$ be choices of admissible data, with $K'=dK$ and $\ell'\ge \ell$, and let $\delta(s)$ be a choice of perturbation defining a continuation map $\mathfrak{c}$, as in \S\ref{sec:continuation-lines}. Abbreviate $\mathscr{P}_{0}=\mathscr{P}^{\ell}_{K}$ and $\mathscr{P}_{1}=\mathscr{P}^{\ell'}_{K'}$.

Consider the parametric space $\Pi(\delta)$ of tuples $(s,x,q)$ where the flow line of the piecewise smooth vector field in Figure \ref{fig:continuation-lines} starting at $x\in\mathscr{P}_{0}$ at time $s$ converges to a critical point $q$.

Note that for $s_{0}\ge 1$, the slice $\Pi(\delta)\cap \set{s=s_{0}}$ is simply the intersection of $\mathscr{P}_{0}$ with the unstable manifolds of $V'$. By picking $V'$ generically, we can ensure that this slice is cut out transversally.

By picking $\delta$ generically, we require that $\Pi(\delta)$ is cut out transversally, as a parametric moduli space. Then we let $C_{0}$ be the subcomplex of $C(\mathscr{P}_{0},N;\mathscr{L})$ consisting of all simplices which are in general position to the evaluation $(s,x,q)\in \Pi(\delta)\mapsto x\in \mathscr{P}_{0}$, and also in general position to $V$, as in \S\ref{sec:comp-with-sing}. Similarly let $C_{1}$ be the subcomplex of $C(\mathscr{P}_{1},N;\mathscr{L})$ consisting of all simplices in general position to $V'$.

The following square is commutative up to chain homotopy:
\begin{equation}\label{eq:commutative_continuation}
  \begin{tikzcd}
    {C_{0}}\arrow[d,"{i}"]\arrow[r,"{\mathrm{F}}"] &{\mathrm{CM}_{\ell}(V)}\arrow[d,"{\mathfrak{c}}"]\\
    {C_{1}}\arrow[r,"{\mathrm{F}'}"] &{\mathrm{CM}_{\ell'}(V')}.
  \end{tikzcd}
\end{equation}
Indeed, for a given simplex $u$, consider the parametric space $\mathscr{M}_{0}$ of tuples $(s,u(\sigma),q)$ where the Morse index of $q$ equals the dimension of $u$ plus $1$. This is a finite set, and counting the elements appropriately determines a map $\kappa:C\to \mathrm{CM}_{\ell}(V')$. Then:
\begin{equation*}
  \mathrm{F}'\circ i-\mathfrak{c}\circ \mathrm{F}=d \kappa+\kappa \bd;
\end{equation*}
To see why, one considers the $1$-dimensional parametric space $\mathscr{M}_{1}$ of tuples $(s,u(\sigma),q)$ where the Morse index of $q$ equals the dimension of $u$, and observes that the counts of $\mathscr{M}_{1}\cap \set{s=1}$ and $\mathscr{M}_{1}\cap \set{s=s_{0}}$, for $s_{0}\ll 0$, give $\mathrm{F}'\circ i$ and $\mathfrak{c}\circ F$, respectively. The counts of the two slices differ by the (i) boundary points $\mathscr{M}_{1}\cap (\R\times \bd \Delta\times \mathrm{Crit}(\mathscr{E}))$, corresponding to $\kappa\bd(u)$, and (ii) the failures of properness due to breaking off of flow lines, corresponding to $d\kappa(u)$. The desired result follows.

\subsubsection{Induced map on the colimit}
\label{sec:induced-map-colimit}

The results in \S\ref{sec:comp-with-sing} produce isomorphisms:
\begin{equation*}
  H_{*}(\mathscr{P}^{\ell}_{K},N;\mathscr{L})\to \mathrm{H}(C(\mathscr{P}^{\ell}_{K},N;V;\mathscr{L}))\to \mathrm{HM}_{\ell}(V)\to \mathrm{HM}_{\ell}(\mathscr{P},N;\mathscr{E};\mathscr{L}),
\end{equation*}
the first map is the homotopy inverse of the inclusion, and the final map is the isomorphism to the colimit. Because of \S\ref{sec:comp-with-cont-1}, this map does not depend on the choice of $V$.

In \cite[\S16]{milnor_morse_theory} it is proved that the inclusion $\mathscr{P}^{\ell}_{K}\to \set{\mathscr{E}\le \ell^{2}}$ is a homotopy equivalence, provided $\mathscr{P}^{\ell}_{K}$ is admissible. In particular, we obtain isomorphisms:
\begin{equation*}
  H_{*}(\set{\mathscr{E}\le \textstyle \ell^{2}},N;\mathscr{L})\to \mathrm{HM}_{\ell}(\mathscr{P},N;\mathscr{E};\mathscr{L}).
\end{equation*}
These isomorphisms are compatible with increasing $\ell$. Since the singular homology of a monotone union is colimit of the singular homologies, we obtain the desired isomorphism between the colimits $H_{*}(\mathscr{P},N;\mathscr{L})\to \mathrm{HM}(\mathscr{P},N;\mathscr{E};\mathscr{L})$.

\subsection{On the choice of local system and the twisted Hurewicz theorem}
\label{sec:choice-local-system}

In this section we describe the choice of local system which guarantees that $H_{*}(\mathscr{P},N;\mathscr{L})$ is non-zero. Throughout, fix a base-point $\mathrm{pt}\in N\subset \mathscr{P}$.

\subsubsection{Free local systems}
\label{sec:free-loc-system}

A local system $\mathscr{L}$ of coefficients on a path-connected and locally-simply-connected space $\mathscr{P}$ is called \emph{free} provided $\mathscr{L}_{\mathrm{pt}}$ is isomorphic to a direct sum $\bigoplus_{S}\Z[\pi_{1}(\mathscr{P},\mathrm{pt})]$ (as a left $\Z[\pi_{1}(\mathscr{P},\mathrm{pt})]$-module) over some indexing set $S$.

\subsubsection{Relative Hurewicz theorem for Morse homology with local coefficients}
\label{sec:relat-hurewicz-theorem}

The main result in this section is the following version of the relative Hurewicz theorem with local coefficients; see \cite[Proposition 2.1]{rong}.

\begin{prop}\label{prop:hurewicz_hard}
  For a free local system $\mathscr{L}$, there is a group isomorphism:
  \begin{equation}\label{eq:fancy_hhh}
    \pi_{k}(\mathscr{P},N,\mathrm{pt})\otimes_{\Z[\pi_{1}(N,\mathrm{pt})]} \mathscr{L}_{\mathrm{pt}}\to H_{k}(\mathscr{P},N;\mathscr{L}),
  \end{equation}
  if $k>1$ is the first integer for which $\pi_{k}(\mathscr{P},N,\mathrm{pt})\ne 0$ but $\pi_{j}(\mathscr{P},N,\mathrm{pt})=0$ for $j<k$.
\end{prop}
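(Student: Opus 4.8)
The plan is to reduce the statement to the classical relative Hurewicz theorem with local coefficients by passing to an appropriate covering space, exploiting the freeness hypothesis on $\mathscr{L}$. Since we have already established (via the isomorphism $H_{*}(\mathscr{P},N;\mathscr{L})\simeq \mathrm{HM}(\mathscr{P},N;\mathscr{E};\mathscr{L})$ and the singular-theoretic definition of homology with local coefficients recalled in \S\ref{sec:comp-with-sing}) that $H_{k}(\mathscr{P},N;\mathscr{L})$ is computed by the complex $C'\otimes_{\Z[\pi_{1}(\mathscr{P},\mathrm{pt})]}\mathscr{L}_{\mathrm{pt}}$, where $C'$ is the singular chain complex of the pair $(\widetilde{\mathscr{P}},\widetilde{N})$ of universal covers, I would first treat the case $\mathscr{L}_{\mathrm{pt}}=\Z[\pi_{1}(\mathscr{P},\mathrm{pt})]$. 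In that case the tensor product collapses and $H_{k}(\mathscr{P},N;\mathscr{L})\cong H_{k}(\widetilde{\mathscr{P}},\widetilde{N};\Z)$. The general free case $\mathscr{L}_{\mathrm{pt}}=\bigoplus_{S}\Z[\pi_{1}(\mathscr{P},\mathrm{pt})]$ then follows because both sides of \eqref{eq:fancy_hhh} commute with direct sums in the $S$-variable: the right side because homology with local coefficients and the tensor product over the group ring commute with arbitrary direct sums of the coefficient module, and the left side because $\pi_{k}(\mathscr{P},N,\mathrm{pt})\otimes_{\Z[\pi_{1}(N,\mathrm{pt})]}(-)$ is visibly additive. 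So I would set up the argument so that the $S=\mathrm{pt}$ case does all the work.

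Next I would identify the left-hand side of \eqref{eq:fancy_hhh}, when $\mathscr{L}_{\mathrm{pt}}=\Z[\pi_{1}(\mathscr{P},\mathrm{pt})]$, with $\pi_{k}(\widetilde{\mathscr{P}},\widetilde{N})$. Here the relevant point is the behavior of $\widetilde{N}$: the covering $\widetilde{\mathscr{P}}\to\mathscr{P}$ restricts over $N$ to the cover of $N$ classified by $\ker\big(\pi_{1}(N,\mathrm{pt})\to\pi_{1}(\mathscr{P},\mathrm{pt})\big)$, so $\pi_{0}(\widetilde{N})=\pi_{1}(\mathscr{P},\mathrm{pt})/i_{*}\pi_{1}(N,\mathrm{pt})$ as a $\pi_{1}(\mathscr{P},\mathrm{pt})$-set. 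Using the long exact sequence of the pair $(\widetilde{\mathscr{P}},\widetilde{N})$ together with $\pi_{1}(\widetilde{\mathscr{P}})=0$, and the hypothesis that $k$ is the first degree with $\pi_{k}(\mathscr{P},N)\ne 0$ — which forces $i$ to be an isomorphism on $\pi_{1}$ and hence $\widetilde{N}$ to be connected and simply connected — one gets $\pi_{j}(\widetilde{\mathscr{P}},\widetilde{N})=0$ for $j<k$ and $\pi_{k}(\widetilde{\mathscr{P}},\widetilde{N})\cong\pi_{k}(\mathscr{P},N,\mathrm{pt})$, the latter isomorphism because $\pi_{k}$ of a pair is unchanged under passing to a covering that is an isomorphism on $\pi_{1}$ of the subspace and of the total space in the relevant range (covering space lifting of relative spheres). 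Simultaneously the tensor identity $\pi_{k}(\mathscr{P},N,\mathrm{pt})\otimes_{\Z[\pi_{1}(N,\mathrm{pt})]}\Z[\pi_{1}(\mathscr{P},\mathrm{pt})]\cong\pi_{k}(\mathscr{P},N,\mathrm{pt})$ holds because $\pi_{1}(N)\to\pi_{1}(\mathscr{P})$ is an isomorphism, so the group ring is just $\Z[\pi_{1}(N,\mathrm{pt})]$ acting on itself.

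With both sides rewritten, the statement becomes the classical relative Hurewicz isomorphism $\pi_{k}(\widetilde{\mathscr{P}},\widetilde{N})\to H_{k}(\widetilde{\mathscr{P}},\widetilde{N};\Z)$ for the simply-connected pair $(\widetilde{\mathscr{P}},\widetilde{N})$ in the first nonvanishing degree $k\ge 2$, which is standard once one checks $\widetilde{N}$ is simply connected and the pair is $(k-1)$-connected. I would cite \cite[Proposition 2.1]{rong} for the packaged twisted statement, but present the covering-space reduction explicitly so the hypotheses ($k>1$ first nonzero, $i_{*}$ iso on $\pi_{1}$) are transparently what is used.

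The main obstacle I expect is bookkeeping the $\pi_{1}(\mathscr{P},\mathrm{pt})$-equivariance throughout: the isomorphism \eqref{eq:fancy_hhh} is asserted as a group isomorphism, but naturality/equivariance under the deck group is what makes the direct-sum reduction and the identification with $H_{k}(\widetilde{\mathscr{P}},\widetilde{N})$ legitimate, and one must verify that $\widetilde{N}$ really is connected and simply connected under the stated hypotheses — this is where the assumption that $k$ is the \emph{first} nonvanishing degree, which forces $\pi_{1}(N)\xrightarrow{\sim}\pi_{1}(\mathscr{P})$, is essential. A secondary technical point is justifying $\pi_{k}(\mathscr{P},N)\cong\pi_{k}(\widetilde{\mathscr{P}},\widetilde{N})$: this needs the lifting criterion applied to maps $(D^{k},S^{k-1},\mathrm{pt})\to(\mathscr{P},N,\mathrm{pt})$, which lift because $D^{k}$ is simply connected and the basepoint lift is prescribed, together with the corresponding statement for homotopies — routine but worth stating.
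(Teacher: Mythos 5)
Your proposal is correct and follows essentially the same route as the paper's proof: pass to the universal cover, use freeness of $\mathscr{L}$ to identify $H_{k}(\mathscr{P},N;\mathscr{L})$ with (a direct sum of copies of) $H_{k}(\til{\mathscr{P}},N')$, identify $\pi_{k}(\mathscr{P},N,\mathrm{pt})\simeq\pi_{k}(\til{\mathscr{P}},N')$ by lifting, and invoke the classical relative Hurewicz theorem — the only organizational difference being that the paper treats arbitrary free $\mathscr{L}$ at once by applying the coinvariants functor to the square \eqref{eq:square-prop-hard}, whereas you reduce to the rank-one case and then use additivity in direct sums. One small caveat: for $k=2$ the injectivity of $\pi_{1}(N,\mathrm{pt})\to\pi_{1}(\mathscr{P},\mathrm{pt})$ is \emph{not} forced by the vanishing of $\pi_{1}(\mathscr{P},N,\mathrm{pt})$ alone (the long exact sequence only gives surjectivity); it comes from the retraction $\mathrm{ev}_{0}:\mathscr{P}\to N$, as noted in the paper, and this is what guarantees that $N'$ is connected and simply connected in your reduction.
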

\begin{prop}\label{prop:hurewicz_not_a_group}
  If $\pi_{1}(\mathscr{P},N, \mathrm{pt})$ is non-trivial, and $\mathscr{L}$ is a free local system, then the homology group $H_{1}(\mathscr{P},N;\mathscr{L})$ is non-zero.
\end{prop}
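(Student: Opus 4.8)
The plan is to reduce to the regular (``canonical'') free local system, pass to the universal cover, and read off non-vanishing from the long exact sequence of a pair in degrees $1$ and $0$. The one thing to check carefully is that the hypothesis on the relative $\pi_{1}$ is exactly equivalent to disconnectedness of the preimage of $N$ in the universal cover; everything else is bookkeeping with local coefficients.

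First I would fix the universal covering $p\colon\widetilde{\mathscr{P}}\to\mathscr{P}$ and the associated local system $\mathscr{L}_{0}$, given by $\mathscr{L}_{0,x}=H_{0}(p^{-1}(x);\Z)$ with deck monodromy, so $\mathscr{L}_{0,\mathrm{pt}}\cong\Z[\pi_{1}(\mathscr{P},\mathrm{pt})]$ as a left $\Z[\pi_{1}(\mathscr{P},\mathrm{pt})]$-module. Any free local system $\mathscr{L}$ with indexing set $S$ is, by definition, isomorphic \emph{as a local system} to $\bigoplus_{S}\mathscr{L}_{0}$; since homology with local coefficients is computed by $C_{*}(\widetilde{\mathscr{P}})\otimes_{\Z[\pi_{1}(\mathscr{P},\mathrm{pt})]}\mathscr{L}_{\mathrm{pt}}$ (and similarly for the pair), and both the tensor product and homology commute with direct sums, one gets $H_{1}(\mathscr{P},N;\mathscr{L})\cong\bigoplus_{S}H_{1}(\mathscr{P},N;\mathscr{L}_{0})$. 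Assuming $S\ne\emptyset$, it thus suffices to prove $H_{1}(\mathscr{P},N;\mathscr{L}_{0})\ne 0$.

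Next I would invoke the standard identifications $H_{*}(\mathscr{P};\mathscr{L}_{0})\cong H_{*}(\widetilde{\mathscr{P}};\Z)$, $H_{*}(N;i^{*}\mathscr{L}_{0})\cong H_{*}(p^{-1}(N);\Z)$, and $H_{*}(\mathscr{P},N;\mathscr{L}_{0})\cong H_{*}(\widetilde{\mathscr{P}},p^{-1}(N);\Z)$ --- here the key point is that $i^{*}\mathscr{L}_{0}$ is precisely the local system on $N$ associated to the (generally disconnected) covering $p^{-1}(N)\to N$, which is what keeps $H_{0}(N;i^{*}\mathscr{L}_{0})$ counting components rather than collapsing to possibly-vanishing coinvariants. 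Then the portion
\begin{equation*}
  H_{1}(\widetilde{\mathscr{P}},p^{-1}(N);\Z)\xrightarrow{\ \partial\ }H_{0}(p^{-1}(N);\Z)\xrightarrow{\ \iota_{*}\ }H_{0}(\widetilde{\mathscr{P}};\Z)=\Z
\end{equation*}
of the long exact sequence of the pair (with $H_{0}(\widetilde{\mathscr{P}};\Z)=\Z$ since $\widetilde{\mathscr{P}}$ is connected) gives, by exactness, that $\partial$ surjects onto $\ker(\iota_{*})$. Since $\iota_{*}$ sends the class of every connected component of $p^{-1}(N)$ to $1\in\Z$, we have $\ker(\iota_{*})\ne 0$ as soon as $p^{-1}(N)$ is disconnected, whence $H_{1}(\widetilde{\mathscr{P}},p^{-1}(N);\Z)\ne 0$ in that case.

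It remains to see that the hypothesis forces $p^{-1}(N)$ to be disconnected, and this is where the only real content lies. Since $\mathscr{P}$ is path-connected and $\mathrm{pt}\in N$, the homotopy exact sequence of the pair $(\mathscr{P},N)$ shows that $\pi_{1}(\mathscr{P},N,\mathrm{pt})$ is trivial precisely when $N$ is connected \emph{and} $i_{*}\colon\pi_{1}(N,\mathrm{pt})\to\pi_{1}(\mathscr{P},\mathrm{pt})$ is surjective: the boundary map $\partial\colon\pi_{1}(\mathscr{P},N,\mathrm{pt})\to\pi_{0}(N)$ surjects onto the kernel of $\pi_{0}(N)\to\pi_{0}(\mathscr{P})=\ast$, that is, onto all of $\pi_{0}(N)$, while the fibres of $\pi_{1}(\mathscr{P},\mathrm{pt})\to\pi_{1}(\mathscr{P},N,\mathrm{pt})$ are the right $i_{*}\pi_{1}(N,\mathrm{pt})$-cosets. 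By the classification of coverings, ``$N$ connected and $i_{*}$ surjective'' is in turn equivalent to ``$p^{-1}(N)$ connected'', since the components of $p^{-1}(N)$ are indexed by $\bigsqcup_{N_{0}}\bigl(i_{*}\pi_{1}(N_{0})\backslash\pi_{1}(\mathscr{P})\bigr)$, the union over the components $N_{0}$ of $N$. Therefore the hypothesis that $\pi_{1}(\mathscr{P},N,\mathrm{pt})$ be non-trivial is exactly the statement that $p^{-1}(N)$ is disconnected, which, by the previous paragraph, gives $H_{1}(\mathscr{P},N;\mathscr{L})\ne 0$. I do not anticipate a genuine obstacle; the only care needed is recognizing $i^{*}\mathscr{L}_{0}$ as the covering local system of $p^{-1}(N)\to N$ and handling the relative $\pi_{1}$ correctly as a pointed set.
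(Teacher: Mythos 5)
Your proof is correct and follows essentially the same route as the paper: pass to the universal cover using freeness of $\mathscr{L}$, observe that nontriviality of $\pi_{1}(\mathscr{P},N,\mathrm{pt})$ forces $N'=p^{-1}(N)$ to be disconnected, and detect nonvanishing of $H_{1}$ via the connecting homomorphism to $H_{0}(N')$. The only cosmetic difference is that you reduce to the regular local system and argue by exactness of the pair sequence, whereas the paper exhibits the explicit nonzero class $\gamma\otimes\ell_{0}$ under the isomorphism $H_{1}(\widetilde{\mathscr{P}},N')\otimes_{\Z[\pi_{1}(\mathscr{P},\mathrm{pt})]}\mathscr{L}_{\mathrm{pt}}\cong H_{1}(\mathscr{P},N;\mathscr{L})$.
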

Some comments are in order: (i) $\pi_{1}(\mathscr{P},N,\mathrm{pt})$ is not a group, so the restriction $k>1$ in Proposition \ref{prop:hurewicz_hard} is reasonable; (ii) the tensor product is over $\Z[\pi_{1}(N,\mathrm{pt})]$, acting by monodromy on $\mathscr{L}_{\mathrm{pt}}$ and in the usual sense on $\pi_{k}(\mathscr{P},N,\mathrm{pt})$; see \cite{bredon_GT,hatcher} for the definition of the action; (iii) for $k>1$, the group $\pi_{k}(\mathscr{P},N,\mathrm{pt})$ is abelian and surjected upon by $\pi_{k}(\mathscr{P},\mathrm{pt})$;\footnote{The inclusion $N\to \mathscr{P}$ has a right inverse, therefore $\pi_{k-1}(N)\to \pi_{k-1}(\mathscr{P})$ is injective.} (iv) the map \eqref{eq:fancy_hhh} is natural, and it is defined by the diagram \eqref{eq:square-prop-hard}.

\subsubsection{Singular chains in the covering space}
\label{sec:singular-chains-cover}

Let $\mathscr{L}$ be a free local system on the set of paths $\mathscr{P}$. Consider the universal covering space $\til{\mathscr{P}} \to \mathscr{P}$, let $N'$ denote the preimage of $N$ in $\til{\mathscr{P}} $, and let $\mathrm{pt}'\in N'$ be a chosen lift of the base-point $\mathrm{pt}$.

Let $C(\til{\mathscr{P}} ,N')$ be the relative singular chain complex, which has a right-action of $\pi_{1}(\mathscr{P},\mathrm{pt})$ as explained in \S\ref{sec:comp-with-sing}. By definition, the singular chain complex with local coefficients is given as a diagonal quotient:
\begin{equation*}
  C(\mathscr{P},N;\mathscr{L}):=C(\til{\mathscr{P}} ,N')\otimes_{\Z[\pi_{1}(\mathscr{P},\mathrm{pt})]}\mathscr{L}_{\mathrm{pt}}.
\end{equation*}
The right hand side is given the tensor product differential $\bd \otimes 1$.

Observe that there is a natural left $\Z[\pi_{1}(\mathscr{P},\mathrm{pt})]$-module structure on $C(\til{\mathscr{P}} ,N')\otimes_{\Z} \mathscr{L}_{\mathrm{pt}}$ so that $g$ acts by $\sigma\otimes \ell\mapsto \sigma g^{-1}\otimes g \ell$. Then $C(\mathscr{P},N;\mathscr{L})$ is the module of \emph{coinvariants} with respect to this action; see \cite[\S IX.7.6]{aluffi} for further discussion.

Since $\mathscr{L}$ is a free local system the natural map:
\begin{equation}\label{eq:nat-map-is-iso}
  H_{k}(\til{\mathscr{P}} ,N')\otimes_{\Z[\pi_{1}(\mathscr{P},\mathrm{pt})]} \mathscr{L}_{\mathrm{pt}}\to H_{k}(\mathscr{P},N;\mathscr{L})
\end{equation}
is an isomorphism. Indeed, any element $\mathfrak{e}$ in the left hand side can be written as $\mathfrak{e}=\sum [\sigma_{i}]\otimes \ell_{i}$ where $\ell_{i}$ are basis elements for $\mathscr{L}_{\mathrm{pt}}$, and $[\sigma_{i}]$ are homology classes. The natural map \eqref{eq:nat-map-is-iso} sends $\mathfrak{e}$ to $[\sum\sigma_{i}\otimes \ell_{i}]$. If this output is zero, then we can write
\begin{equation*}
  \sum \sigma_{i}\otimes \ell_{i}=\sum \d \alpha_{i}\otimes \ell_{i}.
\end{equation*}
Using that $\ell_{i}$ are elements of a basis of $\mathscr{L}_{\mathrm{pt}}$ as a left $\Z[\pi_{1}(\mathscr{P},\mathrm{pt})]$-module, the representation in the above form is unique, and hence $\sigma_{i}=\d \alpha_{i}$. This proves \eqref{eq:nat-map-is-iso} is injective. A similar argument proves that \eqref{eq:nat-map-is-iso} is surjective.

If $\pi_{1}(\mathscr{P},N,\mathrm{pt})=0$ then $\pi_{1}(N,\mathrm{pt})\to \pi_{1}(\mathscr{P},\mathrm{pt})$ is an isomorphism, and so the natural map:
\begin{equation*}
  H_{k}(\til{\mathscr{P}} ,N')\otimes_{\Z[\pi_{1}(N,\mathrm{pt})]} \mathscr{L}_{\mathrm{pt}}\to H_{k}(\mathscr{P},N;\mathscr{L})
\end{equation*}
is also an isomorphism; we will use this fact in \S\ref{sec:proof-of-hard}.

\subsubsection{Proof of Proposition \ref{prop:hurewicz_hard}}
\label{sec:proof-of-hard}

Let $\til{\mathscr{P}} ,N'$ be the universal covers, as in \S\ref{sec:singular-chains-cover}. It is straightforward to see that there is a commutative square:
\begin{equation}\label{eq:square-prop-hard}
  \begin{tikzcd}
    {\pi_{k}(\mathscr{P},N,\mathrm{pt})\otimes_{\Z} \mathscr{L}_{\mathrm{pt}}}\arrow[d,"{}"]\arrow[r,"{}"] &{H_{k}(\til{\mathscr{P}} ,N')\otimes_{\Z}\mathscr{L}_{\mathrm{pt}}}\arrow[d,"{}"]\\
    {\pi_{k}(\mathscr{P},N,\mathrm{pt})\otimes_{\Z[\pi_{1}(N,\mathrm{pt})]} \mathscr{L}_{\mathrm{pt}}}\arrow[r,"{}"] &{H_{k}(\mathscr{P},N;\mathscr{L})},
  \end{tikzcd}
\end{equation}
where we identify $\pi_{k}(\til{\mathscr{P}} ,N',\mathrm{pt}')\simeq \pi_{k}(\mathscr{P},N,\mathrm{pt})$ via the projection map; this is an isomorphism because $k\ge 2$ and the Serre fibration property.

The vertical morphisms in \eqref{eq:square-prop-hard} are the coinvariant quotient maps associated to the left $\Z[\pi_{1}(N,\mathrm{pt})]$-module structures. Indeed, the top morphism is equivariant with respect to the $\pi_{1}(N,\mathrm{pt})$ action (to see this, one needs to recall the $\pi_{1}(N,\mathrm{pt})$ action on $\pi_{k}(\mathscr{P},N,\mathrm{pt})$ and compare it with the action on $H_{k}(\til{\mathscr{P}} ,N')$ by deck transformations).

The bottom map of \eqref{eq:square-prop-hard} is obtained from the top map by applying the coinvariants functor, and it therefore suffices to prove that:
\begin{equation}\label{eq:want_to_show_injective}
  \pi_{k}(\til{\mathscr{P}} ,N',\mathrm{pt}')\to H_{k}(\til{\mathscr{P}} ,N')
\end{equation}
is an isomorphism, since functors preserve isomorphisms. That \eqref{eq:want_to_show_injective} is an isomorphism follows from the relative Hurewicz theorem; see \cite[\S VII]{bredon_GT}, \cite[Theorem 4.37]{hatcher}. This completes the proof of Proposition \ref{prop:hurewicz_hard}. We give a Morse theoretic proof that \eqref{eq:want_to_show_injective} is an isomorphism in \S\ref{sec:relat-hurew-morse}.

\subsubsection{Proof of Proposition \ref{prop:hurewicz_not_a_group}}
\label{sec:proof-prop-not-a-group}

In the case when $\pi_{1}(\mathscr{P},N)$ is non-zero, the above arguments break-down, mainly because of the failure of commutativity. However, one can argue directly to conclude $H_{1}(\mathscr{P},N;\mathscr{L})$ is non-zero.

Indeed, if $\mathscr{L}$ is a free local system then the arguments in \S\ref{sec:singular-chains-cover} imply that the natural map:
\begin{equation}\label{eq:tricky-iso}
  H_{1}(\til{\mathscr{P}} ,N')\otimes_{\Z[\pi_{1}(\mathscr{P},\mathrm{pt})]} \mathscr{L}_{\mathrm{pt}}\to H_{1}(\mathscr{P},N;\mathscr{L})
\end{equation}
is an isomorphism. Let $\gamma$ be a non-trivial element of $\pi_{1}(\mathscr{P},N,\mathrm{pt})$ which lifts to a path in $\til{\mathscr{P}} $ starting at $\mathrm{pt}'$ and ending in $N'$. Since $\gamma$ is non-trivial and $\til{\mathscr{P}} $ is simply connected, this path must join different components of $N'$.

Therefore the path $\gamma$ induces some non-zero element in the group $H_{1}(\til{\mathscr{P}} ,N';\mathscr{L})$, because of the connecting homomorphism. Thus $\gamma\otimes \ell_{0}$ is non-zero in the left hand side of \eqref{eq:tricky-iso}, since $\mathscr{L}$ is free, and hence $\gamma\otimes \ell_{0}$ is sent to a non-zero element of the right hand side. This completes the proof of Proposition \ref{prop:hurewicz_not_a_group}.

\subsection{Wrapped Floer cohomology for conormal Lagrangians}
\label{sec:wrapped-floer-cohom}

Fix a local system of coefficients $\mathscr{L}$ on the space of paths $\mathscr{P}$. Setting $\mathscr{L}=\pr^{*}\mathscr{L}$, one also considers $\mathscr{L}$ as a local system on the space of paths joining $\nu^{*}N$ to $\nu^{*}N$.

To each admissible Hamiltonian system and complex structure $H_{t},J_{t}$, associate the \emph{Floer cohomology} complex: $$\mathrm{CF}(\nu^{*}N; H_{t}, J_{t};\mathscr{L})  = \bigoplus \mathfrak{o}_{\gamma}\otimes \mathscr{L}_{\gamma},$$  where the direct sum is over all chords $\gamma$ of $H_{t}$ with endpoints on $\nu^{*}N$. The differential on the complex is the usual (cohomological) Floer differential twisted by the monodromy of the local system  (\S\ref{sec:floer-differential-local-coefficients}). Admissible data is defined in \S\ref{sec:admissible-data}.

If $\nu^{*}N,\mathscr{L}$ can be inferred from the context, we will use the abbreviation $\mathrm{CF}(H_{t}, J_{t})$.

Given a \emph{non-negative-at-infinity} path joining one system $H_{0,t}$ to another $H_{1,t}$ (and a path joining $J_{0,t}$ to $J_{1,t}$) there is an associated continuation map: $$\mathfrak{c}:\mathrm{CF}(\nu^{*}N; H_{0,t}, J_{0,t}; \mathscr{L})\to\mathrm{CF}(\nu^{*}N; H_{1,t}, J_{1,t}; \mathscr{L});$$ see \S\ref{sec:continuation-maps-hf} for more details. Technically, one requires the path to be generic on the compact part of $T^{*}M$ so that moduli spaces of continuation strips are cut out transversally; such paths will be called \emph{admissible}.

Consider the diagram (i.e., small category) whose objects are admissible data $(H_{t},J_{t})$ and whose morphisms are homotopy classes of admissible paths. Often this category of Floer data is referred to as a \emph{directed system}. Standard arguments show that $(\mathrm{HF},\mathfrak{c})$ is a functor from this category to the category of $\Z$-modules. The following two invariants are extracted directly from this functor:
\begin{enumerate}
\item The \emph{(full)-wrapped Floer cohomology} $\mathrm{HW}(\nu^{*}N;\mathscr{L})$ is defined as the colimit of $(\mathrm{HF},\mathfrak{c})$. One can compute this colimit by taking any cofinal sequence of contact-at-infinity Hamiltonian systems $H_{n}$, where $H_{n}$ gets more and more positive at infinity, and taking the direct limit of $\mathrm{HF}$ with respect to continuation maps.
\item The \emph{zero-wrapped Floer cohomology}, $\mathrm{HW}_{0}$, defined as the limit of $(\mathrm{HF},\mathfrak{c})$ over the subcategory of Hamiltonian systems which are positive at infinity. One computes the limit by taking a sequence $H_{n}$ which are positive but tending to zero at infinity, and taking the inverse limit of $\mathrm{HF}$ with respect to continuation maps.
\end{enumerate}

The final invariant is the \emph{positive-wrapped Floer cohomology} $\mathrm{HW}_{+}$ and is extracted from a chain-level construction. If one is working over a field, one can simply define $\mathrm{HW}_{+}$ as the mapping cone of the map $\mathrm{HW}_{0}\to \mathrm{HW}$, where $\mathrm{HW}_{0},\mathrm{HW}$ are considered as complexes with zero differential. However, when one is working over the integers, one needs to be more careful, because of extension phenomena; see, e.g., \cite[\S3]{GPS1} which constructs the wrapped Fukaya category over the integers.

The definition we give below has the following crucial property: if $\mathrm{HW}_{+}(\nu^{*}N;\mathscr{L})$ is non-zero, for some $\mathscr{L}$, then $\nu^{*}N$ has a Reeb chord for every contact form; see \S\ref{sec:reeb-chords-pos-wrap-floer}. Before we define $\mathrm{HW}_{+}$ we first recall one notion from homological algebra:

\subsubsection{Cofibrations}
\label{sec:cofibrations}
A chain map $C_{1}\to C_{2}$ is a \emph{cofibration} provided:
\begin{enumerate}
\item there is a decomposition $C_{2}\simeq C_{1}\oplus F$ where $F$ is a free $\Z$-module, and
\item the chain map $C_{1}\to C_{2}$ is identified with the inclusion of the first summand.
\end{enumerate}
Note that the differential of $C_{2}$ may include off-diagonal terms taking $F$ to $C_{1}$.

\subsubsection{Definition of positive wrapped Floer cohomology}
\label{sec:defin-posit-wrapp}
Say that a sequence of admissible data $(H_{n},J_{n})$, with admissible continuation data joining the choices from $n$ to $n+1$, is \emph{cofibrant-cofinal} provided:

\begin{enumerate}
\item it is cofinal, i.e., $H_{n}$ converges to $+\infty$ on the symplectization end of $T^{*}M$,
\item the continuation maps $\mathrm{CF}(H_{n},J_{n})\to \mathrm{CF}(H_{n+1},J_{n+1})$ are cofibrations, and,
\item $H_{0}$ is positive at infinity and the natural map $\mathrm{HW}_{0}\to \mathrm{HF}(H_{0})$ is an isomorphism.
\end{enumerate}

For such a cofibrant-cofinal sequence $\set{H_{n},J_{n}}$, define $\mathrm{HW}_{+}(\set{H_{n},J_{n}})$ as the colimit of the homologies of the quotients $\mathrm{CF}(H_{n},J_{n})/\mathrm{CF}(H_{0},J_{0})$ as $n\to\infty$. It follows from the definitions that, for any choice of cofibrant-cofinal sequence, there is a long-exact sequence:
\begin{equation}\label{eq:hwplus-long-exact}
  \dots \to \mathrm{HW}_{0}\to \mathrm{HW}\to \mathrm{HW}_{+}(\set{H_{n},J_{n}})\to \mathrm{HW}_{0}[1]\to \dots.
\end{equation}
The positive-wrapped homology $\mathrm{HW}_{+}$ is invariant in the following sense:
\begin{lemma}\label{lemma:pos-wrap-invariance}
  For any two cofibrant sequences $\set{H_{n}^{i},J_{n}^{i}}$, $i=0,1$, there is an isomorphism $\mathrm{HW}_{+}(\set{H_{n}^{0},J_{n}^{0}})\to \mathrm{HW}_{+}(\set{H_{n}^{1},J_{n}^{1}})$ respecting the long exact sequence \eqref{eq:hwplus-long-exact}.
\end{lemma}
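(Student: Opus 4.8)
The plan is to compare two cofibrant-cofinal sequences by interleaving them into a single ladder of continuation maps, and then to argue at the level of the quotient complexes that this ladder induces the desired isomorphism on colimits, compatibly with the long exact sequences.

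\textbf{Step 1: Reduction to a common refinement.} Given $\set{H_n^0,J_n^0}$ and $\set{H_n^1,J_n^1}$, I would first produce a cofibrant-cofinal sequence $\set{H_n^2,J_n^2}$ which dominates both, i.e., admits admissible continuation maps $\mathrm{CF}(H_{\phi(n)}^i,J_{\phi(n)}^i)\to \mathrm{CF}(H_n^2,J_n^2)$ for $i=0,1$ and cofinal reindexings $\phi$, and such that all the relevant squares of continuation maps commute up to chain homotopy (this is the directed-system property of $(\mathrm{HF},\mathfrak{c})$ already invoked in the excerpt). One also arranges that $H_0^2$ is positive at infinity with $\mathrm{HW}_0\to \mathrm{HF}(H_0^2)$ an isomorphism, which is possible since the inverse limit defining $\mathrm{HW}_0$ stabilizes along any cofinal-among-positive subsequence. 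By symmetry it then suffices to produce, for a single pair $\set{H_n^0,J_n^0}$ dominated by $\set{H_n^1,J_n^1}$, an isomorphism $\mathrm{HW}_+(\set{H_n^0,J_n^0})\to \mathrm{HW}_+(\set{H_n^1,J_n^1})$ respecting \eqref{eq:hwplus-long-exact}. Here ``dominated'' means we may reindex so that there are admissible continuation maps $f_n:\mathrm{CF}(H_n^0,J_n^0)\to \mathrm{CF}(H_n^1,J_n^1)$ intertwining the two ladders up to chain homotopy, with $f_0$ a continuation map between two Hamiltonians positive at infinity realizing the isomorphism $\mathrm{HW}_0\to \mathrm{HW}_0$.

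\textbf{Step 2: Pass to quotients and take colimits.} The maps $f_n$ send $\mathrm{CF}(H_0^0,J_0^0)$ into $\mathrm{CF}(H_0^1,J_0^1)$ up to chain homotopy, hence — after the standard trick of replacing $f_n$ by a genuine chain map on a mapping-cylinder model, or simply working in the homotopy category of chain complexes — induce maps on quotients $\mathrm{CF}(H_n^0,J_0^0)/\mathrm{CF}(H_0^0,J_0^0)\to \mathrm{CF}(H_n^1,J_0^1)/\mathrm{CF}(H_0^1,J_0^1)$. Taking homology and then the colimit over $n$ yields a map $\mathrm{HW}_+(\set{H_n^0,J_n^0})\to \mathrm{HW}_+(\set{H_n^1,J_n^1})$. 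Functoriality of the colimit, together with the fact that every continuation map between two data with the same ``positivity level'' (in particular $f_0$ between the two choices of $H_0$) induces an isomorphism on $\mathrm{HF}$ — this is exactly the hypothesis (iii) that $\mathrm{HW}_0\to \mathrm{HF}(H_0^i)$ is an isomorphism, so $\mathrm{HF}(H_0^0)\to \mathrm{HF}(H_0^1)$ is an isomorphism — shows the map is well-defined independent of the interleaving, and is natural.

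\textbf{Step 3: Compatibility with the long exact sequence and conclusion.} The short exact sequences of complexes $0\to \mathrm{CF}(H_0^i,J_0^i)\to \mathrm{CF}(H_n^i,J_n^i)\to \mathrm{CF}(H_n^i,J_n^i)/\mathrm{CF}(H_0^i,J_0^i)\to 0$ are maps of short exact sequences under $f_n$ (up to homotopy), hence induce a map of the associated long exact sequences in homology; passing to the colimit over $n$, and using that colimits are exact and that $H_*(\mathrm{CF}(H_n^i,J_n^i))$ has colimit $\mathrm{HW}$ while $H_*(\mathrm{CF}(H_0^i,J_0^i))=\mathrm{HF}(H_0^i)$ has colimit $\mathrm{HW}_0$, recovers \eqref{eq:hwplus-long-exact} and exhibits our map as a morphism of long exact sequences inducing the identity (under the canonical identifications) on the $\mathrm{HW}_0$ and $\mathrm{HW}$ terms. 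The five lemma then forces the map on $\mathrm{HW}_+$ to be an isomorphism. I expect the main obstacle to be \textbf{Step 2}: passing cleanly from the up-to-homotopy commutative ladder of continuation maps to an honest map of quotient complexes, and verifying that the resulting map on colimit homology does not depend on the choices of chain homotopies or interleaving. This is where one must be careful over $\Z$ — the cofibration hypothesis is precisely what guarantees the quotients are free and the short exact sequences split as graded modules, so that the mapping-cone/telescope manipulations behave well; I would either invoke the telescope (homotopy colimit) model for the colimit so that everything is functorial up to coherent homotopy, or spell out a direct chain-level comparison using condition (ii).
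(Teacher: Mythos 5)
Your outline reproduces the skeleton of the paper's proof in \S\ref{sec:invariance-pos-wrap} (interleave the two sequences along a subsequence with $k(0)=0$, map quotient complexes, take colimits, conclude with the five lemma), but the step you yourself flag as the main obstacle is exactly where the paper's actual content lies, and your proposal does not supply it. The paper does not pass to mapping cylinders or work in the homotopy category; it proves a small rectification lemma (Lemma \ref{lemma:cofibrant-lemma}): if a square commutes up to a chain homotopy $K$ and its top map $c$ is a cofibration, then choosing a (not necessarily chain) retraction $r$ with $rc=1$ and replacing $f_{1}$ by $f_{1}-d_{1}'Kr-Krd_{1}$ makes the square commute on the nose. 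Applying this to all the ladder squares \eqref{eq:CF_square} yields honest maps of short exact sequences $0\to\mathrm{CF}_{0}\to\mathrm{CF}_{n}\to\mathrm{CF}_{n}/\mathrm{CF}_{0}\to 0$, after which exactness of colimits and the five lemma finish the argument. Note that ``simply working in the homotopy category'' does not suffice: the quotient $\mathrm{CF}_{n}/\mathrm{CF}_{0}$ (equivalently, the cone) is not functorial there, and forming the colimit of homologies requires a strictly commuting ladder, or else a homotopy-coherent colimit formalism that you would then have to build; likewise the independence of the resulting map from the choices of homotopies and interleavings is not addressed. As written, Step 2 is therefore a genuine gap rather than a routine verification.

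A second, smaller problem is the direction of the reduction in Step 1. You ask for a cofibrant-cofinal sequence dominating both given ones, which forces $H_{0}^{2}$ to be more positive at infinity than both $H_{0}^{0}$ and $H_{0}^{1}$ while still satisfying condition (iii), namely that $\mathrm{HW}_{0}\to\mathrm{HF}(H_{0}^{2})$ is an isomorphism. Your justification (``the inverse limit defining $\mathrm{HW}_{0}$ stabilizes along any cofinal-among-positive subsequence'') is not established in the paper and is not true in general: condition (iii) is only guaranteed for sufficiently small slopes at infinity, and nothing in the hypotheses bounds the slopes of $H_{0}^{0},H_{0}^{1}$ relative to the Reeb chord spectrum, so the existence of such a dominating $H_{0}^{2}$ is an unproven claim. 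The paper sidesteps this by taking the auxiliary sequence on the other side: a cofibrant-cofinal $\mathrm{CF}''$ whose initial Hamiltonian is \emph{less} positive than both, which the explicit construction of \S\ref{sec:spec-cofibr-sequ} always provides, and then composing one comparison isomorphism with the inverse of the other. Your reduction is repairable by flipping it to this direction, but as stated it rests on an unjustified existence assertion.
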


Lemma \ref{lemma:pos-wrap-invariance} is proved in \S\ref{sec:invariance-pos-wrap}. In \S\ref{sec:spec-cofibr-sequ} we construct cofibrant-cofinal sequences, so the definition of $\mathrm{HW}_{+}$ is not vacuous.

\subsubsection{Admissible data}
\label{sec:admissible-data}
A Hamiltonian $H_{t}$ is \emph{admissible} if:
\begin{enumerate}
\item $H_{t}(q,e^{s}p)=e^{s}H_{t}(q,p)$ outside a compact set,
\item $X_{H_{t}}$ has no chords for $\nu^{*}N$ outside a compact set,
\item the chords of $X_{H_{t}}$ for $\nu^{*}N$ are non-degenerate,
\end{enumerate}
Note that (i) and (iii) imply (ii). See \cite{ritter_tqft,ulja_automorphisms,merry_ulja,ulja_drobnjak,brocic_cant} for a similar class of Hamiltonians.

A complex structure $J$ is \emph{admissible} if:
\begin{enumerate}[label=(\alph*)]
\item $J$ is tame for the symplectic structure,
\item the positive Liouville flow preserves $J$, outside a compact set,
\end{enumerate}

\subsubsection{Grading by Conley-Zehnder index}
\label{sec:grad-conl-zehnd}

For a Hamiltonian chord $\gamma$, define the grading of $\gamma$ to be $\mu(\gamma):= d - n - \mathrm{CZ}(\gamma)$, where $\mathrm{CZ}(\gamma)$ is defined in \S\ref{sec:asymptotic-data}. The grading is chosen so that the Floer differential in \S\ref{sec:floer-differential-local-coefficients} raises the degree by $1$. The shift by $d-n$ is chosen so that the isomorphism between Morse homology and Floer cohomology constructed in \S\ref{sec:isom-from-morse} sends degree $k$ to degree $-k$.

\subsubsection{The Floer differential with local coefficients}
\label{sec:floer-differential-local-coefficients}

Given two chords $\gamma_{\pm}$ of an admissible Hamiltonian $H_t$, with endpoints on $\nu^*N$, we define $\mathscr{M}$ as the set of solutions of:
\begin{equation*}
  \left\{
    \begin{aligned}
      &u:\R \times [0,1]\to T^{*}M,\\
      &\bd_{s}u+J_{t}(u)(\bd_{t}u-X_{H_{t}}(u))=0,\\
      &u(s,0), u(s,1)\in \nu^*N,
    \end{aligned}
  \right.
\end{equation*}
and let $\mathscr{M}(\gamma_{-},\gamma_{+})$ be the subset with asymptotic conditions $\lim_{s\to \pm \infty} u(s, \cdot)= \gamma_{\pm}$. For a generic choice of a Hamiltonian, $\mathscr{M}(\gamma_-, \gamma_+)$ is a manifold of dimension $\mu(\gamma_{-})-\mu(\gamma_{+})$, where $\mu$ was defined in \S\ref{sec:grad-conl-zehnd} (see also \S\ref{sec:conl-zehnd-indic}, \S\ref{sec:asymptotic-data}).

The moduli space $\mathscr{M}$ possesses a canonical $\R$-action, by translation in the $s$-direction. Differentiating in the $s$-direction produces a preferred element $\eta_{u}\in \ker D_{u}$ for every $u\in \mathscr{M}$. If the Fredholm index of $D_{u}$ is $1$, and $u$ is regular, then this element $\eta_{u}$ is a basis for the kernel, and hence determines a generator of $\mathfrak{o}(D_{u})$, the orientation line of the (Fredholm) determinant of $D_{u}$.

It follows from \S\ref{sec:orientation-lines} and \S\ref{sec:asymptotic-data} that for every $u \in \mathscr{M}(\gamma_-, \gamma_+)$ there is a canonical isomorphism:
\begin{equation}\label{eq:orientation-gluing}
  \mathfrak{o}_{\gamma_{+}} \simeq \mathfrak{o}_{\gamma_{-}} \otimes \mathfrak{o}(D_{u}).
\end{equation}
When $\mu(\gamma_{-})=\mu(\gamma_{+})+1$, we use the canonical orientation of $\mathfrak{o}(D_{u})$ given by the basis $-\eta_{u}$ and the identification \eqref{eq:orientation-gluing} to obtain an isomorphism $d_u: \mathfrak{o}_{\gamma_{+}} \to \mathfrak{o}_{\gamma_{-}};$ essentially, one can use \eqref{eq:orientation-gluing}, plus the choice of $-\eta_{u}$, to transfer the orientation from $\mathfrak{o}_{\gamma_{+}}$ to $\mathfrak{o}_{\gamma_{-}}$. The choice of the minus sign can be thought of as ``cohomological conventions,'' i.e., cylinders go from right to left, however, the real significance of the sign is needed in the comparison with Morse theory in \S\ref{sec:isom-from-morse}; see also \S\ref{sec:inter-break-param}.

On the other hand, given a local system $\mathscr{L}$, every $u \in \mathscr{M}(\gamma_-, \gamma_+)$ has an associated monodromy $\mathscr{L}(u): \mathscr{L}_{\gamma_{+}} \to \mathscr{L}_{\gamma_{-}}$. Define the \emph{Floer differential} to be the direct sum of the individual contributions $d_{u}\otimes \mathscr{L}(u)$ as $u$ varies over all Floer strips of index difference $1$. As usual, we have:
\begin{lemma}\label{lemma:differential}
  The map $d:\mathrm{CF}(H_t; \mathscr{L}) \to \mathrm{CF}(H_t; \mathscr{L})$ squares to $0$.
\end{lemma}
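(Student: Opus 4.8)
The plan is to run the standard argument: identify $d^{2}$, restricted to each component $\mathfrak{o}_{\gamma_{+}}\otimes\mathscr{L}_{\gamma_{+}}\to\mathfrak{o}_{\gamma_{-}}\otimes\mathscr{L}_{\gamma_{-}}$, with a signed count of the boundary points of a compactified one-dimensional moduli space, and then observe the boundary count vanishes. Fix chords $\gamma_{\pm}$ of $H_{t}$ on $\nu^{*}N$ with $\mu(\gamma_{-})=\mu(\gamma_{+})+2$. For generic admissible data $\mathscr{M}(\gamma_{-},\gamma_{+})/\R$ is a one-manifold. By definition $d\circ d$ on the $(\gamma_{+},\gamma_{-})$ component is $\sum_{\gamma_{0}}\sum_{u_{1},u_{2}}(d_{u_{1}}\otimes\mathscr{L}(u_{1}))\circ(d_{u_{2}}\otimes\mathscr{L}(u_{2}))$, where $\gamma_{0}$ runs over chords with $\mu(\gamma_{0})=\mu(\gamma_{+})+1$, $u_{2}\in\mathscr{M}(\gamma_{0},\gamma_{+})/\R$ and $u_{1}\in\mathscr{M}(\gamma_{-},\gamma_{0})/\R$; this is precisely a count of once-broken Floer trajectories from $\gamma_{+}$ to $\gamma_{-}$, with signs on orientation lines and weights on local coefficients. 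The goal is to match this set with $\bd\big(\overline{\mathscr{M}(\gamma_{-},\gamma_{+})}/\R\big)$.

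The first step is compactness. Because $T^{*}M$ is exact and $\nu^{*}N$ is an exact Lagrangian, no holomorphic sphere or disk bubbles can appear; the energy of Floer strips in $\mathscr{M}(\gamma_{-},\gamma_{+})$ is bounded by the difference of Hamiltonian actions of $\gamma_{-}$ and $\gamma_{+}$, so Gromov--Floer compactness applies once one has a $C^{0}$ bound confining the strips to a fixed compact subset of $T^{*}M$. That bound follows from the admissibility hypotheses on $H_{t}$ and $J$ of \S\ref{sec:admissible-data} (the contact-at-infinity condition on $H_{t}$, the Liouville invariance of $J$ at infinity, and the absence of chords at infinity) via the usual maximum-principle / no-escape argument, as in the references cited there. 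Hence $\overline{\mathscr{M}(\gamma_{-},\gamma_{+})}/\R$ is a compact one-manifold, and the deferred gluing theorem shows each once-broken configuration of two index-one strips is the limit of exactly one end of it; so the boundary is exactly the set of such configurations, as wanted.

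The local coefficients cause no trouble. On each connected component of a moduli space the monodromy $\mathscr{L}(u)\colon\mathscr{L}_{\gamma_{+}}\to\mathscr{L}_{\gamma_{-}}$ is locally constant, hence constant; and under breaking of a strip $u$ into $(u_{1},u_{2})$ the concatenation property of monodromy gives $\mathscr{L}(u)=\mathscr{L}(u_{1})\circ\mathscr{L}(u_{2})$. Therefore the two ends of any component of $\mathscr{M}(\gamma_{-},\gamma_{+})/\R$ carry the \emph{same} map on $\mathscr{L}$, and the local system is inert in the cancellation, exactly as in the Morse discussion of \S\ref{sec:morse-differential-with-loc-coeffs}.

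The genuine technical content, and the step I expect to be the main obstacle to write carefully, is the orientation bookkeeping: one must verify that the two boundary points of a given component of $\overline{\mathscr{M}(\gamma_{-},\gamma_{+})}/\R$ contribute with \emph{opposite} signs as maps $\mathfrak{o}_{\gamma_{+}}\to\mathfrak{o}_{\gamma_{-}}$. This is where the sign built into $d_{u}$ (the choice of $-\eta_{u}$) and the coherent-orientation conventions of \S\ref{sec:orientation-lines}, \S\ref{sec:asymptotic-data} come in: the gluing isomorphism \eqref{eq:orientation-gluing} must be shown compatible, up to the expected sign, with associativity of linear gluing $\mathfrak{o}(D_{u_{1}})\otimes\mathfrak{o}(D_{u_{2}})\cong\mathfrak{o}(D_{u})$ and with the boundary orientation of the one-dimensional glued moduli space determined by the translation vector $\eta_{u}$. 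Granting the orientation theory developed in the appendix, this reduces to a formal sign computation; combined with the compactness statement and the concatenation remark above, it yields $d^{2}=0$.
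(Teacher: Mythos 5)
Your proposal is correct and follows essentially the same route as the paper: reduce $d^{2}=0$ to the cancellation of the two broken-strip ends of each one-dimensional component of $\mathscr{M}(\gamma_{-},\gamma_{+})/\R$, note that the local system is inert because the component gives a homotopy between the two concatenations, and handle the signs through the coherent-orientation/gluing formalism. The sign step you flag as the remaining technical content is exactly what the paper carries out, by identifying $\mathfrak{o}(D_{u})$ with the orientation of the two-dimensional moduli space, invoking the kernel-gluing fact (from \cite[\S2e]{floer-ham}) that $(\eta_{u_{1}},\eta_{v_{1}})$ and $(\eta_{u_{2}},\eta_{v_{2}})$ induce opposite generators, and using associativity of linear gluing as in \eqref{eq:orientation-gluing}.
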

\begin{proof}
  It is enough to show that for every two chords $\gamma_{-}, \gamma_{+}$ with $\mu(\gamma_{-}) = \mu(\gamma_{+})+2$, and for every component $C$ of  $\mathscr{M}(\gamma_-, \gamma_+)/ \R$ we have the relation
  \begin{equation}\label{eq:relation-u-v-differential}
    d_{u_1} \circ d_{v_1} + d_{u_2} \circ d_{v_2}=0,
  \end{equation}
  where $(u_1, v_1)$ and $(u_2, v_2)$ are the broken strips which appear as the boundary of $C$. Indeed, since $C$ induces a homotopy between the concatenations $u_1\# v_1$ and $u_2\# v_2$, hence $\mathscr{L}(u_1) \circ \mathscr{L}(v_1) = \mathscr{L}(u_2) \circ \mathscr{L}(v_2)$, therefore the proof in the case of local systems is identical to the one with $\mathscr{L} = \Z$.

  \begin{figure}[H]
    \centering
    \begin{tikzpicture}
      \draw[every node/.style={draw,circle,inner sep=1pt,fill},postaction={decorate,decoration={markings,mark=between positions 0.2 and 1.0 step 0.24 with {\arrow{Latex};}}}] (0,0)node(A){} to[out=0,in=180] (1,1)node(C){} to[out=0,in=180] (2,-1) to[out=0,in=180] (3,0)node(B){};
      \node at (A)[left]{$(u_{1},v_{1})$};
      \node at (C)[above]{$u$};
      \node at (B)[right]{$(u_{2},v_{2})$};
    \end{tikzpicture}
    \caption{The one-dimensional component $C$ of the moduli space of index difference two chords.}
    \label{fig:one-dimensional-moduli}
  \end{figure}
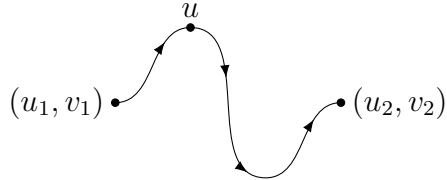

  The orientation line $\mathfrak{o}(D_{u})$ is identified with the orientation line for the two-dimensional moduli space $\mathscr{M}(\gamma_{-},\gamma_{+})$. The component of $\mathscr{M}$ lying over $C$ is diffeomorphic to $C\times \R$ where the translation action is identified with translation along the $\R$ factor.

  Near the end of $C$ corresponding to the breaking $(u_{i},v_{i})$, the gluing construction produces a canonical identification:
  \begin{equation}\label{eq:k-g-map}
    \mathfrak{o}(\ker D_{u_{i}}\oplus \ker D_{v_{i}})\simeq \mathfrak{o}(D_{u_{i}})\otimes \mathfrak{o}(D_{v_{i}})\to \mathfrak{o}(D_{u}).
  \end{equation}
  As explained above, $\ker D_{u_{i}}$ and $\ker D_{v_{i}}$ are generated by the elements $\eta_{u_{i}}$ and $\eta_{v_{i}}$, respectively. It follows from \cite[\S2e]{floer-ham} that $(\eta_{u_1}, \eta_{v_1})$ and $(\eta_{u_2}, \eta_{v_2})$ give rise to the opposite generators of $\mathfrak{o}(D_{u})$ via the kernel gluing map \eqref{eq:k-g-map}.

  Linear gluing also gives an identification:
  \begin{equation}\label{eq:linear-g-o-l}
    \mathfrak{o}_{\gamma_{-}} \otimes \mathfrak{o}(D_{u_{i}}) \otimes \mathfrak{o}(D_{v_{i}}) \simeq \mathfrak{o}_{\gamma_{+}},
  \end{equation}
  and the contribution $\mathfrak{o}_{\gamma_{+}}\to \mathfrak{o}_{\gamma_{-}}$ to the Floer differential is defined by inserting the generator $\eta_{u_{i}}\otimes \eta_{v_{i}}$ into $\mathfrak{o}(D_{u_{i}})\otimes \mathfrak{o}(D_{v_{i}})$; see Figure \ref{fig:gluing-for-d2-zero}.

  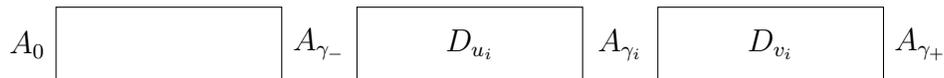
\begin{figure}[H]
    \centering
    \begin{tikzpicture}
      \draw (0,0) rectangle +(3,1) (4,0) rectangle +(3,1) (8,0) rectangle +(3,1);
      \path (0,0.5) node[left] {$A_{0}$} (3.5,0.5) node {$A_{\gamma_{-}}$} (7.5,0.5) node{$A_{\gamma_{i}}$} (11,0.5) node[right] {$A_{\gamma_{+}}$};
      \path (5.5,0.5) node{$D_{u_{i}}$} (9.5,0.5) node{$D_{v_{i}}$};
    \end{tikzpicture}
    \caption{Gluing linearized operators when proving $d^2=0$; the two ends of the moduli space correspond to setting $i=1,2$.}
    \label{fig:gluing-for-d2-zero}
  \end{figure}

  By the associativity of gluing, one can combine \eqref{eq:k-g-map} and \eqref{eq:linear-g-o-l} to conclude that the two identifications (obtained by gluing $u_{1},v_{1}$ and $u_{2},v_{2}$):
  \begin{equation*}
    \mathfrak{o}_{\gamma_{-}} \otimes \mathfrak{o}(D_{u})\simeq \mathfrak{o}_{\gamma_{+}}
  \end{equation*}
  are opposite. The desired relation \eqref{eq:relation-u-v-differential} follows. For similar arguments, see \cite[pp.\,35]{fh_coherent}, \cite[\S4.2]{floer-hofer-sh-i}, and \cite[\S9.5.2]{abouzaid_monograph}.
\end{proof}

\subsubsection{Continuation maps}
\label{sec:continuation-maps-hf}

A path of Hamiltonian systems $H_{s,t}$ and almost complex structures $J_{s,t}$ is called \emph{admissible continuation data} provided:
\begin{enumerate}
\item There is a fixed star-shaped domain $\Omega$ so that $H_{s,t}$ is $1$-homogeneous and $J_{s,t}$ is translation invariant outside $\Omega$,
\item There are admissible Hamiltonian systems $H_{\pm,t}$ and almost complex structures $J_{\pm,t}$ so that $H_{s,t}=H_{\pm,t}$ and $J_{s,t}=J_{\pm,t}$ for $\pm s$ sufficiently large,
\item $\bd_{s} H_{s,t}(z)\le 0$ holds for $z$ outside a compact set,
\item the moduli space of continuation strips is cut out transversally (see below).
\end{enumerate}

Associated to such data is the moduli space $\mathscr{M}(H_{s,t},J_{s,t})$ of \emph{continuation strips} satisfying:
\begin{equation*}
  \left\{
    \begin{aligned}
      &u:\R \times [0,1]\to T^{*}M,\\
      &\bd_{s}u+J_{s,t}(u)(\bd_{t}u-X_{H_{s,t}}(u))=0,\\
      &u(s,0), u(s,1)\in \nu^*N,
    \end{aligned}
  \right.
\end{equation*}
and, by (iv), this moduli space is cut out transversally.

Standard asymptotic analysis implies any continuation strip converges to chords of $H_{\pm,t}$ as $s\to\pm\infty$. The moduli space splits into components of varying dimensions depending on the asymptotic chords; the component with asymptotics $\gamma_{\pm}$ is a smooth manifold of dimension $\mu(\gamma_{-})-\mu(\gamma_{+})$.
Let $\mathscr{M}_{0}$ be the zero-dimensional component of $\mathscr{M}$; the compactness results in \S\ref{sec:compactness-cont-strips} imply $\mathscr{M}_{0}$ is a finite set of points. Moreover, the linearization $D_{u}$ of the continuation map equation at any solution $u$ is an isomorphism, and hence the orientation line $\mathfrak{o}(D_{u})$ has a canonical generator. On the other hand, the framework in \S\ref{sec:coher-orient} furnishes a canonical identification between $\mathfrak{o}_{\gamma_{+}} \simeq \mathfrak{o}_{\gamma_{-}} \otimes \mathfrak{o}(D_{u})$, and hence a generator of $\mathfrak{o}(D_{u})$ induces an isomorphism $\mathfrak{c}_{u}:\mathfrak{o}(\gamma_{+})\to \mathfrak{o}(\gamma_{-})$.

Twisting this map by the monodromy of the local system $\mathscr{L}$ and summing over $u\in \mathscr{M}_{0}$ gives the \emph{continuation map}:
\begin{equation*}
  \mathfrak{c}(H_{s,t},J_{s,t})=\sum \mathfrak{c}_{u}\otimes \mathscr{L}(u);
\end{equation*}
as with the Floer differential, this is interpreted as a map $\mathrm{CF}(H_{-},J_{-})\to \mathrm{CF}(H_{+},J_{+})$.

Standard gluing results, similar to the proof of Lemma \ref{lemma:differential}, imply that $\mathfrak{c}(H_{s,t},J_{s,t})$ is a chain map (see \cite[\S9.6.2]{abouzaid_monograph}). Counting elements in parametric moduli spaces shows that the chain homotopy class of $\mathfrak{c}$ is unchanged under homotopies of the path $H_{s,t},J_{s,t}$ with fixed endpoints, and so that (i), (ii), and (iii) hold during the homotopy.
Given two sets of continuation data $H_{s,t}^{i}$, $J_{s,t}^{i}$, $i=0,1$, with the same endpoints, one can linearly interpolate $H_{s,t}^{0}$ to $H_{s,t}^{1}$ and use contractibility of the space of complex structures to join $J_{s,t}^{0}$ the paths $J_{s,t}^{1}$. In this fashion, one shows the chain homotopy class of $\mathfrak{c}$ is independent of the choice of continuation data.

\subsubsection{Compactness for continuation strips}
\label{sec:compactness-cont-strips}

It follows from \cite[\S2.2.5]{brocic_cant} that any sequence of continuation maps $u_{n}$ satisfying an a priori energy bound and bound on the first derivatives remains in a fixed compact set (depending on the energy bound and first derivative bound). We note that, because of the first derivative bound, it suffices to bound the image of $(\R\setminus [s_{0},s_{1}])\times [0,1]$, i.e., we can ignore any sub-cylinder of finite length when proving the $C^{0}$ bound. By this trick, we can ignore the intricacies of the continuation map equation and focus on the translation invariant Floer's equation (which is what \cite{brocic_cant} considers).

An energy bound implies a first derivative bound, using bubbling analysis as in \S\ref{sec:a-priori-estimate-sobolev}. It is important that the size of a Hamiltonian perturbation is bounded with respect to a translation invariant metric, and hence the rescaled equations always converge to holomorphic equations (spheres or disks with boundary on $\nu^{*}N$). A priori first derivative bounds also imply bounds on the higher derivatives, using elliptic regularity, see, e.g., \cite[\S3.1.2]{brocic_cant}; see also \cite[\S2.2.4]{cant_sh_barcode} for a similar argument.

\subsubsection{Invariance of positive wrapped Floer cohomology}
\label{sec:invariance-pos-wrap}

Consider two sequences $\mathrm{CF}_{n}$ and $\mathrm{CF}_{n}'$ so that the data for $\mathrm{CF}_{0}'$ is more positive than the data for $\mathrm{CF}_{0}$. Then one can define a first continuation map $\mathrm{CF}_{0}\to \mathrm{CF}_{0}'$. Using the cofinal property, one can find:
\begin{enumerate}
\item a subsequence $k:\mathbb{N}\to \mathbb{N}$ (strictly increasing map) with $k(0)=0$,
\item continuation maps $\mathrm{CF}_{n}\to \mathrm{CF}_{k(n)}'$ for each $n$.
\end{enumerate}
Here the continuation maps are defined using some continuation data. The importance of the cofinal assumption is that the data used to define $\mathrm{CF}'_{k}$ will eventually be more positive than the data used to define $\mathrm{CF}_{n}$, and so the continuation map can eventually be defined.

Because all of our maps are defined as (compositions of) continuation maps, all the squares of the form:
\begin{equation}\label{eq:CF_square}
  \begin{tikzcd}
    {\mathrm{CF}_{n}}\arrow[d,"{}"]\arrow[r,"{}"] &{\mathrm{CF}_{n+1}}\arrow[d,"{}"]\\
    {\mathrm{CF}'_{k(n)}}\arrow[r,"{}"] &{\mathrm{CF}'_{k(n+1)}}
  \end{tikzcd}
\end{equation}
commute up to homotopy. The key property we will require from cofibrations is that it allows one to upgrade ``commutes up to homotopy'' to ``commutes on the nose'':
\begin{lemma}\label{lemma:cofibrant-lemma}
  Suppose that one has a square in the category of differential graded $\Z$-modules which commutes up to chain homotopy:
  \begin{equation*}
    \begin{tikzcd}
      {C_{0}}\arrow[d,"{f_{0}}"]\arrow[r,"{c}"] &{C_{1}}\arrow[d,"{f_{1}}"]\\
      {C'_{0}}\arrow[r,"{c'}"] &{C'_{1}},
    \end{tikzcd}
  \end{equation*}
  and suppose $c$ is a cofibration. Then one can alter $f_{1}$ in its chain homotopy class so as to make the diagram commute.
\end{lemma}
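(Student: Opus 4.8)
The plan is to exploit the splitting that comes with a cofibration in order to extend the given chain homotopy to all of $C_1$, and then to modify $f_1$ by that extended homotopy. First I would spell out the hypothesis: since $c$ is a cofibration we may write $C_1 = C_0\oplus F$ as graded $\Z$-modules, with $c$ the inclusion of the first summand; the differential $d_1$ of $C_1$ then has block form $d_1(x,y) = (d_0 x + \beta y,\, d_F y)$ for some $\beta\colon F\to C_0$ (these are the off-diagonal terms allowed in the definition), and in particular the chain-map identity $d_1\circ c = c\circ d_0$ holds tautologically. Let $h\colon C_0\to C_1'$ be a chain homotopy witnessing the hypothesis, so that $f_1\circ c - c'\circ f_0 = d_1'\circ h + h\circ d_0$.

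The key step is to extend $h$ to a graded module map $H\colon C_1\to C_1'$ with $H\circ c = h$. Using the direct-sum decomposition $C_1 = C_0\oplus F$ this is immediate: put $H = h$ on the summand $C_0$ and $H = 0$ on $F$, with no compatibility of $H$ with the differentials required. This is exactly where the cofibration hypothesis enters — for a general injective chain map $c$ no such extension need exist; the situation is analogous to a lifting property for cofibrations in a model category. (The freeness of $F$ is not used at this point; it plays a role elsewhere, e.g.\ in controlling the quotient complexes $\mathrm{CF}(H_n,J_n)/\mathrm{CF}(H_0,J_0)$ over $\Z$, and I would point this out so the reader does not go looking for it here.)

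Then I would set $f_1' := f_1 - d_1'\circ H - H\circ d_1$ and check the three needed properties. That $f_1'$ is a chain map follows from $d_1^2 = 0 = (d_1')^2$ together with the fact that $f_1$ is a chain map: both $d_1'\circ f_1'$ and $f_1'\circ d_1$ equal $f_1\circ d_1 - d_1'\circ H\circ d_1$. That $f_1'$ is chain homotopic to $f_1$ is immediate, the homotopy being $H$ itself, since $f_1 - f_1' = d_1'\circ H + H\circ d_1$ by construction. Finally, that the square commutes on the nose: using $H\circ c = h$ and $d_1\circ c = c\circ d_0$ one computes
\[
f_1'\circ c = f_1\circ c - d_1'\circ(H\circ c) - H\circ(d_1\circ c) = f_1\circ c - d_1'\circ h - h\circ d_0 = c'\circ f_0,
\]
the last equality being the defining property of $h$.

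There is no real obstacle in this lemma: the entire content is bookkeeping with the differentials and keeping track of which maps are chain maps and which are only graded module maps. The one conceptual point, which I would state clearly, is that it is the splitting $C_1\cong C_0\oplus F$ — not freeness, and not mere injectivity of $c$ — that makes the extension $H$ of $h$ possible, and hence makes the whole argument work.
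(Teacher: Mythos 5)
Your proof is correct and is essentially the paper's argument: your extension $H$ (equal to $h$ on $C_0$ and $0$ on $F$) is exactly the composite $K r$ in the paper's proof, where $r\colon C_1\to C_0$ is the retraction supplied by the splitting, and your replacement $f_1' = f_1 - d_1'H - Hd_1$ coincides with the paper's formula $f_1' = f_1 - d_1'Kr - Krd_1$. The only difference is presentational (you also spell out explicitly that $f_1'$ is a chain map and homotopic to $f_1$, which the paper leaves implicit).
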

\begin{proof}
  Let $K$ be a chain homotopy so $f_{1}c-c'f_{0}=d_{1}'K+Kd_{0}$. Since $c$ is a cofibration, there is a map $r:C_{1}\to C_{0}$, not necessarily a chain map, so that $rc=1$. Define the replacement of $f_{1}$ by the formula: $$f_{1}':=f_{1}-d_{1}'Kr-Krd_{1}.$$ One checks that $f_{1}'c=f_{1}c-d_{1}'K-Krd_{1}c$. Using $rd_{1}c=rcd_{0}=d_{0}$, one concludes $f_{1}'c-c'f_{0}=0$, as desired.
\end{proof}

Applying this result, replace all of the chosen continuation maps $\mathrm{CF}_{n}\to \mathrm{CF}'_{k(n)}$ by chain-homotopic maps so that all the squares \eqref{eq:CF_square} commute. One obtains commuting short exact sequences:
\begin{equation*}
  \begin{tikzcd}
    0\arrow[r]&{\mathrm{CF}_{0}}\arrow[d,"{}"]\arrow[r,"{}"] &{\mathrm{CF}_{n}}\arrow[d,"{}"]\arrow[r,"{}"] &{\mathrm{CF}_{n}/\mathrm{CF}_{0}}\arrow[d,"{}"]&0\arrow[from=1-4]\\
    0\arrow[r]&{\mathrm{CF}_{0}'}\arrow[r,"{}"] &{\mathrm{CF}_{k(n)}'}\arrow[r,"{}"] &{\mathrm{CF}_{k(n)}'/\mathrm{CF}_{0}'}&0\arrow[from=2-4]
  \end{tikzcd}.
\end{equation*}
These sequences are natural with respect to the maps $\mathrm{CF}_{n}\to \mathrm{CF}_{n+1}$, etc, and hence one can take the colimit to obtain a diagram of short exact sequences:
\begin{equation*}
  \begin{tikzcd}
    0\arrow[r]&{\mathrm{CF}_{0}}\arrow[d,"{}"]\arrow[r,"{}"] &{\colim\mathrm{CF}_{n}}\arrow[d,"{}"]\arrow[r,"{}"] &{\colim \mathrm{CF}_{n}/\mathrm{CF}_{0}}\arrow[d,"{}"]&0\arrow[from=1-4]\\
    0\arrow[r]&{\mathrm{CF}_{0}'}\arrow[r,"{}"] &{\colim \mathrm{CF}_{n}'}\arrow[r,"{}"] &{\colim \mathrm{CF}_{n}'/\mathrm{CF}_{0}'}&0\arrow[from=2-4]
  \end{tikzcd};
\end{equation*}
here we use that taking colimits preserves exactness and the colimit of along the subsequence $k(n)$ computes the full colimit; this follows, for example, from \cite[\S{IX}.2]{maclean-cat-working-math} and \cite[\S07N7, Lemma 10.8.8]{stacks-project}.

One concludes from the long exact sequence in cohomology and the five lemma that the induced map:
\begin{equation*}
  \mathrm{H}(\colim \mathrm{CF}_{n}/\mathrm{CF}_{0})\to \mathrm{H}(\colim\mathrm{CF}'_{n}/\mathrm{CF}'_{0})
\end{equation*}
is an isomorphism which commutes with the long-exact sequence. This completes the proof of the invariance of positive wrapped Floer cohomology (Lemma \ref{lemma:pos-wrap-invariance}) in the special case when $\mathrm{CF}_{0}'$ is more positive than $\mathrm{CF}_{0}$. In the general case when the data for $\mathrm{CF}_{0}$ and $\mathrm{CF}_{0}'$ are not comparable, one picks a cofibrant-cofinal sequence $\mathrm{CF}_{n}''$ so that the data for $\mathrm{CF}_{0}''$ is less positive than the data for both $\mathrm{CF}_{0}$ and $\mathrm{CF}_{0}'$, and then applies the preceding discussion (it follows from \S\ref{sec:spec-cofibr-sequ} that we can always find such $\mathrm{CF}''$). This completes the proof of invariance in general.

\subsubsection{Strong maximum principle}
\label{sec:strong-maxim-princ}

In the construction of a particular cofibrant-cofinal sequence in \S\ref{sec:spec-cofibr-sequ}, we will have occasion to use the following strong maximum principle:
\begin{prop}
  Let $r$ be a positive function which is $1$ homogeneous with respect to the Liouville flow, let $H_{s}=f_{s}(r)$ be a Hamiltonian system so that $\bd_{s}\bd_{r}f_{s}(r)\le 0$, and suppose that $J$ is the SFT-type almost complex structure satisfying $JZ=X_{r}$. Then no non-constant solution to $\bd_{s}u+J(u)(\bd_{t}-X_{H_{s}}(u))=0$ attains a local maximum at an interior point or on $s=0,1$ boundary components satisfying conormal Lagrangian boundary conditions.
\end{prop}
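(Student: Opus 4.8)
The plan is to show that $\rho:=r\circ u$ is, after an exponential substitution, a subsolution of a uniformly elliptic inequality with bounded coefficients which in addition satisfies a homogeneous Neumann condition along the boundary arcs carrying the conormal condition; the assertion then follows from Hopf's strong maximum principle in the interior and Hopf's boundary point lemma on the boundary. Here $\rho$ is defined on the open part of the strip $\R\times[0,1]$ on which $u$ avoids the zero section. Throughout one uses the identities forced by the SFT-type $J$ and the $1$-homogeneity of $r$: $dr(X_{r})=0$, so $dr(X_{H_{s}})=0$ since $X_{H_{s}}=f'_{s}(r)X_{r}$ is tangent to the level sets of $r$; $dr(Z)=r$; and $dr(Jw)=\lambda(w)$, where $\lambda=\iota_{Z}\omega$ is the Liouville form. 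Since the conormal $\nu^{*}N$ is a conical exact Lagrangian, $\lambda$ restricts to zero on $T\nu^{*}N$, so $dr(Jw)=0$ whenever $w$ is tangent to $\nu^{*}N$.

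First I would compute $\Delta\rho$ with respect to the flat metric on the strip. Rewriting the equation as $\bd_{t}u=X_{H_{s}}(u)+J(u)\bd_{s}u$ and using $H_{s}=f_{s}(r)$, a differentiation along the lines of the standard maximum-principle computations (as in \cite{abouzaid_seidel_open_string_analogue}, and, in the $s$-dependent setting, \cite{ritter_tqft,brocic_cant}) produces an identity of the schematic shape
\begin{equation*}
  \Delta\rho \;=\; e(u)\;+\;f''_{s}(\rho)\,Q(\nabla\rho)\;-\;(\bd_{s}\bd_{r}f_{s})(\rho)\,\beta(u),
\end{equation*}
in which $e(u)\ge0$ is an energy density, $\beta(u)\ge0$, and $Q$ is quadratic in $dr(\bd_{s}u)=\bd_{s}\rho$ and $dr(\bd_{t}u-X_{H_{s}})=dr(\bd_{t}u)=\bd_{t}\rho$. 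The identity $dr(X_{H_{s}})=0$ is precisely what forces the \emph{a priori} sign-indefinite $f''_{s}$ term to be quadratic in $\nabla\rho$, hence to vanish at critical points of $\rho$ and to be $\le C|\nabla\rho|^{2}$ on compact sets, while the hypothesis $\bd_{s}\bd_{r}f_{s}\le0$ makes the last term $\ge0$. Thus $\Delta\rho\ge-C|\nabla\rho|^{2}-C'|\nabla\rho|$ locally, and setting $v:=\exp(k\rho)$ with $k$ large turns this into $\Delta v+b\cdot\nabla v\ge0$ with $b$ bounded, i.e.\ $v$ is a genuine subsolution of a uniformly elliptic operator.

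It remains to handle the boundary. On the arcs $\{t=0\}$ and $\{t=1\}$, where $u$ satisfies the conormal condition, $\bd_{s}u$ is tangent to $\nu^{*}N$, so evaluating $\bd_{t}u=X_{H_{s}}(u)+J(u)\bd_{s}u$ there and applying $dr$ gives $\bd_{t}\rho=dr(X_{H_{s}})+dr(J\bd_{s}u)=0+\lambda(\bd_{s}u)=0$, using $dr(X_{H_{s}})=0$ and $\lambda|_{T\nu^{*}N}=0$; hence $\rho$, and therefore $v$, has vanishing outward normal derivative along these arcs. If $\rho$ attained a local maximum at an interior point, the strong maximum principle applied to $v$ would force $v$, and hence $\rho$, to be locally constant; and if it attained a local maximum at a boundary point, Hopf's boundary point lemma together with the vanishing Neumann data would force the same conclusion. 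Unique continuation then propagates constancy of $\rho$ over the connected component of its domain, and inspecting the equality case $e(u)\equiv0$ together with the asymptotic conditions on the continuation strip forces $u$ to be constant, contrary to hypothesis. I expect the main obstacle to be the careful bookkeeping of signs in the identity for $\Delta\rho$ — in particular, verifying that the one term with no \emph{a priori} sign, the $f''_{s}$ one, is genuinely quadratic in $\nabla\rho$ and hence harmless — together with checking that the exponentiated, possibly degenerate inequality is still covered by the Hopf boundary point lemma; the interior argument is entirely classical once the differential inequality is in hand.
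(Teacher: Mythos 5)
Your proposal is correct and follows essentially the paper's own argument: one computes the Laplacian of $\mathrm{r}=r\circ u$, uses $\partial_{s}\partial_{r}f_{s}\le 0$ and the vanishing of $\lambda$ on $T\nu^{*}N$ (which gives $\partial_{t}\mathrm{r}=0$ along the conormal boundary), and applies the strong maximum principle, the only cosmetic difference being that the paper handles boundary points by doubling $\mathrm{r}$ across the boundary instead of invoking the Hopf boundary point lemma. One small correction: the exact identity is $\Delta \mathrm{r}=|\partial_{s}u|_{J}^{2}-\mathrm{r}\,(\partial_{s}\partial_{r}f_{s})(\mathrm{r})-\mathrm{r}\,f_{s}''(\mathrm{r})\,\partial_{s}\mathrm{r}$, so the sign-indefinite $f_{s}''$ term is first order in $\nabla\mathrm{r}$ rather than quadratic; it still vanishes at critical points and is absorbed by your $C'|\nabla\rho|$ term, so your differential inequality and conclusion stand, and the exponential substitution is in fact unnecessary.
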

\begin{proof}
  Set $\mathrm{r}(s,t) := r(u(s,t) )$. The fact that $\lambda$ vanishes on conormal Lagrangians implies that: $$\bd_{t} \mathrm{r} =\d r(J\bd_{s}u+f'_{s}(r)X_{r})=\d r(J\bd_{s}u)=-\lambda(\bd_{s}u)=0,$$ where we have used the SFT condition $\d r\circ J=-\lambda$. Therefore $r$ can be doubled as a $C^{2}$ function across the boundary. Using that $\bd_{s} \mathrm{r} = \lambda(\bd_{t} u) - \mathrm{r} f'_{s}(\mathrm{r})$, one shows: $$\Delta \mathrm{r}= | \bd_{s} u |^2_J - \mathrm{r} \cdot (\bd_{s} \bd_{r} f_{s}) (\mathrm{r}) - \mathrm{r} \cdot f_{s}''(\mathrm{r}) \cdot \bd_{s} \mathrm{r} .$$ Hence, at every critical point of $\mathrm{r}$ we have $\Delta \mathrm{r} \ge 0$, which yields the desired result. The computation is well-known; see, e.g., \cite[\S1.3]{viterbo_functors_and_computations_1}, \cite[\S{D.3}]{ritter_tqft}.
\end{proof}

\subsubsection{Reeb chords and positive wrapped Floer cohomology}
\label{sec:reeb-chords-pos-wrap-floer}

In this section, we prove that in the absence of Reeb chords $\mathrm{HW}_{+}$ vanishes for any choice of local system. This result is well-known, see \cite{abouzaid_seidel_open_string_analogue,ritter_tqft}, and can be considered as an open string analogue of the existence result for Reeb orbits of ideal boundaries in \cite{viterbo_functors_and_computations_1} assuming $\mathrm{SH}_{+}$ is non-zero.
\begin{prop}
  If the ideal boundary of $\nu^{*}N$ has no $\alpha$-Reeb chords of positive length for some contact form $\alpha$ then $\mathrm{HW}_{+}(\nu^{*}N;\mathscr{L}) = 0$ for all local systems $\mathscr{L}$.
\end{prop}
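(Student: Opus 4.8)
The plan is to show that the hypothesis forces the natural map $\mathrm{HW}_{0}\to \mathrm{HW}$ to be an isomorphism; by the long exact sequence \eqref{eq:hwplus-long-exact} this yields $\mathrm{HW}_{+}(\nu^{*}N;\mathscr{L})=0$, and since the local system $\mathscr{L}$ will play no role in the argument, the conclusion holds for every $\mathscr{L}$. Concretely, I would exhibit a cofibrant-cofinal sequence $(H_{n},J_{n})$ adapted to the Reeb-chord-free contact form $\alpha$ for which the continuation maps $\mathrm{CF}(H_{n},J_{n};\mathscr{L})\to \mathrm{CF}(H_{n+1},J_{n+1};\mathscr{L})$ are the identity; then each quotient $\mathrm{CF}(H_{n})/\mathrm{CF}(H_{0})$ is the zero complex, so $\mathrm{HW}_{+}$, being the colimit of their homologies, vanishes.

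First I would fix an identification of a neighbourhood of the ideal boundary of $T^{*}M$ with a positive symplectization $(\R_{+}\times Y,\d(r\alpha))$ of $(Y,\alpha)=(S^{*}M,\alpha)$, with $r$ the radial coordinate with $\lambda=r\alpha$, so that $\nu^{*}N$ becomes $\R_{+}\times\Lambda_{N}$. Then I would choose admissible data $(H_{n},J_{n})$ with: $H_{n}=f_{n}(r)$ on $\{r\ge R_{1}\}$ with each $f_{n}$ convex, $f_{n}\le f_{n+1}$, and $f_{n}'\to+\infty$ uniformly, so the slopes diverge and the sequence is cofinal; $H_{n}$ \emph{independent of $n$} on the fixed compact region $\{r\le R_{1}\}$, where it is a small time-dependent perturbation making all chords non-degenerate and where $H_{0}$ is already positive at infinity; and $J_{n}$ of SFT type for $\alpha$ (i.e.\ $J_{n}Z=X_{r}$) on $\{r\ge R_{1}\}$. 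That such a choice can be arranged to be cofibrant-cofinal will follow from the construction in \S\ref{sec:spec-cofibr-sequ}; the cofibration condition in fact becomes automatic once the continuation maps are shown to be identities.

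Next I would rule out chords at infinity: on $\{r\ge R_{1}\}$ one has $X_{H_{n}}=f_{n}'(r)\,R_{\alpha}$, so a chord of $X_{H_{n}}$ with endpoints on $\nu^{*}N$ lying there would project to an $\alpha$-Reeb chord of $\Lambda_{N}$ of positive length $f_{n}'(r)$, contradicting the hypothesis. Hence all chords of every $H_{n}$ lie in $\{r\le R_{1}\}$, where the $H_{n}$ coincide; in particular the underlying modules $\mathrm{CF}(H_{n};\mathscr{L})$ agree for all $n$. I would then confine the Floer cylinders: applying the strong maximum principle of \S\ref{sec:strong-maxim-princ} to the Floer differential of $H_{n}$, and to a continuation homotopy from $H_{n}$ to $H_{n+1}$ realized by a convex family $f_{s}(r)$ with $\partial_{s}\partial_{r}f_{s}\le 0$ on $\{r\ge R_{1}\}$ and taken $s$-independent on $\{r\le R_{1}\}$, the function $r\circ u$ has no interior local maximum and no maximum on the conormal boundary; since the asymptotics lie in $\{r\le R_{1}\}$, every such $u$ is contained in $\{r\le R_{1}\}$. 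On that region the relevant Floer and continuation data all agree, so the differentials coincide and each continuation map $\mathrm{CF}(H_{n};\mathscr{L})\to \mathrm{CF}(H_{n+1};\mathscr{L})$ is the identity; the same reasoning applied to data tending to zero at infinity shows $\mathrm{HW}_{0}\to \mathrm{HF}(H_{0})$ is an isomorphism, as required for cofibrant-cofinality.

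Consequently $\mathrm{CF}(H_{n};\mathscr{L})/\mathrm{CF}(H_{0};\mathscr{L})=0$ for all $n$, so $\mathrm{HW}_{+}(\nu^{*}N;\mathscr{L})=\colim_{n}\mathrm{H}\big(\mathrm{CF}(H_{n};\mathscr{L})/\mathrm{CF}(H_{0};\mathscr{L})\big)=0$; equivalently $\mathrm{HW}_{0}\to\mathrm{HW}$ is an isomorphism and \eqref{eq:hwplus-long-exact} gives the same. As $\mathscr{L}$ was arbitrary, this proves the proposition. I expect the main obstacle to be the bookkeeping for the adapted sequence: one must choose the interior perturbation and the SFT-type structures compatibly with the admissibility and transversality conditions of \S\ref{sec:admissible-data}, and arrange the continuation homotopies to be $s$-independent on $\{r\le R_{1}\}$ while staying monotone outside it so that the maximum principle applies. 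The essential geometric input --- that no Floer or continuation strip escapes the compact region --- is already supplied by the strong maximum principle proved above.
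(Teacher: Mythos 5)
Your proposal is correct and follows essentially the same route as the paper's proof: a cofibrant-cofinal sequence whose Hamiltonians are $n$-independent on a fixed compact region, the no-Reeb-chord hypothesis to keep all chords in that region, the strong maximum principle of \S\ref{sec:strong-maxim-princ} to confine Floer and continuation strips there, and the resulting vanishing of the quotients $\mathrm{CF}(H_{n})/\mathrm{CF}(H_{0})$. The one step you state a bit too quickly is that ``the differentials coincide'' does not by itself make the continuation maps the identity; you also need that for $s$-independent continuation data the rigid continuation strips must be stationary (non-stationary solutions solve the translation-invariant Floer equation and so come in $\R$-families, hence cannot be rigid), which is exactly the index argument the paper invokes via \S\ref{sec:spec-cofibr-sequ}.
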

\begin{proof}
  It is enough to construct a cofinal-cofibrant sequence $H_{n,t}$ such that all continuation maps $CF(H_{n,t}) \to CF(H_{n+1,t})$ are equal to the identity.

  Let $r$ be the radial coordinate on the symplectization end of $T^{*}M$ so that $\alpha=\lambda/r$. The Hamiltonian vector field for $r$ induces the Reeb flow for $\alpha$ on the ideal boundary. Let $\epsilon>0$ and $H_{0,t}$ be a generic Hamiltonian system so that $H_{0,t}=\epsilon r$ holds on the end $\set{r\ge 1}$, and $H_{0,t}\le \epsilon$ holds on the compact domain $\set{r\le 1}$. Let $J$ be the SFT type almost complex structure for $\alpha$ (extended arbitrarily to $\set{r\le 1}$). By picking $\epsilon$ small enough, the canonical map $\mathrm{HW}_{0}\to \mathrm{HF}(H_{0,t},J)$ is an isomorphism.

  We pick $H_{n,t}$ so that it agrees with $H_{0,t}$ on the set $\set{r\le1}$, $H_{n,t} =n r$ on $\set{r\ge2}$ and to be convex on the region $\set{1\le r\le 2}$. Since the flow by $H_{n,t}$ agrees with the flow of $H_{0,t}$ on the invariant set $\set{r\le 1}$, and there are no Reeb chords of $\nu^{*}N$, the chords of $H_{0,t}$ are exactly those of $H_{n,t}$. Consider the linear interpolation from $H_{n-1,t}$ to $H_{n,t}$ as continuation data. The strong maximum principle from \S\ref{sec:strong-maxim-princ} implies that the only rigid continuation strips from $H_{n,t}$ to $H_{n+1,t}$ are the stationary\footnote{A strip $u(s,t)$ is called \emph{stationary} if it is independent of the $s$ coordinate.} strips; this uses that the continuation data is $s$-independent in $\set{r\le 1}$; see \S\ref{sec:spec-cofibr-sequ} below for a variant of this argument.

  It follows that the maps $\mathrm{CF}(H_{n,t},J)\to \mathrm{CF}(H_{n+1,t},J)$ are trivially cofibrations, and the map $\mathrm{CF}(H_{0,t},J)\to \mathrm{CF}(H_{n,t},J)$ is an isomorphism. In particular, $$\mathrm{CF}(H_{n,t},J)/\mathrm{CF}(H_{0,t},J)=0,$$ and hence $\mathrm{HW}_{+}$ is $0$, as it is the colimit of a sequence of trivial groups, as desired.
\end{proof}

\subsubsection{A cofibrant-cofinal sequence}
\label{sec:spec-cofibr-sequ}

In this section we construct a cofibrant-cofinal sequence, showing that the definition of $\mathrm{HW}_{+}(\nu^{*}N;\mathscr{L})$ is not vacuous. Moreover, this construction is used to prove the isomorphism in \S\ref{sec:isom-from-morse}.

Fix $r=r_{g}=|p|_{g}$. Say that $a$ is a \emph{critical value} if $a$ is the length of a normal geodesic chord with boundary on $N$. Picking our metric generically ensures that the set $\set{0, \sigma_{1}, \sigma_{2}, ...}$ of critical values is discrete (and closed). We can further assume that $0 < \sigma_{1} < \sigma_{2} < \dots$. It follows that we can pick sequences $0<x_{1}<x_{2}<\dots$ so that the following holds:
\begin{enumerate}
\item The $k$th positive critical value $\sigma_k$ is the midpoint of $(x_{2k},x_{2k+1})$,
\item $x_{2k+1}^{2}-x_{2k}^{2}<x_{2k}^{2}-x_{2k-1}^{2}$.
\end{enumerate}
Note that the first condition implies that $\lim_{n\to\infty}x_{n}=\infty$. The second condition will be used for the energy estimate of continuation strips, to exclude certain configurations.

\begin{figure}[H]
  \centering
  \begin{tikzpicture}[xscale=1.2]
    \draw (-1,0)--(10,0);
    \path[every node/.style={fill, black,circle, inner sep=1pt}] (-1,0)node{} (0,0)node{} (1,0)node{}--(1.4,0)node{}--(2.2,0)node{}--(2.6,0)node{}--(4,0)node{}--(4.4,0)node{}--(6,0)node{}--(6.4,0)node{}--(8.5,0)node{}--(9.1,0)node{};
    \path[every node/.style={below}]
    (-1,0)node{0} (0,0)node{$x_{1}$} (1,0)node{$x_{2}$}--(1.4,0)node{$x_{3}$}--(2.2,0)node{$x_{4}$}--(2.6,0)node{$x_{5}$}--(4,0)node{$x_{6}$}--(4.4,0)node{$x_{7}$}--(6,0)node{$x_{8}$}--(6.4,0)node{$x_{9}$}--(8.5,0)node{$x_{10}$}--(9.1,0)node{$x_{11}$};
    \path[every node/.style={fill=red,circle, inner sep=1pt}] (1.2,0)node{}--(2.4,0)node{}--(4.2 , 0)node{}--(6.2, 0)node{}--(8.8,0)node{};
    \path[every node/.style={above}] (1.2,0)node{$\sigma_{1}$}--(2.4,0)node{$\sigma_{2}$}--(4.2,0)node{$\sigma_{3}$}--(6.2,0)node{$\sigma_{4}$}--(8.8,0)node{$\sigma_{5}$};
  \end{tikzpicture}
  \label{fig:xk_picture}
  \caption{Sequence $x_{k}$ used in the construction.}
\end{figure}
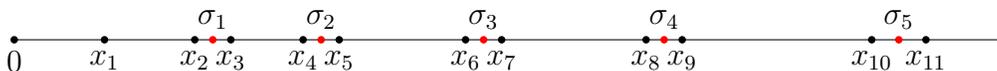

Define a sequence of functions $f_{n}:[0, +\infty) \to [0, +\infty)$ as follows:
\begin{equation}\label{eq:special_construction}
  f_{n}=\left\{
    \begin{aligned}
      &\frac{1}{2}(r^{2}+x_{n}^{2})&&\text{ for }r<x_{n}-\delta_{n},\\
      &x_{n}r&&\text{ for }r>x_{n}+\delta_{n},
    \end{aligned}
  \right.
\end{equation}
so that $f_{n}$ is smooth, increasing and satisfies $x_{n}-\delta_{n}\le f_{n}' (r)\le r$ in the region defined by $x_{n}-\delta_{n} \le r \le x_{n} + \delta_{n};$ see Figure \ref{fig:function_f_n}. We require that $\delta_{n}$ is smaller than $\frac{1}{3}\min\set{x_{n}-x_{n-1},x_{n+1}-x_{n}}$; see Figure \ref{fig:supp-per}.

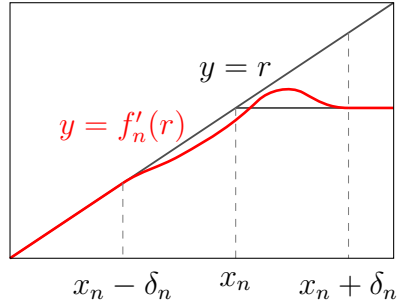
\begin{figure}[H]
  \centering
  \begin{tikzpicture}[xscale=3,yscale=2]
    \draw[line width=0.7pt, black!70!white] plot[domain= 0: 1.7,variable=\x] ({\x},{\x});
    \draw[line width=0.7pt, black!70!white] plot[domain=1 : 1.7, variable=\x] ({\x}, 1);
    \draw[line width=1pt,red] plot[domain= 0: 0.5,variable=\x] ({\x},{\x});
    \draw[line width=1pt,red] (0.5,0.5)  to [out=45, in=220] (0.75, 0.68) to [out=40, in=235] (1.1,1.05) to [out=55, in=140] (1.3, 1.1) to [out=320, in=180] (1.5,1);
    \foreach \x in {0.5, 1, 1.5} {
      \draw[dashed,black!50!white] ({\x},{\x})coordinate(X\x)--({\x},0);
    }
    \draw[line width=1pt, red] plot[domain=1.5 : 1.7, variable=\x] ({\x}, 1);
    \draw (0,0) rectangle (1.7,1.7);
    \node at (1,1.1) [above] {$y=r$};
    \node at (0.5, 0.7)[above, red] {$y= f_{n}'(r)$};
    \node at (0.5,0)[below,inner sep=5pt] {$x_{n} - \delta_{n}$};
    \node at (1,0)[below,inner sep=5pt] {$x_{n}$};
    \node at (1.5, 0)[below,inner sep=5pt] {$x_{n}+\delta_{n}$};
  \end{tikzpicture}
  \caption{Graph of the function $f_{n}'$.}
  \label{fig:function_f_n}
\end{figure}

\begin{figure}[H]
  \centering
  \begin{tikzpicture}
    \draw (0,0) -- (12,0);
    \path[every node/.style={draw,circle,fill,inner sep=1pt}] (2,0)node{} (6,0)node{} (8,0)node[fill=red]{} (10,0)node{};
    \path[every node/.style={below}] (2,0)node{$x_{2k-1}$} (6,0)node{$x_{2k}$} (8,0)node{$\sigma_{k}$} (10,0)node{$x_{2k+1}$};
    \foreach \x in {2, 6, 10} {
      \draw (\x,0)+(1,0.2)--+(1,-0.2) (\x,0)+(-1,0.2)--+(-1,-0.2);
    }
    \draw[<->] (7,0.3)--node[above]{$2\delta_{2k}$}(5,0.3);
  \end{tikzpicture}
  \caption{The perturbation term $\kappa_{n,t}$ are supported in the union of the intervals $[x_{j}-\delta_{j},x_{j}+\delta_{j}]$ for $j=1,\dots,n$.}
  \label{fig:supp-per}
\end{figure}
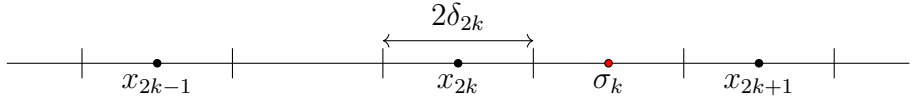

Now define a sequence of Hamiltonians by:
\begin{equation*}
  H_{n} = f_{n}(r) + b,
\end{equation*}
where $b\ge 0$ is $C^2$ small function supported in the region $\set{r\le x_{1}}$. We pick $b$ so that the chords at $r=0$ are non-degenerate.

In order to achieve the transversality of moduli spaces, we perturb $H_{n,t}:=H_{n}+\kappa_{n,t}$ with a \emph{non-negative} time-dependent perturbation $\kappa_{n,t}\ge 0$. For $n\ge 2$, we require that $\kappa_{n,t}$ coincides with $\kappa_{n-1, t}$ on $r \le x_{n}-\delta_{n}$, and $\kappa_{n,t} - \kappa_{n-1,t}$ is supported in the region where $r\in (x_{n} -\delta_{n},x_{n}+\delta_{n})$ and $t\in (1/3,2/3)$; see Figure \ref{fig:supp-per}. In particular, $H_{n,t}$ agrees with $H_{n}$ whenever $r$ is close to a critical value $\sigma_{k}$. By requiring that $\kappa_{n,t}$ is small enough, we may assume the chords of $H_{n}$ and $H_{n} + \kappa_{n,t}$ with endpoints on $\nu^{*}N$ coincide, i.e., no new chords were created by adding the perturbation terms. Because the metric is non-degenerate, all of the chords of $H_{n}$ (and hence $H_{n,t}$) with endpoints on $\nu^{*}N$ are non-degenerate (and there are only finitely many); see \cite[\S2.1]{abbondandolo_schwarz} for details on the non-degeneracy.

Define $F_{k}$ to be the direct sum of $\mathfrak{o}_{\gamma}\otimes \mathscr{L}_{\gamma}$ where $\gamma$ ranges over the chords with $r=\sigma_k$, yielding a decomposition:
\begin{equation}\label{eq:decomposition}
  \mathrm{CF}(H_{n,t})=\mathrm{CF}(H_{1})\oplus F_{1}\oplus \dots \oplus F_{\lfloor \frac{n-1}{2}\rfloor}.
\end{equation}

A straightforward computation shows that the action $$\mathscr{A}_{n}(\gamma) = \int_0^1 H_{n,t}(\gamma)\mathrm{d}t - \gamma^* \lambda$$ of a chord $\gamma$ on the level $r=\sigma_{k}$ in the system $H_{n}$ satisfies $\mathscr{A}_{n}(\gamma) = \frac{1}{2}(x_{n}^{2}-\sigma_{k}^{2})$, where $\sigma_{k}=\frac{1}{2}(x_{2k}+x_{2k+1})$ is the $k$th positive critical value. The action $\mathscr{A}_{n}(\gamma)$ of chords $\gamma$ in the summand $\mathrm{CF}(H_{1},J)$ is close to $\frac{1}{2}x_{n}^{2}$ (how close depends on $b$).

\begin{prop}
  Let $H_{n,t,s}$ be the continuation data $(1-\beta(s))H_{n+1,t}+\beta(s)H_{n,t}$ where $\beta$ is a cut-off function $\R\to [0,1]$, and let $J$ to be SFT type almost complex structure as in \S\ref{sec:strong-maxim-princ}. The continuation morphism: $$\mathfrak{c}(H_{n,t,s},J):\mathrm{CF}(H_{n,t},J)\to \mathrm{CF}(H_{n+1,t},J)$$ equals the inclusion, with respect to the decompositions in \eqref{eq:decomposition}. In particular, $(H_{n,t},J)$ with this continuation data forms a cofinal-cofibrant sequence.
\end{prop}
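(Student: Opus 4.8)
The plan is to determine the zero-dimensional moduli spaces of continuation strips for $(H_{n,t,s},J)$ and show they consist only of stationary strips. Recall a contributing strip $u$ is asymptotic to a chord $\gamma_{+}$ of $H_{n,t}$ at one end and a chord $\gamma_{-}$ of $H_{n+1,t}$ at the other, and the $\gamma_{-}$-coefficient of $\mathfrak{c}(\gamma_{+})$ is the signed count of such $u$.

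First I would record that on $\{r<x_{n}-\delta_{n}\}$ one has $f_{n+1}-f_{n}\equiv\tfrac12(x_{n+1}^{2}-x_{n}^{2})$ and $\kappa_{n+1,t}=\kappa_{n,t}$, so there $H_{n,t,s}$ differs from the $s$-independent Hamiltonian $H_{n,t}$ by an additive constant only, and its Hamiltonian vector field is $s$-independent. Every chord of $H_{n,t}$, and every chord of $H_{n+1,t}$ outside the at-most-one new summand $F_{\lfloor n/2\rfloor}$ of \eqref{eq:decomposition}, lies at radial level $<x_{n}-\delta_{n}$; hence each generator $\gamma$ of $\mathrm{CF}(H_{n,t})$ is literally a generator of $\mathrm{CF}(H_{n+1,t})$ and $u(s,t)=\gamma(t)$ is a continuation strip. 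These are regular (a routine computation, the data being $s$-independent near $\gamma$), and with the coherent orientations of \S\ref{sec:coher-orient}, and since the monodromy of a constant strip is the identity, each contributes $+\mathrm{id}$. This produces the identity on the common summands.

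Next is the energy estimate. The energy identity $0\le E(u)=\mathscr{A}_{n+1}(\gamma_{-})-\mathscr{A}_{n}(\gamma_{+})+\int_{\R\times[0,1]}(\bd_{s}H_{n,t,s})(u)\,\d s\,\d t$, the monotonicity $\bd_{s}H_{n,t,s}=-\beta'(s)(H_{n+1,t}-H_{n,t})$ with $\beta'\ge0$, and the pointwise bound $H_{n+1,t}-H_{n,t}\ge\tfrac12(x_{n+1}^{2}-x_{n}^{2})$ everywhere (the minimum of $f_{n+1}-f_{n}$ being attained at $r=0$, with $\kappa_{n+1,t}-\kappa_{n,t}\ge0$) combine to give $\mathscr{A}_{n+1}(\gamma_{-})\ge\mathscr{A}_{n}(\gamma_{+})+\tfrac12(x_{n+1}^{2}-x_{n}^{2})$. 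Equivalently, for the shifted action $\mathscr{A}'_{m}(\gamma):=\mathscr{A}_{m}(\gamma)-\tfrac12 x_{m}^{2}$, which equals $-\tfrac12\rho^{2}$ on a chord at radial level $\rho<x_{m}-\delta_{m}$ and so is $m$-independent for all chords in play, one gets $\mathscr{A}'(\gamma_{-})\ge\mathscr{A}'(\gamma_{+})$. In \eqref{eq:decomposition} the summand $\mathrm{CF}(H_{1})$ has $\mathscr{A}'$ within $O(b)$ of $0$ while $F_{k}$ has $\mathscr{A}'=-\tfrac12\sigma_{k}^{2}$, and these values strictly decrease in $k$; hence, taking $b$ small, the new summand $F_{\lfloor n/2\rfloor}$ has strictly smaller $\mathscr{A}'$ than any generator of $\mathrm{CF}(H_{n,t})$, so no strip outputs there, and for every contributing strip both $\gamma_{\pm}$ lie at radial level $<x_{n}-\delta_{n}$.

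Finally, the strong maximum principle of \S\ref{sec:strong-maxim-princ} forces stationarity: its hypothesis $\bd_{s}\bd_{r}f_{s}\le0$ holds since $f_{s}=(1-\beta)f_{n+1}+\beta f_{n}$ with $f_{n+1}'\ge f_{n}'$ and $\beta'\ge0$, and $J$ is of SFT type, so $r\circ u\le\max(r\circ\gamma_{-},r\circ\gamma_{+})<x_{n}-\delta_{n}$ and $u$ stays in the region where $H_{n,t,s}$ is autonomous up to an additive constant. There $u$ solves the translation-invariant Floer equation of the generic Hamiltonian $H_{n,t}$, so if $u$ were non-stationary then $0\ne\bd_{s}u\in\ker D_{u}$ and $u$ would lie in a reduced moduli space of positive dimension, contradicting rigidity. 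Hence $u$ is stationary and $\gamma_{-}=\gamma_{+}$; together with the second paragraph, $\mathfrak{c}$ equals the inclusion of \eqref{eq:decomposition}. Cofinality follows as $x_{n}\to\infty$; each $\mathfrak{c}$ is a cofibration, being the inclusion of the summand complementary to the free module $F_{\lfloor n/2\rfloor}$ (off-diagonal differential terms allowed); and $\mathrm{HW}_{0}\to\mathrm{HF}(H_{1})$ is an isomorphism since $(0,x_{1})$ contains no critical value. The step I expect to be most delicate is reconciling the maximum principle and the action identity with the auxiliary perturbations: $J$ is exactly SFT-type and $H_{n,t,s}$ exactly radial only away from the supports of $b$ and of the $\kappa_{j,t}$, so one keeps $b$ concentrated near $r=0$ and small and each $\kappa_{j,t}$ in a narrow shell $r\in(x_{j}-\delta_{j},x_{j}+\delta_{j})$ disjoint from the critical levels $r=\sigma_{k}$ — which is precisely what conditions (1)--(2) together with $\delta_{n}<\tfrac13\min\{x_{n}-x_{n-1},x_{n+1}-x_{n}\}$ allow — and then checks that the autonomy argument near $r=0$ and the bound $H_{n+1,t}-H_{n,t}\ge\tfrac12(x_{n+1}^{2}-x_{n}^{2})$ are not disturbed, the sharper local-along-$u$ form of the latter being where condition (2) is really used.
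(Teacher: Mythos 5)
Your proof is correct and shares the paper's skeleton: confine rigid continuation strips to the region $r\le x_{n}-\delta_{n}$ via the strong maximum principle of \S\ref{sec:strong-maxim-princ} (using $f_{n+1}'\ge f_{n}'$ and $\beta'\ge 0$), observe that there $X_{H_{n,t,s}}$ is $s$-independent because $H_{n+1,t}-H_{n,t}$ is the constant $\frac{1}{2}(x_{n+1}^{2}-x_{n}^{2})$, and rule out non-stationary rigid solutions by the translation/transversality argument, so the map is the identity on the common summands of \eqref{eq:decomposition}; then exclude outputs in the new summand by an action estimate. Where you genuinely diverge is in that exclusion step: the paper drops the $\int \bd_{s}H$ term, bounds $E(u)\le \mathscr{A}_{n+1}(\gamma_{-})-\mathscr{A}_{n}(\gamma_{+})$, and invokes the spacing condition (ii) of \S\ref{sec:spec-cofibr-sequ}, $x_{2k+1}^{2}-x_{2k}^{2}<x_{2k}^{2}-x_{2k-1}^{2}$, to make this difference negative; you keep the term, bound it by $-\frac{1}{2}(x_{n+1}^{2}-x_{n}^{2})$ using the uniform pointwise gap $H_{n+1,t}-H_{n,t}\ge \frac{1}{2}(x_{n+1}^{2}-x_{n}^{2})$ (which indeed holds: $f_{n+1}-f_{n}$ is nondecreasing in $r$, and $\kappa_{n+1,t}-\kappa_{n,t}\ge 0$ because the difference is supported where $\kappa_{n,t}$ vanishes), and deduce monotonicity of the shifted action, i.e.\ the output level is at most the input level. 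This is a sharper bookkeeping: it makes condition (ii) unnecessary for this proposition, shows directly that both asymptotics lie below $x_{n}-\delta_{n}$, and lets you treat odd and even $n$ uniformly instead of the paper's case split. Two small caveats, neither fatal: the literal confinement $r\circ u\le \max(r\circ\gamma_{\pm})$ is stronger than what the stated maximum principle yields once the non-radial perturbations $b$ and $\kappa_{j,t}$ are present (the paper disposes of this with its one-line ``compactness argument'' setting $\kappa_{n,t}=0$, and only confinement to $r\le x_{n}-\delta_{n}$ is needed), and you correctly flag exactly this as the delicate point, so your rigor matches the paper's; also the label $F_{\lfloor n/2\rfloor}$ for the new summand is only apt for even $n$, but your argument covers odd $n$ as the statement that the map is the identity.
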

\begin{proof}
  There are two sorts of continuation maps to consider, those when $n=2k-1$ is odd, which are supposed to be the identity with respect to \eqref{eq:decomposition}, and those when $n=2k$ is even, which is supposed to be a proper inclusion with cokernel $F_{k}$.

  First, we consider the case when $n$ is odd. We will show that all continuation strips are contained in the region where $r\le x_{n}- \delta_{n}$. By a compactness argument, we can assume that all perturbations satisfy $\kappa_{n,t}=0$. In this case, $H_{n,s}=h_{s}(r)$ holds in the region where $r\ge x_{n}-\delta_{n}$ and $\bd_{s}h'_{s}(r)\le 0$. Indeed, the functions $f_n$ and $f_{n+1}$ are chosen in a way that $f_{n}' \leq f_{n+1}'$, so: $$\bd_{s}((1-\beta(s)) f_{n+1}'(r) + \beta(s) f_{n}'(r)) = \beta'(s) (f_{n}'(r) - f_{n+1}'(r)) \le 0.$$  Thus we may apply the strong maximum principle of \S\ref{sec:strong-maxim-princ} to conclude that all continuation strips are contained in the region where $r\le x_{n}- \delta_{n}$.

  In this region, the Hamiltonian vector field $X_{H_{n,t,s}}$ is independent of the $s$ variable; therefore, all continuation strips solve the equation for the Floer differential. By picking the perturbation terms $\kappa_{n,t}$ sufficiently generically, the moduli spaces used to define the Floer differential are cut out transversally, hence there are no non-stationary continuation strips for index reasons. Indeed, a reparametrization by translation in the $s$ direction implies that the component of any non-stationary solution $u$ must be of positive dimension. Consequently, the continuation map is the identity when $n$ is odd.

  Next consider the case $n=2k$. Write $\mathrm{CF}(H_{n,t})=C$ and $\mathrm{CF}(H_{n+1,t})=C\oplus F_{k}$. The same strong maximum principle argument given above implies that only terms $C\to C$ are given by constant continuation strips.

  It remains to prove that there are no terms $C\to F_{k}$ in the continuation map. We will show there are no such continuation strips by considering the action.

  Since the continuation data $H_{n,t,s}$ is non-increasing (i.e., $\bd_{s}H_{n,s}\le 0$), we conclude that the energy of any continuation strip is bounded above by $\mathscr{A}_{n+1}(\gamma_{-})-\mathscr{A}_{n}(\gamma_{+})$. Using (ii) from the construction, we show that this action difference is strictly negative. Indeed, recalling that for $n=2k$, $\mathscr{A}_{n+1}(\gamma_{-})$ equals $\frac{1}{2}(x_{2k+1}^{2}-\sigma_{k}^{2})$ while $\mathscr{A}_{n}(\gamma_{+})$ is bounded from below by $\frac{1}{2}(x_{2k}^{2}-\sigma_{k-1}^{2})$, so:
  \begin{equation*}
    \begin{aligned}
      \mathscr{A}_{n+1}(\gamma_{-})-\mathscr{A}_{n}(\gamma_{+})
      &\le \frac{1}{2}(x_{2k+1}^{2}-x_{2k}^{2}-\sigma_{k}^{2}+\sigma_{k-1}^{2})\\
      &\le \frac{1}{2}(x_{2k+1}^{2}-x_{2k}^{2}-x_{2k}^{2}+x_{2k-1}^{2})<0,
    \end{aligned}
  \end{equation*}
  using (ii) to conclude the final inequality. Thus there are no continuation strips $C\to F_{k}$, and so the continuation map $C\to C\oplus F_{k}$ equals the inclusion $C\to C$, as desired.
\end{proof}

\subsubsection{Comparison with a quadratic Hamiltonian}
\label{sec:comp-with-quadr}

In this section we define the Floer complex $\mathrm{CF}(Q_{t})$ for a quadratic Hamiltonian system $Q_{t}=\frac{1}{2}r^{2}+\kappa_t$, and prove it represents the colimit of $\mathrm{CF}(H_{n,t})$ where $H_{n,t}$ is the cofinal-cofibrant sequence constructed in \S\ref{sec:spec-cofibr-sequ}. Here $\kappa_{t}:=b+\lim_{n\to\infty} \kappa_{n,t}$. See \cite[\S{C}]{ritter_tqft} for more details regarding quadratic Hamiltonian systems and their role in wrapped Floer cohomology.

First of all, note that $Q_{t}=\lim_{n\to\infty} (H_{n,t}-\frac{1}{2}x_{n}^{2})$ holds on compact sets; indeed, on the region $\set{r\le x_{n}}$, $Q_{t}$ equals $H_{n,t}-\frac{1}{2}x_{n}^{2}$.

Define $\mathrm{CF}(Q_{t})$ to be the direct sum of $\mathfrak{o}_{\gamma}\otimes  \mathscr{L}_{\gamma}$ as $\gamma$ ranges over all chords of the system generated by $Q_{t}$. Then $\mathrm{CF}(Q_{t})$ splits as a direct sum $\mathrm{CF}(H_{1})\oplus F_{1}\oplus F_{2}\oplus \dots$. Every Floer cylinder for $Q_{t}$ is a Floer cylinder for some $H_{n,t}$ if $n$ is large enough, by \S\ref{sec:strong-maxim-princ}; all that is required is that the asymptotics of $\gamma$ lie in the region where $Q_{t}$ equals $H_{n,t}-\frac{1}{2}x_{n}^{2}$. Moreover, the summand $\mathrm{CF}(H_{2k+1,t})=\mathrm{CF}(H_{1})\oplus F_{1}\oplus \cdots \oplus F_{k}$ is a subcomplex of $\mathrm{CF}(Q_{t})$, i.e., the obvious inclusion $\mathrm{CF}(H_{2k+1,t})$ into $\mathrm{CF}(Q_{t})$ is a chain map.

It follows from these facts that the quotient $\mathrm{CF}(Q_{t})/\mathrm{CF}(H_{1})$ is a chain-level model for $\mathrm{HW}_{+}(\nu^{*}N;\mathscr{L})$.

\subsection{Isomorphism from Morse homology to wrapped Floer cohomology}
\label{sec:isom-from-morse}

The goal in this section is to construct an isomorphism:
\begin{equation*}
  \Theta:\mathrm{HM}(\mathscr{P},N;\mathscr{E}; \mathscr{L}) \to \mathrm{HW}_{+}(\nu^*N; \mathscr{L}).
\end{equation*}
Our approach to defining $\Theta$ and to proving it is an isomorphism follows \cite[Theorem 3.1]{abbondandolo_schwarz}; see also \cite{abouzaid_based_loop,abouzaid_monograph}. Fix throughout this section a pseudogradient for the Riemannian energy functional.

The morphism is defined by counting elements in the moduli space $\mathscr{M}^{\Theta}(H,J)$ of pairs $(u,q)$ where $q$ lies in the unstable manifold of a geodesic normal chord and $u$ solves the elliptic boundary problem:
\begin{equation}\label{eq:theta-moduli-space}
  \begin{tikzpicture}[baseline={(0,-0.1)}]
    \draw (0,-0.5)coordinate(A)--+(4,0)coordinate(B)--+(4,1)coordinate(C)--+(0,1)coordinate(D);
    \draw[dashed] (A)--node[left]{$\gamma$}(D);
    \path (A)--node[below]{$\nu^{*}N$}(B) (D)--node[above]{$\nu^{*}N$}(C)--node[right]{$T^{*}M_{q(t)}$}(B);
  \end{tikzpicture}
  \hspace{1cm}
  \left\{
    \begin{aligned}
      &u:(-\infty,0]\times [0,1]\to T^{*}M,\\
      &\bd_{s}u+J_{t}(u)(\bd_{t}u-X_{H_{t}}(u))=0,\\
      &u(0,t)=(q(t),p(t)),\\
      &u(s,0), u(s,1)\in \nu^*N.\\
    \end{aligned}
  \right.
\end{equation}

By construction, each element of $\mathscr{M}^{\Theta}$ is asymptotic at its left end to a Hamiltonian chord $\gamma(t)$ for the system $H_{t}$, while the flow line starting at $q(t)$ converges to some geodesic normal chord $x(t)$ under the flow by the positive pseudogradient. Note that we can think of $\mathscr{M}^{\Theta}$ as a parametric moduli space of pairs $(u,q)$ where $q$ (locally) varies in a finite dimensional space (namely, the unstable manifold of $x$).

If $(u,q)$ is a rigid element in $\mathscr{M}^{\Theta}$, then the linearized operator $T_{u,q}$ associated to the problem is an isomorphism; as usual for parametric moduli spaces, this determines an isomorphism between $\mathfrak{o}(D_{u,q})$ (the orientation line of the linearization with $q$ fixed) and $\mathfrak{o}_{x}$ (the orientation line for the unstable manifold of $x$); see \S\ref{sec:param-moduli-space}. In \S\ref{sec:ident-orient-line} we describe a coherent way of identifying $\mathfrak{o}(D_{u,q})$ with the orientation line $\mathfrak{o}_{\gamma}$. Let us denote by $\Theta_{(u,q)}$ this induced map on orientation lines $\mathfrak{o}_{x}\to \mathfrak{o}_{\gamma}$.

With regard to the local system $\mathscr{L}$, observe that every element $(u,q)$ determines a path from the geodesic chord $x(t)$ to (the projection of) the Hamiltonian chord $\gamma(t)$, and hence determines a map $\mathscr{L}(u,q)$ between $\mathscr{L}_{x}$ and $\mathscr{L}_{\gamma}$.

Therefore, modulo the question of sums converging, we can interpret the count of rigid elements in $\mathscr{M}^{\Theta}$ as a map from $\mathrm{CM}(\mathscr{P},N;\mathscr{E};\mathscr{L})$ to $\mathrm{CF}(H,J;\mathscr{L})$ by the formula:
\begin{equation}\label{eq:theta_map_defn}
  \Theta := \sum_{(u,q)}\Theta_{(u,q)}\otimes \mathscr{L}(u,q).
\end{equation}
It is necessary to impose restrictions on the system $H_{t}$ in order for this $\Theta$ map to be well-defined and a chain map; in \cite[\S3]{abbondandolo_schwarz}, the authors prove that a variant of $\Theta$ is a chain map and a quasi-isomorphism in the case when $H_{t}$ has quadratic growth in the $p$ coordinate.

We note that, since we work with relative Morse homology, $\Theta$ will be a chain map only after we quotient the codomain $\mathrm{CF}(H,J;\mathscr{L})$ by a certain subcomplex; see \S\ref{sec:chain-map-property} for the precise statement.

For special choices of the system $H_{t}$ (namely, small perturbations of $H=\frac{1}{2}r^{2}$), $\Theta$ is shown to be an quasi-isomorphism in \cite{abbondandolo_schwarz} via a \emph{diagonal argument}, i.e., with respect to bases ordered by their action, $\Theta$ is a triangular matrix with all ones on the diagonal; see \S\ref{sec:defin-full-morse} and \S\ref{sec:diagonal-argument-theta} for more details. Their results, especially those in the follow-up paper \cite{abbondandolo_portaluri_schwarz}, seem to apply to the approximately quadratic system $Q_t$ defined in \S\ref{sec:comp-with-quadr}, and should yield the isomorphism:
\begin{equation*}
  H_{*}(\mathscr{P},N;\mathscr{L})\simeq \mathrm{HM}(\mathscr{P},N;\mathscr{E};\mathscr{L})\simeq \mathrm{H}(\mathrm{CF}(Q_{t},J)/\mathrm{CF}(H_{1},J))\simeq \mathrm{HW}_{+}(\nu^{*}N;\mathscr{L}),
\end{equation*}
proving Theorem \ref{theorem:main-iso}. The rest of this section is devoted to a careful proof of this isomorphism in our framework, using the particular quadratic system $Q_{t}$ constructed in \S\ref{sec:comp-with-quadr}. The final subsections \S\ref{sec:adapt-compl-struct}--\ref{sec:regul-moving-lagr} are devoted to some analysis of the solutions to $\mathscr{M}^{\Theta}(Q_{t},J)$ near the corners of the domain, and some discussion of the moving Lagrangian boundary conditions along the $s=0$ boundary.

\subsubsection{Energy bounds for a quadratic Hamiltonian}
\label{sec:energy-bounds-quad}

Suppose $Q_{t}$ is the perturbation of the quadratic system as in \S\ref{sec:comp-with-quadr}. Using that $Q_{t}\geq \frac{1}{2}r^2$, we estimate:
\begin{equation*}
  \begin{aligned}
    E(u)&=\mathscr{A}(\gamma_{-})+\textstyle\int\ip{p(t),q'(t)}\d t-\textstyle\int Q_{t}(q(t),p(t))\,\d t\\
        &\le \mathscr{A}(\gamma_{-})-\textstyle\frac{1}{2}\norm{p-g_{\flat}(q')}_{L^{2}}^{2}+\textstyle\frac{1}{2}\norm{q'}_{L^{2}}^{2}\\
        &=\textstyle\frac{1}{2}(\norm{q'}_{L^{2}}^{2}-r_{-}^{2} - \norm{p-g_{\flat}(q')}_{L^{2}}^{2}).
  \end{aligned}
\end{equation*}
Here $g_{\flat}$ signifies the isomorphism between $TM$ and $T^{*}M$ induced by the Riemannian metric $g$. In the final line we have used that every orbit $\gamma_{-}$ of $Q_{t}$ lies in a fixed level set $r=r_{-}$ and its action is given by $\mathscr{A}(\gamma_{-})=-\frac{1}{2}r_{-}^2$.

In particular, an $L^{2}$ bound on $q'$ ensures an a priori bound on the energy of $u$, an a priori $L^{2}$ bound on $p$, and an a priori bound on $r_{-}^{2}$. By definition of pseudogradient, the $L^{2}$ size of $q'$ is at most the $L^{2}$ size of $x'$, and hence, for each element $x$ in $\mathrm{CM}_{\ell}$ all moduli spaces needed to compute $\Theta(x)$ satisfy an a priori energy bound. In this case, the results in \S\ref{sec:priori-w1-p} imply $\Theta(x)$ is a finite sum in $\mathrm{CF}(Q_{t})$.

\subsubsection{Chain map property}
\label{sec:chain-map-property}

Let $\mathrm{CM}_{\ell}$ be as in \S\ref{sec:morse-homol-energy}. We emphasize that $\Theta$ will not directly define a chain map $\mathrm{CM}_{\ell}\to \mathrm{CF}(Q_{t},J)$, since $\mathrm{CM}_{\ell}$ only considers generators of positive length. However, if we let $\mathrm{CF}_{+}(Q,J):=\mathrm{CF}(Q,J)/\mathrm{CF}(H_{1},J)$, then:

\begin{prop}
  The induced map:
  \begin{equation*}
    \Theta:\mathrm{CM}_{\ell}\to \mathrm{CF}_{+}(Q_{t},J)
  \end{equation*}
  is a chain map, assuming that the construction of $Q_t$ is sufficiently generic so that the relevant moduli spaces are cut out transversally.
\end{prop}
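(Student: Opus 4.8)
\emph{The plan.} As usual for comparison maps of this type, I would prove the chain-map identity by analysing the compactified one-dimensional components of the parametric moduli space $\mathscr{M}^{\Theta}(Q_{t},J)$ and reading off $d\circ\Theta\pm\Theta\circ\bd=0$ from their boundary; here $\bd$ is the Morse differential of \S\ref{sec:morse-differential-with-loc-coeffs} on $\mathrm{CM}_{\ell}$ and $d$ is the (induced) Floer differential on $\mathrm{CF}_{+}(Q_{t},J)=\mathrm{CF}(Q_{t},J)/\mathrm{CF}(H_{1},J)$. Throughout, fix the generic $Q_{t}$ of \S\ref{sec:comp-with-quadr} and a generic pseudogradient so that all the moduli spaces below are cut out transversally; by \S\ref{sec:energy-bounds-quad} the sum $\Theta(x)$ is then finite for each generator $x$ of $\mathrm{CM}_{\ell}$, so that $\Theta$, and hence the composite $\mathrm{CM}_{\ell}\to\mathrm{CF}_{+}(Q_{t},J)$, is well defined. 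It suffices to compare, for each generator $x$ of $\mathrm{CM}_{\ell}$ (a geodesic chord of positive length) and each chord $\gamma$ of $Q_{t}$ with $\gamma\notin\mathrm{CF}(H_{1})$, the $\gamma$-coefficients of $d\Theta(x)$ and of $\Theta\bd(x)$ — which is precisely why one must pass to the quotient $\mathrm{CF}_{+}$.

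\emph{Compactness, boundary strata, and the two subtleties.} Fix such an $x,\gamma$ with the index relation for which $\mathscr{M}^{\Theta}(x,\gamma)$ is one-dimensional. I would first show that $\overline{\mathscr{M}^{\Theta}(x,\gamma)}$ is a compact one-manifold with boundary: the a priori energy bound of \S\ref{sec:energy-bounds-quad}, exactness of $T^{*}M$ and the vanishing of $\lambda$ on $\nu^{*}N$ (so there is no sphere or disk bubbling), the $W^{1,p}$-estimates of \S\ref{sec:priori-w1-p}, and the corner and moving-boundary analysis of \S\ref{sec:adapt-compl-struct}--\ref{sec:regul-moving-lagr} together yield Gromov compactness, with codimension-one strata of two kinds: (a) a Floer cylinder breaks off at $s\to-\infty$, producing a rigid $(u',q)\in\mathscr{M}^{\Theta}(x,\gamma')_{0}$ and a rigid Floer cylinder with asymptotics $\gamma'$ and $\gamma$ contributing to $d$; and (b) a pseudogradient trajectory breaks off at $q$, producing a rigid trajectory from $x$ to a critical point $x'$ of index drop one and a rigid $(u,q')\in\mathscr{M}^{\Theta}(x',\gamma)_{0}$ with $q'$ in the unstable manifold of $x'$. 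With the orientation conventions fixed in \S\ref{sec:floer-differential-local-coefficients} — and using, exactly as in the proof of Lemma \ref{lemma:differential}, that a one-parameter family of configurations induces a homotopy of the associated paths, so the monodromies of $\mathscr{L}$ compose correctly and the local system introduces nothing new — the signed count of strata (a) equals the $\gamma$-coefficient of $d\Theta(x)$ and that of strata (b) equals the $\gamma$-coefficient of $\Theta\bd(x)$, \emph{provided} the intermediate objects are of the expected type. In (a), the intermediate chord $\gamma'$ cannot lie in $\mathrm{CF}(H_{1})$: by \S\ref{sec:comp-with-quadr} that is a subcomplex of $\mathrm{CF}(Q_{t},J)$, so it supports no differential into $\gamma\notin\mathrm{CF}(H_{1})$, and hence (a) indeed computes the $\gamma$-coefficient of the differential of $\Theta(x)$ in the quotient complex $\mathrm{CF}_{+}$. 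In (b), the critical point $x'$ could a priori be a constant path in the Morse--Bott minimum $N$, which $\bd$ on $\mathrm{CM}_{\ell}$ does not record; but then the limiting half-strip would have constant boundary $q'\equiv\mathrm{pt}$, and the estimate of \S\ref{sec:energy-bounds-quad} would give $E(u)\le\tfrac12\bigl(0-r_{-}^{2}-\norm{p}_{L^{2}}^{2}\bigr)\le 0$, forcing $u$ to be stationary with $r_{-}=0$, i.e.\ $\gamma\in\mathrm{CF}(H_{1})$, contrary to hypothesis. Thus in (b) the critical point $x'$ has positive length and the stratum is one of those counted by $\bd$ on $\mathrm{CM}_{\ell}$.

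\emph{Conclusion and the main obstacle.} Since the total signed count of boundary points of a compact oriented one-manifold vanishes, summing over all boundary points of $\overline{\mathscr{M}^{\Theta}(x,\gamma)}$ over all generators $x$ of $\mathrm{CM}_{\ell}$ and all chords $\gamma\notin\mathrm{CF}(H_{1})$ yields $d\Theta\pm\Theta\bd=0$ in $\mathrm{CF}_{+}(Q_{t},J)$, which is the desired chain-map property. The combinatorial bookkeeping above is routine; the real work is analytic. The hard part will be proving the compactness and transversality of $\mathscr{M}^{\Theta}$ in the presence of the corners of the half-strip $(-\infty,0]\times[0,1]$ and of the only-piecewise-smooth moving Lagrangian boundary condition $u(0,t)\in T^{*}M_{q(t)}$ along $s=0$ — which is exactly the content of \S\ref{sec:adapt-compl-struct}--\ref{sec:regul-moving-lagr} — together with checking that the Hamiltonian $Q_{t}$ from \S\ref{sec:comp-with-quadr} can be taken generic enough to achieve this transversality while still respecting the structural constraints imposed on it there (SFT-type $J$ and $s$-independence of the data near $r=0$).
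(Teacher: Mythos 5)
Your proposal follows essentially the same route as the paper's proof: analyze the boundary of the compactified one-dimensional components of $\mathscr{M}^{\Theta}(\gamma,x)$, with the two breaking types giving $d\circ\Theta$ and $\Theta\circ\bd$, monodromies composing by homotopy invariance as in Lemma \ref{lemma:differential}, and the two subtleties handled the same way — the quotient by the subcomplex $\mathrm{CF}(H_{1})$ rules out intermediate chords of small action, and the energy estimate of \S\ref{sec:energy-bounds-quad} (the paper phrases it as a positive lower bound on $\norm{q_{n}'}_{L^{2}}$ along the component when $r(\gamma)\ge x_{1}$) rules out degeneration into the Morse–Bott minimum $N$. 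The only difference is that the paper carries out the sign/orientation bookkeeping explicitly via \S\ref{sec:param-moduli-space}, \S\ref{sec:bound-param-moduli} and \S\ref{sec:inter-break-param}, which you defer to the stated conventions.
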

\begin{proof}
  The proof is similar to the proof of Lemma \ref{lemma:differential}.

  Let $\mathscr{M}^{\Theta}(\gamma,x)$ be the subset of $\mathscr{M}^{\Theta}$ consisting of those $(u,q)$ so that $u$ is asymptotic to the Hamiltonian chord $\gamma$ and so that $q$ lies in the unstable manifold of $x$.

  Since we consider the projection of $\Theta$ to the quotient $\mathrm{CF}_{+}(Q_{t},J)$, we only need to consider $0$ and $1$ dimensional components of $\mathscr{M}^{\Theta}(\gamma,x)$ where $r(\gamma(t))\ge x_{1}$; see \S\ref{sec:spec-cofibr-sequ}. In particular, any sequence $(u_{n},q_{n})$ in such a component has $\norm{q_{n}'}_{L^{2}}$ bounded from below by a positive number, otherwise the action of $\gamma$ would be too close to zero by the energy estimate in \S\ref{sec:energy-bounds-quad}.

  Let us consider a 1-dimensional component $C$ of $\mathscr{M}^{\Theta}(\gamma, x)$. There are two possible types of breaking at the non-compact ends of $C$: the first one involves the breaking of a Morse trajectory, and the second involves the breaking of a Floer trajectory. A priori, $C$ can join the first type with the first type, the second type with the second type, or the first type with the second type. The crucial case is when $C$ joins different types, namely $\bd C = \{( (u_1, q_1), w_1), (v_2, (u_2, q_2) ) \}$ where $( (u_1, q_1), w_1) \in  \mathscr{M}^{\Theta}(\gamma, y) \times \Bar{\mathscr{M}}(y, x)$ and $(v_2, (u_2, q_2) ) \in \Bar{\mathscr{M}}(\gamma, \gamma') \times \mathscr{M}^{\Theta}(\gamma', x)$.

  From \S\ref{sec:param-moduli-space} we get $\mathfrak{o}(\mathscr{M}^{\Theta}(\gamma, x)) \simeq \mathfrak{o}_{\gamma} \otimes \mathfrak{o}_{x}$. This identification, and a choice of orientation of $C$, induces an identification $\Theta_{(u,q)}:\mathfrak{o}_{x} \to \mathfrak{o}_{\gamma}$. The idea is to compare $\Theta_{(u,q)}$ with  $d_{v_{2}} \circ \Theta_{(u_{2}, q_{2})}$ and $\Theta_{(u_{1}, q_{1})} \circ \bd_{w_{1}}$.

  Applying \S\ref{sec:bound-param-moduli} we get that $\Theta_{(u,q)} =  \Theta_{(u_{1}, q_{1})} \circ \bd_{w_{1}}$ if $\partial_{s} w_{1}$ is positively\footnote{Equivalently, $(u_{1}, q_{1})$ is a positive boundary point.} oriented in $\mathscr{M}^{\Theta}(\gamma, x)$ and $\Theta_{(u,q)} = - \Theta_{(u_{1}, q_{1})} \circ \bd_{w_{1}}$ if $\partial_{s} w_{1}$ is negatively oriented. On the other hand, by \S\ref{sec:inter-break-param}, $\Theta_{(u,q)} = d_{v_{2}} \circ \Theta_{(u_{2}, q_{2})}$ if $-\eta_{u_{2}}$ is positively oriented in $\mathscr{M}^{\Theta}(\gamma, x)$, and $\Theta_{(u,q)} = - d_{v_{2}} \circ \Theta_{(u_{2}, q_{2})}$ otherwise. See Figure \ref{fig:chainmap} for an illustration.

  If $C$ joins the same types of the breaking, by analogous arguments as in Lemma \ref{lemma:differential} we get that $\Theta_{(u_{1},q_{1})} \circ \bd_{w_{1}} + \Theta_{(u_{2},q_{2})} \circ \bd_{w_{2}} = 0$ and $d_{v_{1}} \circ \Theta_{(u_{1},q_{1})} + d_{v_{2}} \circ \Theta_{(u_{2}, q_{2})}=0$.

  For similar arguments see \cite[\S12 Lemma 3.7]{abouzaid_monograph}, \cite[\S3.2]{abbondandolo_schwarz}, \cite[\S3]{abbondandolo_schwarz_corrigendum}.
\end{proof}

\subsubsection{Compatibility with continuation lines}
\label{sec:comp-with-cont}

The proof is similar to \S\ref{sec:chain-map-property}, and here we just sketch the argument; see \cite[\S XII.3.4]{abouzaid_monograph} and also \cite[Theorem 7]{katic_milinkovic_pss} for similar results.

Let $V,V'$ be two pseudogradients on finite dimensional approximations to $\mathscr{P}_{\ell}$ and consider the continuation line equation from $V$ to $V'$ as in \S\ref{sec:continuation-lines}; see Figure \ref{fig:continuation-lines}.

Consider the parametric moduli space $\mathscr{M}(\gamma,x)$ of triples $(u,q,\tau)$ where $(u,q)$ solves \eqref{eq:theta-moduli-space} with asymptotic $\gamma$, and $q$ is the right endpoint of a solution $\xi(s)$ of the continuation line ODE defined on domain $(-\infty,\tau]$ asymptotic to $x$ at the negative end.

One considers the 1-dimensional component $\mathscr{M}_{1}(\gamma,x)\subset \mathscr{M}(\gamma,x)$ and the projection $\mathrm{pr}:\mathscr{M}_{1}(\gamma,x)\to \R$ given by $(u,q,\tau)\mapsto \tau$. For generic $\tau$, the fiber: $$\mathscr{M}_{0}(\tau;\gamma,x)=\mathrm{pr}^{-1}(\tau)$$ is a zero-dimensional manifold, the signed count of which determines the $x\mapsto \gamma$ coefficient of a map $\mathrm{CM}_{\ell}\to \mathrm{CF}_{+}$.

Similarly to \S\ref{sec:chain-map-property}, summing over all $x$ defines a chain map $\mathfrak{T}_{\tau}:\mathrm{CM}_{\ell}\to \mathrm{CF}_{+}$ depending on $\tau$. Standard Floer theoretic arguments imply that different choices of $\tau$ give chain homotopic maps. For $\tau<0$, $\mathfrak{T}_{\tau}$ equals the $\Theta$ map for the pseudogradient $V$. As $\tau\to \infty$, $\mathfrak{T}_{\tau}$ eventually agrees with the composition of the Morse continuation map $\mathfrak{c}$ with the $\Theta$ map for the pseudogradient $V'$, as desired.

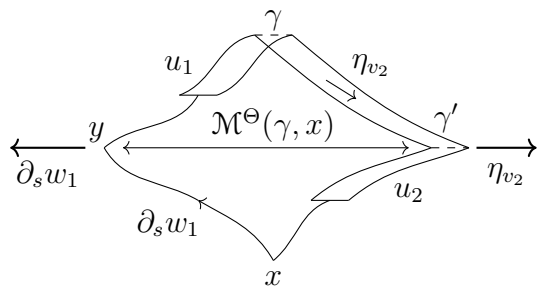
\begin{figure}[H]
  \centering
  \begin{tikzpicture}[]
    \draw[<->] (0,0) -- (3.8,0);
    \draw (1.75, 1.5) to [out=320, in=160] (4.1, 0) (2.25, 1.5) to [out=320, in=160] (4.6, 0);
    \draw (2.25, 1.5) to [out=190, in=20] (1.25, 0.7) (1.75, 1.5) to [out=190, in=20] (0.75, 0.7);
    \draw[<-] (3.1, 0.65) -- (2.7,0.9);
    \draw[dashed] (1.75, 1.5) -- (2.25 ,1.5) (4.1,0) -- (4.6,0);
    \draw (1, 0.7) to [out=240, in=40] (-0.25, 0) (2,-1.5) to [out=40, in = 200] (2.75, -0.7);
    \draw (-0.25, 0) to [out=300, in=150] (1,-0.7);
    \draw[<-](1,-0.7) to [out=330, in=120] (2,-1.5);
    \draw (4.1, 0) to [out=200, in=40] (2.5, -0.7) (4.6, 0) to [out=200, in=40] (3, -0.7);
    \draw (0.75, 0.7) -- (1.25, 0.7) (2.5, -0.7) -- (3, -0.7);
    \node at (2,0.3) {$\mathscr{M}^{\Theta}(\gamma, x)$};
    \node at (-0.35, 0.2) {$y$};
    \node at (2, 1.7) {$\gamma$};
    \node at (4.3, 0.4) {$\gamma'$};
    \draw[line width = 0.8 pt, ->](4.7, 0) -- (5.5,0);
    \draw[line width = 0.8 pt, <-]   (-1.5 , 0) -- (-0.5,0);
    \node at (5.1,-0.35) {$\eta_{v_{2}}$};
    \node at (-1,-0.35) {$\bd_{s} w_{1}$};
    \node at (0.6, -1) {$\bd_{s} w_{1}$};
    \node at (0.75, 1.1) {$u_1$};
    \node[below] at (2, -1.5) {$x$};
    \node at (3.8, -0.6) {$u_2$};
    \node at (3.3, 1.1) {$\eta_{v_{2}}$};
  \end{tikzpicture}
  \caption{Both $\bd_{s} w_{1}$ and $\eta_{u_2}$ point outward near the boundary of $C$.}
  \label{fig:chainmap}
\end{figure}

\subsubsection{The diagonal argument}
\label{sec:diagonal-argument-theta}

The energy estimate in \S\ref{sec:energy-bounds-quad} is used to prove the following lemma based on the diagonal argument from \cite{abbondandolo_schwarz}.
\begin{lemma}\label{lemma:technical-diagonal}
  The map $\Theta:\mathrm{CM}_{\ell}\to \mathrm{CF}_{+}(Q_{t},J)$ factors through the cofibration $\mathrm{CF}_{n(\ell)}/\mathrm{CF}_{0}\to \mathrm{CF}_{+}(Q_{t},J)$ where: $$n(\ell)=\mathrm{min}\set{n:x_{n+1}>\ell}.$$ Moreover, the induced map $\mathrm{CM}_{\ell}\to \mathrm{CF}_{n(\ell)}/\mathrm{CF}_{0}$ is a quasi-isomorphism.
\end{lemma}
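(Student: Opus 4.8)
The plan is to combine the energy estimate of \S\ref{sec:energy-bounds-quad} with the length filtration to run the diagonal argument of \cite{abbondandolo_schwarz}.

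For the factorization, fix a generator of $\mathrm{CM}_{\ell}$, i.e.\ a normal geodesic chord $x$ of positive length $\ell(x)\le \ell$, and a rigid pair $(u,q)\in \mathscr{M}^{\Theta}$ contributing to $\Theta(x)$; here $u$ is asymptotic at its end to a chord $\gamma$ of $Q_{t}$ on a level set $r=r_{-}$, and $q$ lies on the unstable manifold of $x$. Since the pseudogradient does not increase $\mathscr{E}$ along its flow, $\norm{q'}_{L^{2}}^{2}=\mathscr{E}(q)\le \mathscr{E}(x)=\ell(x)^{2}\le \ell^{2}$; inserting this into the estimate of \S\ref{sec:energy-bounds-quad} gives $0\le E(u)\le \tfrac12\big(\norm{q'}_{L^{2}}^{2}-r_{-}^{2}\big)$, so $r_{-}\le \ell$. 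Hence every chord occurring in $\Theta(x)$ lies at a level $\sigma_{k}\le \ell$ (the level-zero chords are killed in the quotient defining $\mathrm{CF}_{+}$), so $\Theta$ takes values in $\bigoplus_{\sigma_{k}\le \ell}F_{k}$, which, by the decomposition \eqref{eq:decomposition} and the choice of $n(\ell)$ --- restricting, if necessary, to a cofinal family of admissible $\ell$, e.g.\ those with $n(\ell)$ odd --- is the subcomplex $\mathrm{CF}_{n(\ell)}/\mathrm{CF}_{0}$ of $\mathrm{CF}_{+}(Q_{t},J)$. This is the first claim.

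For the quasi-isomorphism I would filter $\mathrm{CM}_{\ell}$ by the value of $\mathscr{E}$ (equivalently by the critical value $\sigma_{k}$ of the underlying geodesic chord) and $\mathrm{CF}_{n(\ell)}/\mathrm{CF}_{0}=\bigoplus_{k}F_{k}$ by $\sigma_{k}$; both filtrations are finite. By \S\ref{sec:chain-map-property} the map $\Theta$ is a chain map, and I claim it is filtered with associated graded the canonical identification. First, for $x$ of length $\sigma_{k}$ the $\bigoplus_{j>k}F_{j}$-component of $\Theta(x)$ vanishes: a contribution in $F_{j}$ would force $\sigma_{j}=r_{-}\le \norm{q'}_{L^{2}}\le \sigma_{k}$, impossible for $j>k$. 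Second, the $F_{k}$-component comes from exactly one solution, the stationary strip: when $r_{-}=\sigma_{k}$ the chain of inequalities collapses, giving $E(u)=0$, $\norm{q'}_{L^{2}}=\norm{x'}_{L^{2}}$, and $p=g_{\flat}(q')$, so $u$ is $s$-independent, $q=x$ by strict monotonicity of $\mathscr{E}$ along non-constant pseudogradient trajectories, and $u(0,\cdot)=(x,g_{\flat}(x'))$ is the chord $\gamma_{x}$ over $x$ (recall $\kappa_{t}$ vanishes near level $\sigma_{k}$), so $u\equiv \gamma_{x}$. Granting that this stationary strip is cut out transversally and that, with the orientation conventions of \S\ref{sec:ident-orient-line} and the corner/moving-boundary analysis of \S\ref{sec:adapt-compl-struct}--\ref{sec:regul-moving-lagr}, it induces $\pm$ the canonical isomorphism $\mathfrak{o}_{x}\to \mathfrak{o}_{\gamma_{x}}$, tensoring with the tautological identification of local-system fibres gives the associated-graded map.

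Since the positive-length critical points of $\mathscr{E}$ of length $\le \ell$ are in a canonical level-preserving bijection with the generators of $\mathrm{CF}_{n(\ell)}/\mathrm{CF}_{0}$, it follows that, in a basis ordered by level, $\Theta$ is a finite triangular matrix with isomorphisms down the diagonal; such a matrix is invertible, so $\Theta$ is an isomorphism of chain complexes, in particular a quasi-isomorphism, and the colimit over such $\ell$ then produces the isomorphism $\mathrm{HM}\to \mathrm{HW}_{+}$ of Theorem \ref{theorem:main-iso}. I expect the diagonal step to be the main obstacle: that the stationary strip is the \emph{unique} rigid contribution at top level, that it is regular despite the corner of the half-strip and the moving conormal boundary along $s=0$, and that its contribution to the orientation-line map is precisely the canonical one --- the regularity and the sign bookkeeping (composing the identifications of \S\ref{sec:param-moduli-space} and \S\ref{sec:ident-orient-line}, following \cite{abbondandolo_schwarz}) being the delicate points. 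The energy estimate and the triangularity itself are routine.
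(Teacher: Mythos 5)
Your proposal is correct and follows essentially the same route as the paper: the energy estimate of \S\ref{sec:energy-bounds-quad} forces $r_{-}\le\norm{q'}_{L^{2}}\le\ell$ (giving the factorization), and ordering generators by the spectral value $\sigma_{k}$ makes $\Theta$ upper triangular with the stationary strip $u(s,t)=(x(t),g_{\sharp}(x'(t)))$ supplying the diagonal isomorphisms, exactly as in the paper's diagonal argument. The points you flag as delicate are handled in the paper by citing \cite[pp.~259]{abbondandolo_schwarz} for regularity of stationary solutions and by noting that the diagonal entries need only be isomorphisms (unique up to sign), so no identification with ``the canonical'' map is required.
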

\begin{proof}
  Since $E(u) \leq \frac{1}{2}(\norm{q'}_{L^2}^2 - r_{-}^2) \leq \frac{1}{2}(\ell^2 - r_{-}^2)$, the asymptotic chord $\gamma_{-}$ of $u$ must satisfy $r_{-} \leq \ell$, which proves that the map $\Theta$ factors through $\mathrm{CF}_{n(\ell)}/\mathrm{CF}_{0}$. To show that the induced map is a quasi-isomorphism, order the bases of $\mathrm{CM}_{\ell}$ and $\mathrm{CF}_{n(\ell)}/\mathrm{CF}_{0}$ by their spectral value $\sigma_{k}$. Given a generator $x \in \mathrm{CM}_{\ell}$, the stationary solution $u(s,t)=\gamma_{x}(t)=(x(t), g_{\sharp}(x'(t))$ satisfies the equation for $\Theta$ map; this implies, together with the energy estimate, that:
  $$
  \Theta \vert_{\mathfrak{o}_{x}\otimes  \mathscr{L}_{x}}= \varphi_x \otimes  \mathscr{L}(x, \gamma_{x})+ \sum_{\gamma_{-} , r_{-} < \norm{x'}_{L^{2}} } \sum_{u \in \mathscr{M}^{\Theta}(x, \gamma_-)} \Theta_{(u,x)}\otimes  \mathscr{L}(u,x),
  $$
  where $\varphi_x: \mathfrak{o}_{x} \to \mathfrak{o}_{\gamma_{x}}$ is an isomorphism between free rank-one $\Z$-modules (unique up to a sign), and the map $\mathscr{L}(x, \gamma_{x}):  \mathscr{L}_{x} \to  \mathscr{L}_{\gamma_{x}}$ is the canonical isomorphism. The corresponding matrix is upper triangular with isomorphisms on the diagonal, and hence it is a quasi-isomorphism. For the regularity of stationary solutions see \cite[pp.~259]{abbondandolo_schwarz}.
\end{proof}

\subsubsection{Conclusion of the argument}
\label{sec:concl-arg}

Pick an increasing sequence $\ell_{k} \to \infty$ of regular values of $\mathscr{E}$, and a sequence $\mathscr{P}^{\ell_{k}}_{K_{k}}$ of admissible finite dimensional approximation together with pseudo-gradients $V_{k}$; see \ref{sec:admiss-pseud}. For each $j<k$ pick a continuation data $\delta_{j,k}$ which induces continuation maps $\mathfrak{c}_{j,k}:\mathrm{HM}_{\ell_{j}}(V_{j}) \to \mathrm{HM}_{\ell_{k}} (V_{k})$ as in \ref{sec:continuation-lines}.

Applying Lemma \ref{lemma:technical-diagonal} we have that $\Theta_{k}: \mathrm{HM}_{\ell_{k}} \to \mathrm{H} ( \mathrm{CF}_{n(\ell_{k})}/ \mathrm{CF_{0}} )$ is an isomorphism, and moreover  \S\ref{sec:comp-with-cont} implies that the following square commutes:
\begin{equation}\label{eq:Theta_square}
  \begin{tikzcd}
    {\mathrm{HM}_{\ell_{j}}}\arrow[d,"{}"]\arrow[r,"{}"] &{\mathrm{H}( \mathrm{CF}_{n(\ell_{j})}/ \mathrm{CF_{0}} )}\arrow[d,"{}"]\\
    {\mathrm{HM}_{\ell_{k}}}\arrow[r,"{}"] &{\mathrm{H}( \mathrm{CF}_{n(\ell_{k})}/ \mathrm{CF_{0}} )}.
  \end{tikzcd}
\end{equation}
One can achieve the transversality needed for each step of the argument (using the perturbations in the definition of $Q_{t}$) since there are only countably many steps.

Passing to the direct limit and using \S\ref{sec:comp-with-sing} yields:
$$
H_{*}(\mathscr{P}, N; \mathscr{L}) \to \mathrm{HW}_{+}(\nu^*N; \mathscr{L}),
$$
completing the proof of Theorem \ref{theorem:main-iso}.

The remaining sections are concerned with various technicalities implicitly used in the preceding arguments.
\subsubsection{Adapted complex structure and conormal boundary conditions}
\label{sec:adapt-compl-struct}
An admissible complex structure $J$ is said to be \emph{adapted} to $N$ provided that, in addition to (a) and (b) from \S\ref{sec:admissible-data}, $J$ further satisfies:
\begin{enumerate}
\item[(c)] there is a smooth locally-defined involution $\mathfrak{r}$ whose fixed point set is $N$, so that the canonical extension $\mathfrak{R}$ to $T^{*}M$ preserves $J$ in a neighborhood of $\nu^{*}N$, and,
\item[(d)] the involution $\mathfrak{i}:(q,p)\mapsto (q,-p)$ is anti-complex in a neighborhood of $\nu^{*}N$.
\end{enumerate}

\begin{lemma}
  If $\alpha$ is a contact form on the ideal boundary $ST^{*}M$ which is invariant under the involution $\mathfrak{R}$ in (c) and anti-invariant under $\mathfrak{i}$ in (d), in a neighborhood of the ideal Legendrian boundary of $\nu^{*}N$, then there is a SFT type almost complex structure $J$ for $\alpha$ which is adapted to $N$, for the same $\mathfrak{R}$.
\end{lemma}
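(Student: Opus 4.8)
The plan is to build $J$ in two stages. First, on the symplectization end $\set{r\ge 1}$ an SFT type almost complex structure for $\alpha$ is precisely the datum of a $\d\alpha$-compatible complex structure $j$ on the contact distribution $\xi=\ker\alpha$ over $ST^{*}M$ (via $JZ=X_{r}$, $J|_{\xi}=j$ and Liouville-invariance), so I would first choose such a $j$ with the required symmetries near the ideal Legendrian boundary $\La_{N}$ of $\nu^{*}N$. Second, I would extend the resulting $J$ across the unit codisk bundle, keeping the symmetries near $\nu^{*}N$, so that (c) and (d) hold on a whole neighborhood of $\nu^{*}N$ and not only over its ideal boundary.

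The first thing to record is how $\mathfrak{R}$ and $\mathfrak{i}$ interact with $r$, $X_{r}$ and the Liouville field $Z$. Since $\mathfrak{R}$ is a cotangent lift it preserves $\lambda$; together with $\mathfrak{R}^{*}\alpha=\alpha$ this forces $\mathfrak{R}^{*}r=r$, hence $\mathfrak{R}_{*}Z=Z$ and $\mathfrak{R}_{*}X_{r}=X_{r}$, so $\mathfrak{R}$ restricts to a strict contactomorphism of $(ST^{*}M,\alpha)$ near $\La_{N}$. On the other hand $\mathfrak{i}^{*}\lambda=-\lambda$ combined with $\mathfrak{i}^{*}\alpha=-\alpha$ gives $\mathfrak{i}^{*}r=r$, hence $\mathfrak{i}_{*}Z=Z$ but $\mathfrak{i}_{*}X_{r}=-X_{r}$; thus $\mathfrak{i}$ induces an \emph{anti}-contactomorphism near $\La_{N}$, i.e. $\mathfrak{i}^{*}(\d\alpha|_{\xi})=-\d\alpha|_{\xi}$. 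As $\mathfrak{R}$ and $\mathfrak{i}$ commute, the group $G=\langle\mathfrak{R},\mathfrak{i}\rangle\cong(\Z/2)^{2}$ acts on a neighborhood of $\La_{N}$ in $ST^{*}M$ and, via cotangent-lift/antipodal, on a $G$-invariant neighborhood of $\nu^{*}N$ in $T^{*}M$.

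Next I would produce $j$. The key point is that one should not average complex structures but rather an auxiliary metric: starting from any fibrewise metric on $\xi$ near $\La_{N}$, averaging over the finite group $G$ yields a $G$-invariant metric $h$, and the polar decomposition of the symplectic form $\d\alpha|_{\xi}$ with respect to $h$ produces a $\d\alpha$-compatible complex structure $j$ on $\xi$ near $\La_{N}$. By naturality of the polar decomposition, $j$ is preserved by the symplectic map $\mathfrak{R}$ and anti-commutes with the anti-symplectic map $\mathfrak{i}$. Extending $j$ to a $\d\alpha$-compatible complex structure on $\xi$ over all of $ST^{*}M$ (possible since compatible complex structures form a bundle with contractible fibres), let $J$ be the associated SFT type almost complex structure on $\set{r\ge1}$. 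Using the splitting $T(T^{*}M)=\R Z\oplus\R X_{r}\oplus\xi$ and the symmetry relations for $Z$, $X_{r}$, $j$ just noted, one checks directly that $\mathfrak{R}$ preserves $J$ and $\mathfrak{i}$ is anti-complex for $J$ on $\set{r\ge1}$ near $\nu^{*}N$; equivalently the compatible metric $g=\omega(\cdot,J\cdot)$ is genuinely $G$-invariant there (here one uses that $\mathfrak{i}$ is simultaneously anti-symplectic and anti-complex).

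Finally I would extend inward. On a $G$-invariant neighborhood $W$ of $\nu^{*}N$ in $T^{*}M$, contained in the cotangent bundle over the domain of $\mathfrak{r}$, I extend $g$ to a metric and average it over $G$ to obtain a $G$-invariant metric $\hat g$ on $W$ agreeing with $g$ on $W\cap\set{r\ge1}$ near $\nu^{*}N$; then I set $J:=J_{\hat g}$, the polar decomposition of $\omega$ with respect to $\hat g$. By the previous step $J_{\hat g}$ coincides with the SFT structure on $W\cap\set{r\ge1}$, it is $\omega$-compatible hence tame on $W$, and by naturality of polar decomposition it is $\mathfrak{R}$-invariant and $\mathfrak{i}$-anti-complex on $W$, i.e. satisfies (c) and (d). After shrinking $W$ slightly, $J$ is a tame almost complex structure defined on the closed set $\set{r\ge1}\cup\overline{W'}$, and since $\omega$-taming almost complex structures form a bundle with contractible fibres it extends to a tame almost complex structure on all of $T^{*}M$, giving the desired $J$. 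The hard part is the middle step: realizing the two symmetry conditions at once---one for the \emph{symplectic} involution $\mathfrak{R}$ and one for the \emph{anti-symplectic} involution $\mathfrak{i}$---while staying inside the nonlinear space of $\d\alpha$-compatible complex structures; this is exactly what the ``average a $G$-invariant metric, then polar decompose'' device resolves, everything else being standard use of contractibility and of $G$-equivariant partitions of unity for the finite group $G$.
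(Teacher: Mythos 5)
Your sketch is correct, but it takes a genuinely different route from the paper. The paper never averages anything: it fixes $\mathfrak{r}$-equivariant exponential/canonical charts, observes that the standard $J_{0}$ in such charts already satisfies (c) and (d), and exploits the fact that (in a chart modelled on $J_{0}$) the tame, respectively $\d\alpha|_{\xi}$-compatible, structures satisfying (c) and (d) form a \emph{convex} set; it then builds $J$ chart-by-chart over a cover of $N$ by an induction with $\mathfrak{r}$-invariant cut-offs, does the same for $J_{\xi}$ on the contact distribution over the ideal boundary, sets $J_{\alpha}(Z)=R$ to get the SFT piece, and finally interpolates convexly between the compact part and the end. You instead obtain the symmetric compatible structures in one stroke by averaging an auxiliary metric over the finite group $G=\langle\mathfrak{R},\mathfrak{i}\rangle$ and taking the polar decomposition of $\d\alpha|_{\xi}$ (on the end) and of $\omega$ (on a neighborhood of $\nu^{*}N$), using the naturality of polar decomposition under symplectic and anti-symplectic isometries; the verification that the resulting $J$ commutes with $\mathfrak{R}$ and anti-commutes with $\mathfrak{i}$, and that $r$, $Z$, $X_{r}$ transform as you state, is correct, and the matching of the inward extension with the SFT piece works because the SFT structure is genuinely $\omega$-compatible, so the polar decomposition with respect to its (then $G$-invariant) metric reproduces it on the overlap. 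What each approach buys: your averaging argument is shorter and avoids the chart-by-chart induction entirely, at the cost of having to set up the $G$-action carefully --- in particular $\mathfrak{R}$ is only defined near $\nu^{*}N$, so all averaging must take place on a (shrunken) $G$-invariant neighborhood, and you must work with compatible rather than merely tame structures until the final extension; the paper's convexity argument is purely local, needs no group action or compatibility, and matches the convex-interpolation style used elsewhere in its orientation framework. Modulo the standard extension technicalities you already flag (extending sections of bundles with contractible fibres from closed sets, Whitney-type extension of the metric across $\{r=1\}$), your proof goes through.
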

\begin{proof}
  Fix some involution $\mathfrak{r}$ once and for all. It is convenient to use a metric for which $\mathfrak{r}$ acts by isometries. The exponential map for such a metric gives coordinate charts $q_{1},\dots,q_{n}$ so that $N$ is identified with the plane $\Pi_{r}=\set{q_{r+1}=\dots=q_{n}=0}$ and $\mathfrak{r}$ is identified with the reflection through $\Pi_{r}$; in particular, the charts are supposed to be equivariant with respect to $\mathfrak{r}$.

  The standard $J_{0}$ in canonical coordinates satisfies (c) and (d), although is not of SFT type. One can use $J_{0}$ to define a linear coordinate chart for the space of $\omega$-tame almost complex structures. In such a local chart, the space of $J$ which satisfy (c) and (d) is identified with a convex open subset in some auxiliary vector space.

  Cover $N$ by such coordinate cubes, say $U_{1},\dots,U_{k}$, and suppose that the slightly shrunken cubes $e^{-\epsilon}U_{j}$ still cover $N$. We will construct a complex structure $J$ on $T^{*}U_{1},\dots,T^{*}U_{j}$ by induction on $j$ so that $J$ satisfies (c) and (d). In the inductive step, we can consider $J$ as defining a complex structure on $T^{*}(U_{j+1}\cap (U_{1}\cup \dots \cup U_{j}))$. By standard extension lemmas, one can extend $J$ to $T^{*}U_{j+1}$ keeping it unchanged on the shrunken cubes $e^{-\epsilon}U_{1}\cup \dots \cup e^{-\epsilon}U_{j}$. Indeed, in the convex space of complex structures satisfying (c) and (d), one can consider a path joining $J$ to $J_{0}$, say $J_{\tau}$. Fix $\mathfrak{r}$-invariant cut-off function $\beta$ which is $1$ on $e^{-\epsilon}U_{1}\cup \dots \cup e^{-\epsilon}U_{j}$ and supported in $U_{1}\cup \dots \cup U_{j}$. Then $J_{\beta}$ is a complex structure which satisfies (c) and (d). This finishes the inductive step.

  The next stage of the construction is to work on the ideal boundary. We will re-use the same cubes $U_{1},\dots,U_{k}$. Note that $q_{1},\dots,q_{n}$ and $\theta_{j}=p_{j}/\abs{p}$ form a coordinate system for $ST^{*}U$ so that the contact form is $\alpha=\sum \theta_{j}\d q_{j}$. The contact distribution is the orthogonal complement to $(q_{1},\dots,q_{n})$ times the tangent space to $S^{n-1}$ at $(\theta_{1},\dots,\theta_{n})$, with respect to the splitting $\R^{n}\times S^{n-1}$. These are the same space, and this identification induces a ``standard'' almost complex structure $J_{0}$ on $\xi$. The formula is simply the identity map from the $\theta$ coordinates to the $q$ coordinates, so that $\d\alpha(-,J_{0}-)=\sum \d\theta_{j}^{\otimes 2}+\d q_{j}^{\otimes 2}$. In particular, $J_{0}$ is compatible with the contact structure (recall that the symplectic structure on $\xi$ is well-defined up to conformal equivalence and does not depend on the form).

  As above, the space of $\d\alpha$-compatible complex structures on $\xi$ which satisfy (c) and (d) in each chart is convex, and one can inductively construct some complex structure $J_{\xi}$ on the contact distribution of $ST^{*}(U_{1}\cup \dots \cup U_{k})$ which satisfies (c) and (d). Since the space of complex structure is convex, we can extend $J_{\xi}$ to all of $ST^{*}M$.

  We can extend $J_{\xi}$ from $\xi=\ker\lambda\cap \ker \d r$ by defining $J_{\alpha}(Z)=R$. By definition, $J_{\alpha}$ is of SFT type. Since $\alpha$ is invariant and anti-invariant under (c) and (d), respectively, this extension satisfies (c) and (d).

  Let $J$ be the complex structure constructed on $T^{*}M$ which was not of SFT type but which satisfied (c) and (d) in a neighborhood of $\nu^{*}N$. Since the space of $\omega$-tame complex structures satisfying (c) and (d) is convex, we can interpolate from $J$ on the compact part to $J_{\alpha}$ in the symplectization end. This completes the construction.
\end{proof}

\subsubsection{Doubling and regularity at corners}
\label{sec:doubl-regul-at}

Let $J$ be an adapted almost complex structure, and suppose that $u$ is a $W^{1,p}_{\mathrm{loc}}$ solution\footnote{Throughout we assume $p>2$ when considering Sobolev spaces.} of the boundary value problem:
\begin{equation*}
  \left\{
    \begin{aligned}
      &u:(-1,0]\times [0,1]\to T^{*}M,\\
      &\bd_{s}u+J(u)(\bd_{t}u-X_{t}(u))=0,\\
      &u(s,0),u(s,1)\in \nu^{*}N\text{ and }u(0,t)\in L_{t},
    \end{aligned}
  \right.
\end{equation*}
where $L_{t}=T^{*}M_{q(t)}$ (some piecewise smooth path of fibers). Assume that $X_{t}$ is the Hamiltonian vector field for the quadratic system $Q_{t}$. Suppose that the Riemannian metric $g$ used to define the radial function $r$ is such that the local involution $\mathfrak{R}$ acts by isometries; this can be achieved by a small perturbation near $N$. We will now explain how to double $u$ across the boundary.

Focus on the corner at $0$, and let $S(\delta)=D(\delta)\cap (-1,0]\times [0,1]$. For sufficiently small $\delta$, $u(S(\delta))$ remains in the neighborhood $U$ of $\nu^{*}N$ where the involution $\mathfrak{R}$ is defined (since $u$ is of class $W^{1,p}_{\mathrm{loc}}$). Let $\mathfrak{a}:U\to U$ be the anti-symplectic involution defined by $\mathfrak{a}=\mathfrak{i}\mathfrak{R}$. Then $J\circ \d \mathfrak{a}=-\d\mathfrak{a}\circ J$. Analyzing how $\mathfrak{r}$ acts on $TM|_{N}$, one sees that $\mathfrak{R}(q,p)=(q,-p)$ for $(q,p)\in \nu^{*}N$, and thus $\mathfrak{a}$ acts identically on $\nu^{*}N$.

Let $\Sigma(\delta)=S(\delta)\cup \bar{S}(\delta)$, i.e., $\Sigma(\delta)=(-1,0]\times [-1,1]\cap D(\delta)$, as shown in Figure \ref{fig:double_corner}.
\begin{figure}[H]
  \centering
  \begin{tikzpicture}[xscale=-1]
    \begin{scope}
      \clip (0,-0.4) rectangle (1,1);
      \draw[pattern={Lines[angle=45,distance=2pt]},pattern color=black!50!white] (0,0) circle (0.4);
    \end{scope}
    \draw (0,-0.4)--(0,0)node[right]{$\Sigma(\delta)$} rectangle (1,1);
  \end{tikzpicture}
  \caption{Doubling the neighborhood of the corner.}
  \label{fig:double_corner}
\end{figure}
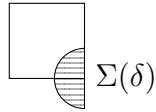

Consider the piecewise function $w:\Sigma(\delta)\to T^{*}M$ given by:
\begin{equation*}
  w(s,t)=\left\{
    \begin{aligned}
      u(s,t)&\text{ for }t\ge 0,\\
      \mathfrak{a}u(s,-t)&\text{ for }t\le 0.\\
    \end{aligned}
  \right.
\end{equation*}
Using that $J\d\mathfrak{a}=-\d\mathfrak{a}J$ we conclude that:
\begin{equation*}
  \bd_{s}w+J(w)\bd_{t}w=-J(w)\d\mathfrak{a}X_{-t}\mathfrak{a}(w)
\end{equation*}
holds for $t\le 0$. Since $X_{t}$ is the Hamiltonian vector field for $\frac{1}{2}r^{2}$ near $t=0$, and $r\circ \mathfrak{a}=r$ and $\mathfrak{a}$ is anti-symplectic, we conclude that $\d\mathfrak{a}X_{t}\mathfrak{a}=-X_{-t}$ holds near $t=0$. In particular, $w$ solves $\bd_{s}w+J(w)\bd_{t}w=J(w)X_{t}$ on all of $\Sigma(\delta)$.

Note that $w(0,t)\in L_{-t}$ for $t\le 0$; thus the moving Lagrangian boundary conditions might have a jump (discontinuity in $q'$) at $t=0$.

\subsubsection{Local elliptic estimate for moving Lagrangian boundary conditions}
\label{sec:regul-moving-lagr}

Let $\mathbb{H}$ be the closed left half plane and $\Omega(\delta)=D(\delta)\cap \mathbb{H}$. Consider $u:\Omega(1)\to \R^{2n}$ so that:
\begin{equation}\label{eq:linear-moving-lags}
  \left\{
    \begin{aligned}
      \bd_{s}u+J_{s,t}\bd_{t}u=A_{s,t},\\
      u(0,t)\in \set{q(s)}\times \R^{n},
    \end{aligned}
  \right.
\end{equation}
where $q:\R\to \R^{n}$ is a $W^{1,p}_{\mathrm{loc}}$ path, $A_{s,t}$ is some $L^{p}_{\mathrm{loc}}$ section, and $J_{s,t}$ is a $W^{1,p}$ family of almost complex structures so $\set{0}\times \R^{n}$ is totally real for every $s,t$.

\begin{prop}
  There are constants $C$ depending on $\norm{J}_{W^{1,p}}$ so that the following estimate holds:
  \begin{equation*}
    \norm{u}_{W^{1,p}(\Omega(1/3))}\le C(\norm{A}_{L^{p}(\Omega(2/3))}+\norm{q}_{W^{1,p}(\Omega(2/3))}+\norm{u}_{L^{\infty}(\Omega(2/3))}).
  \end{equation*}
  for all $u\in W^{1,p}_{\mathrm{loc}}$ which solve \eqref{eq:linear-moving-lags}.
\end{prop}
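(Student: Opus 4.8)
The plan is to remove the moving boundary condition by a translation and then to reduce to the classical interior-and-boundary elliptic estimate for the inhomogeneous Cauchy--Riemann operator with a \emph{fixed} totally real boundary condition and merely $W^{1,p}$ coefficients. Set $\hat q(s,t):=(q(t),0)\in\R^{n}\times\R^{n}$ (the boundary datum of \eqref{eq:linear-moving-lags}, read along $\set{s=0}$). Since $p>2$ and $q\in W^{1,p}_{\mathrm{loc}}$, one has $\hat q\in W^{1,p}(\Omega(1))$, $\bd_{s}\hat q=0$, and $\norm{\hat q}_{W^{1,p}(\Omega(\rho))}\le C\norm{q}_{W^{1,p}(\Omega(\rho))}$. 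Put $v:=u-\hat q$; then $v(0,t)\in\set{0}\times\R^{n}=:\Lambda_{0}$, a subspace totally real for every $J_{s,t}$, and
\begin{equation*}
  \bd_{s}v+J_{s,t}\bd_{t}v=A_{s,t}-J_{s,t}(q'(t),0)=:\til A_{s,t}.
\end{equation*}
Using the two-dimensional embedding $W^{1,p}\hookrightarrow L^{\infty}$ to bound $\norm{J}_{L^{\infty}}\le C\norm{J}_{W^{1,p}}$, we get $\norm{\til A}_{L^{p}(\Omega(2/3))}\le\norm{A}_{L^{p}(\Omega(2/3))}+C\norm{J}_{W^{1,p}}\norm{q}_{W^{1,p}(\Omega(2/3))}$ and $\norm{v}_{L^{\infty}(\Omega(2/3))}\le\norm{u}_{L^{\infty}(\Omega(2/3))}+C\norm{q}_{W^{1,p}(\Omega(2/3))}$. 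Hence it suffices to prove $\norm{v}_{W^{1,p}(\Omega(1/3))}\le C\bigl(\norm{\til A}_{L^{p}(\Omega(2/3))}+\norm{v}_{L^{\infty}(\Omega(2/3))}\bigr)$ with $C=C(\norm{J}_{W^{1,p}})$ for $W^{1,p}_{\mathrm{loc}}$ solutions $v$ of the displayed equation with $v(0,\cdot)\subset\Lambda_{0}$; re-substituting $u=v+\hat q$ then recovers the proposition.

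For this fixed-boundary estimate we would freeze coefficients and absorb. By Morrey's inequality $\norm{J}_{C^{0,1-2/p}(\overline{\Omega(2/3)})}\le C\norm{J}_{W^{1,p}}$, so the modulus of continuity of $J$ is controlled by $\norm{J}_{W^{1,p}}$; moreover, since $\Lambda_{0}$ is totally real for every $J_{s,t}$ and $J$ is continuous on the compact $\overline{\Omega(2/3)}$, the splitting $\Lambda_{0}\oplus J_{s,t}\Lambda_{0}=\R^{2n}$ is uniform with constants depending only on $\norm{J}_{W^{1,p}}$. Fix a small absolute constant $\varepsilon_{0}$ and pick $\rho_{0}=\rho_{0}(\norm{J}_{W^{1,p}})>0$ so that $\norm{J_{z}-J_{z_{0}}}_{L^{\infty}}<\varepsilon_{0}$ whenever $z,z_{0}$ lie in one disk of radius $2\rho_{0}$. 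Cover $\overline{\Omega(1/3)}$ by finitely many disks $B_{i}$, $i=1,\dots,N$, of radius $\rho_{0}$ with $2B_{i}\cap\mathbb{H}\subset\Omega(2/3)$, where $N$ is bounded in terms of $\rho_{0}$, hence of $\norm{J}_{W^{1,p}}$; choose cutoffs $\beta_{i}$ equal to $1$ on $B_{i}$, supported in $2B_{i}$, with $\abs{\nabla\beta_{i}}\le C/\rho_{0}$, and let $z_{i}$ be the center of $B_{i}$. Then
\begin{equation*}
  \bd_{s}(\beta_{i}v)+J_{z_{i}}\bd_{t}(\beta_{i}v)=\beta_{i}\til A+(\bd_{s}\beta_{i})v+(\bd_{t}\beta_{i})J_{s,t}v+(J_{z_{i}}-J_{s,t})\bd_{t}(\beta_{i}v).
\end{equation*}
The function $\beta_{i}v$ is compactly supported and has boundary values in $\Lambda_{0}$ on $\set{s=0}$ when $2B_{i}$ meets the boundary; after a fixed linear change of coordinates of norm $\le C(\norm{J}_{W^{1,p}})$ that sends $J_{z_{i}}$ to the standard structure and preserves $\Lambda_{0}$, it solves the standard inhomogeneous Cauchy--Riemann equation (with real boundary values, when applicable). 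Schwarz reflection across $\set{s=0}$ and the Calder\'on--Zygmund estimate for the Cauchy transform give $\norm{\beta_{i}v}_{W^{1,p}}\le C_{0}\norm{(\bd_{s}+J_{z_{i}}\bd_{t})(\beta_{i}v)}_{L^{p}}$ with $C_{0}=C_{0}(\norm{J}_{W^{1,p}})$ \emph{independent of} $\rho_{0}$ (one uses that $\beta_{i}v$ has support of diameter $\le 4\rho_{0}\le 1$, together with Poincar\'e's inequality, to control the zeroth-order part). Combining with the displayed identity and $\norm{(J_{z_{i}}-J_{s,t})\bd_{t}(\beta_{i}v)}_{L^{p}(2B_{i})}\le\varepsilon_{0}\norm{\beta_{i}v}_{W^{1,p}}$ yields
\begin{equation*}
  \norm{\beta_{i}v}_{W^{1,p}}\le C_{0}\norm{\til A}_{L^{p}(\Omega(2/3))}+C_{0}\varepsilon_{0}\norm{\beta_{i}v}_{W^{1,p}}+C_{1}\norm{v}_{L^{p}(\Omega(2/3))},
\end{equation*}
with $C_{1}=C_{1}(\norm{J}_{W^{1,p}})$. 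Choosing $\varepsilon_{0}=1/(2C_{0})$ absorbs the middle term; summing over $i=1,\dots,N$ and using $\norm{v}_{L^{p}(\Omega(2/3))}\le C\norm{v}_{L^{\infty}(\Omega(2/3))}$ gives the estimate for $v$, and translating back to $u$ completes the argument.

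The only genuinely delicate point is the absorption step and the bookkeeping that keeps the final constant a function of $\norm{J}_{W^{1,p}}$ alone: the admissible mesh $\rho_{0}$, the number $N$ of balls, the norm of the coordinate change standardizing $J_{z_{i}}$, and the uniformity of the total reality of $\Lambda_{0}$ are all controlled through the Morrey embedding $W^{1,p}\hookrightarrow C^{0,1-2/p}$ and compactness of $\overline{\Omega(2/3)}$. None of this is new; an alternative is simply to quote the corresponding boundary elliptic estimate for $W^{1,p}$ almost complex structures with a fixed totally real boundary from the standard literature on pseudoholomorphic curves, and to record only the reduction $v=u-(q,0)$ in detail. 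We note that nothing in the argument is sensitive to the corner of $q$: the hypothesis $q\in W^{1,p}_{\mathrm{loc}}$, which permits a jump discontinuity of $q'$, is precisely what is used, so the estimate applies verbatim to the doubled map $w$ of \S\ref{sec:doubl-regul-at} near the corner.
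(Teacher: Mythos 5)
Your proof is correct, and its first step coincides with the paper's: both subtract the boundary path (you set $v=u-(q,0)$; the paper writes $u=q+\sum a_{j}X_{j}+b_{j}Y_{j}$), converting the moving-fiber condition into the fixed totally real condition $\{0\}\times\R^{n}$ at the cost of adding $J_{s,t}\cdot(q',0)$ to the inhomogeneity, which is why $\norm{q}_{W^{1,p}}$ appears on the right-hand side. Where you diverge is in the treatment of the variable almost complex structure: the paper expands the remainder in the $J$-adapted frame $X_{j}$, $Y_{j}=J_{s,t}X_{j}$, so that $w=(a_{j}+ib_{j})$ solves the \emph{standard} equation $\bd_{s}w+J_{0}\bd_{t}w=A'+B\cdot w$ with real boundary values, where $\norm{B}_{L^{p}}\lesssim\norm{J}_{W^{1,p}}$ comes from differentiating the frame; the zeroth-order term is controlled by $\norm{B}_{L^{p}}\norm{w}_{L^{\infty}}$ (this is exactly where the $L^{\infty}$ term of the statement is used in an essential way, since $B$ is only $L^{p}$), and the conclusion follows by quoting \cite[Lemma B.4.6]{mcduffsalamon}. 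You instead keep $J_{s,t}$ in the equation and re-prove the needed fixed-boundary estimate from scratch by freezing coefficients on a Morrey-scale cover, conjugating each frozen $J_{z_{i}}$ to $J_{0}$, reflecting across the boundary, applying Calder\'on--Zygmund, and absorbing the commutator; this is longer but self-contained, and, as you note yourself, it could be replaced by quoting the standard boundary estimate, which essentially returns you to the paper's route. Two small points: the constant of the splitting $\{0\}\times\R^{n}\oplus J_{s,t}(\{0\}\times\R^{n})=\R^{2n}$ (equivalently the norms of your coordinate changes, or of the inverse frame in the paper's proof) is not a function of $\norm{J}_{W^{1,p}}$ alone but requires a quantitative total-reality/taming bound --- an imprecision your write-up shares with the paper's own statement, so it is not a gap relative to the paper; and $\varepsilon_{0}$ cannot be fixed as an ``absolute'' constant at the outset, since it must be chosen after $C_{0}$ (hence after the coordinate-change norms), as your final choice $\varepsilon_{0}=1/(2C_{0})$ implicitly acknowledges.
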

\begin{proof}
  Pick a traveling frame $X_{1},\dots,X_{n},Y_{1},\dots,Y_{n}$ on $\mathbb{H}$ so $X_{j}$ is the $j$th basis vector for $\set{0}\times \R^{n}$, and $Y_{j}=J_{s,t}X_{j}$. Write $u(s,t)$ as $q(s)+\sum a_{j}X_{j}+b_{j}Y_{j}$, and note that it suffices to bound the $W^{1,p}$ sizes of $w=(a_{1}+ib_{1},\dots,a_{n}+ib_{n})$; one requires $p>2$ in order to have the $W^{1,p}$ quadratic estimates.

  Observe that $w$ solves the equation:
  \begin{equation*}
    \left\{
      \begin{aligned}
        &\bd_{s}w+J_{0}\bd_{t}w=A'_{s,t}+B_{s,t}\cdot w,\\
        &w(s,0)\in \R^{n}\times \set{0},
      \end{aligned}
    \right.
  \end{equation*}
  where $\norm{A'}_{L^{p}}$ can be bounded by $C(\norm{A}_{L^{p}}+\norm{q'}_{L^{p}})$ and the $L^{p}$ size of $B_{s,t}$ is bounded by the $W^{1,p}$ size of $J$. Then we estimate:
  \begin{equation*}
    \norm{B_{s,t}\cdot w}_{L^{p}} \le \norm{B_{s,t}}_{L^{p}}\norm{w}_{L^{\infty}}.
  \end{equation*}
  Apply \cite[Lemma B.4.6]{mcduffsalamon} with $r=p$ to conclude the desired result.
\end{proof}

\subsubsection{$W^{1,p}$ a priori estimate}
\label{sec:a-priori-estimate-sobolev}

Let $u_{n}$ be a sequence of $W^{1,p}_{\mathrm{loc}}$ elements in the $\mathscr{M}^{\Theta}(Q_{t})$ moduli space so that $u(0,t)\in T^{*}M_{q_{n}(t)}$ and $\norm{q'_{n}}_{L^{p}}$ is bounded. Let $p_{n}(t)=u(0,t)$, considered as an element in $T^{*}M_{q_{n}(t)}$. The a priori estimate in \S\ref{sec:energy-bounds-quad} implies that $\norm{p_{n}}_{L^{2}}$ and $E(u_{n})$ are bounded. The goal in this section is to prove that $u_{n}$ satisfies an a priori $W^{1,p}$ bound, in the sense that:
\begin{equation}\label{eq:apriori-W1P}
  \sup_{n}\int_{\Sigma} \abs{\bd_{s}u_{n}}^{p}+\abs{\bd_{t}u_{n}}^{p}\d s\d t< \infty,
\end{equation}
where the norm is measured using a translation invariant metric. This $W^{1,p}$ estimate implies, among other things, that $u_{n}$ satisfies an a priori $C^{0}$ bound (i.e., the maximum principle).

Let $\Sigma=(-\infty,0]\times [0,1]\subset \C$. To begin, pick $\delta_{n}$ so that, after rescaling the domain of $u_{n}$ by setting $w_{n}(s,t)=u_{n}(\delta_{n}s,\delta_{n}t)$ we have:
\begin{equation*}
  \max_{z\in \delta_{n}^{-1}\Sigma}\norm{\d w_{n}}_{L^{p}(S(z))}=1,
\end{equation*}
where $S(z)$ is the square of side length $1$ centered at $z$. Standard asymptotic analysis at the infinite end (using the a priori energy bound) implies that there is some maximizer $z_{n}$.

First, if $\delta_{n}$ does not converge to $0$ along some subsequence, then $u_{n}$ is bounded in $W^{1,p}$ on any square of side length $1$. Standard elliptic bootstrapping implies that $u_{n}$ satisfies an a priori $C^{1}$ bound on $\Sigma(1)=(-\infty,-1]\times [0,1]$, say by $C$. Then we conclude that: $$\int_{\Sigma(1)} \abs{\bd_{s}u_{n}}^{p}+\abs{\bd_{t}u_{n}}^{p}\d s \d t\le C^{p-2}\norm{\d u}_{L^{2}}^{2}.$$ Since the $W^{1,p}$ size is bounded on $\Sigma\setminus \Sigma(1)=[-1,0]\times [0,1]$ we conclude the desired result.

Henceforth assume $\delta_{n}\to 0$ along some subsequence. We will derive a contradiction using bubbling analysis. One key step needed to deal with the quadratic system $Q_{t}$ is that one can estimate how small $\delta_{n}$ is using the $L^{2}$ bound on $p_{n}$. It will be important to estimate $r_{n}:=\max r(u_{n}(s,t))$.

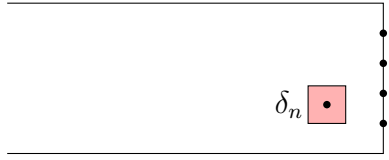
\begin{figure}[H]
  \centering
  \begin{tikzpicture}
    \draw (0,0)coordinate(A)--(5,0)coordinate(X)--+(0,2)coordinate(Y)--(0,2)coordinate(Z);
    \path[every node/.style={fill,circle,inner sep=1pt}] (X)--(Y) node[pos=0.2] {} node[pos=0.4] {} node[pos=0.6]{} node[pos=0.8]{};
    \draw[fill=white!70!red] (4,0.4) rectangle coordinate(X) +(0.5,0.5);
    \node[fill, circle, inner sep=1pt]at(X){};
    \node[shift={(-0.5,0)}] at (X) {$\delta_{n}$};
  \end{tikzpicture}
  \caption{The domain of $u_n$. The shaded square is centered at $\delta_{n}z_{n}$ and has side length $\delta_{n}$. After expanding $u_{n}$ to $w_{n}$, the square has side length $1$ and the $L^{p}$ size of $\d w_{n}$ on the square is exactly $1$.}
  \label{fig:domain-of-un}
\end{figure}

By the construction of $Q_{t}$, we can apply the strong maximum principle of \S\ref{sec:strong-maxim-princ} to conclude that $r_{n}\le \max_{t}\abs{p_{n}(t)}+1,$ where the $1$ should be thought of as an error term. Let $\max \abs{p_{n}(t)}=\abs{p_{n}(t_{n})}=e^{2\sigma_{n}}$.

There is maximal $\epsilon_{n}>0$ so that the interval $[t_{n}-\epsilon_{n},t_{n}+\epsilon_{n}]$ is mapped by $\abs{p_{n}}$ into $[e^{\sigma_{n}},e^{2\sigma_{n}}]$. In particular, we can estimate $\epsilon_{n}e^{2\sigma_{n}}\le \norm{p_{n}}_{L^{2}}^{2}$.

First consider the case when $\epsilon_{n}/\delta_{n}$ is bounded from above. The rescaling $w_{n}$ has an interval $I_{n}=\delta_{n}^{-1}[t_{n}-\epsilon_{n},t_{n}+\epsilon_{n}]$ in its boundary so that $w_{n}(I_{n})=u_{n}(\delta_{n}I_{n})$ has diameter at least $\sigma_{n}$ (noting that $\epsilon_{n}\to 0$ implies at least one endpoint $t_{n}\pm\epsilon_{n}$ is mapped by $\abs{p_{n}}$ to $e^{\sigma}$).

The Sobolev embedding theorem implies that the diameter of $w_{n}(I_{n})$ is bounded in terms of the $W^{1,p}$ size of $w_{n}$ on some union of squares of height $1$ covering $I_{n}$. Since there is an a priori finite number of squares needed to cover $I_{n}$ (as $\epsilon_{n}/\delta_{n}$ is bounded above), the diameter of $w_{n}(I_{n})$ is bounded. Thus the maximum $\sigma_{n}$ is bounded from above and consequently $\abs{X_{Q_{t}}(u_{n})}$ is also bounded. In this case, observe that the equation for the rescaling:
\begin{equation}\label{eq:rescaled-equation}
  \bd_{s}w_{n}+J(w_{n})\bd_{t}w_{n}=\delta_{n}J(w_{n})X_{Q_{t}}(w_{n})
\end{equation}
converges to the standard holomorphic curve equation $\bd_{s}w+J(w)\bd_{t}w$. Compactness results for solutions to perturbations of the holomorphic curve equations satisfying a priori $W^{1,p}$ bounds imply $w_{n}$ converges in $W^{1,p}_{\mathrm{loc}}$ to some holomorphic map; see, e.g., \cite[\S2.11]{wenc} or \S\ref{sec:priori-w1-p} below.

By recentering $w_{n}$ so that the maximizing point $z_{n}$ is located at the origin, one concludes that $w_{n}$ converges to a non-constant holomorphic map $w$. During the rescaling and recentering process, the domain of $w_{n}$ is some half-infinite rectangle in $\C$ which intersects the origin. Such a sequence of objects converges to either $\C$, a half-plane, or a ``quadrant.'' In all cases where there are boundary, the limiting map $w$ has fixed Lagrangian boundary conditions on either a fiber $T^{*}M_{q}$ or the conormal $\nu^{*}N$ (or both, in the case of a quadrant). Since these Lagrangians are strongly exact, i.e., satisfy $\lambda|_{L}=0$, we conclude that the limiting map has zero energy, and hence must be constant. This contradicts our earlier deduction, and hence we conclude that, if $\delta_{n}\to 0$, then $\epsilon_{n}/\delta_{n}$ must be unbounded.

The case when $\epsilon_{n}/\delta_{n}\to +\infty$ uses our estimate $\epsilon_{n}e^{2\sigma_{n}}\le \norm{p_{n}}^{2}_{L^{2}}$. Since the $L^{2}$ norm of $p_{n}$ is a priori bounded, we conclude that $\delta_{n}e^{2\sigma_{n}}$ converges to zero as $n\to\infty$. Recall that $r_{n}\le e^{2\sigma_{n}}+1$, and hence $\delta_{n}r_{n}$ also converges to zero. Since $\abs{X_{Q_{t}}}\le cr+b$, the equation \eqref{eq:rescaled-equation} converges to the standard holomorphic curve equation. If $w_{n}(0)$ remains bounded, then $w_{n}$ converges to a limiting holomorphic plane, half-plane, or corner, producing a contradiction as above. If $w_{n}(0)$ is unbounded, then it eventually enters the symplectization end. After translation by the Liouville flow we may suppose $w_{n}(0)$ converges in the symplectization end. The local $W^{1,p}$ bounds imply $w_{n}$ converges on compact subsets to a holomorphic object, again yielding a contradiction.

Thus $\delta_{n}$ must be bounded from below and $u_{n}$ satisfies an a priori $W^{1,p}$ bound.

\subsubsection{A priori $W^{1,p}$ bound implies $W^{1,p}_{\mathrm{loc}}$ compactness}
\label{sec:priori-w1-p}

In this section we prove that any sequence $u_{n}\in \mathscr{M}^{\Theta}(Q_{t})$ so that $\norm{q_{n}'(t)}_{L^{2}}$ is bounded has a subsequence which converges in $W_{\mathrm{loc}}^{1,p}$. Such a compactness result is implicit in the Floer theoretic arguments in \S\ref{sec:chain-map-property} and \S\ref{sec:diagonal-argument-theta}.

First, it follows from the energy estimate in \S\ref{sec:energy-bounds-quad} that $r_{-}^{2}$ is bounded along the sequence $u_{n}$. The strong maximum principle then implies that the images of $u_{n}$ remain in a fixed compact set.

By \S\ref{sec:a-priori-estimate-sobolev}, $u_{n}$ satisfies the a priori $W^{1,p}$ bound \eqref{eq:apriori-W1P}. By the Sobolev embedding theorem, we can replace $u_{n}$ by a subsequence so that it converges in $C^{0}_{\mathrm{loc}}$. Then the (locally defined) differences $w_{n,m}=u_{n}-u_{m}$ satisfy the Cauchy sequence property in the $C^{0}$ distance. The strategy is to show that $w_{n,m}$ locally solve equations of the form in \S\ref{sec:regul-moving-lagr}.

In local coordinates one computes:
\begin{equation}\label{eq:lc_difference}
  \bd_{s}w_{n,m}+J(u_{n})\bd_{t}w_{n,m}=[J(u_{m})-J(u_{n})]\bd_{t}u_{m}+J(u_{n})X_{t}(u_{n})-J(v_{n})X_{t}(v_{n}).
\end{equation}
Using the doubling trick if necessary, we may suppose that the local coordinate representation of $w_{n,m}$ is defined on the left half disk $(it_{0}+D(\epsilon))\cap (-\infty,0]\times \R$ or the full disk $s_{0}+it_{0}+D(\epsilon)$, with $s_{0}<-\epsilon$. In the half-disk case, we suppose the moving Lagrangian boundary conditions is given by $q(s)\times \R^{n}$.

One observes that $J_{s,t}=J(u_{n})$ is uniformly bounded in $W^{1,p}$, and $w_{n,m}(0,t)\in 0\times \R^{n}$. Either the elliptic estimate with Lagrangian boundary in \S\ref{sec:regul-moving-lagr} applies or standard interior estimates apply and one concludes:
\begin{equation*}
  \norm{w_{n,m}}_{W^{1,p}}\le C(\norm{A_{n,m}}_{L^{p}}+\norm{w_{n,m}}_{C^{0}}),
\end{equation*}
where $A_{n,m}$ is the right hand side of \eqref{eq:lc_difference}. Since $u_{n}$ is Cauchy in $C^{0}$, we conclude that both $\norm{w_{n,m}}_{C^{0}}$ converges to zero and that $\norm{A_{n,m}}_{L^{p}}\to 0$. Therefore, small local coordinate representations of $u_{n}$ are Cauchy in $W^{1,p}$, hence $u_{n}$ converges in $W^{1,p}_{\mathrm{loc}}$.

\appendix

\section{Coherent orientations for determinant lines}
\label{sec:coher-orient}

Our approach to the problem of orienting the moduli spaces is inspired by \cite{fh_coherent,oh_1997_cotangent, abbondandolo_schwarz,seidel_book,katic_milinkovic, penka_phd, penka_gt, abbondandolo_schwarz_corrigendum,abouzaid_monograph,zapolsky-orientation}.

\subsection{Linear theory for infinite strips}
\label{sec:linear-theory}

\subsubsection{Asymptotic operators}
\label{sec:asymptotic-operators}

An \emph{asymptotic operator} is a differential operator of the form $A=-J_{0}\bd_{t}-S(t)$, acting on the space of smooth functions $[0,1]\to \R^{2n}$ taking boundary values in $\R^{n}$. We require that $J_{0}$ is the standard almost complex structure and $S(t)$ is a symmetric matrix. One says that $A$ is \emph{non-degenerate} if $A$ has zero kernel.

\subsubsection{Paths in the linear symplectic group}
\label{sec:paths-lin-sympl}
Let $\Phi_{t}$ be the \emph{fundamental solution} of the ordinary differential equation $A\eta=0$, i.e.,
\begin{equation*}
  \bd_{t}\Phi_{t}=J_{0}S(t)\Phi_{t}\text{ and }\Phi_{0}=1\in \R^{2n\times 2n}.
\end{equation*}
Then $\Phi_{t}$ is valued in the linear symplectic group. Moreover, every smooth path of symplectic matrices starting at the identity arises in this fashion.

Every solution to $A\eta=0$ satisfies $\eta(t)=\Phi_{t}\eta(0)$. Thus the condition that $A$ has zero kernel when restricted to the sections over $[0,1]$ which take boundary values in $\R^{n}$ is equivalent to requiring that $\Phi_{1}\R^{n}$ is transverse to $\R^{n}$.

\subsubsection{Cauchy-Riemann operators on strips}
\label{sec:cauchy-riem-oper}

A \emph{Cauchy-Riemann operator with asymptotic operators $A_{\pm}=-J_{0}\bd_{t}-S_{\pm}(t)$} on the strip $\R\times [0,1]$ is the data of a Cauchy-Riemann operator: $$D=\bd_{s}+J_{0}\bd_{t}+S(s,t)$$ on the trivial bundle $\R^{2n}$ so that $S(\pm s,t)-S_{\pm}(t)$ is in $L^{p}([0,\infty)\times [0,1])$.

\subsubsection{Sobolev spaces}
\label{sec:sobolev-spaces}
The domain of a Cauchy-Riemann operator is $W^{1,p}(\R^{2n},\R^{n})$, i.e., we restrict to sections which take $\R^{n}$-boundary values. The codomain is $L^{p}(\R^{2n})$. See \cite[Appendix B]{mcduffsalamon} and \cite{cant_thesis} for more details.

\subsubsection{Fredholm property and space of Cauchy-Riemann operators}
\label{sec:determ-line-bundle}

Denote the space of Cauchy-Riemann operators satisfying the properties in \S\ref{sec:cauchy-riem-oper} by: $$\mathrm{CR}:=\mathrm{CR}(A_{-},A_{+}).$$ One topologizes $\mathrm{CR}$ as an affine space, i.e., if $D_{0}$ is a chosen basepoint in $\mathrm{CR}$, then every other Cauchy-Riemann operator can be expressed as $D_{0}+B$ where $B$ is a smooth $L^{p}$-integrable section of the bundle of homomorphisms $\R^{2n}\to \R^{2n}$. In particular, $\mathrm{CR}$ is a contractible space.

Consider now $\mathrm{Fred}:=\mathrm{Fred}(W^{1,p},L^{p})$. Suppose henceforth that the asymptotics $A_{\pm}$ are non-degenerate. It is well-known that every element of $\mathrm{CR}$ induces a Fredholm operator $W^{1,p}\to L^{p}$. Moreover, the natural map $\mathrm{CR}\to \mathrm{Fred}$ is continuous when the codomain is topologized via the Banach space topology.

\subsubsection{Gluing operation}
\label{sec:gluing-operation}
We follow \cite{fh_coherent}; see also \cite{floer-hofer-sh-i,seidel_book,FOOO_part2,abouzaid_monograph}.

For every real number $R>0$, define a \emph{gluing map}: $$\mathfrak{G}_{R}:\mathrm{CR}(A_{-},A)\times \mathrm{CR}(A,A_{+})\to \mathrm{CR}(A_{-},A_{+}),$$ by the formula:
\begin{equation*}
  \mathfrak{G}_{R}(D_{-},D_{+}):=\beta_{-}(s)D_{-}^{-R}+\beta_{+}(s)D_{+}^{R},
\end{equation*}
where:
\begin{enumerate}
\item $D^{R}$ is obtained by conjugating $D$ with the isomorphism $\eta(s,t)\mapsto \eta(s+R,t)$,
\item $\beta_{-}(s)+\beta_{+}(s)=1$ and $\beta_{-}(s)=1$ for $s\le -1$, $\beta_{+}(s)=1$ for $s\ge 1$.
\end{enumerate}
The importance of having matching asymptotic operator $A=-J_{0}\bd_{t}-S(t)$ is the following: for any choice of $\delta,\rho>0$, one can choose $R_{0}$ large enough that the glued operator $D_{R}:=\mathfrak{G}_{R}(D_{-},D_{+})$ satisfies:
\begin{equation}\label{eq:important_estimate}
  \norm{D_{R}(\xi)-(\bd_{s}\xi+J_{0}\bd_{t}\xi+S(t)\xi)}_{L^{p}([-\rho,\rho])}\le \delta \norm{\xi}_{W^{1,p}([-\rho,\rho])},
\end{equation}
for $R\ge R_{0}$. This estimate is important when relating the kernels/cokernels of $D_{\pm}$ to the kernel/cokernels of $D_{R}$.

\subsubsection{Kernel and cokernel gluing}
\label{sec:kernel-gluing}
Let $C_{\pm}$ represent the cokernels of $D_{\pm}$. As in \S\ref{sec:gluing-operation}, there is a gluing operation $C_{-}\oplus C_{+}\to L^{p}$, given by: $$\eta_{-}(s,t)\oplus \eta_{+}(s,t)\mapsto \eta_{-}(s+R,t)+\eta_{+}(s-R,t).$$

Consider the map $\Delta_{R}:W^{1,p}\oplus C_{-}\oplus C_{+}\to L^{p}$ which applies $D_{R}$ to the first factor and glues the cokernel elements as above.
\begin{prop}\label{prop:kernel-cokernel-gluing}
  There is a uniformly bounded projection $\Pi_{R}:W^{1,p}\to K_{-}\oplus K_{+}$ so that $\Delta_{R}\oplus \Pi_{R}:W^{1,p}\oplus C_{-}\oplus C_{+}\to L^{p}\oplus K_{-}\oplus K_{+}$ is an isomorphism with a uniformly bounded inverse as $R\to\infty$.
\end{prop}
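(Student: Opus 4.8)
The plan is to follow the standard linear gluing argument (cf.\ \cite{fh_coherent,floer-hofer-sh-i,seidel_book,abouzaid_monograph}): I would construct an explicit approximate two-sided inverse for $\Delta_{R}\oplus \Pi_{R}$ whose error tends to zero in operator norm as $R\to\infty$, so that uniform invertibility follows from a Neumann series.

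First I would record the inputs coming from the unglued operators. Since $A_{\pm}$ and the matching asymptotic operator $A$ are non-degenerate, $D_{\pm}$ are Fredholm, and the elements of $K_{\pm}$ together with representatives of $C_{\pm}$ (which may be taken to be genuine solutions of the formal adjoint equation, hence exponentially decaying at both ends of the strip) enjoy uniform exponential decay estimates. Fix bounded projections $\pi^{K}_{\pm}\colon W^{1,p}\to K_{\pm}$; then the \emph{stabilized} operators $(\xi,c)\mapsto (D_{\pm}\xi+c,\ \pi^{K}_{\pm}\xi)$ are isomorphisms $W^{1,p}\oplus C_{\pm}\to L^{p}\oplus K_{\pm}$, and I denote their inverses by $Q_{\pm}$, which are bounded independently of $R$ (indeed they do not involve $R$). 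The map $\Pi_{R}$ will be defined by: restrict a section to the two halves of the neck using the cut-off functions $\beta_{\pm}$ from \S\ref{sec:gluing-operation}, translate each half to the corresponding model strip, and apply $\pi^{K}_{-}\oplus \pi^{K}_{+}$; this is manifestly uniformly bounded.

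Next I would build the approximate inverse $\Psi_{R}\colon L^{p}\oplus K_{-}\oplus K_{+}\to W^{1,p}\oplus C_{-}\oplus C_{+}$ by the mirror-image recipe: given $(\eta,\kappa_{-},\kappa_{+})$, split $\eta$ into a left and a right piece with cut-offs, translate each onto the model strip, apply $Q_{\mp}$ with the prescribed kernel data to obtain $(\xi_{\mp},c_{\mp})$, translate back, glue $\xi_{-}$ and $\xi_{+}$ with a second pair of cut-offs, and return that glued section together with $(c_{-},c_{+})$. A direct computation then gives $(\Delta_{R}\oplus \Pi_{R})\circ\Psi_{R}=\mathrm{Id}+E_{R}$ and, similarly, $\Psi_{R}\circ(\Delta_{R}\oplus\Pi_{R})=\mathrm{Id}+E_{R}'$, where every term of $E_{R},E_{R}'$ is built from (i) commutators $[D_{R},\beta]$, which by the matching of asymptotic operators and estimate \eqref{eq:important_estimate} differ from the corresponding model commutators by an error of size at most $\delta$ once $R\ge R_{0}(\delta)$, and (ii) exponentially decaying kernel or cokernel sections evaluated in the region $\{|s|\le 1\}$ of the glued coordinate, which lies a distance $\sim R$ from where those sections are concentrated, hence is $O(e^{-cR})$. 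Choosing $\delta$ small and then $R$ large makes $\norm{E_{R}},\norm{E_{R}'}<1/2$, whence $\Delta_{R}\oplus\Pi_{R}$ (and $\Psi_{R}$) is an isomorphism with inverse bounded uniformly in $R$.

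The main obstacle is precisely this error estimate: one must keep careful track of which cut-off derivative multiplies which exponentially small section, and invoke \eqref{eq:important_estimate} exactly on the neck region where $D_{R}$ differs from a translated model operator, so that all contributions to $E_{R}$ are genuinely uniform — not merely finite — in $R$. The uniform exponential decay of elements of $K_{\pm}$ and $C_{\pm}$, together with the identification of $C_{\pm}$ with kernels of adjoint Cauchy--Riemann operators on the strip, are the technical facts that make this bookkeeping go through; the remaining manipulations with cut-offs and translations are formal.
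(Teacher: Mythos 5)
Your overall strategy is the standard one and is essentially that of the cited source --- note that the paper offers no argument for Proposition \ref{prop:kernel-cokernel-gluing} beyond referring to \cite{fh_coherent} --- but your error analysis has a genuine gap. The dangerous terms in $E_{R}$ are not the ones you list: in computing $(\Delta_{R}\oplus\Pi_{R})\circ\Psi_{R}$, the cut-off derivatives do not multiply exponentially decaying kernel or cokernel sections; they multiply the solution pieces $\xi_{\mp}=Q_{\mp}(\text{translated piece of }\eta)$, which are only bounded by $\norm{Q_{\mp}}\norm{\eta}$. If the splitting and regluing cut-offs are the fixed functions $\beta_{\pm}$ of \S\ref{sec:gluing-operation} (derivative of size $O(1)$ supported in $[-1,1]$), then the terms $(\partial_{s}\gamma_{\pm})\,\xi_{\pm}$ are of order $1$, not $o(1)$: already in the translation-invariant model $D_{-}=D_{+}=\partial_{s}-A$ (so that $D_{R}=\partial_{s}-A$ for every $R$), taking $\eta$ concentrated on the transition region produces an error of definite size independent of $R$. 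Estimate \eqref{eq:important_estimate} does not help here --- it controls the difference between $D_{R}$ and the translation-invariant operator on the neck, not the commutator $[D_{R},\gamma]\xi=(\partial_{s}\gamma)\xi$ --- and ``being close to the model commutator'' does not make the term small.

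The missing ingredient is a quantitative use of the non-degeneracy of the matching asymptotic $A$ on the neck. Two standard repairs: (a) take the splitting/regluing cut-offs adapted to the gluing parameter, transitioning over an interval of length comparable to $R$, so that $\abs{\partial_{s}\gamma_{\pm}}=O(1/R)$ and the commutator terms are $O(1/R)\norm{\eta}$; one must then verify that on this long transition region the coefficients of $D_{R}$ and of the translated model operators are uniformly close to $S(t)$, which is where the matching of asymptotics genuinely enters (a mild strengthening of \eqref{eq:important_estimate}); or (b) use a three-piece parametrix whose middle piece is $(\partial_{s}-A)^{-1}$, which exists precisely because $A$ is non-degenerate, so that the cut-off derivative terms are paired with differences that decay exponentially along the neck. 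Relatedly, you should choose the projections $\pi^{K}_{\pm}$ concretely, e.g.\ as $L^{2}$-pairings against exponentially decaying spanning sections of $K_{\pm}$: for an abstract bounded projection, the claim that $\Pi_{R}$ applied to the glued section nearly returns the prescribed kernel data $(\kappa_{-},\kappa_{+})$ does not follow from cutting off and translating. With these modifications your Neumann-series scheme goes through and coincides with the argument of \cite{fh_coherent}.
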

\begin{proof}
  The reader is referred to \cite{fh_coherent} for the proof.
\end{proof}

Therefore the kernel of $D_{R}$ is canonically-up-to-homotopy identified with $K_{-}\oplus K_{+}$, and similarly the cokernel is identified with $C_{-}\oplus C_{+}$. In particular, one obtains the well-known result that the Fredholm index of $D_{R}$ is the sum of the Fredholm indices of $D_{-},D_{+}$. More importantly for us, kernel gluing establishes homotopically canonical relationships between the determinant lines of the operators $D_{-},D_{+},D_{R}$, see \S\ref{sec:equiv-ident-determ}.

\subsubsection{Determinant lines}
\label{sec:determ-lines-axioms}

To each Fredholm operator $D:X\to Y$ one can associate its \emph{determinant line} $\det(D):=\det(\ker D\oplus (\coker D)^{\vee}),$ where $\det(V)$ is the top exterior power. In particular, $\det(0)=\R$.

It is well-known that $\det(D)\to \mathrm{Fred}(X,Y)$ can be topologized as a continuous real line bundle with the following properties:
\begin{enumerate}[label=(\roman*)]
\item If $\varphi:\mathrm{Fred}(X,Y)\to \mathrm{Fred}(X',Y')$ is given by conjugation $\varphi:D\mapsto BDA^{-1}$ with isomorphisms $A:X\to X'$ and $B:Y\to Y'$ then the induced map:
  \begin{equation*}
    \ker D\oplus (\coker D)^{\vee}\mapsto \ker \varphi(D)\oplus (\coker \varphi(D))^{\vee},
  \end{equation*}
  given by multiplication by $A$ on $\ker D$ and precomposition with $B^{-1}$ on $(\coker D)^{\vee}$, is a continuous map $\det(D)\mapsto \varphi^{*}\det(D)$.
\item If $D$ is an isomorphism, then the conjugation action in (i) preserves the canonical identification $\det(D)=\det(0)=\R$.
\end{enumerate}

See \cite[\S A.2]{mcduffsalamon}, \cite{fh_coherent, seidel_book, abouzaid_monograph, zinger_determinant} for further discussion.

\subsubsection{Determinant lines for Cauchy-Riemann operators on infinite strips}
\label{sec:determ-lines-cauchy}

Pulling back by $\mathrm{CR}(A_{-},A_{+})\to \mathrm{Fred}(W^{1,p},L^{p})$ induces a line bundle $\det(D)\to \mathrm{CR}(A_{-},A_{+})$. Since each space $\mathrm{CR}(A_{-},A_{+})$ is contractible, this line bundle is orientable.

\subsubsection{Group of automorphisms and conjugation action}
\label{sec:group-of-automorphisms}

Introduce the group of automorphisms $G$ consisting of maps $g:\R\times [0,1]\to \mathrm{U}(n)$ so that $g$ maps the boundary into $\mathrm{SO}(n)$ and the matrix difference $g-1$ lies in $W^{1,p}$.

Then $G$ acts on $W^{1,p}$ and $L^{p}$ by left-multiplication, and hence $G$ acts on the space of Cauchy-Riemann operators $\mathrm{CR}(A_{-},A_{+})$ by conjugation, say $D\mapsto gDg^{-1}$.

As in \S\ref{sec:determ-lines-axioms}, there are induced isomorphisms $g_{*}:\det(D)\to g^{*}\det(D)$ for each $g\in G$. Since $\det(D)\to \mathrm{CR}(A_{-},A_{+})$ is orientable, it makes sense to say whether or not $g_{*}$ preserves or reverses orientation. Denote by $G_{+}\subset G$ the set of automorphisms which preserve orientation.

\subsubsection{Equivariant identification of determinant lines via gluing}
\label{sec:equiv-ident-determ}

By the gluing construction in \S\ref{sec:gluing-operation}, there is an identification:
\begin{equation}\label{eq:homotopy_canonical}
  \det(D_{-})\otimes \det(D_{+})\to \det(\mathfrak{G}_{R}(D_{-},D_{+})),
\end{equation}
for $R$ sufficiently large, given by gluing the kernels and cokernels together, i.e., using the homotopically canonical isomorphism:
\begin{equation*}
  \ker(D_{-})\oplus \coker(D_{-})^{\vee}\oplus \ker(D_{+})\oplus \coker(D_{+})^{\vee}\to \ker(D_{R})\oplus \coker(D_{R})^{\vee},
\end{equation*}
for $R$ sufficiently large, and turning $\otimes$ into $\wedge$ when going from left to right in \eqref{eq:homotopy_canonical}.

If $g\in G$ is compactly supported, the gluing construction shows that $g_{*}$ reverses orientation on $\mathrm{CR}(A_{-},A)$ if and only if it does on $\mathrm{CR}(A,A_{+})$; see \cite{fh_coherent} for more details. As a consequence, the group of orientation preserving automorphisms $G_{+}$ is independent of $A_{\pm}$.

\subsubsection{Path components in the group of automorphisms}
\label{sec:path-comp-group}
Restricting $g\in G$ to the boundary of the strip gives two continuous paths in $\mathrm{SO}(n)$ based at $1$. Clearly the map $\pi_{0}(G)\mapsto \pi_{1}(\mathrm{SO}(n))\times \pi_{1}(\mathrm{SO}(n))$ is a group homomorphism. Moreover, it is easy to see this homomorphism is an isomorphism, using that $\pi_{1}(\mathrm{SO}(n))\to \pi_{1}(\mathrm{U}(n))$ is trivial and $\pi_{2}(\mathrm{U}(n))=0$.

When $n\ge 3$ there is isomorphism $\pi_{0}(G)\to \Z/2\times \Z/2$, while if $n=2$, the isomorphism is $\pi_{0}(G)\to \Z\times \Z$, and if $n=1$, $\pi_{0}(G)$ is trivial.

It is obvious that $2\Z\times 2\Z$ lies in $\pi_{0}(G_{+})$. Moreover, some analysis shows that $(1,1)$ lies in $\pi_{0}(G_{+})$; indeed, $(1,1)$ can be represented by a $t$-independent map $g_{s}\in \mathrm{SO}(n)$, and one checks that:
\begin{equation*}
  g_{\rho s}^{-1}D_{0}g_{\rho s}=D_{0}+\rho g'_{\rho s},
\end{equation*}
if $D_{0}=\bd_{s}+J_{0}\bd_{t}+\delta$ for some constant $\delta\ne \pi\Z$ (this ensures the asymptotic is non-degenerate). It follows easily that $g_{\rho s}^{-1}D_{0}g_{\rho s}$ converges to $D_{0}$ in $L^{p}$ as $\rho\to 0$. Since $D_{0}$ is an isomorphism, see \cite{cant_thesis,salamon1997}, it has a path connected neighborhood consisting of isomorphisms. The canonical orientation of isomorphisms is continuous on this neighborhood, by \S\ref{sec:determ-lines-axioms}. Since conjugation preserves the canonical orientation, one concludes that $g_{*}$ also preserves any global orientation. See \cite[Lemma 1.13]{fh_coherent} for the same argument.

The next proposition shows that $(1,0)$ and $(0,1)$ actually do reverse orientation.
\begin{prop}\label{prop:orientation_reversing}
  If $g$ lies in the path component of $G$ represented by $(1,0)$, then $g_{*}$ reverses any global orientation of the determinant line bundle. In other words, the quotient group $G/G_{+}$ is generated by $g$.
\end{prop}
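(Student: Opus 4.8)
The plan is to reduce the statement to a single model computation of Floer--Hofer type. First I would reformulate it group-theoretically. By \S\ref{sec:equiv-ident-determ} the gluing construction shows that the set $G_{+}$ of orientation-preserving automorphisms does not depend on the asymptotics $A_{\pm}$, and since $g_{*}$ commutes with the monodromy isomorphisms of $\det(D)\to\mathrm{CR}(A_{-},A_{+})$ the assignment $[g]\mapsto(\text{sign of }g_{*})$ is a well-defined homomorphism $\chi\colon\pi_{0}(G)\to\Z/2$. We have already established that $\chi$ kills the class $(1,1)$ and the even classes $2\Z\times 2\Z$ (when $n=2$), so $\ker\chi$ contains the unique index-two subgroup of $\pi_{0}(G)$ described in \S\ref{sec:path-comp-group}; hence $\pi_{0}(G)/\pi_{0}(G_{+})$ has order at most two and is generated by the class of $g$, and the Proposition is equivalent to the assertion that $\chi$ is \emph{nontrivial}, i.e. $\chi(1,0)=-1$. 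By the reflection symmetry $t\mapsto 1-t$ of the strip it is equivalent to prove $\chi(0,1)=-1$.

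Next I would set up the model. Stabilizing, write $g=g_{0}\oplus\mathrm{id}_{n-2}$ where $g_{0}$ takes values in $\mathrm{SO}(2)\subset\mathrm{U}(2)$, restricts on the $t=0$ boundary component to a loop generating $\pi_{1}(\mathrm{SO}(2))$ (whose image generates $\pi_{1}(\mathrm{SO}(n))$ for $n\ge 3$, and which is the class $(1,0)$ itself for $n=2$), restricts to the identity on the $t=1$ boundary, and equals the identity outside a compact set in the $s$-variable. Since $\chi$ is independent of $A_{\pm}$, choose the standard operator $D_{0}=\bd_{s}+J_{0}\bd_{t}+\delta$ with $\delta\in(0,\pi)$, which is an isomorphism by \cite{cant_thesis,salamon1997}; then $\det(D_{0})=\R$ canonically, and likewise $\det(gD_{0}g^{-1})=\R$ because $gD_{0}g^{-1}$ is again an isomorphism and, as $g\equiv\mathrm{id}$ near $s=\pm\infty$, lies in the same space $\mathrm{CR}(A_{-},A_{+})$. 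By property (ii) of \S\ref{sec:determ-lines-axioms} the map $g_{*}$ is the identity in these canonical trivializations. Consequently, if $\mathfrak{o}$ is a global orientation and $\varepsilon\colon\{\text{isomorphisms in }\mathrm{CR}(A_{-},A_{+})\}\to\{\pm1\}$ denotes the locally constant function comparing $\mathfrak{o}$ with the canonical orientation of isomorphisms (continuity of the latter on the open set of isomorphisms is part of \S\ref{sec:determ-lines-axioms}), then $g_{*}$ preserves $\mathfrak{o}$ if and only if $\varepsilon(D_{0})=\varepsilon(gD_{0}g^{-1})$; equivalently, $g_{*}$ reverses orientation precisely when $D_{0}$ and $gD_{0}g^{-1}$ lie in distinct path components of the isomorphisms inside $\mathrm{CR}(A_{-},A_{+})$, i.e. on opposite sides of the codimension-one non-invertibility locus.

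The heart of the matter is therefore to show that $D_{0}$ and $gD_{0}g^{-1}$ are separated by this locus, which I would do by the model computation of Floer--Hofer \cite[\S1]{fh_coherent}, adapted to the totally real/strip setting as in \cite[\S11]{seidel_book}. Deform $g$ through gauge transformations $g_{\lambda}$ winding the $t=0$ boundary by $\lambda$ (keeping $t=1$ trivial), so that $D_{\lambda}:=g_{\lambda}D_{0}g_{\lambda}^{-1}$ is a continuous family of Cauchy--Riemann operators whose asymptotic operator at $+\infty$ is a conjugate of $A_{+}$ whose time-one solution restricts to the rotation by $\delta-\lambda$ on the distinguished $2$-plane; as $\lambda$ runs over the generating loop, this traces out a closed path in the space of non-degenerate asymptotics, and one connects $D_{0}$ to $gD_{0}g^{-1}$ by perturbing $D_{\lambda}$ off the degenerate asymptotics it meets. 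The computation then consists in tallying the orientation contributions at the asymptotic degeneracies encountered (together with the Maslov contribution of the loop of boundary conditions) and checking that the total is odd, so that the side of the non-invertibility locus changes; an alternative route is to double $u$ across the $t=0$ boundary by Schwarz reflection in $\R^{n}$ and glue along $t=\pm1$, reducing to the closed-string statement of \cite{fh_coherent}. I expect this final sign/parity bookkeeping to be the main obstacle: one must keep scrupulous track of the kernel-gluing identifications of \S\ref{sec:equiv-ident-determ} through each degeneracy (or verify that the doubling map is orientation-faithful and that the doubled gauge transformation is genuinely non-trivial), which is exactly what the cited references carry out in their respective conventions.
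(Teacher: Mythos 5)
Your opening reduction is fine: $\chi\colon\pi_{0}(G)\to\Z/2$ is well defined, it kills $(1,1)$ and $2\Z\times 2\Z$ by \S\ref{sec:path-comp-group}, so the proposition is equivalent to $\chi(1,0)=-1$; and the idea of testing against the invertible model $D_{0}=\bd_{s}+J_{0}\bd_{t}+\delta$, where both $D_{0}$ and $gD_{0}g^{-1}$ carry the canonical orientation of isomorphisms from \S\ref{sec:determ-lines-axioms}, is a reasonable setup. But there are two genuine problems. First, your claimed equivalence --- that $g_{*}$ reverses orientation \emph{precisely when} $D_{0}$ and $gD_{0}g^{-1}$ lie in distinct path components of the invertible locus --- is only a necessary condition: the locally constant comparison sign $\varepsilon$ may take the same value on different components of the set of isomorphisms, so being ``separated by the non-invertibility locus'' proves nothing by itself. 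What must be computed is the relative sign along a concrete path joining the two operators, i.e.\ the parity of the orientation jumps at the crossings of the degenerate locus. Second, and decisively, that computation is never carried out: you set up the family $D_{\lambda}$ and then state that the ``sign/parity bookkeeping'' is the main obstacle and is ``exactly what the cited references carry out.'' That bookkeeping \emph{is} the proposition (the paper itself calls it a fairly deep fact), so deferring it leaves the proof with no content at the crucial point. Your fallback route --- doubling across $t=0$ and reducing to the closed-string statement of \cite{fh_coherent} --- cannot work: in the closed-string setting there is no orientation-reversing gauge action at all (which is why Floer--Hofer obtain coherent orientations with no spin-type hypothesis); the reversal you need is an intrinsically open-string phenomenon attached to $\pi_{1}(\mathrm{SO}(n))$ and real boundary conditions, and doubling away the boundary destroys exactly the effect you are trying to detect.

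For comparison, the paper's argument is different and complete modulo precise citations: represent $(1,0)$ by an automorphism compactly supported in a small half-disk touching the boundary, use equivariance of determinant lines under gluing (\S\ref{sec:equiv-ident-determ}) to transport the conjugation action to \emph{any} Riemann surface with boundary and $\R^{n}$ boundary conditions, and then invoke the places where the sign has actually been computed, namely \cite[\S 8.1.2]{FOOO_part2} (disk) and \cite[\S 4]{abbondandolo_schwarz_corrigendum} (half-infinite cylinder). If you wish to keep your wall-crossing framework, you must actually perform the crossing computation for your explicit family $D_{\lambda}$ (tracking how the kernel/cokernel jumps interact with the canonical orientations, in the spirit of those references), or else replace it by a transport-to-a-computed-model argument of the above kind; as written, the key sign is asserted, not proved.
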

\begin{proof}
  This appears to be a fairly deep fact, and follows from the computations in \cite[\S4]{abbondandolo_schwarz_corrigendum} and \cite[\S8.1.2]{FOOO_part2}, by a sequence of gluing operations.

  Indeed, since $g=(1,0)$ can be represented by an element compactly supported in $$D(\epsilon)\cap (\R\times [0,1]),$$ one can use the equivariance under gluing trick to show that $(1,0)$ reverses orientation by analyzing the conjugation action on practically any space of Cauchy-Riemann operators on $\R^{2n}$ with $\R^{n}$ boundary conditions; see Figure \ref{fig:analyzing-orientation}.

  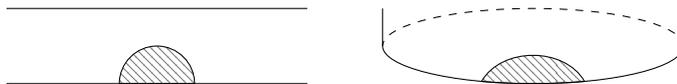
\begin{figure}[H]
    \centering
    \begin{tikzpicture}
      \draw (0,0)--(4,0) (0,1)--+(4,0);
      \draw[pattern={north west lines}, pattern color=black!50!white] (2.5,0) arc (0:180:0.5) --cycle;
      \begin{scope}[shift={(7,0.5)}]
        \draw[dashed] (2,0) arc (0:180:2 and 0.5);
        \draw (2,0) arc (0:-180:2 and 0.5);
        \draw (2,0)--+(0,.5) (-2,0)--+(0,.5);
        \draw[pattern={north west lines}, pattern color=black!50!white] (-70:2 and 0.5)to[out=120,in=60](-110:2 and 0.5) arc (-110:-70:2 and 0.5);
      \end{scope}
    \end{tikzpicture}
    \caption{Analyzing whether or not an element reverses orientation. Via equivariance under gluing, we can ``transport'' the compactly supported element to any Riemann surface with boundary.}
    \label{fig:analyzing-orientation}
  \end{figure}

  This observation shows that, if there is \emph{any} Riemann surface $\Sigma$ with boundary, and a suitable space of Cauchy-Riemann operators for $\R^{2n}$ valued sections with $\R^{n}$ boundary conditions over $\Sigma$, and the action of $g$ reverses orientation, then the action of $g$ will also reverse orientation in our context. Thus we can appeal to the computations in \cite[\S4]{abbondandolo_schwarz_corrigendum} (which apply to a half-infinite cylinder) or \cite[\S8.1.2]{FOOO_part2} (which apply to a disk). This completes the proof.
\end{proof}

\subsubsection{Conley-Zehnder indices as Fredholm indices}
\label{sec:conl-zehnd-indic}

For a non-degenerate asymptotic operator $A$, define the index $\mathrm{CZ}(A)$ to be the Fredholm index of $\mathrm{CR}(A_{0},A)$, picking the reference operator $A_{0}=-J_{0}\bd_{t}-C,$ where $C$ is the matrix of complex conjugation. The choice of $A_{0}$ is not particularly important, and we use the choice from \cite{cant_thesis}. In any case, it follows that $\mathrm{CZ}(A_{0})=0$, and the Fredholm index of $\mathrm{CR}(A_{1},A_{2})$ equals $\mathrm{CZ}(A_{2})-\mathrm{CZ}(A_{1})$.

\subsubsection{Orientation lines for asymptotic operators}
\label{sec:orientation-lines}

Let $\mathfrak{o}(A_{1},A_{2})$ be the free $\Z$-module (of rank one) with an identification between the two choices of generator and the choices of global orientation of $\det(D)\to \mathrm{CR}(A_{1},A_{2})$, with $\mathfrak{o}(A):=\mathfrak{o}(A_{0},A)$.

Since $\mathrm{CR}(A_{0},A_{0})$ has a basepoint $D=\bd_{s}-A_{0}$ which is an isomorphism, the orientation line for $\det(D)\to \mathrm{CR}(A,A_{0})$ is canonically identified with $\mathfrak{o}(A)$ by the gluing construction. Similar application of gluing yields $\mathfrak{o}(A_{1}) \otimes \mathfrak{o}(A_{1},A_{2})\simeq\mathfrak{o}(A_{2})$, in particular, a choice of orientation for $\det(D)\to \mathrm{CR}(A_{1},A_{2})$ induces an isomorphism $\mathfrak{o}(A_{2})\to \mathfrak{o}(A_{1})$. This perspective is used when defining the Floer theoretic operations in \S\ref{sec:floer-differential-local-coefficients}, \S\ref{sec:continuation-maps-hf}, and \S\ref{sec:isom-from-morse}.

\subsection{Linear theory for half-infinite strips}
\label{sec:linear-theory-half}

Let $\Sigma$ denote the space of Lagrangians $\Pi\subset \R^{2n}$ which satisfy $$\Pi=(\Pi\cap \R^{n})\oplus (\Pi\cap J\R^{n}),$$ and let $\Sigma_{d}\subset \Sigma$ be the subset where $\dim(\Pi\cap \R^{n})=d$. Note that $\Pi\in \Sigma$ is completely determined by $\Pi\cap \R^{n}$, and hence $\Sigma_{d}$ is diffeomorphic to the Grassmann manifold of $d$-planes in $\R^{n}$. See \S\ref{sec:linear-conormals} for more details.

To every $\Pi\in \Sigma$ and non-degenerate asymptotic operator $A=-J_{0}\bd_{t}-S(t)$, let $\mathrm{CR}(A,\Pi)$ denote the set of operators of the form $\bd_{s}+J_{0}\bd_{t}+S(s,t)$ on $(-\infty,0]\times [0,1],$ defined on $W^{1,p}$ sections $\xi$ satisfying $\xi(s,i)\in \R^{n}$ for $i=0,1$ and $\xi(0,t)\in \Pi$, and so that $S(s,t)-S(t)$ lies in $L^{p}$.

\begin{figure}[H]
  \centering
  \begin{tikzpicture}
    \draw (0,0)--node[below]{$\R^{n}$}(5,0)--node[right]{$\Pi$}+(0,1)coordinate(Y)--node[above]{$\R^{n}$}(0,1)coordinate(X);
    \draw[dashed] (X)--node[left]{$A$}+(0,-1);
    \path (0,0)--node{$\bd_{s}+J_{0}\bd_{t}+S(s,t)$}(Y);
  \end{tikzpicture}
  \caption{Cauchy-Riemann operators on the half-infinite strip.}
\end{figure}

As in \S\ref{sec:linear-theory}, $\mathrm{CR}(A,\Pi)$ is a contractible space of Fredholm operators, see \S\ref{sec:fredholm-index-half}, and therefore the natural determinant line bundle over $\mathrm{CR}(A,\Pi)$ is orientable.

\subsubsection{Linear conormals and anti-symplectic involutions}
\label{sec:linear-conormals}

Observe that $\Pi$ is fixed under complex conjugation. Indeed, a Lagrangian is fixed under complex conjugation if and only if it lies in $\Sigma$.

On the other hand, every linear Lagrangian $L$ determines an anti-symplectic involution, which acts identically on $L$ and multiplies by $-1$ on $J_{0}L$. If $\Pi$ is a linear conormal, then the involution through $\Pi$ fixes $\R^{n}$.

\subsubsection{Fredholm property and index for half-infinite strips}
\label{sec:fredholm-index-half}

Doubling using the anti-symplectic involutions from \S\ref{sec:linear-conormals}, one shows that, for a non-degenerate asymptotic $A$, Cauchy-Riemann operators in $\mathrm{CR}(A,\Pi)$ are Fredholm.

\begin{prop}
  The Fredholm index of $D\in \mathrm{CR}(A,\Pi)$ is equal to:
  \begin{equation*}
    \mathrm{Index}(D)=d-n-\mathrm{CZ}(A),
  \end{equation*}
  where $d=\dim(\Pi\cap \R^{n})$.
\end{prop}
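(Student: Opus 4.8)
The plan is to combine homotopy invariance of the Fredholm index with the additivity coming from the gluing construction of \S\ref{sec:gluing-operation}, reducing the statement to the determination of a single integer depending only on $d$, and then to pin that integer down by a direct-sum decomposition together with two explicit rank-one base cases. First I would observe that $\mathrm{CR}(A,\Pi)$ is contractible (\S\ref{sec:linear-theory-half}), so $\mathrm{Index}(D)$ depends only on $A$ and $\Pi$; and since $\Sigma_{d}$ is a Grassmannian, hence connected, it depends on $\Pi$ only through $d=\dim(\Pi\cap\R^{n})$. Gluing a Cauchy--Riemann operator on the bi-infinite strip onto the finite end of a half-infinite one gives a map $\mathrm{CR}(B,A)\times\mathrm{CR}(A,\Pi)\to\mathrm{CR}(B,\Pi)$ under which Fredholm indices add; combined with $\mathrm{Index}(\mathrm{CR}(B,A))=\mathrm{CZ}(A)-\mathrm{CZ}(B)$ (\S\ref{sec:conl-zehnd-indic}) this shows $\mathrm{Index}(D)+\mathrm{CZ}(A)$ is independent of $A$. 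Hence $\mathrm{Index}(D)=c(d)-\mathrm{CZ}(A)$ for a constant $c(d)$, and it remains to prove $c(d)=d-n$.

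Next I would reduce to rank one. Choose the reference Lagrangian $\Pi=\R^{d}\oplus i\,\R^{\,n-d}$ inside $\C^{d}\oplus\C^{\,n-d}$ and a non-degenerate asymptotic operator $A$ that is diagonal for this splitting into complex lines, each block being the scalar reference operator (so each scalar Conley--Zehnder index is $0$, and $\mathrm{CZ}(A)=0$). Then $D$ is an orthogonal direct sum of $n$ rank-one Cauchy--Riemann operators on the half-strip, of which $d$ carry the boundary Lagrangian $\R\subset\C$ at $s=0$ and $n-d$ carry $i\R\subset\C$. Fredholm indices add over direct sums, so $c(d)=d\,a+(n-d)\,b$, where $a$ and $b$ are the indices of the two rank-one models with scalar asymptotic of Conley--Zehnder index $0$. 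Since $c(d)=d-n$ is equivalent to $a=0$ and $b=-1$, everything comes down to these two scalar computations.

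I would handle the scalar base cases by doubling across $s=0$, as in \S\ref{sec:fredholm-index-half} and \S\ref{sec:linear-conormals}. Let $\mathfrak{a}\colon\C\to\C$ be the anti-holomorphic involution fixing the relevant boundary Lagrangian ($\mathfrak{a}(z)=\bar z$ for $\R$, $\mathfrak{a}(z)=-\bar z$ for $i\R$); in either case $\mathfrak{a}$ preserves the $\R$-boundary condition along $t=0,1$. Reflecting a solution $u$ on $(-\infty,0]\times[0,1]$ to $\tilde u(s,t)=\mathfrak{a}\,u(-s,t)$ for $s\ge 0$ produces a solution of a Cauchy--Riemann operator $\tilde D$ on the full strip with $\R$-boundary conditions, asymptotic to $A$ at $-\infty$ and to $\mathfrak{a}_{*}A$ at $+\infty$. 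The $\R$-linear involution $u\mapsto\mathfrak{a}\,u(-s,t)$ of the domain and codomain of $\tilde D$ splits $\ker\tilde D$ and $\coker\tilde D$ into $(\pm1)$-eigenspaces; the $(+1)$-parts are the kernel and cokernel of the half-strip operator with boundary Lagrangian $\mathrm{Fix}(\mathfrak{a})$, and the $(-1)$-parts those of the companion operator with boundary Lagrangian $J_{0}\cdot\mathrm{Fix}(\mathfrak{a})$, so $\mathrm{Index}(\tilde D)=a+b$. On the other hand $\mathrm{Index}(\tilde D)=\mathrm{CZ}(\mathfrak{a}_{*}A)-\mathrm{CZ}(A)$ by \S\ref{sec:conl-zehnd-indic}; tracking the effect of $\mathfrak{a}$ together with the reflection $t\mapsto 1-t$ on the symplectic path gives $\mathrm{CZ}(\mathfrak{a}_{*}A)=-\mathrm{CZ}(A)-1$, whence $a+b=-1$. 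Finally a single explicit model computation, e.g.\ the constant-coefficient operator $\bd_{s}+J_{0}\bd_{t}-C$ on the half-strip with these boundary Lagrangians, whose kernel and cokernel one writes down by separation of variables, fixes $a=0$ and hence $b=-1$.

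The main obstacle is the last step: reconciling every Conley--Zehnder and Maslov sign with the normalization $\mathrm{CZ}(A_{0})=0$ of \S\ref{sec:conl-zehnd-indic}, including the half-integer contributions produced by the two jumps in the boundary condition at the corners of the half-strip. These computations are classical and I would follow \cite{mcduffsalamon}, \cite{fh_coherent}, \cite{seidel_book}, \cite{abbondandolo_schwarz_corrigendum} and \cite{abouzaid_monograph}; everything else is formal bookkeeping with indices, gluing and direct sums.
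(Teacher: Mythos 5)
Your route is genuinely different from the paper's and is workable in outline, but it is also considerably longer than what is actually needed, and its crucial numerical step is left unproven. The paper performs the same two reductions you do (contractibility of $\mathrm{CR}(A,\Pi)$, then gluing against bi-infinite strips using $\mathrm{Index}=\mathrm{CZ}(A_{+})-\mathrm{CZ}(A_{-})$ from \S\ref{sec:conl-zehnd-indic} to fix the asymptotic), but then it simply chooses $A_{\delta}=-J_{0}\bd_{t}+\delta$ with $\delta\in(0,\pi)$, for which $\mathrm{CZ}(A_{\delta})=-n$ is quoted from the thesis reference, and computes the constant-coefficient operator $D_{\delta}=\bd_{s}+J_{0}\bd_{t}-\delta$ directly by Fourier analysis: the kernel is $\set{e^{\delta s}X:X\in \Pi\cap\R^{n}}$, of dimension $d$, and the formal adjoint (with $J_{0}\Pi$ boundary values at $s=0$) is injective, so the index is $d$. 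No rank-one splitting and no doubling are needed; the entire content is one explicit kernel/cokernel computation for a well-chosen asymptotic. Your splitting $\Pi=\R^{d}\oplus i\R^{n-d}$ and the additivity of the index over direct sums are fine, and the doubling across $s=0$ can indeed be made to compute $a+b$, with the caveat that the domain involution $u\mapsto\mathfrak{a}u(-s,\cdot)$ intertwines $\til D$ only with $-\mathfrak{a}(\cdot)(-s,\cdot)$ on the codomain, so matching the $\pm$-eigenspaces of the cokernel with the cokernels of the two half-strip problems requires the same formal-adjoint/$J_{0}$-rotated boundary bookkeeping the paper uses.

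The genuine gap is the identity $\mathrm{CZ}(\mathfrak{a}_{*}A)=-\mathrm{CZ}(A)-1$, which carries the entire numerical content of your argument and is asserted rather than proved. The paper's $\mathrm{CZ}$ is normalized purely through Fredholm indices ($\mathrm{CZ}(A_{0})=0$ for $A_{0}=-J_{0}\bd_{t}-C$, plus the quoted $\mathrm{CZ}(A_{\delta})=-n$), so this identity must be derived from that definition, and doing so amounts to computing the index of an explicit operator -- i.e., work of the same kind and size as the paper's one-step proof. Moreover, your sketched justification does not match the situation: doubling across $s=0$ involves no reflection $t\mapsto 1-t$ (the doubled perturbation is $\tilde S(s,t)=-\mathfrak{a}S(-s,t)\mathfrak{a}$ with $t$ untouched), and the doubled strip has smooth $\R^{n}$ boundary conditions along both edges, so there are no corner jumps and no half-integer Maslov contributions to reconcile; those remarks suggest a conflation with a different (Maslov-with-corners) computation. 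Finally, a small but real slip: in your conventions the operator asymptotic to the reference $A_{0}$ is $\bd_{s}+J_{0}\bd_{t}+C$, not $\bd_{s}+J_{0}\bd_{t}-C$ (the latter has asymptotic $-J_{0}\bd_{t}+C=\mathfrak{a}_{*}A_{0}$, of index $-1$ shift), so the proposed model computation as written would pin down the wrong constant. Once you are forced to do an honest constant-coefficient computation anyway, the efficient choice is the paper's $A_{\delta}$, for which both base cases and the general $d$ are handled at once.
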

\begin{proof}
  It is clear that the index depends only on $A,d,n$; denote it by $\mathfrak{i}(A,d,n)$. A gluing argument shows that it suffices to prove it for any fixed asymptotic operator. Introduce the reference operator:
  \begin{equation*}
    A_{\delta}=-J_{0}\bd_{t}+\delta,
  \end{equation*}
  for $\delta\in (0,\pi)$, so that $\mathrm{CZ}(A_{\delta})=-n$, as shown in \cite[\S2.3.1.3]{cant_thesis}. Thus it suffices to prove that:
  \begin{equation*}
    \mathfrak{i}(A_{\delta},d,n)=d.
  \end{equation*}
  Fixing $D_{\delta}=\bd_{s}+J_{0}\bd_{t}-\delta$, one uses basic Fourier analysis to prove that:
  \begin{equation*}
    D_{\delta}u=0 \iff u(s,t)=e^{\delta s}X,
  \end{equation*}
  where $X$ is a fixed vector in $\Pi\cap \R^{n}$. The formal adjoint $D_{\delta}^{*}=-\bd_{s}+J_{0}\bd_{t}-\delta$ is defined on the space of sections which take values in $J_{0}\Pi$ when $s=0$ and in $\R^{n}$ when $t=0,1$. Again using Fourier analysis, one shows $D_{\delta}^{*}$ is injective. Thus $D_{\delta}$ is surjective and has a kernel identified with the $d$-dimensional space $\Pi\cap \R^{n}$. This implies $\mathfrak{i}(A_{\delta},d,n)=d$, as desired.
\end{proof}

\subsubsection{Orientation lines}
\label{sec:orientation-lines-1}

As in \S\ref{sec:orientation-lines}, let $\mathfrak{o}(A,\Pi)$ denote the orientation line for the determinant line bundle over $\mathrm{CR}(A,\Pi)$. Gluing gives a canonical isomorphism:
\begin{equation*}
  \mathfrak{o}(A_{-},\Pi)\simeq \mathfrak{o}(A_{-},A_{+})\otimes \mathfrak{o}(A_{+},\Pi),
\end{equation*}
where $\mathfrak{o}(A_{-},A_{+})$ is the orientation line for the determinant line of $\mathrm{CR}(A_{-},A_{+})$.

\subsubsection{Group of automorphisms}
\label{sec:group-of-automorphisms-1}
The group of automorphisms $G(\Pi)$ consists of maps:
\begin{equation*}
  \left\{
    \begin{aligned}
      &g:(-\infty,0]\times [0,1]\to \mathrm{U}(n),\\
      &g(s,0),g(s,1)\in \mathrm{SO}(\R^{n}) \text{ and }g(0,t)\in \mathrm{SO}(\Pi),\\
      &1-g\in W^{1,p},
    \end{aligned}
  \right.
\end{equation*}
where $\mathrm{SO}(\Pi)\subset \mathrm{U}(n)$ consists of the elements which fix $\Pi$ and preserve its orientation.

\subsubsection{Path components in group of automorphisms}
\label{sec:path-comp-group-1}
Arguing as in \S\ref{sec:path-comp-group-1}, one shows that $\pi_{0}(G)$ is classified by the homotopy class of a based loop satisfying the constraints defined in Figure \ref{fig:homotopy_class_Pi}.

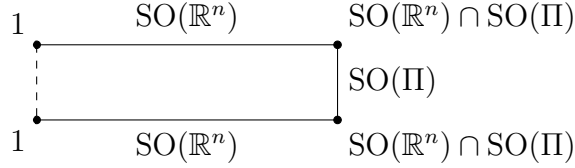
\begin{figure}[H]
  \centering
  \begin{tikzpicture}
    \draw (0,0)--node[below]{$\mathrm{SO}(\R^{n})$}(4,0)--node[pos=0,below right]{$\mathrm{SO}(\R^{n})\cap \mathrm{SO}(\Pi)$}node[pos=1,above right]{$\mathrm{SO}(\R^{n})\cap \mathrm{SO}(\Pi)$}node[pos=1,draw,circle,inner sep=1pt,fill]{}node[pos=0,draw,circle,inner sep=1pt,fill]{}node[right]{$\mathrm{SO}(\Pi)$}+(0,1)coordinate(Y)--node[above]{$\mathrm{SO}(\R^{n})$}(0,1)coordinate(X);
    \draw[dashed] (X)--node[pos=1,draw,circle,inner sep=1pt,fill]{}node[pos=0,draw,circle,inner sep=1pt,fill]{}node[pos=1,below left]{$1$}node[pos=0,above left]{$1$}+(0,-1);
  \end{tikzpicture}
  \caption{The path component of $g\in G(\Pi)$ is classified by the homotopy class of a based loop.}
  \label{fig:homotopy_class_Pi}
\end{figure}

In the case $\Pi\cap \R^{n}=\R^{d}$, then $\mathrm{SO}(\R^{n})\cap \mathrm{SO}(\Pi)$ is the subgroup of $\mathrm{O}(d)\times \mathrm{O}(n-d)$ of elements with determinant $+1$. In order to determine which elements of $G(\Pi)$ preserve orientation on $\mathrm{CR}(A,\Pi)$ we digress for a moment on a linear-algebraic structure related to conormals.

\subsubsection{Canonical isomorphism between conormals}
\label{sec:canon-ident-conormal}
Consider a vector space $E$ with a complex structure $J$. Let us agree to say that two totally real subspaces $L_{0},L_{1}$ are \emph{conormally related} if the unique anti-complex involution fixing $L_{0}$ preserves $L_{1}$. This is a symmetric relation. For example, with $\C^{n}$ and $L_{0}=\R^{n}$, the set of totally real subspaces conormally related to $\R^{n}$ are the linear conormals from \S\ref{sec:linear-conormals}.

If $L_{0},L_{1}$ are conormally related, there is a \emph{canonical} isomorphism $L_{0}\to L_{1}$; indeed, there are direct sum decompositions:
\begin{equation*}
  L_{1}=(L_{1}\cap L_{0})\oplus (L_{1}\cap JL_{0})\text{ and }L_{0}=(L_{0}\cap L_{1})\oplus (L_{0}\cap JL_{1}),
\end{equation*}
and we define the canonical isomorphism $C_{J}(L_{0},L_{1})$ to act identically on $L_{0}\cap L_{1}$ and by $J$ on the second factor $L_{0}\cap JL_{1}$. We extend $C_{J}(L_{0},L_{1})$ to all of $E$ by requiring that $C_{J}(L_{0},L_{1})J=JC_{J}(L_{0},L_{1})$. It then holds that $C_{J}(L_{0},L_{1})=C_{J}(L_{1},L_{0})$.

\subsubsection{Orientation preserving group of automorphisms}
\label{sec:orient-pres-group}

Define $G_{+}(\Pi)$ to be the automorphisms which preserve any global orientation of the determinant line on $\mathrm{CR}(A,\Pi)$. As in \S\ref{sec:equiv-ident-determ}, a gluing argument shows that the group is independent of the asymptotic operator $A$.

Given $g_{z}\in G(\Pi)$ we can consider the map $\bd \Sigma\to \mathrm{SO}(n)$ given by:
\begin{equation*}
  [g]_{\mathfrak{m}}=\left\{
    \begin{aligned}
      &g_{z}&&\text{ for }t(z)=0,1,\\
      &C^{-\mathfrak{m}}g_{z}C^{\mathfrak{m}}&&\text{ for }s(z)=0,
    \end{aligned}
  \right.
\end{equation*}
where $\mathfrak{m}\in \set{-1,1}$ and where $C=C_{J_{0}}(\Pi,\R^{n})$. One notes that this is a continuous family of unitary matrices which preserve $\R^{n}$, and which is based at the identity (as $s\to -\infty$).

\begin{lemma}\label{lemma:some-choice-of-sign}
  For some choice of $\mathfrak{m}$, if $[g]_{\mathfrak{m}}\in \pi_{1}(\mathrm{SO}(n),1)$ is trivial then $g\in G_{+}(\Pi)$.
\end{lemma}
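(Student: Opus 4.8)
# Proof Proposal for Lemma \ref{lemma:some-choice-of-sign}

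The plan is to reduce the statement to the corresponding fact on the strip without moving Lagrangian boundary conditions, namely Proposition \ref{prop:orientation_reversing}, by ``straightening'' the $s=0$ boundary condition into a constant $\R^n$ condition via the canonical isomorphism $C = C_{J_0}(\Pi,\R^n)$ from \S\ref{sec:canon-ident-conormal}. First I would fix a reference operator $D_0 \in \mathrm{CR}(A,\Pi)$ and choose a homotopy of totally real boundary conditions along $s=0$ from $\Pi$ to $\R^n$: since $\Pi$ and $\R^n$ are conormally related, one can connect them through linear conormals, and correspondingly deform $D_0$ to an operator $D_1 \in \mathrm{CR}(A,\R^n)$. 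Because $\mathrm{CR}(A,\Pi)$ and $\mathrm{CR}(A,\R^n)$ are each contractible, and the deformation of boundary conditions furnishes a path of Fredholm problems interpolating them, the determinant line bundles are identified up to the homotopy; the point is to track exactly \emph{how} the automorphism $g$ transforms under this identification. This is where the sign $\mathfrak{m} \in \{-1,1\}$ enters: conjugating the boundary frame by $C^{\pm 1}$ at $s=0$ is precisely the bookkeeping needed to express $g$, viewed as an automorphism of the straightened problem, as the boundary datum $[g]_{\mathfrak m}$.

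Next I would invoke the gluing/equivariance machinery of \S\ref{sec:equiv-ident-determ} and \S\ref{sec:orient-pres-group}: $G_+(\Pi)$ is independent of the asymptotic operator $A$, so it suffices to work with a convenient $A$ (say $A_\delta = -J_0\bd_t + \delta$), and moreover an automorphism supported near an interior point reverses orientation on $\mathrm{CR}(A,\Pi)$ iff it does on any other space of Cauchy--Riemann operators, by transporting compactly supported automorphisms across gluings as in Figure \ref{fig:analyzing-orientation}. Combining this with Proposition \ref{prop:orientation_reversing}, the orientation behavior of $g$ is detected entirely by the homotopy class in $\pi_0(G)$, which in turn (as in \S\ref{sec:path-comp-group-1} and Figure \ref{fig:homotopy_class_Pi}) is read off from the based loop in $\mathrm{SO}(n)$ obtained by going around $\bd\Sigma$. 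The straightening procedure converts the $\mathrm{SO}(\Pi)$-valued part of $g$ along $s=0$ into an $\mathrm{SO}(n)$-valued path via conjugation by $C$, and the resulting loop is exactly $[g]_{\mathfrak m}$ for the appropriate choice of $\mathfrak m$. Hence if $[g]_{\mathfrak m}$ is trivial in $\pi_1(\mathrm{SO}(n),1)$, the automorphism $g$ lies in the path component of the identity after straightening, and so preserves any global orientation, i.e., $g \in G_+(\Pi)$.

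The main obstacle I anticipate is precisely pinning down which choice of $\mathfrak m$ works — that is, verifying that conjugation by $C^{+1}$ (rather than $C^{-1}$) along the $s=0$ edge correctly matches the orientation conventions inherited from the doubling construction in \S\ref{sec:fredholm-index-half} and the gluing isomorphisms of \S\ref{sec:orientation-lines-1}. The asymmetry $C_J(L_0,L_1)=C_J(L_1,L_0)$ but $C^2 \neq 1$ in general means the two sign choices genuinely differ, and the correct one is forced by the orientation of the kernel/cokernel of the model operator $D_\delta$ computed in \S\ref{sec:fredholm-index-half}. I would resolve this by explicitly checking the rank-one model case $n=1$ (where $\mathrm{SO}(\Pi)$-loops are trivial and the statement is vacuous but the conventions are transparent), then the first nontrivial case, and propagating via gluing. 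A secondary technical point is ensuring the homotopy of boundary conditions from $\Pi$ to $\R^n$ can be realized through operators that remain Fredholm with fixed index $d - n - \mathrm{CZ}(A)$; this follows from the index formula in \S\ref{sec:fredholm-index-half} applied to the conormals $\Pi_\tau$ with $\dim(\Pi_\tau \cap \R^n)$ held equal to $d$ along the path, but it should be stated carefully.
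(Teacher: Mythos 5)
Your reduction hinges on ``straightening'' the $s=0$ boundary condition from $\Pi$ to $\R^{n}$ and transporting $g$ along that deformation, but this step has a concrete problem: any path of boundary conditions from $\Pi$ to $\R^{n}$ must change $\dim(\Pi_{\tau}\cap\R^{n})$ (it is $d<n$ at the start and $n$ at the end), so your safeguard of ``holding $\dim(\Pi_\tau\cap\R^n)$ equal to $d$ along the path'' is self-contradictory, the Fredholm index jumps by $n-d$, and there is no canonical homotopy-identification of the determinant bundles over $\mathrm{CR}(A,\Pi)$ and $\mathrm{CR}(A,\R^{n})$ of the kind you invoke. More seriously, even granting some identification, the assertion that the orientation action of $g$ is then read off from the loop $[g]_{\mathfrak m}$ is exactly the content of the lemma, not bookkeeping. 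The logic ``$[g]_{\mathfrak m}$ trivial $\Rightarrow$ $g$ is in the identity component after straightening $\Rightarrow$ $g\in G_{+}(\Pi)$'' fails in the only case that matters: there are elements with $[g]_{\mathfrak m}$ trivial that are \emph{not} in the identity component of $G(\Pi)$ (the paper's special element $g_{\mathfrak m}$, whose $s=0$ values are $C^{\mathfrak m}\gamma(1-t)C^{-\mathfrak m}$, is such an element), and for the other sign the analogous element $g_{-\mathfrak m}$ has trivial class $[g_{-\mathfrak m}]_{-\mathfrak m}$ yet may reverse orientation — which is why the lemma can only hold for \emph{one} choice of $\mathfrak m$. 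Your proposal never produces the ingredient that distinguishes the two signs.

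The paper handles precisely this point without ever computing which sign is correct: it constructs the two special elements $g_{\pm}$, uses the computation that $C^{-1}\gamma(t)C$ and $C\gamma(t)C^{-1}$ rotate $q_{1}$ in opposite directions to show that the product $g_{\mathfrak m}g_{-\mathfrak m}$ has boundary winding number one and hence reverses orientation by Proposition \ref{prop:orientation_reversing}, concludes that at least one of $g_{\pm}$ preserves orientation (this \emph{defines} the good $\mathfrak m$), and then, using this element together with the $t$-invariant element $g_{\mathrm{ti}}$, modifies an arbitrary $g$ with $[g]_{\mathfrak m}=1$ — without changing its class or its orientation behavior — until it is constant on the $s=0$ and $t=1$ edges, where Proposition \ref{prop:orientation_reversing} applies. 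Your fallback plan (pin down the sign by explicit computation in low-dimensional models and propagate by gluing) is a genuinely different and much heavier route — it would amount to redoing the determinant-line computations of the type cited from \cite{abbondandolo_schwarz_corrigendum} and \cite{FOOO_part2} in the half-strip setting — and, as written, the proposal neither carries out that computation nor supplies a substitute for the product trick, so the argument as it stands does not establish the lemma.
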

\begin{proof}
  This is trivial if $\Pi=\R^{n}$, so suppose $\Pi\ne \R^{n}$.

  Note that $\mathrm{SO}(\R^{n})\cap \mathrm{SO}(\Pi)$ always contains $\pm 1$. Consider a path $\gamma$ in $\mathrm{SO}(\R^{n})$ which start at $+1$ and ends at $-1$ so that $\gamma^{2}$ generates $\pi_{1}(\mathrm{SO}(\R^{n}))$.

  Then $C^{\mathfrak{m}}\gamma(1-t)C^{-\mathfrak{m}}$ is a path in $\mathrm{SO}(\Pi)$ which starts at $-1$ and ends at $1$. Consider the element $g_{\mathfrak{m}}$ in $G(\Pi)$ represented by the diagram shown in Figure \ref{fig:special-element}.

  This special element has the property that $[g]_{\mathfrak{m}}$ is trivial. We claim that exactly one of $g_{-},g_{+}$ preserves orientation. This can be as follows, suppose: $$\gamma(t)q_{1}=\cos(\pi t)q_{1}+\sin(\pi t)q_{2}$$ where $q_{1}\in \R^{n}\cap \Pi$ and $q_{2}\in \R^{n}\cap (J_{0}\Pi)$ (and suppose $\gamma(t)$ fixes $q_{3},\dots,q_{n}$).

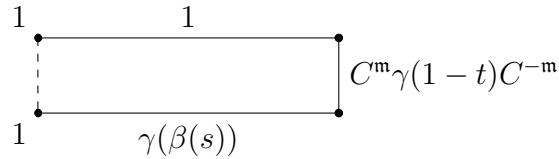
\begin{figure}[H]
    \centering
    \begin{tikzpicture}
      \draw (0,0)--node[below]{$\gamma(\beta(s))$}(4,0)--node[pos=1,draw,circle,inner sep=1pt,fill]{}node[pos=0,draw,circle,inner sep=1pt,fill]{}node[right]{$C^{\mathfrak{m}}\gamma(1-t)C^{-\mathfrak{m}}$}+(0,1)coordinate(Y)--node[above]{$1$}(0,1)coordinate(X);
    \draw[dashed] (X)--node[pos=1,draw,circle,inner sep=1pt,fill]{}node[pos=0,draw,circle,inner sep=1pt,fill]{}node[pos=1,below left]{$1$}node[pos=0,above left]{$1$}+(0,-1);
    \end{tikzpicture}
    \caption{The special element $\mathfrak{g}_{\mathfrak{m}}\in G(\Pi)$; here $\beta$ is cut off function so $\beta(0)=1$ and $\beta(-\infty)=0$.}
    \label{fig:special-element}
  \end{figure}  
  One computes:
  \begin{equation*}
    \begin{aligned}
      C^{-1}\gamma(t)Cq_{1}&=\cos(\pi t)q_{1}-\sin(\pi t)p_{1}\\ C\gamma(t)C^{-1}q_{1}&=\cos(\pi t)q_{1}+\sin(\pi t)p_{1}.
    \end{aligned}
  \end{equation*}
  Thus $C^{-1}\gamma(t)C$ and $C\gamma(t)C^{-1}$ rotate $q_{1}$ in opposite directions. In particular:
  \begin{equation*}    C^{-\mathfrak{m}}\gamma(1-t)C^{\mathfrak{m}}=C^{\mathfrak{m}}\gamma(t)C^{-\mathfrak{m}}.
  \end{equation*}
  Therefore the product $g_{\mathfrak{m}}g_{-\mathfrak{m}}$ reverses orientation; see Figure \ref{fig:product-special-element}.

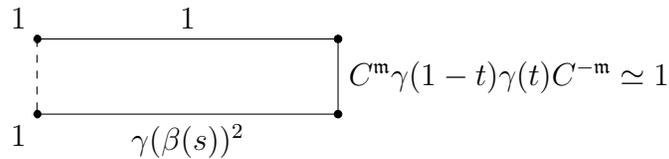
\begin{figure}[H]
    \centering
    \begin{tikzpicture}
      \draw (0,0)--node[below]{$\gamma(\beta(s))^{2}$}(4,0)--node[pos=1,draw,circle,inner sep=1pt,fill]{}node[pos=0,draw,circle,inner sep=1pt,fill]{}node[right]{$C^{\mathfrak{m}}\gamma(1-t)\gamma(t)C^{-\mathfrak{m}}\simeq 1$}+(0,1)coordinate(Y)--node[above]{$1$}(0,1)coordinate(X);
    \draw[dashed] (X)--node[pos=1,draw,circle,inner sep=1pt,fill]{}node[pos=0,draw,circle,inner sep=1pt,fill]{}node[pos=1,below left]{$1$}node[pos=0,above left]{$1$}+(0,-1);
    \end{tikzpicture}
    \caption{The product $g_{\mathfrak{m}}g_{-\mathfrak{m}}$ reverses orientation because $\gamma(\beta(s))^{2}$ has winding number $1$ and Proposition \ref{prop:orientation_reversing} applies.}
    \label{fig:product-special-element}
  \end{figure}
 
  If a product of two elements reverses orientation, then at least one of the elements preserves orientation; say $\mathfrak{m}$.

  On the other hand, the element $g_{\mathrm{ti}}$ described in Figure \ref{fig:t-invariant-special-element} preserves orientation because it is $t$-invariant, and clearly has $[g_{\mathrm{ti}}]_{\mathfrak{m}}=1$ for either choice of $\mathfrak{m}$.
  
  Thus, by applying $g_{\mathrm{ti}}$ and $g_{\mathfrak{m}}$ we may alter any given element $g$ \emph{without changing} $[g]_{\mathfrak{m}}$ or whether or not $g$ reverses orientation, until $g$ can be represented by an element which equals $1$ on $s=0$, $t=1$ (and is potentially non-constant on the $t=0$ boundary). Proposition \ref{prop:orientation_reversing} then shows that $[g]_{\mathfrak{m}}=1\implies g\in G_{+}(\Pi)$, as desired.
\end{proof}

\begin{figure}[H]
    \centering
    \begin{tikzpicture}
      \draw (0,0)--node[below]{$\gamma(\beta(s))$}(4,0)--node[pos=1,draw,circle,inner sep=1pt,fill]{}node[pos=0,draw,circle,inner sep=1pt,fill]{}node[right]{1}+(0,1)coordinate(Y)--node[above]{$\gamma(\beta(s))$}(0,1)coordinate(X);
      \draw[dashed] (X)--node[pos=1,draw,circle,inner sep=1pt,fill]{}node[pos=0,draw,circle,inner sep=1pt,fill]{}node[pos=1,below left]{$1$}node[pos=0,above left]{$1$}+(0,-1);
    \end{tikzpicture}
    \caption{This special element preserves orientation because it is represented by a $t$-invariant element, and the rescaling argument of \S\ref{sec:path-comp-group} applies.}
    \label{fig:t-invariant-special-element}
  \end{figure}
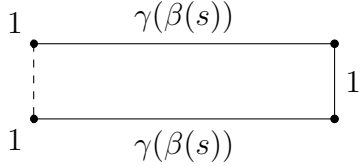

\subsection{Orienting the non-linear moduli spaces}
\label{sec:orienting-non-linear}

\subsubsection{Outline of the approach}
\label{sec:outline-approach}

Our approach follows \cite{oh_1997_cotangent}. Let $\mathscr{M}$ be a moduli space of solutions to Floer's equation. To each $u\in\mathscr{M}$ one associates a linearized operator $D_{u}$. The orientability of $\mathscr{M}$ reduces to the orientability of a determinant line bundle $\det(D)\to \mathscr{M}$, whose fiber over $u$ is the determinant line of $D_{u}$.

We define $D_{u}$ using local coordinates near $u$; for each choice of coordinates $\psi$, one obtains a local coordinate representation $D^{\psi}_{u}$. One arranges that $D^{\psi}_{u}$ is valued in a contractible space of Cauchy-Riemann operators $\mathrm{CR}$, e.g., $\mathrm{CR}(A_{-},A_{+})$ or $\mathrm{CR}(A_{-},\Pi)$, or a space of operators which admits a deformation retraction onto one of these spaces.

Different choices of coordinates $\psi$ give conjugate elements of $\mathrm{CR}$, under the action of a group of automorphisms $G$, e.g., the groups considered in \S\ref{sec:group-of-automorphisms} and \S\ref{sec:group-of-automorphisms-1}.

The linearized operator $D_{u}$ is defined as an abstract limit, in the category theory sense, of the local coordinate representations $D^{\psi}_{u}$. The determinant line bundle over $\det(D)\to \mathscr{M}$ is topologized as a locally trivial line bundle whose structure group (a subgroup of $\Z/2$) is determined by the action of $G$ on the orientation line of the determinant line bundle over $\mathrm{CR}$. See \S\ref{sec:orientation-lines} and \S\ref{sec:orientation-lines-1}.

In particular, if the choice of coordinates $\psi$ can be restricted so that the transition functions are always valued in the subgroup $G_{+}\subset G$ of orientation preserving automorphisms, then $\det(D)\to \mathscr{M}$ can be oriented.

The rest of this section is aimed at making this informal discussion precise.

\subsubsection{Frames for the conormal and the vertical subbundle}
\label{sec:fram-conorm-vert}
Throughout we fix a linear conormal $\Pi$ so that $\dim(\Pi\cap \R^{n})=\mathrm{codim}(N)$. We assume that $T\nu^{*}N$ and the vertical subbundle $V$ of $T^{*}M$ are conormally related, and hence there exist unitary frames $F:\R^{2n}\to TT^{*}M$ so that $F(\R^{n})=T\nu^{*}N$ and $F(\Pi)=V$ based at any point in $\nu^{*}N$; such frames can be constructed using appropriate canonical coordinates.

Denote $C_{0}=C_{J_{0}}(\Pi,\R^{n})$ and $C_{1}=C_{J}(T\nu^{*}N,V)$. It will be important that a frame $F$ for $T\nu^{*}N$ satisfies:
\begin{equation*}
  F(\Pi)=V\iff C_{1}F=FC_{0};
\end{equation*}
this can be proved by considering each summand in the decomposition:
\begin{equation*}
  \R^{2n}=(\R^{n}\cap \Pi)\oplus (\R^{n}\cap J_{0}\Pi)\oplus (J_{0}\R^{n}\cap \Pi)\oplus (J_{0}\R^{n}\cap J_{0}\Pi)
\end{equation*}
separately.

We fix the sign $\mathfrak{m}$ so that the conclusion of Lemma \ref{lemma:some-choice-of-sign} holds.

\subsubsection{Coordinates can be chosen symplectically}
\label{sec:canon-coord-can}
Let $\mathfrak{F}\to T^{*}M$ be the unitary frame bundle. A choice of metric with very large injectivity radius produces a smooth map $\Phi:\mathfrak{F}\times B(1)\to T^{*}M$ which restricts to an open embedding denoted $\Phi_{F}$ on each fiber $\set{F}\times B(1)$. Taking a metric pulled back from the base for which $N$ is the fixed point set of an isometric involution, one can ensure the following properties:

\begin{enumerate}[label=(\alph*)]
\item\label{item:moser-lag-a} $\d\Phi_{F}(0)=F$
\item\label{item:moser-lag-b} $\Phi_{F}^{-1}(\nu^{*}N)=\R^{n}\cap B(\delta)$ for each $F$ tangent to $\nu^{*}N$,
\item\label{item:moser-lag-c} $\Phi_{F}^{-1}(T^{*}M_{q})=\Pi\cap B(\delta)$ for each $F$ tangent to $V$,
\item\label{item:moser-lag-d} $\Phi_{F}C_{0}^{\mathfrak{m}}=\Phi_{FC_{0}^{\mathfrak{m}}}$ for all $F$ satisfying $F(\Pi)=V$.
\end{enumerate}

This coordinate map is essentially a coherent choice of smooth embeddings of balls. A fairly straightforward application of the Moser isotopy technique yields:
\begin{prop}\label{prop:symplectic_coordinates}
  The coordinate map $\Phi:\mathfrak{F}\times B(1)\to T^{*}M$ can be deformed so that it is symplectic on $\mathfrak{F}\times B(\delta)$, for $\delta$ sufficiently small, preserving the properties \ref{item:moser-lag-a}-\ref{item:moser-lag-d}.
\end{prop}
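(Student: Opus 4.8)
The plan is to carry out a fiberwise Moser isotopy, uniformly in the frame. For each unitary frame $F$ over a point $m\in T^{*}M$, the embedding $\Phi_{F}\colon B(1)\to T^{*}M$ pulls back $\omega$ to a symplectic form $\Phi_{F}^{*}\omega$ on $B(1)$; since $F$ is unitary, hence linearly symplectic, property \ref{item:moser-lag-a} gives $(\Phi_{F}^{*}\omega)(0)=\omega_{0}$, the standard form. After shrinking to a ball $B(\delta)$ on which all the convex interpolations $\omega_{t}^{F}=\omega_{0}+t(\Phi_{F}^{*}\omega-\omega_{0})$ are nondegenerate, the goal is to construct a smooth-in-$F$ family of diffeomorphisms $\psi_{F}$ of $B(\delta)$ with $\psi_{F}(0)=0$, $\d\psi_{F}(0)=\id$, and $\psi_{F}^{*}(\Phi_{F}^{*}\omega)=\omega_{0}$; running the Moser flow then deforms $\Phi$ to the coordinate map $(F,x)\mapsto\Phi_{F}(\psi_{F}(x))$, which is symplectic on $\mathfrak{F}\times B(\delta)$.

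To produce $\psi_{F}$, write $\eta_{F}=\Phi_{F}^{*}\omega-\omega_{0}$, a closed $2$-form vanishing at the origin, and set $\sigma_{F}=h\eta_{F}$, where $h$ is the standard radial (cone) homotopy operator centred at $0$; then $\d\sigma_{F}=\eta_{F}$ and, since $\eta_{F}$ vanishes at $0$, the primitive $\sigma_{F}$ vanishes to second order there. Solving $\iota_{X_{t}^{F}}\omega_{t}^{F}=-\sigma_{F}$ yields a time-dependent vector field vanishing to second order at $0$, whose time-one flow is $\psi_{F}$; in particular $\psi_{F}(0)=0$ and $\d\psi_{F}(0)=\id$. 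Everything depends smoothly on $F$ because $\eta_{F}$ does and $h$ is a fixed operator, and a routine uniformity estimate — using that only frames over a neighbourhood of $\nu^{*}N$, whose base $N$ is compact, are relevant — lets one take a single $\delta>0$ for all $F$.

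It remains to check that \ref{item:moser-lag-a}--\ref{item:moser-lag-d} survive. Property \ref{item:moser-lag-a} is immediate from $\d\psi_{F}(0)=\id$. For \ref{item:moser-lag-b} and \ref{item:moser-lag-c}: when $F$ is tangent to $\nu^{*}N$ (resp.\ to $V$), the linear subspace $\R^{n}$ (resp.\ $\Pi$) is Lagrangian both for $\omega_{0}$ and for $\Phi_{F}^{*}\omega$ — the latter because $\Phi_{F}$ carries it into the Lagrangian $\nu^{*}N$ (resp.\ the fibre $T^{*}M_{q}$) — hence for every $\omega_{t}^{F}$; since the radial vector field is tangent to any linear subspace through the origin, $\sigma_{F}=h\eta_{F}$ restricts to $0$ on $\R^{n}$ (resp.\ $\Pi$), so $X_{t}^{F}$ is $\omega_{t}^{F}$-orthogonal to that Lagrangian, hence tangent to it, and $\psi_{F}$ preserves it; this gives $(\Phi_{F}\circ\psi_{F})^{-1}(\nu^{*}N)=\R^{n}\cap B(\delta')$ and the analogous statement for the fibres. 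For \ref{item:moser-lag-d}: $C_{0}$ is a linear symplectomorphism of $(\R^{2n},\omega_{0})$ (checked directly from its definition as the identity on the complex span of $\Pi\cap\R^{n}$ and $J_{0}$ on the symplectically orthogonal complement), so $\Phi_{FC_{0}^{\mathfrak{m}}}=\Phi_{F}\circ C_{0}^{\mathfrak{m}}$ forces $\eta_{FC_{0}^{\mathfrak{m}}}=(C_{0}^{\mathfrak{m}})^{*}\eta_{F}$; naturality of $h$ under linear maps propagates this to $\sigma_{FC_{0}^{\mathfrak{m}}}=(C_{0}^{\mathfrak{m}})^{*}\sigma_{F}$ and then to $\psi_{FC_{0}^{\mathfrak{m}}}=(C_{0}^{\mathfrak{m}})^{-1}\circ\psi_{F}\circ C_{0}^{\mathfrak{m}}$, which is exactly what keeps $\Phi_{F}\circ\psi_{F}$ equivariant under $C_{0}^{\mathfrak{m}}$.

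The \emph{main obstacle} is not any one computation but the simultaneous bookkeeping: the Moser isotopy must be smooth and defined on a common ball over the entire frame bundle, while at once respecting the Lagrangian normalization along the conormal, the Lagrangian normalization along the cotangent fibres, and the $C_{0}^{\mathfrak{m}}$-equivariance. The resolution is the observation that the single natural choice of primitive — the radial homotopy operator — discharges all three constraints at the same time, being equivariant under linear maps and preserving every linear subspace through the origin; granting this, what remains is the standard parametrized Moser argument.
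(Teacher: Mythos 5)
Your proposal is correct and is essentially the paper's own argument: the paper's proof is a two-line sketch of exactly this parametrized Moser isotopy (a correcting flow $\psi_{t,F}$ with $\d\psi_{t,F}(0)=\id$, constructed parametrically in $F$, with details left to the reader). Your write-up simply supplies those details — the radial primitive, tangency of the Moser field to the Lagrangian subspaces $\R^{n}$ and $\Pi$, and $C_{0}^{\mathfrak{m}}$-equivariance via naturality of the cone operator — all of which check out.
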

\begin{proof}
  One constructs a correcting flow $\psi_{t,F}:B(\delta)\to B(1)$ so that $\Phi_{F}\psi_{1,F}$ is symplectic on $B(\delta)$. The construction is parametric in $F$. Because $\Phi^{*}_{F}\omega-\omega_{0}$ vanishes at $0$ one can arrange that $\d\psi_{t,F}(0)=\id$. The details are left to the reader.
\end{proof}

\subsubsection{Asymptotic data at chords}
\label{sec:asymptotic-data}

Let $\gamma$ be a path with endpoints on $\nu^{*}N$. There are unitary frames $F(t)$ for $\gamma^{*}TT^{*}M$ so $F(0),F(1)$ span $T\nu^{*}N$ and $F(t)C_{0}$ spans $V$. One can first construct $F(t)C_{0}^{\mathfrak{m}}$ and then alter it at the endpoints so that $F(0),F(1)$ span $T\nu^{*}N$. Fix once and for all such a unitary frame $F_{\gamma}(t)$ for each chord.

Given the unitary frame, one considers the travelling coordinate chart $\Phi_{t}=\Phi_{F(t)}$. In this coordinate chart, one defines the linearized operator $A_{\gamma}$ as in \cite[\S3]{cant_thesis}. Briefly, it is defined by the formula:
\begin{equation*}
  \d\Phi_{t}(0)A_{\gamma}\eta:=\lim_{h\to 0}h^{-1}\mathrm{NL}(\Phi_{t}(h\eta(t))),
\end{equation*}
where $\mathrm{NL}(\gamma(t))=-J(\gamma(t))(\gamma'(t)-X_{t}(\gamma(t)))$. One shows that $A_{\gamma}=-J_{0}\bd_{t}-S(t)$ is a valid asymptotic operator.

From this construction, we extract:
\begin{enumerate}
\item the orientation line $\mathfrak{o}_{\gamma}=\mathfrak{o}(A_{\gamma})$, as in \S\ref{sec:orientation-lines},
\item the Conley-Zehnder index $\mathrm{CZ}(\gamma)=\mathrm{CZ}(A_{\gamma})$, as in \S\ref{sec:conl-zehnd-indic}.
\end{enumerate}

\subsubsection{Linearization procedure for infinite strips}
\label{sec:line-proc}
Let $u:\Sigma\to T^{*}M$ be a finite energy solution to Floer's equation with $\Sigma=\R\times [0,1]$.

Then $u$ is asymptotic at its punctures to chords $\gamma_{-}$ and $\gamma_{+}$. Pick a map $F_{u}:\Sigma\to \mathfrak{F}$, writing $\Phi_{z}=\Phi_{F_{u}(z)}$, and suppose that:
\begin{enumerate}
\item\label{item:line-proc-1} $u(z)$ lies in the image of $\Phi_{z}(B(\delta))$,
\item\label{item:line-proc-2} $F_{u}(s,t)=F_{\gamma_{-}}(t)$ for $s\le s_{0}$ and $F_{u}(s,t)=F_{\gamma_{+}}(t)$ for $s\ge s_{1}$,
\item\label{item:line-proc-3} $F_{u}(s,i)$ points along $T\nu^{*}N$, for $i=0,1$.
\end{enumerate}
Consider the partially defined frame for $V$:
\begin{equation}\label{eq:partially-defined}
  \left\{
    \begin{aligned}
      &C_{1}^{\mathfrak{m}}F_{u}(z)&&\text{ for }t(z)=0,1,\\
      &F_{u}(z)C_{0}^{\mathfrak{m}}&&\text{ for }s(z)\in (-\infty,s_{0}]\cup [s_{1},\infty).\\
    \end{aligned}
  \right.
\end{equation}
This frame agrees on the overlaps, because of the observation in \S\ref{sec:fram-conorm-vert}. We require the frames satisfy the homotopical condition:
\begin{enumerate}[resume]
\item\label{item:line-proc-4} the conormal frame $F_{u}^{*}$ extends to a frame of $u^{*}V$ defined on $\Sigma$.
\end{enumerate}
Since $\pi_{2}(\mathrm{U}(n),\mathrm{SO}(n),1)\to \pi_{1}(\mathrm{SO}(n),1)$ is surjective, we can always achieve \ref{item:line-proc-4} by precomposing $F_{u}(z)$ with a map $\Sigma\to \mathrm{U}(n)$.

If the conditions \ref{item:line-proc-1} through \ref{item:line-proc-4} are satisfied, we say the choice of frame is \emph{admissible}.

As in \S\ref{sec:asymptotic-data}, condition \ref{item:line-proc-1} allows us to consider the local coordinate representation:
\begin{equation*}
  u(s,t)=\Phi_{z}(\mu(s,t)),
\end{equation*}
Setting $\mathrm{NL}(u(s,t))=\bd_{s}u(s,t)+J_{s,t}(u)(\bd_{t}u-X_{s,t}(u))$, we have the formula for the linearized operator:
\begin{equation*}
  \d\Phi_{z}(\mu(s,t))D_{u}^{\Phi}(\eta)=\lim_{\epsilon \to 0}\epsilon^{-1}\mathrm{NL}(\Phi_{z}(\mu(s,t)+\epsilon \eta(s,t)));
\end{equation*}
see \cite[\S4.4]{cant_thesis} for further discussion.

It is straightforward to show that $D^{\Phi}_{u}$ is a Cauchy-Riemann operator on the trivial bundle over the infinite strip for the domain dependent complex structure $J^{\Phi}_{z}$ given by $\d\Phi_{z}(\mu)^{-1}J_{z}(u(s,t))\d\Phi_{z}(\mu)$. Since the coordinates are symplectic, $J^{\Phi}_{z}$ is tame.

It is important to note that, by our requirement that $\Phi_{z}$ eventually agrees with the coordinates used to linearize the ODE in \S\ref{sec:asymptotic-data}, that $D^{\Phi}_{u}$ is asymptotic to $\bd_{s}-A_{\gamma_{-}}$ and $\bd_{s}-A_{\gamma_{+}}$ at the positive and negative ends.

The space of Cauchy-Riemann operators with a tame domain-dependent complex structure deformation retracts onto the space $\mathrm{CR}(A_{\gamma_{-}},A_{\gamma_{+}})$, because the space of tame complex structures is contractible. This retraction establishes a canonical isomorphism:
\begin{equation}\label{eq:can-iso-Phi}
  \text{the orientation line for $\det(D_{u}^{\Phi})$} \simeq \mathfrak{o}(A_{\gamma_{-}},A_{\gamma_{+}});
\end{equation}
see \S\ref{sec:orientation-lines}. On the other hand, it is well-known that for two such choices of coordinates $u(z)=\Phi_{0,z}(\mu(z))$ and $u(z)=\Phi_{1,z}(\mu'(z))$, one has:
\begin{equation}\label{eq:invariance-conju}
  \d\Phi_{0,z}(\mu)D^{\Phi_{0}}_{u}\d\Phi_{0,z}(\mu)^{-1}=\d\Phi_{1,z}(\mu')D^{\Phi_{1}}_{u}\d\Phi_{1,z}(\mu')^{-1}.
\end{equation}
This conjugation establishes an isomorphism between the orientation lines of $\det(D_{u}^{\Phi_{0}})$ and $\det(D_{u}^{\Psi_{1}})$.

\begin{prop}\label{prop:coherent-orient-1}
  Suppose $\Phi_{0},\Phi_{1}$ are admissible coordinates. The isomorphisms induced by \eqref{eq:invariance-conju} preserve the isomorphisms \eqref{eq:can-iso-Phi}. In particular, if one defines $\det(D_{u})$ as the abstract limit of $\det(D_{u}^{\Phi})$ over the isomorphisms induced by \eqref{eq:invariance-conju}, then $\det(D)$ is an orientable real line bundle, and the corresponding orientation line is canonically identified with $\mathfrak{o}(A_{\gamma_{-}},A_{\gamma_{+}})$.
\end{prop}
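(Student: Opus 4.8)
The plan is to reduce the statement to a topological fact about the transition function between two admissible framings, and then to read that fact off from condition \ref{item:line-proc-4}, which is tailored exactly for it. Write $\Psi_{i}(z)=\d\Phi_{i,z}(\mu^{(i)}(z))$ for the framing of $u^{*}TT^{*}M$ produced by the coordinate choice $\Phi_{i}$, $i=0,1$. By \eqref{eq:invariance-conju} the operators $\Psi_{i}D_{u}^{\Phi_{i}}\Psi_{i}^{-1}$ agree, so the isomorphism between the orientation lines of $\det(D_{u}^{\Phi_{0}})$ and $\det(D_{u}^{\Phi_{1}})$ induced by \eqref{eq:invariance-conju} is conjugation by the automorphism $g:=\Psi_{1}^{-1}\Psi_{0}$. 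The action of an automorphism on an orientation line depends only on its path component in the group $G$ of \S\ref{sec:group-of-automorphisms}, and $g$ is homotopic within $G$ to $z\mapsto (F_{u}^{1}(z))^{-1}F_{u}^{0}(z)$ (contract $\mu^{(i)}$ to $0$ via \ref{item:moser-lag-a}, and contract away the domain-dependent tame complex structures, as in the retraction used for \eqref{eq:can-iso-Phi}). So it suffices to show that the transition function $g=(F_{u}^{1})^{-1}F_{u}^{0}$ lies in the orientation-preserving subgroup $G_{+}\subset G$: once that is known, conjugation by $g$ preserves the global orientation of the determinant bundle over $\mathrm{CR}(A_{\gamma_{-}},A_{\gamma_{+}})$, hence preserves \eqref{eq:can-iso-Phi}, so the abstract limit defining $\det(D_{u})$ is orientation-coherent with orientation line canonically $\mathfrak{o}(A_{\gamma_{-}},A_{\gamma_{+}})$; and orientability of $\det(D)\to\mathscr{M}$ follows because admissible framings can be chosen in continuous families (using surjectivity of $\pi_{2}(\mathrm{U}(n),\mathrm{SO}(n),1)\to\pi_{1}(\mathrm{SO}(n))$, as in the remark after \ref{item:line-proc-4}) and all transition functions are of this form.

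Next I would verify that $g$ genuinely lies in $G$: by \ref{item:line-proc-2} both framings equal $F_{\gamma_{\pm}}$ on the ends, so $g\equiv 1$ there; by \ref{item:line-proc-3} both point along the (fixed, oriented) tangent bundle $T\nu^{*}N$ along $t=0,1$, so $g$ takes values in $\mathrm{SO}(n)$ on the boundary; and $g-1\in W^{1,p}$ by construction. Now recall from \S\ref{sec:path-comp-group} that $\pi_{0}(G)$ is classified by the pair of based loops obtained by restricting to $t=0$ and $t=1$, and that by Proposition \ref{prop:orientation_reversing}, together with $(1,1),(2,0),(0,2)\in\pi_{0}(G_{+})$, an element of $G$ lies in $G_{+}$ precisely when these two boundary loops represent the same class in $\pi_{1}(\mathrm{SO}(n))$ (taken mod $2$ if $n=2$). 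Hence the whole proof comes down to showing that the two boundary loops of $g$ coincide.

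This is where \ref{item:line-proc-4} enters, and I expect it to be the crux. The partially defined frame \eqref{eq:partially-defined} attached to $F_{u}^{i}$ --- call it $F_{u}^{*,i}$ --- is a trivialization of the \emph{real} rank-$n$ vertical subbundle $u^{*}V$ over the region $A\subset\Sigma$ made up of the two boundary lines and the two half-infinite ends. Since $\Sigma$ is $A$ with a disk glued on (the complement of $A$ is an open rectangle), $A$ carries a circle onto which it deformation retracts, and \ref{item:line-proc-4} says exactly that $F_{u}^{*,i}$ extends over that disk, i.e.\ that its winding around the circle vanishes in $\pi_{1}(\mathrm{GL}^{+}(n,\R))\cong\pi_{1}(\mathrm{SO}(n))$. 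Applying this for $i=0,1$, the gauge transformation $g^{*}:=(F_{u}^{*,0})^{-1}F_{u}^{*,1}$ of $u^{*}V$ over $A$ extends over $\Sigma$ as well, hence winds trivially. On the ends $F_{u}^{*,i}$ is the $s$-independent frame $F_{\gamma_{\pm}}C_{0}^{\mathfrak{m}}$, so $g^{*}\equiv 1$ there; and on the boundary lines the fibrewise isomorphism $C_{1}^{\mathfrak{m}}$ relating $F_{u}^{*,i}$ to $F_{u}^{i}$ cancels in the ratio, giving $g^{*}|_{t=j}=g|_{t=j}|_{\R^{n}}$. Reading the winding of $g^{*}$ around the retraction circle as a concatenation of these four arcs, the two end-arcs drop out and we get
\begin{equation*}
  0=[g^{*}|_{t=0}]-[g^{*}|_{t=1}]=[g|_{t=0}]-[g|_{t=1}]\in\pi_{1}(\mathrm{SO}(n)),
\end{equation*}
so the two boundary loops of $g$ agree, $g\in G_{+}$, and the proof is complete.

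The only genuinely subtle point, then, is the one in the last paragraph: one must read \ref{item:line-proc-4} as a statement about the \emph{real} rank-$n$ bundle $u^{*}V$, because it is precisely the fact that the relevant structure group has fundamental group $\pi_{1}(\mathrm{SO}(n))$ --- and not $\pi_{1}(\mathrm{U}(n))$, into which $\pi_{1}(\mathrm{SO}(n))$ maps trivially --- that makes the vanishing of the extension obstruction encode exactly what is needed to put $g$ in $G_{+}$. A lesser bookkeeping issue is to fix an orientation of $T\nu^{*}N$ once and for all so that all boundary values land in $\mathrm{SO}(n)$ rather than $\mathrm{O}(n)$; this is harmless and is part of the set-up in \S\ref{sec:fram-conorm-vert}.
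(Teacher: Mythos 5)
Your argument is correct and takes essentially the same route as the paper: reduce to the homotopy class of the transition map between the two admissible framings, use the admissibility condition \ref{item:line-proc-4} (extension of the conormal frame of $u^{*}V$ over $\Sigma$) to show that its two boundary loops agree in $\pi_{1}$, and conclude via the computation of $G_{+}$ in \S\ref{sec:path-comp-group} together with Proposition \ref{prop:orientation_reversing}. The only point where the paper is more careful is the reduction from the symplectic-valued transition $\d\Phi_{1,z}(\mu)^{-1}\d\Phi_{0,z}(\mu)$ to a complex-linear element acting on a fixed space of Cauchy--Riemann operators, which it carries out explicitly via the deformation $M_{\tau}=M\,E_{\tau}$ through tame complex structures compatible with \eqref{eq:can-iso-Phi} --- the step you compress into the parenthetical about contracting $\mu$ and the domain-dependent $J$.
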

\begin{proof}
  Let $P_{i}(z)=\d\Phi_{i,z}(\mu_{i}(z))$, and $M(z)=P_{1}(z)^{-1}P_{0}(z)$. Then $M(z)$ is a family of matrices in $\mathrm{Sp}(\R^{2n})$ over the strip and which preserve $\R^{n}$ when $z$ lies in boundary.

  Let $J_{1,z}=M(z)^{*}J_{0}$, and choose a path $J_{\tau,z}$ of $\omega_{0}$-tame complex structures so that $J_{0,z}=J_{0}$. Let $E_{\tau}(z)$ be the unique matrix which fixes $\R^{n}$ and so $E_{\tau}(z)^{*}J_{1,z}=J_{\tau,z}$.

  Then $M_{\tau}(z):=M(z)E_{\tau}(z)$ satisfies $M_{\tau}^{*}(z)J_{0}=J_{\tau,z}$. Consequently, $M_{1}(z)$ reverses orientation if and only if $M_{0}(z)$ reverses orientation. Moreover, $M_{0}(z)v=M_{1}(z)v$ for $z\in \bd \Sigma$ and $v\in \R^{n}$.

  It is clear that $M_{0}(z)$ restricts to $\bd\Sigma$ to two paths in $\pi_{1}(\mathrm{GL}(\R^{n}),1)$. These two paths represent the same element because of the homotopical condition \ref{item:line-proc-4} for admissible coordinates. Indeed, $P_{0},P_{1}$ induces conormal frames $P_{0}^{*},P_{1}^{*}$ for $u^{*}V$ along the $t=0,1$ boundaries. These frames extend to all values of $t$, and hence the homotopy class of the change of trivialization $s\mapsto (P_{1}^{*}(s,t))^{-1}P_{0}^{*}(s,t)$, considered as an element in $\pi_{1}(\mathrm{GL}(\R^{n}),1)$, is independent of $t$.

  It then follows from the arguments in \S\ref{sec:path-comp-group} that conjugation by $M_{0}$ preserves orientation. This completes the proof.
\end{proof}

\subsubsection{Linearization procedure for half-infinite strips}
\label{sec:half-proc}

Let $\Sigma=(-\infty,0]\times [0,1]$, and suppose that $u$ solves the boundary value problem:
\begin{equation*}
  \left\{
    \begin{aligned}
      &\bd_{s}u+J(u)\bd_{t}u=A_{s,t}(u)\\
      &u(s,0),u(s,1)\in \nu^{*}N\\
      &u(0,t)\in TM^{*}_{q(t)},
    \end{aligned}
  \right.
\end{equation*}
where $q(t)$ is a $W^{1,p}$ path.

Given $u:\Sigma\to T^{*}M$, pick a map $z\in \Sigma\mapsto F_{u}(z)\in \mathfrak{F}$. Similarly to \S\ref{sec:line-proc}, consider travelling frames which satisfy:
\begin{enumerate}
\item\label{item:half-proc-1} $u(z)$ lies in the image of $\Phi_{z}(B(\delta))$,
\item\label{item:half-proc-2} $F_{u}(z)=F_{\gamma_{-}}(t)$ for $s(z)\le s_{0}$,
\item\label{item:half-proc-3} $F_{u}(z)$ points along $T\nu^{*}N$, for $t(z)=0,1$,
\item\label{item:half-proc-4} $F_{u}(z)\Pi=V$ holds for $s(z)$ near $0$,
\item\label{item:half-proc-5} $F_{u}$ is constant in a neighborhood of the corners $(0,0)$ and $(0,1)$.
\end{enumerate}

Similarly to \eqref{eq:partially-defined}, consider the frame for $V$ defined by:
\begin{equation}\label{eq:half-partially-defined}
    \left\{
    \begin{aligned}
      &C_{1}^{\mathfrak{m}}F_{u}(z)&&\text{ for }t(z)=0,1,\\
      &F_{u}(z)C_{0}^{\mathfrak{m}}&&\text{ for }s(z)\in (-\infty,s_{0}]\cup \set{s=0}.\\
    \end{aligned}
  \right.
\end{equation}

This frame of $u^{*}V$ is defined on the domain shown in Figure \ref{fig:domain-of-frame}.

One uses the result of \S\ref{sec:fram-conorm-vert} and \ref{item:half-proc-5} to obtain a smooth transition at the corners. The final property we consider is:
\begin{enumerate}[resume]
\item\label{item:half-proc-6} the frame in \eqref{eq:half-partially-defined} extends to a frame of $u^{*}V$ over all of $\Sigma$.
\end{enumerate}
As in \S\ref{sec:line-proc}, this condition can be attained by precomposing $\Phi_{z}$ by a linear change of coordinates. If the properties \ref{item:half-proc-1} through \ref{item:half-proc-6} hold we say that choice of travelling frame $F_{u}$ is \emph{admissible}; such frames always exist.

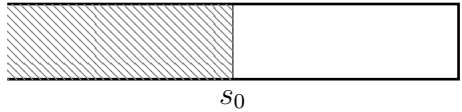
\begin{figure}[H]
  \centering
  \begin{tikzpicture}
    \draw[line width=1pt] (0,0)--(6,0)--+(0,1)--(0,1);
    \draw (3,1)--+(0,-1)node[below]{$s_{0}$};
    \fill[pattern={north west lines},pattern color=black!50!white] (0,0) rectangle (3,1);
  \end{tikzpicture}
  \caption{Over the marked region, including the solid boundary, there is a frame $\R^{n}\to u^{*}V$ determined by the choice of coordinates.}
  \label{fig:domain-of-frame}
\end{figure}

Using property \ref{item:moser-lag-d}, $\Phi_{F_{u}(z)}C_{0}^{\mathfrak{m}}=\Phi_{F_{u}(z)C_{0}^{\mathfrak{m}}}$ holds along $s=0$, so $\Phi_{z}^{-1}(V)=\Pi\cap B(\delta)$ holds because of \ref{item:moser-lag-c}. Therefore, in admissible coordinates the linearized operator has the following form:
\begin{equation*}
  D^{\Phi}_{u}(\eta)=\bd_{s}\eta+J_{z}^{\Phi}\bd_{t}\eta+S(s,t)\eta,
\end{equation*}
where $\eta$ takes the $\R^{n}$ and $\Pi$ boundary values as in \S\ref{sec:linear-theory-half} and $J_{z}^{\Phi}\bd_{t}\eta+S(s,t)\eta$ converge to $-A_{\gamma_{\pm}}$ as $s\to \pm \infty$. Here $J_{z}^{\Phi}:=\d\Phi_{z}(\mu(z))^{-1}J(u(z))\d\Phi_{z}(\mu(z))$ is $\omega_{0}$-tame.

Let $\mathscr{J}(\Pi)$ be the set of domain dependent $\omega_{0}$-tame complex structures $J_{z}$ so that $\Pi$, $\R^{n}$ are conormally related when $z=(0,0)$ or $z=(0,1)$. Standard arguments show that $\mathscr{J}(\Pi)$ is contractible in such a way that $D_{u}^{\Phi}$ can be canonically deformed through Fredholm operators until it is in the set $\mathrm{CR}(A_{-},\Pi)$ from \S\ref{sec:linear-theory-half}.

Thus the orientation line for $\det(D_{u}^{\Phi})$ is canonically equivalent to $\mathfrak{o}(A_{-},\Pi)$ and the Fredholm index of $D^{\Phi}_{u}$ is $d-n-\mathrm{CZ}(\gamma_{-})$; see \S\ref{sec:orientation-lines-1} and \S\ref{sec:fredholm-index-half}.

\begin{prop}\label{prop:coherent-orient-2}
  The identification between the orientation line of $\det(D_{u}^{\Phi})$ and $\mathfrak{o}(A_{-},\Pi)$ is independent of the choice of admissible coordinates $\Phi$, i.e., it commutes with the conjugation isomorphisms.
\end{prop}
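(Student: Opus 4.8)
The plan is to run the argument proving Proposition~\ref{prop:coherent-orient-1} again, now on the half-infinite strip $\Sigma=(-\infty,0]\times[0,1]$ and with the orientation analysis of \S\ref{sec:path-comp-group-1} and Lemma~\ref{lemma:some-choice-of-sign} replacing that of \S\ref{sec:path-comp-group}. Given two admissible travelling frames with coordinate maps $\Phi_0,\Phi_1$, write $u(z)=\Phi_{i,z}(\mu_i(z))$ and set $P_i(z)=\d\Phi_{i,z}(\mu_i(z))$ and $M(z)=P_1(z)^{-1}P_0(z)$. As in Proposition~\ref{prop:coherent-orient-1}, $M(z)$ is a smooth family of matrices in $\mathrm{Sp}(\R^{2n})$ over $\Sigma$; by properties \ref{item:moser-lag-b} and \ref{item:half-proc-3} it preserves $\R^n$ along $t=0,1$, and by properties \ref{item:moser-lag-c}, \ref{item:moser-lag-d}, \ref{item:half-proc-4} it preserves $\Pi$ along $s=0$ near the positive end. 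The conjugation isomorphism between the orientation lines of $\det(D_u^{\Phi_0})$ and $\det(D_u^{\Phi_1})$ respects the two identifications with $\mathfrak{o}(A_-,\Pi)$ precisely when conjugation by $M$ preserves orientation on $\mathrm{CR}(A_-,\Pi)$.

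The first step is to pass to the linear model of \S\ref{sec:linear-theory-half}. Using contractibility of $\mathscr{J}(\Pi)$, interpolate the domain-dependent complex structure $J^{\Phi_0}_z$ to the constant $J_0$ and absorb the change by matrices $E_\tau(z)$ which fix $\R^n$ on $t=0,1$ and $\Pi$ on $s=0$, exactly as in the proof of Proposition~\ref{prop:coherent-orient-1}; this replaces $M$ by a homotopic family $M_0$ with $M_0(z)\in\mathrm{U}(n)$ and $M_0=1$ near $s=-\infty$, so $M_0$ represents an element $g\in G(\Pi)$ of the automorphism group of \S\ref{sec:group-of-automorphisms-1}. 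By Lemma~\ref{lemma:some-choice-of-sign}, to conclude $g\in G_+(\Pi)$ it suffices to show the based loop $[M_0]_{\mathfrak{m}}\in\pi_1(\mathrm{SO}(n),1)$ is trivial, where $\mathfrak{m}$ is the sign fixed in \S\ref{sec:fram-conorm-vert} and the twist is by $C^{\mathfrak{m}}$ with $C=C_{J_0}(\Pi,\R^n)$.

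The key observation is that $[M_0]_{\mathfrak{m}}$ is the restriction to $\partial\Sigma$ of the change-of-trivialization map of $u^*V$ between the two extended conormal frames. Indeed, as in Proposition~\ref{prop:coherent-orient-1}, $P_0$ and $P_1$ induce frames $P_0^{*},P_1^{*}$ of $u^*V$ via the recipe \eqref{eq:half-partially-defined}, i.e.\ $C_1^{\mathfrak{m}}P_i$ along $t=0,1$ and $P_iC_0^{\mathfrak{m}}$ along $s=0$; these glue smoothly at the corners because of the identity $C_1F=FC_0$ from \S\ref{sec:fram-conorm-vert} together with the corner-constancy \ref{item:half-proc-5}, and they extend over all of $\Sigma$ by admissibility condition \ref{item:half-proc-6}. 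Computing $(P_1^{*})^{-1}P_0^{*}$ on the boundary gives $M_0$ along $t=0,1$ and $C_0^{-\mathfrak{m}}M_0C_0^{\mathfrak{m}}=C^{-\mathfrak{m}}M_0C^{\mathfrak{m}}$ along $s=0$, which is exactly the loop $[M_0]_{\mathfrak{m}}$. Since $u^*V$ is trivial over the contractible domain $\Sigma$, the map $(P_1^{*})^{-1}P_0^{*}:\Sigma\to\mathrm{GL}(\R^n)$ is nullhomotopic, hence so is its boundary restriction; therefore $[M_0]_{\mathfrak{m}}=0$ and Lemma~\ref{lemma:some-choice-of-sign} finishes the argument.

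I expect the main obstacle to be careful bookkeeping rather than anything conceptual: one must check that the three boundary arcs of $[M_0]_{\mathfrak{m}}$ genuinely assemble into a loop based at $1$ — which uses that $M_0(0,0)$ and $M_0(0,1)$ lie in $\mathrm{SO}(\R^n)\cap\mathrm{SO}(\Pi)$, hence commute with $C^{\mathfrak{m}}$ — and that the twisting conventions in \eqref{eq:half-partially-defined}, in the definition of $[g]_{\mathfrak{m}}$, and in the fixed sign $\mathfrak{m}$ of \S\ref{sec:fram-conorm-vert} are deployed consistently, so that $(P_1^{*})^{-1}P_0^{*}|_{\partial\Sigma}$ is \emph{on the nose} equal to $[M_0]_{\mathfrak{m}}$ rather than merely conjugate to it. Once this identification is pinned down, triviality is immediate from contractibility of $\Sigma$, exactly as in Proposition~\ref{prop:coherent-orient-1}.
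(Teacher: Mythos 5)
Your proposal is correct and follows essentially the same route as the paper: reduce to the linearized change of coordinates $M(z)=P_1^{-1}P_0$, deform it via matrices $E_\tau(z)$ preserving $\R^n$ on $t=0,1$ and $\Pi$ on $s=0$ to a complex-linear family $M_0$, and then use the homotopical admissibility condition \ref{item:half-proc-6} together with Lemma \ref{lemma:some-choice-of-sign} to see that the twisted boundary loop $[M_0]_{\mathfrak{m}}$ is trivial, so $M_0\in G_{+}(\Pi)$. Your explicit identification of $[M_0]_{\mathfrak{m}}$ with the boundary restriction of the change of trivialization $(P_1^{*})^{-1}P_0^{*}$ of the extended frames of $u^{*}V$ is precisely the paper's step ``these frames extend to the full domain by \ref{item:half-proc-6}'', just spelled out in slightly more detail.
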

\begin{proof}
  As in Proposition \ref{prop:coherent-orient-1}, set $P_{i}(z)=\d\Phi_{i,z}(\mu_{i}(z))$ and $M(z)=P_{1}(z)^{-1}P_{0}(z)$. One thinks of $M(z)$ as the linearized change of coordinates; it is a family of matrices in $\mathrm{Sp}(\R^{2n})$ over the strip and which preserve $\R^{n}$ when $t=0,1$, and which preserve the fixed linear conormal $\Pi$ when $s=0$.

  The Cauchy-Riemann operators $D^{\Phi_{i}}_{u}$ are conjugate to one another via $M(z)$, and so the problem reduces to showing that this conjugation action preserves orientation.

  Let $J_{1,z}=M(z)^{-1}J_{0}M(z)$, and pick a path $J_{\tau,z}$ of $\omega_{0}$-tame almost complex structures so that $\R^{n}$ and $\Pi$ remain conormally related for $J_{\tau,z}$ for $z=(0,0)$ and $z=(0,1)$. One can construct $E_{\tau}(z)$ so that $E_{\tau}(z)$ preserves $\R^{n}$ for $t(z)=0,1$ and preserves $\Pi$ for $s(z)=0$, and so that:
  \begin{equation*}
    J_{\tau,z}=E_{\tau}(z)^{-1}J_{1,z}E_{\tau}(z).
  \end{equation*}
  One first constructs $E_{\tau}$ at the corners, then extends to the boundary edges, and then extends to the interior. Then $M_{\tau}(z)=M(z)E_{\tau}(z)$ is a family of matrices so $M_{\tau}(z)^{*}J_{0}=J_{\tau,z}$; it follows that $M_{1}$ preserves orientation if and only if $M_{0}$ does. Note that $M_{0}$ is complex linear and preserves $\R^{n}$ and $\Pi$ on the respective boundary edges. The construction can be arranged so that $M_{\tau}(z)\to \id$ as $s(z)\to-\infty$, because the frames along the asymptotic end are unitary to begin with.

  One observes that $M_{0}(z)$ and $C_{0}^{-\mathfrak{m}}M_{0}(z)C_{0}^{\mathfrak{m}}$ are frames of $\R^{n}$ (along the boundary) which agree on the corners. By the homotopical assumption \ref{item:half-proc-6} these frames extend to frames of $\R^{n}$ defined on the full domain. By Lemma \ref{lemma:some-choice-of-sign}, it follows that $M_{0}$ preserves orientation, as desired.
\end{proof}

\subsection{Orientation lines for parametric moduli spaces}
\label{sec:param-moduli-space}

Let $P$ be a finite dimensional parameter space, and let $\mathrm{NL}:\mathscr{E}\times P\to \mathscr{F}$ be a smooth map between Banach spaces so that $0$ is a regular value. For our application to \S\ref{sec:isom-from-morse}, one should think of $\mathscr{E}$ as a local chart in the Banach manifold of $W^{1,p}$ maps $\Sigma\to W$ and $P$ a finite dimensional space of $W^{1,p}$ paths $q(t)$ in the base $M$, and:
\begin{equation*}
  \mathrm{NL}(u,q)=\bd_{s}(u-q(t))+J(u-q(t))(\bd_{t}(u-q(t))-X_{t}(u-q(t))),
\end{equation*}
where the symbol $u-q(t)$ should be understood in local coordinates. In the local model $u$ is assumed to take the boundary values as in \S\ref{sec:linear-theory-half}.

Let $\mathscr{M}=\mathrm{NL}^{-1}(0)$, and let:
\begin{equation*}
  D_{u,q}(\eta)=\pd{\mathrm{NL}(u,q)}{u}\eta\text{ and }B_{u,q}(v)=\pd{\mathrm{NL}(u,q)}{q}v\text{ and }T_{u,q}(\eta,v)=D_{u,q}(\eta)+B_{u,q}(v).
\end{equation*}
Since $\mathscr{M}$ is cut transversally, $T_{u,q}$ is surjective for each $(u,q)\in \mathscr{M}$.

\begin{prop}\label{prop:can-ident-param}
  There is a canonical identification:
  \begin{equation*}
    \mathfrak{o}(T_{u,q})\simeq \mathfrak{o}(D_{u,q})\otimes\mathfrak{o}(TP_{q})
  \end{equation*}
\end{prop}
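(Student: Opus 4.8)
\emph{Plan.} The statement is a linear-algebraic fact about determinant lines of Fredholm operators, and the plan is to realize $T_{u,q}$ as a compact perturbation of the split operator $D_{u,q}\oplus(0\colon TP_{q}\to 0)$ and transport orientations along the resulting path of Fredholm operators. First I would record that, for fixed $q$, the operator $D_{u,q}$ is itself a Cauchy--Riemann operator of the type considered in \S\ref{sec:linear-theory-half}: a CR operator on the half-infinite strip, asymptotic to $A_{\gamma_{-}}$ as $s\to-\infty$, with conormal boundary conditions on $t=0,1$ and fixed-fibre boundary conditions $T^{*}M_{q(t)}$ along $s=0$. Hence $D_{u,q}$ is Fredholm, using the doubling argument of \S\ref{sec:fredholm-index-half} together with the local elliptic estimate of \S\ref{sec:regul-moving-lagr} to handle the (piecewise smooth, compact) moving Lagrangian edge. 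Since $\mathscr{M}$ is cut out transversally, $T_{u,q}$ is surjective, so $\mathfrak{o}(T_{u,q})$ is the orientation line of $\ker T_{u,q}$.

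Next I would introduce the homotopy $T_{\lambda}(\eta,v):=D_{u,q}(\eta)+\lambda B_{u,q}(v)$, $\lambda\in[0,1]$, with codomain $\mathscr{F}$ throughout. As $B_{u,q}$ has finite rank (it differentiates in the finite-dimensional direction $TP_{q}$), each $T_{\lambda}$ is a compact perturbation of $T_{0}=D_{u,q}\oplus(0\colon TP_{q}\to 0)$, hence Fredholm, and $\lambda\mapsto T_{\lambda}$ is a path in $\mathrm{Fred}(\mathscr{E}\times TP_{q},\mathscr{F})$ from $T_{0}$ to $T_{1}=T_{u,q}$. Pulling back the determinant line bundle of \S\ref{sec:determ-lines-axioms} along this path gives a line bundle over $[0,1]$, which is trivial; parallel transport then yields an isomorphism $\mathfrak{o}(T_{0})\to\mathfrak{o}(T_{u,q})$. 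On the other hand, for the split operator $T_{0}$ one has the canonical identifications
\[
  \mathfrak{o}(T_{0})\simeq \mathfrak{o}(D_{u,q})\otimes \mathfrak{o}(0\colon TP_{q}\to 0)\simeq \mathfrak{o}(D_{u,q})\otimes \mathfrak{o}(TP_{q}),
\]
the first from multiplicativity of determinant (hence orientation) lines under direct sums of Fredholm operators, and the second because $\ker(0\colon TP_{q}\to 0)=TP_{q}$ while the cokernel vanishes. Composing these gives the desired isomorphism.

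The step I expect to be the main obstacle is checking that this isomorphism is genuinely \emph{canonical}, i.e.\ independent of the straight-line homotopy (and of the auxiliary identifications used to split $T_{0}$). For this I would observe that the relevant space of operators $\{\,D_{u,q}(\cdot)+R : R\text{ a finite-rank, or compact, operator }\mathscr{E}\times TP_{q}\to\mathscr{F}\,\}$ is affine, hence contractible and in particular simply connected; therefore parallel transport of the determinant line bundle between any two of its points is independent of the connecting path, so the isomorphism $\mathfrak{o}(T_{0})\to\mathfrak{o}(T_{u,q})$ is well defined. Combined with the manifest naturality of the direct-sum identification for $T_{0}$, this yields canonicity. (Equivalently, one could route the argument through the multiplicativity of determinant lines for the short exact sequence of Fredholm complexes $0\to(\mathscr{E},D_{u,q})\to(\mathscr{E}\times TP_{q},T_{u,q})\to(TP_{q},0)\to 0$, but the homotopy formulation above is the most direct.)
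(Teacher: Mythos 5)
Your proof is correct, but it takes a genuinely different route from the paper. The paper argues at the single operator $T_{u,q}$, using surjectivity of $T_{u,q}$ (transversality) to choose explicit splittings of the domain and of $\mathscr{F}$ — in particular a complement $V\cong\coker D_{u,q}$ of $\im D_{u,q}$ inside $\im B_{u,q}$ — and then writes down a lexicographic chain of identifications \eqref{eq:lexico-reordering}, cancelling $V\oplus V$ by its canonical orientation, to land directly on $\mathfrak{o}(T_{u,q})$. You instead deform $T_{u,q}$ along the straight line $T_{\lambda}=D_{u,q}+\lambda B_{u,q}$ to the split operator $D_{u,q}\oplus(0\colon TP_{q}\to 0)$, note all $T_{\lambda}$ are Fredholm (finite-rank perturbations), and transport orientations through the determinant line bundle, with canonicity coming from contractibility of the affine space of perturbations. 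Your argument is sound and even a bit more general — it never uses surjectivity of $T_{u,q}$, so the identification exists at non-transverse points too — and the canonicity discussion is cleaner. What the paper's explicit recipe buys is the concrete form of the identification, which is exactly what is unravelled later in the sign claims of \S\ref{sec:bound-param-moduli} and \S\ref{sec:inter-break-param} (feeding into the chain-map property in \S\ref{sec:chain-map-property}); with your definition one would still have to fix the ordering convention in the direct-sum identification (commuting $TP_{q}$ past $(\coker D_{u,q})^{\vee}$ costs a sign $(-1)^{\dim TP_{q}\cdot\dim\coker D_{u,q}}$) and then check that the resulting orientation agrees with, or appropriately modifies, the lexicographic convention used in those later computations. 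As a proof of Proposition \ref{prop:can-ident-param} as stated, though, your argument is complete.
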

\begin{proof}
  Abbreviate $T=D+B:X\times Y\to \mathscr{F}$ with $D:X\to \mathscr{F}$ and $B:Y\to \mathscr{F}$.

  Consider the following splittings:
  \begin{enumerate}
  \item $X=\ker D\oplus \im D$ and $Y=\im B\oplus \ker B$,
  \item $\im B=V\oplus(\im D\cap \im B)$ where $V$ is a linear complement to $\im D$,
  \end{enumerate}

  There is a canonical identification:
  \begin{equation*}
    \ker T=\ker D\oplus (\im D\cap \im B)\oplus \ker B.
  \end{equation*}
  There is the standard (lexicographic, i.e., write things out in order) identification:
  \begin{equation}\label{eq:lexico-reordering}
    \begin{aligned}
      \mathfrak{o}(D)\otimes \mathfrak{o}(Y)
      &\simeq \mathfrak{o}(\ker D\oplus V \oplus \im B\oplus \ker B),\\
      &\simeq \mathfrak{o}(\ker D\oplus V \oplus V\oplus (\im B\cap \im D)\oplus \ker B),\\
      &\simeq \mathfrak{o}(\ker D\oplus (\im B\cap \im D) \oplus \ker B),\\
      &=\mathfrak{o}(T)\\
    \end{aligned}
  \end{equation}
  where we use the complex orientation on $V\oplus V$ in the fourth line. This completes the proof.
\end{proof}

A particular case of interest is when $\dim \ker T_{u,q}=\dim \coker T_{u,q}=0$. In this case $\mathfrak{o}(T)$ is canonically oriented and hence each point $(u,q)\in \mathscr{M}$ determines a canonical identification:
\begin{equation*}
  \mathfrak{o}(TP_{q})\simeq \mathfrak{o}(D_{u,q});
\end{equation*}
these identifications are used in \S\ref{sec:isom-from-morse}.

\subsubsection{The boundary of a parametric moduli space}
\label{sec:bound-param-moduli}

Another case of interest is when $P$ is a manifold with boundary, e.g., $$P=\set{x_{1}\ge 0}\subset \R^{n},$$ and $\dim \ker T_{u,q}=1$. In this case one can restrict to $\bd P$ and obtain a restricted total differential $T_{u,q}^{\bd}$. Let us assume that the restricted total differential is also surjective; it follows that it is an isomorphism (i.e., has index $0$). Moreover, the implicit function theorem implies $\mathscr{M}$ is a $1$-manifold whose boundary is the inverse image of $\bd P$.

One then has two identifications:
\begin{equation*}
  \mathfrak{o}(T_{u,q})\simeq \mathfrak{o}(D_{u,q})\otimes \mathfrak{o}(TP)\text{ and }\mathfrak{o}(T^{\bd}_{u,q})\simeq \mathfrak{o}(D_{u,q})\otimes \mathfrak{o}(T\bd P).
\end{equation*}
Pick an orientation of $\mathscr{M}$. Then it makes sense to say that $(u,q)$ is a negative or positive boundary point (since $\mathscr{M}$ can be parametrized in such a way that its projection to $P$ is an immersion transverse to $\bd P$, near the boundary points $\bd \mathscr{M}$)

\begin{claim}
  Use the canonical identification $\mathfrak{o}(T\bd P)\to \mathfrak{o}(TP)$ given by addition of an outwards normal vector as the final basis vector. Let $(u,q)$ be a boundary point. The identifications:
  \begin{equation*}
    \mathfrak{o}(\mathscr{M})\simeq \mathfrak{o}(D_{u,q})\otimes \mathfrak{o}(TP)\text{ and }\Z\simeq \mathfrak{o}(D_{u,q})\otimes \mathfrak{o}(TP)
  \end{equation*}
  differ by a sign $\mathfrak{m}_{\mathrm{ind}}\mathfrak{m}_{\mathrm{dir}}$ where $\mathfrak{m}_{\mathrm{ind}}$ depends only on the index of $D_{u,q}$ and $\mathfrak{m}_{\mathrm{dir}}$ depends only on whether $(u,q)$ is a negative or positive boundary point. Indeed, with our conventions $\mathfrak{m}_{\mathrm{ind}}=+1$ and $\mathfrak{m}_{\mathrm{dir}}$ is the sign of $(u,q)$ as a boundary point.
\end{claim}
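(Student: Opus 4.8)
\emph{Proof plan.} The plan is to reduce the statement to a linear-algebra computation built entirely from the universal isomorphism of Proposition \ref{prop:can-ident-param} and from the canonical orientation of an isomorphism, and then to track signs. First I would record the consequences of the hypothesis that $T^{\bd}_{u,q}$ is an isomorphism: applying the formula for $\ker T^{\bd}_{u,q}$ from the proof of Proposition \ref{prop:can-ident-param} forces $D_{u,q}$ to be injective, $B^{\bd}_{u,q}:=B_{u,q}|_{T\bd P_q}$ to be injective, and $\mathscr F=\im D_{u,q}\oplus \im B^{\bd}_{u,q}$; hence we may take the complement $V$ of $\im D_{u,q}$ to be exactly $\im B^{\bd}_{u,q}$, so $\coker D_{u,q}\cong \im B^{\bd}_{u,q}$ and $\mathrm{Index}(D_{u,q})=-\dim T\bd P_q$. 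Writing $TP_q=T\bd P_q\oplus \R\nu$ with $\nu$ an outward normal, the line $\ker T_{u,q}=T_{(u,q)}\mathscr M$ is spanned by
\[
\kappa:=\bigl(-(T^{\bd}_{u,q})^{-1}(B_{u,q}\nu)\,,\,\nu\bigr),
\]
whose $TP_q$-component has outward-normal part $+1$. I would then match this with the boundary orientation convention: near $(u,q)$ the projection $\pr:\mathscr M\to P$ is an immersion transverse to $\bd P$ which carries the interior of $\mathscr M$ into the interior of $P$, so the outward tangent direction of $\mathscr M$ at $(u,q)$ projects to an outward vector of $P$; since $d\pr(\kappa)$ is outward, the outward direction of $\mathscr M$ is $+\kappa$. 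With the standard convention $\bd[0,1]=\{1\}-\{0\}$ this says $(u,q)$ is a positive boundary point if and only if the chosen orientation of $\mathscr M$ equals $[\kappa]$.

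Next I would unwind the two identifications in the Claim. The first is $\mathrm{Prop}_{P}\colon \mathfrak o(\mathscr M)=\mathfrak o(T_{u,q})\xrightarrow{\ \sim\ }\mathfrak o(D_{u,q})\otimes \mathfrak o(TP_q)$, the identification of Proposition \ref{prop:can-ident-param}. The second is the composition
\[
\Z=\mathfrak o(T^{\bd}_{u,q})\xrightarrow[\ \sim\ ]{\ \mathrm{Prop}_{\bd P}\ }\mathfrak o(D_{u,q})\otimes \mathfrak o(T\bd P_q)\xrightarrow[\ \sim\ ]{\ \mathrm{id}\otimes(\text{append }\nu)\ }\mathfrak o(D_{u,q})\otimes \mathfrak o(TP_q),
\]
where the first arrow uses that $T^{\bd}_{u,q}$ is an isomorphism, so $\mathfrak o(T^{\bd}_{u,q})=\Z$ canonically. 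Thus the discrepancy between the two is the isomorphism $\Psi:=\mathrm{Prop}_{P}^{-1}\circ(\mathrm{id}\otimes(\text{append }\nu))\circ \mathrm{Prop}_{\bd P}\colon \mathfrak o(T^{\bd}_{u,q})\to \mathfrak o(\mathscr M)$, and I must compute $\Psi(1)$. The point is that $\mathrm{Prop}_{P}$ and $\mathrm{Prop}_{\bd P}$ are both instances of the lexicographic reordering \eqref{eq:lexico-reordering} applied with the \emph{same} $D=D_{u,q}$, $\ker D=0$, $\coker D$, and $V=\im B^{\bd}_{u,q}$, and with $Y=TP_q$ resp.\ $Y=T\bd P_q$. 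Consequently the two reorderings differ only by (i) the treatment of the single extra domain direction $\R\nu$ and the resulting one extra kernel dimension, and (ii) a bookkeeping sign that is a fixed power of $-1$ depending only on the dimensions and indices appearing — i.e.\ only on $\mathrm{Index}(D_{u,q})$. Carrying out this comparison identifies $\Psi$, up to the sign in (ii), with the elementary ``stabilization'' isomorphism $\mathfrak o(T^{\bd}_{u,q})\to \mathfrak o(T_{u,q})=\mathfrak o(\ker T_{u,q})$ induced by the projection $\ker T_{u,q}\xrightarrow{\ \sim\ }\R\nu$ (an isomorphism because $\ker T^{\bd}_{u,q}=0$) together with the orientation $[\nu]$ of $\R\nu$; and I would check that with the conventions of the paper — the order $\mathfrak o(D)\otimes \mathfrak o(TP)$, the ``$\im B$ first, $\ker B$ second'' ordering in \eqref{eq:lexico-reordering}, the complex orientation on $V\oplus V$, and the outward-normal-last rule — this sign is $+1$.

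Finally I would assemble: $\pr_{\R\nu}(\kappa)=+\nu$, so the stabilization isomorphism sends $1\mapsto[\kappa]$, whence $\Psi(1)=\mathfrak m_{\mathrm{ind}}\cdot[\kappa]=[\kappa]$; comparing $[\kappa]$ with the chosen orientation of $\mathscr M$ therefore introduces the sign $\mathfrak m_{\mathrm{dir}}$, which by the first step is $+1$ exactly when $(u,q)$ is a positive boundary point. This is the assertion of the Claim, with $\mathfrak m_{\mathrm{ind}}=+1$ and $\mathfrak m_{\mathrm{dir}}$ the sign of $(u,q)$. The main obstacle is the sign bookkeeping in the middle step: showing the two lexicographic reorderings leave no residual dimension-dependent sign beyond the one absorbed into $\mathfrak m_{\mathrm{ind}}$, and that $\mathfrak m_{\mathrm{ind}}=+1$ for these conventions. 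It is worth working out the base case $\mathrm{Index}(D_{u,q})=0$ (so $\bd P$ is a point and $P=[0,\infty)$) completely by hand as a sanity check — there $\mathrm{Prop}_{\bd P}$ is just the canonical orientation of the isomorphism $D_{u,q}$ and every term is explicit, giving $\Psi(1)=[\kappa]$ directly — with the general case following the same pattern with extra, cancelling summands.
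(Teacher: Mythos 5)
Your plan is correct and follows essentially the route the paper itself indicates: the paper's proof is just the instruction to unravel the identification of Proposition \ref{prop:can-ident-param} (i.e.\ \eqref{eq:lexico-reordering}), and that is exactly what you do — identifying $\ker T_{u,q}$ with the outward vector $\kappa$, reducing the comparison of the two identifications to a stabilization-plus-reordering sign, and matching the outward direction with the boundary-point sign. The sign bookkeeping you flag as the remaining obstacle is precisely the ``unravelling'' the paper leaves implicit, so no genuinely different idea is involved.
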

\begin{proof}
  This involves unravelling the definition of the identification given in the proof of Proposition \ref{prop:can-ident-param}, in particular \eqref{eq:lexico-reordering}.
\end{proof}

\subsubsection{Interior breaking in a parametric moduli space}
\label{sec:inter-break-param}

Let $\mathscr{M}$ be a one-dimensional component of a parametric moduli space, and suppose that $\mathscr{M}$ has a non-compact end oriented as $[0,\infty)$. In the context of \S\ref{sec:isom-from-morse} such an end corresponds to a breaking of the form shown in Figure \ref{fig:interior-breaking}. Note that the positive orientation of $\mathscr{M}$ points towards the breaking; see the proof of Lemma \ref{lemma:differential} for similar discussion in the context of the Floer differential.

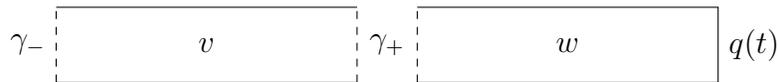
\begin{figure}[H]
  \centering
  \begin{tikzpicture}
    \draw (0,0)coordinate(A) -- +(4,0)coordinate(B) (0,1) -- +(4,0) (4.8,0)coordinate(C) --+(4,0)coordinate(D)--node[right]{$q(t)$}+(4,1)--+(0,1);
    \draw[dashed] (A)--node[left]{$\gamma_{-}$}+(0,1) (B)--+(0,1)coordinate(X) (C)--+(0,1)coordinate(Y);
    \path (X)--node{$\gamma_{+}$}(C) (X)--node{$v$}(A) (Y)--node{$w$}(D);
  \end{tikzpicture}
  \caption{An interior breaking in a parametric moduli space.}
  \label{fig:interior-breaking}
\end{figure}
Let $u$ be obtained by gluing $v$ and $w$, and assume that $(w,q)$ is a rigid element of the parametric moduli space, while $v$ is rigid-up-to-translation.

Proposition \ref{prop:kernel-cokernel-gluing} identifies $\ker(D_{v})\simeq \ker(D_{u,q})$ and $\coker(D_{w})\simeq \coker(D_{u,q})$, and hence an identification:
\begin{equation}\label{eq:identification-v-w-u}
  \mathfrak{o}(D_{v})\simeq \mathfrak{o}(D_{v})\otimes \mathfrak{o}(D_{w})\otimes \mathfrak{o}(TP)\to \mathfrak{o}(D_{u,q})\otimes \mathfrak{o}(TP)\simeq \mathfrak{o}(T_{u,q})=\mathfrak{o}(\mathscr{M}),
\end{equation}
where $\mathfrak{o}(D_{w})\otimes \mathfrak{o}(TP)$ is oriented as a rigid element of the lower dimensional moduli space.
\begin{claim}
  The identification \eqref{eq:identification-v-w-u} sends the orientation of $\mathscr{M}$ pointing towards the breaking to the generator $-\eta_{v}$ of $\mathfrak{o}(D_{v})$.
\end{claim}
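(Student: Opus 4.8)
The plan is to unravel \eqref{eq:identification-v-w-u} into its linear‐gluing building blocks, parametrize the breaking end of $\mathscr{M}$ by the gluing parameter, and then carry out the sign computation, which is the only genuine content. First I would set up the gluing parametrization: for $R$ large, pre‐gluing $v$ (on the left, along its positive end) to the rigid element $(w,q)$ (on the right, along its negative end $\gamma_{+}$) and correcting by the implicit function theorem produces a solution $u=u_{R}$ of the $\mathscr{M}$‐equation whose $q$‐component converges to that of $(w,q)$; the assignment $R\mapsto u_{R}$ is a diffeomorphism of a half‐line onto a neighbourhood of the breaking end, with the breaking at $R\to+\infty$, so that $\ker T_{u_{R},q}=\ker D_{u_{R},q}$ is one‐dimensional and the orientation of $\mathscr{M}$ pointing towards the breaking is, in $\mathfrak{o}(\mathscr{M})=\mathfrak{o}(T_{u_{R},q})$, the one given by the tangent vector $\partial_{R}u_{R}$.

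Second, I would simplify \eqref{eq:identification-v-w-u}. The index bookkeeping gives $\mathrm{ind}\,D_{v}=1$ and $\mathrm{ind}\,D_{w}=-\dim P$, so $\coker D_{v}=0$ and $\ker D_{w}=0$; hence the kernel–cokernel gluing of Proposition~\ref{prop:kernel-cokernel-gluing} collapses to an isomorphism $\mathrm{gl}\colon\ker D_{v}\xrightarrow{\ \sim\ }\ker D_{u_{R},q}$ together with an isomorphism $\coker D_{w}\xrightarrow{\ \sim\ }\coker D_{u_{R},q}$, and the Proposition~\ref{prop:can-ident-param} identification $\mathfrak{o}(D_{u_{R},q})\otimes\mathfrak{o}(TP)\simeq\mathfrak{o}(T_{u_{R},q})$ contributes only the index sign $\mathfrak{m}_{\mathrm{ind}}$, normalized to $+1$ in the sign convention of \S\ref{sec:bound-param-moduli}. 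Since $\mathfrak{o}(D_{w})\otimes\mathfrak{o}(TP)=\mathfrak{o}(T_{w,q})$ is canonically oriented (because $(w,q)$ is rigid), tracing the composite shows that \eqref{eq:identification-v-w-u} carries the naive generator $\eta_{v}=\partial_{s}v$ of $\mathfrak{o}(D_{v})$ to the class of $\mathrm{gl}(\eta_{v})\in\ker D_{u_{R},q}$, regarded in $\mathfrak{o}(\mathscr{M})$.

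It then remains to compare $\mathrm{gl}(\partial_{s}v)$ with $\partial_{R}u_{R}$ inside the one‐dimensional space $\ker D_{u_{R},q}$. This follows by the same mechanism as the sign computation in \cite[\S2e]{floer-ham} that was used in the proof of Lemma~\ref{lemma:differential} (where $(\eta_{u_{i}},\eta_{v_{i}})$ was shown to glue to opposite generators): carrying that computation through the present half‐strip configuration, while keeping track of the cohomological choice of $-\eta$ fixed in \S\ref{sec:floer-differential-local-coefficients} and of the reordering in \eqref{eq:lexico-reordering}, yields $\mathrm{gl}(\partial_{s}v)=-\partial_{R}u_{R}$ to leading order (the $\pm$ error terms being exponentially small in $R$, but the sign exact). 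Hence $\eta_{v}\mapsto-(\text{orientation towards the breaking})$, equivalently $-\eta_{v}\mapsto(\text{orientation towards the breaking})$, as claimed.

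The main obstacle is precisely this sign: one must reconcile (a) on which side of the neck each of $v$ and $w$ is placed and the direction in which increasing the gluing length translates $v$, (b) the cohomological choice of $-\eta$ built into the Floer differential in \S\ref{sec:floer-differential-local-coefficients}, and (c) the reordering and complex‐orientation conventions of \eqref{eq:lexico-reordering} underlying Propositions~\ref{prop:can-ident-param} and~\ref{prop:kernel-cokernel-gluing}. The analytic inputs — the gluing theorem, exponential decay of the correction term, and the identification of $R\mapsto u_{R}$ with the breaking end — are standard and can be quoted from \cite{fh_coherent,abouzaid_monograph}; all the work is in the bookkeeping.
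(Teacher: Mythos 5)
Your proposal is correct and takes essentially the same route as the paper: the paper's proof of this claim is a one-sentence appeal to the standard analysis of the gluing isomorphism of Proposition \ref{prop:kernel-cokernel-gluing} (citing \cite{fh_coherent}), and your argument---parametrizing the breaking end by the gluing parameter, collapsing the kernel--cokernel gluing using rigidity of $(w,q)$ and rigidity-up-to-translation of $v$, and comparing the glued image of $\eta_{v}$ with the direction of motion towards the breaking---is that same standard analysis made explicit. As in the paper, the decisive sign is quoted from the literature rather than derived; the only caveat is that this sign is a matter of the gluing and ordering conventions of \S\ref{sec:gluing-operation} and \eqref{eq:lexico-reordering}, not of the $-\eta_{u}$ convention built into the Floer differential in \S\ref{sec:floer-differential-local-coefficients}, which enters only afterwards in \S\ref{sec:chain-map-property}.
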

\begin{proof}
  This involves a standard analysis of the gluing isomorphism in Proposition \ref{prop:kernel-cokernel-gluing}; see, e.g, \cite{fh_coherent} and the references therein.
\end{proof}
The results in this section are used to prove that the $\Theta$ map is a chain map in \S\ref{sec:chain-map-property}.

\subsubsection{Identifying orientation line of a chord with the orientation line of a half-strip}
\label{sec:ident-orient-line}

Linear gluing $\mathrm{CR}(A_{0},A_{\gamma})\times \mathrm{CR}(A_{\gamma},\Pi)\to \mathrm{CR}(A_{0},\Pi)$ establishes a canonical isomorphism between the orientation lines:
\begin{equation}\label{eq:canon-ident-Pi}
  \mathfrak{o}_{\gamma}\otimes \mathfrak{o}(D_{u})\to \mathfrak{o}(A_{0},\Pi),
\end{equation}
where we use the definition $\mathfrak{o}_{\gamma}=\mathfrak{o}(A_{0},A_{\gamma})$ and the identification $\mathfrak{o}(D_{u})\simeq \mathfrak{o}(A_{\gamma},\Pi)$ described in Proposition \ref{prop:coherent-orient-2}; see Figure \ref{fig:canon-ident-line-glue} and \S\ref{sec:gluing-operation} for the details of the gluing operation.

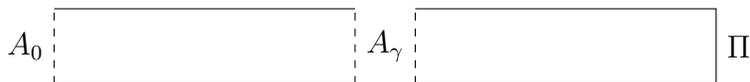
\begin{figure}[H]
  \centering
  \begin{tikzpicture}
    \draw (0,0)coordinate(A) -- +(4,0)coordinate(B) (0,1) -- +(4,0) (4.8,0)coordinate(C) --+(4,0)coordinate(D)--node[right]{$\Pi$}+(4,1)--+(0,1);
    \draw[dashed] (A)--node[left]{$A_{0}$}+(0,1) (B)--+(0,1)coordinate(X) (C)--+(0,1)coordinate(Y);
    \path (X)--node{$A_{\gamma}$}(C) (X)--(A) (Y)--(D);
  \end{tikzpicture}
  \caption{Linear gluing.}
  \label{fig:canon-ident-line-glue}
\end{figure}

In particular, fixing a generator $\mathfrak{g}$ of $\mathfrak{o}(A_{0},\Pi)$ gives us an identification $\mathfrak{o}(D_{u})\simeq \mathfrak{o}_{\gamma}$; this identification is used in the definition of the $\Theta$ map in \S\ref{sec:isom-from-morse}; it depends on the choice of $\mathfrak{g}$ up to a sign.

\section{Morse theory for the energy functional}
\label{sec:appendix-B}

\subsection{Generic metrics and the Morse property}
\label{sec:gener-metr-morse}

In this section, we explain how to find \textit{bumpy metrics} $g$, which are metrics with the property that all critical points of $\mathscr{E}:\mathscr{P}\to [0,\infty)$ with positive critical value are Morse. The case $N=\set{p,q}$ follows from the arguments in \cite[\S18]{milnor_morse_theory}. In the general case, we follow \cite{anosov_closed_geodesics}. For more applications of bumpy metrics see \cite[\S4]{stojisavljevic_zhang}, \cite[\S3]{abbondandolo_schwarz_estimates_RFH}, and \cite{klingenberg_lectures,frauenfelder_arnold_givental,oancea_closed_geodesics}.

Given a Riemannian metric $g_0$, let $\eta, \xi$ be variations along the geodesic $x \in Crit(\mathscr{E})$. Since $x$ satisfies boundary conditions $x(0), x(1) \in N$, we assume that $\eta(0), \eta(1), \xi(0)$ and $\xi(1)$ are tangent to $N$. By the second variation formula \cite[Theorem 13.1]{milnor_morse_theory}, the Hessian of $\mathscr{E}$ is given by:
\begin{equation*}
  d^2 \mathscr{E}(x)(\xi, \eta)=-2 \int_0^1g_{0} \left(\xi, \frac{D^2 \eta}{dt^2} + R_{g_{0}}(x', \eta) x' \right) dt,
\end{equation*}
where $R_{g_{0}}$ is the Riemannian curvature tensor. Since the null space of the Hessian is the set of Jacobi fields along $x$ which are tangent to $N$ at the endpoints, $\mathscr{E}$ is Morse away from the set of constant paths if there are no such Jacobi fields. On the other hand, there is a bijective correspondence between Jacobi fields $J$ along $x$ and equivariant vector fields $\Tilde{J}(t) = D\varphi^t(x'(0)) \Tilde{J}(0)$ given by $J \mapsto (J, \nabla J)$, where $\varphi_{g_{0}}^t$ is the geodesic flow on $TM$; see \cite[Lemma 3.1.6]{klingenberg_lectures}. Because of this correspondence, the Morse condition for $\mathscr{E}$ is equivalent to the transversality of $\varphi^1_{g_{0}}(\nu^* N)$ and $\nu^* N$ away from the zero section.

Let $S^* M$ be the ideal contact boundary of $T^*M$, and let $\Lambda_{N} \subset S^*M$ be the Legendrian boundary of the conormal bundle $\nu^* N$. Let $H_{g_{0}}(p):= \| p \|_{g_{0}}$, and note that $H_{g_{0}}$ defines normalized co-geodesic flow $\Phi_{g_{0}}^t$ on $T^*M \setminus M$. Since $\Phi_{g_{0}}^t(e^s p) = e^s \Phi_{g_{0}}^t(p)$, $\Phi_{g_{0}}^t$ is equivariant with respect to the Liouville flow and hence induces an autonomous flow on $S^* M$. Consider the map:
\begin{equation*}
  F_{g_{0}}:S^* M \times \R_{+}\to S^* M \times S^* M
  \hspace{.5cm}\text{given by}\hspace{.5cm}(p, t)\mapsto (p, \Phi_{g_{0}}^t(p)).
\end{equation*}
The transversality condition $\varphi^1_{g}(\nu N) \pitchfork \nu N$ is equivalent to $F_{g_{0}} \pitchfork \Lambda_{N} \times \Lambda_{N}$. Let $\mathcal{G}_{\epsilon}(g_0)$ be the Banach manifold given by: $$\mathcal{G}_{\epsilon}(g_0):= \{\text{symmetric tensor fields $g$}: \sum \epsilon_{k} \|g-g_{0}\|_{C^k} < 1 \};$$
for similar use of such a Banach manifold see \cite{floer-lag,mcduffsalamon,wendl-sft}. One picks $\epsilon_{0}$ large enough that $\mathcal{G}_{\epsilon}(g_{0})$ consists only of metrics, and picks $\epsilon_{k}$ decaying sufficiently rapidly that $\mathcal{G}_{\epsilon}(g_{0})$ contains enough compactly supported bump functions.

Following \cite[\S4]{anosov_closed_geodesics}, introduce the universal map $F(p,t,g)=F_{g}(p,t)$.

One says that $(p, t_{0} ,g)$ is an \textit{injective} point if there is $s_{0} \in (0, t_{0})$ such that the geodesic $\gamma(t):=\Phi^{t}_{g}(p)$ satisfies $\gamma^{-1}\{\gamma(s_{0})\}=\{ s_{0} \}$. It follows from the arguments in \cite[Lemma 2]{anosov_closed_geodesics} that $\d F(p,t_{0},g)$ is surjective for all injective points, hence $F$ restricted to the set of all injective points is transverse to any submanifold.

The set $\mathcal{G}^{*}_{\mathrm{reg}}$ of regular values of the projection $F^{-1}(\Lambda_{N}\times \Lambda_{N})\to \mathcal{G}_{\epsilon}(g_{0})$ is of the second Baire category. For $g \in \mathcal{G}^{*}_{\mathrm{reg}}$, if $(p,t_{0},g)$ is injective point, we have: $$\d F_{g} (p,t_{0}) T_{(p,t_{0})}(S^*M \times \R_{+})\oplus T_{\left(p, \Phi^{t_0}_{g}(p) \right)}( \Lambda_{N} \times \Lambda_{N}) = T_{\left(p,\Phi^{t_0}_{g}(p) \right)} (S^* M \times S^*M).$$

It is left to show that all points $(p, T, g)$ such that $F(p, T, g) \in \Lambda_N \times \Lambda_N$ are injective for generic $g$, or equivalently, there is no closed geodesic of $g$ which passes through $N$ orthogonally. This follows from the fact that $F$ restricted to the set of injective points is transverse to $\Delta_{N}:= \{(p,p) \mid p \in \Lambda_{N} \}$.

The projection to the third factor $(p, t, g) \mapsto g$ is a Fredholm map, and when restricted to the set of all injective points in $F_{g}^{-1}(\Delta_N)$, it has index $-n-1$ (which is equal to the ``expected'' dimension of $F_{g}^{-1}(\Delta_N) \subset S^*M \times \R_{+}$). Since the index is negative, regular metrics $g$ satisfy $F_{g}^{-1}(\Delta_N)= \emptyset$, i.e., closed geodesics are not orthogonal to $N$ for generic metrics $g$. Let $\mathcal{G}'_{reg}$ be the set of such metrics.

It follows that for every metric $g$ in the Baire generic set $\mathcal{G}'_{reg} \cap \mathcal{G}^{*}_{reg}$, all non-constant geodesics orthogonal to $N$ at the endpoints are non-degenerate critical points of $\mathscr{E}_{g}$.

\subsection{Relative Hurewicz theorem and Morse homology}
\label{sec:relat-hurew-morse}

In this section we show that \eqref{eq:want_to_show_injective} is an isomorphism via a geometric argument similar to the one in \S\ref{sec:comp-with-sing}. We will focus on proving that \eqref{eq:want_to_show_injective} is injective, as this is what is necessary to deduce the Arnol'd chord conjecture. The arguments relating Morse theory to homotopy groups are similar to those relating Morse theory to singular homology and pseudo-cycle cobordism; see \cite{schwarz_pseudocycles,zinger_pseudocycles} and \cite{mcduffsalamon,wilkins_quantum_steenrod,wilkins_pseudocycles} for more details.

For simplicity, and without any serious loss of generality, we replace $\mathscr{P}$ by one of its finite dimensional approximations in \S\ref{sec:finite-dimens-appr}.

Recall that we assume that $\pi_{j}(\mathscr{P},N,\mathrm{pt})=0$ for $j<k$.

Let $\Sigma_{k}$ be the union of the unstable disks of the index $\le k$ critical points. By an inductive process, one can find a homotopy $\varphi_{t}:\mathscr{P}\times [0,1]\to \mathscr{P}$ so that $\varphi_{0}=\id$, $\varphi_{t}(N)\subset N$, and $\varphi_{1}$ takes a neighborhood $V$ of $\Sigma_{k}\setminus U$ into $N$, where $U$ is a small neighborhood of the index $k$ critical points, see Figure \ref{fig:unstable-projection} for an illustration of $U$. The reason it is possible to construct such a homotopy is because $\Sigma_{k}\setminus U$ deformation retracts onto the union of the unstable disks of the index $<k$ critical points, and these lower dimensional unstable disks represent trivial elements in $\pi_{j}(\mathscr{P},N,\mathrm{pt})$ (because this relative homotopy group is trivial).

Now let $f:(D^{k},\bd D^{k},\mathrm{pt})\to (\til{\mathscr{P}} ,N',\mathrm{pt})$ represent some homotopy class which lies in the kernel of \eqref{eq:want_to_show_injective}. First suppose that the count of trajectories is zero on chain level. By perturbing $f$ to be in general position, we conclude a subset of $2K$ points in the interior of the disk $\set{x_{1}^{-},x_{1}^{+},\dots,x_{K}^{-},x_{K}^{+}}$ so that both $x_{i}^{\pm}$ have a flow line to the same index $k$ critical point $y_{i}$ contributing opposite signs in the Morse differential.

By picking $U$ sufficiently small and flowing $f$ for a sufficiently long time, we may suppose that $f^{-1}(U)$ consists of $2K$ disks centered on the points $x_{i}^{\pm}$. Moreover, each disk is mapped by $f$ onto a disk which projects diffeomorphically onto a small piece of the unstable manifold of $y_{i}$; see Figure \ref{fig:unstable-projection}, and the two induced orientations of the unstable manifold at $y_{i}$ differ. We also suppose that $D\setminus f^{-1}(U)$ is mapped into the neighborhood $V$ (which can be achieved by flowing long enough).

After having deformed $f$ in this manner, consider the homotopy $\varphi_{t}\circ f$. This homotopy is relative the boundary of $f$, and the resulting map $\varphi_{1}\circ f$ is valued in $N'$ except for the $2K$ small disks $f^{-1}(U)$; see Figure \ref{fig:almost-in-N}.

By construction, the restriction of $\varphi_{1}\circ f$ to the $x_{i}^{-}$ disk equals the restriction to the $x_{i}^{+}$ disk composed with an orientation preserving diffeomorphism.

The \emph{relative homotopy addition theorem} from \cite[\S VII]{bredon_GT} states that such a disk (decomposed into subdisks) represents a decomposition in relative homotopy groups. Since $N'$ is simply connected and $\pi_{k}(\mathscr{P},N,\mathrm{pt})$ is commutative for $k\ge 2$, the theorem implies that $\varphi_{1}\circ f$, and hence $f$, can be written as a cancelling sum: $$f=u_{1}-u_{1}+\dots+u_{K}-u_{K}=0$$ in $\pi_{k}(\mathscr{P},N,\mathrm{pt})$; here $\pm u_{i}$ is induced by the restriction of $\varphi_{1}\circ f$ to the disk around $x_{i}^{\pm}$. In this special case where the count of trajectories is zero on chain level, we have shown that \eqref{eq:want_to_show_injective} is injective.

In the general case when the count of trajectories equals $\d(\sum z_{i})$ where $z_{i}$ are index $k+1$ critical points, one can add to $f$ the appropriate number of unstable spheres around the $z_{i}$ critical points. This yields a map defined on a union of a disk and some number of spheres. Each such sphere can be joined to the basepoint and thereby be interpreted as the zero element of $\pi_{k}(\mathscr{P},N,\mathrm{pt})$. Thus we can add these elements to $f$ to obtain the same element of $\pi_{k}(\mathscr{P},N,\mathrm{pt})$. The altered representative will now have a count of trajectories which vanishes on chain level, and we can apply the preceding argument.

This completes the proof of injectivity of \eqref{eq:want_to_show_injective}. Surjectivity of \eqref{eq:want_to_show_injective} follows from similar arguments, and is left to the reader.

\begin{figure}[H]
  \centering
  \begin{tikzpicture}[scale=1.5]
    \fill[black!10!white] (-0.5,-0.6) rectangle (0.5,0.6);
    \draw (-3,0)--coordinate[pos=0.5](y)(3,0);
    \foreach \x in {-1.4,-1,...,1.4} {
      \draw[->,black!50!white] (\x,1)--+(0,-0.8);
      \draw[->,black!50!white] (\x,-1)--+(0,0.8);
    }
    \draw[line width=1pt,red] plot[variable=\x,domain=-2:-1,samples=100] (\x,{0.5+0.1*sin(90*\x)}) to[out=0,in=180] (-0.5,0)--(0.5,0)to[out=0,in=180] (1,0.6);
    \draw[line width=1pt,red] plot[variable=\y,domain=1:2,samples=100] (\y,{0.5+0.1*sin(90*\y)});
    \draw[variable=\x,domain=-2:-1,samples=100,line width=1pt,red] plot (\x,{-0.5+0.1*sin(90*\x)}) to[out=0,in=180] (-0.5,0)--(0.5,0)to[out=0,in=180] (1,-0.4);
    \draw[variable=\x,domain=1:2,samples=100,line width=1pt,red] plot (\x,{-0.5+0.1*sin(90*\x)});
    \draw[dashed] (-0.5,-0.6) rectangle (0.5,0.6);
    \node at (0,0) [above] {$y_{i}$};
  \end{tikzpicture}
  \caption{Projecting disks around $x^\pm_i$ (both disks shown in red) onto a piece of the unstable manifold of $y_i$ (shown as a horizontal black line). The dashed box is $U$.}
  \label{fig:unstable-projection}
\end{figure}
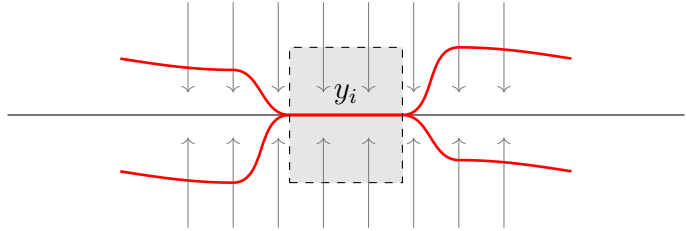

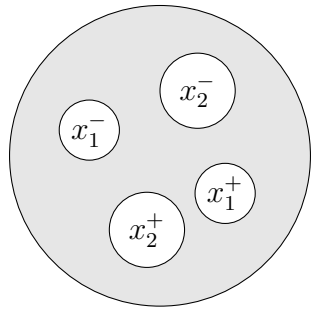
\begin{figure}[H]
  \centering
  \begin{tikzpicture}
    \draw[fill=black!10!white,even odd rule] (0,0) circle (2) (60:1) circle (0.5)node{$x_{2}^{-}$} (330:1) circle (0.4)node{$x_{1}^{+}$} (260:1) circle (0.5)node{$x_{2}^{+}$} (160:1) circle (0.4)node{$x_{1}^{-}$};
  \end{tikzpicture}
  \caption{After the homotopy, the disk decomposes into $K$ cancelling pairs of disks. The shaded region is mapped into $N'$.}
  \label{fig:almost-in-N}
\end{figure}

\bibliography{citations}
\bibliographystyle{alpha}
\end{document}